\numberwithin{equation}{section}
\theoremstyle{plain}
\newtheorem{theorem}{Theorem}[section]
\newtheorem{prop}[theorem]{Proposition}
\newtheorem{corollary}[theorem]{Corollary}
\newtheorem{lemma}[theorem]{Lemma}
\theoremstyle{definition}
\newtheorem{definition}[theorem]{Definition}
\newtheorem{example}[theorem]{Example}
\newtheorem{remark}[theorem]{Remark}
\newtheorem{conjecture}[theorem]{Conjecture}
\newcommand{\B}{\mathcal{B}}
\newcommand{\CC}{\mathbb{C}}
\newcommand{\I}{\mathcal{I}}
\newcommand{\GG}{\mathbb{G}}
\newcommand{\M}{\overline{M}}
\newcommand{\HH}{\mathrm{H}}
\newcommand{\PP}{\mathbb{P}}
\renewcommand{\O}{\mathcal{O}}
\newcommand{\ZZ}{\mathbb{Z}}
\newcommand{\Aut}{\operatorname{Aut}}
\newcommand{\Proj}{\operatorname{Proj}}
\newcommand{\SL}{\operatorname{SL}}
\newcommand{\gitq}{/\hspace{-0.25pc}/}
\newcommand{\ra}{\rightarrow}
\newcommand{\co}{\colon\thinspace} 
\newcommand{\Spec}{\operatorname{Spec}}
\newcommand{\gm}{\mathbb{G}_m}
\newcommand{\Sym}{\operatorname{Sym}}
\newcommand*{\longhookrightarrow}
{\ensuremath{\lhook\joinrel\relbar\joinrel\relbar\joinrel\relbar\joinrel\relbar\joinrel\relbar\joinrel\relbar\joinrel\relbar\joinrel\rightarrow}}
\newcommand{\lspan}{\operatorname{span}}
\newcommand{\rank}{\operatorname{rank}}
\newcommand\Mg[1]{\overline{\mathcal{M}}_{#1}}
\begin{document}
\title{Finite Hilbert stability of (bi)canonical curves}
\author[Alper]{Jarod Alper}
\author[Fedorchuk]{Maksym Fedorchuk}
\author[Smyth]{David Ishii Smyth*}

\address[Alper]{Departamento de Matem\'aticas\\
Universidad de los Andes\\
Cra 1 No. 18A-10\\
Edificio H\\
Bogot\'a, 111711, Colombia} \email{jarod@uniandes.edu.co}

\address[Fedorchuk]{Department of Mathematics\\
Columbia University\\
2990 Broadway\\
New York, NY 10027}
\email{mfedorch@math.columbia.edu}

\address[Smyth]{Department of Mathematics\\
Harvard University\\
1 Oxford Street\\
Cambridge, MA 01238}
\email{dsmyth@math.harvard.edu}

\dedicatory{To Joe Harris on his sixtieth birthday}

\thanks{*The third author was partially supported by NSF grant 
DMS-0901095 during the preparation of this work.}

\begin{abstract} We prove that a generic canonically or bicanonically embedded smooth curve 
has semistable $m^{th}$ Hilbert points for all $m \geq 2$. 
We also prove that a generic bicanonically embedded smooth curve has stable $m^{th}$ Hilbert points for all $m\geq 3$. 
In the canonical case, this is accomplished by proving finite Hilbert semistability of special singular curves with $\GG_m$-action, 
namely the canonically embedded {\em balanced ribbon} and the canonically embedded 
{\em balanced double $A_{2k+1}$-curve}. In the bicanonical case, we prove finite Hilbert stability of special hyperelliptic curves, 
namely {\em Wiman curves}. 
Finally, we give examples of canonically embedded smooth curves 
whose $m^{th}$ Hilbert points are non-semistable for low values of $m$, 
but become semistable past a definite threshold.
\end{abstract}
\maketitle

\setcounter{tocdepth}{2}
\tableofcontents
\clearpage

\section{Introduction}
Geometric Invariant Theory (GIT) was developed by Mumford in order to construct quotients in algebraic geometry,
and in particular to construct moduli spaces. To use GIT to construct a moduli space one must typically prove that 
a certain class of embedded varieties has stable or semistable Hilbert points.
The prototypical example of a stability result is Gieseker and Mumford's asymptotic stability theorem 
for pluricanonically embedded curves \cite{mumford-stability, gieseker, gieseker-CIME}:  
\begin{theorem}[Asymptotic Stability] 
\label{T:asymptotic-stability}
Suppose $C \subset \PP \HH^0\bigl(C, K_{C}^n\bigr)$ is a smooth curve embedded by the complete linear system 
$|K_{C}^n|$, where $n \geq 1$. Then the $m^{th}$ Hilbert point of $C$ is stable for all $m \gg 0$.
\end{theorem}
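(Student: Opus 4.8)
The plan is to verify the Hilbert--Mumford numerical criterion directly for the $\SL(N+1)$-action on $\mathrm{Hilb}^{P}(\PP^{N})$, where $N+1=\dim\HH^0(C,K_C^n)$ and $P$ is the Hilbert polynomial of the $n$-canonical curve. First I would record the cohomological facts that pin down the setup: since $\deg K_C^{nm}=2nm(g-1)>2g-2$, one has $\HH^1(C,K_C^{nm})=0$ for all $m\geq 1$, so by Riemann--Roch $\dim\HH^0(C,K_C^{nm})=(2nm-1)(g-1)=:P(m)$; and for $m\gg 0$ the restriction $\HH^0(\PP^N,\O(m))\ra\HH^0(C,K_C^{nm})$ is surjective, so the $m^{th}$ Hilbert point of $C$ is the point of $\mathrm{Gr}\bigl(P(m),\Sym^m\HH^0(C,K_C^n)\bigr)$ cut out by the quotient $\Sym^m\HH^0(C,K_C^n)\twoheadrightarrow\HH^0(C,K_C^{nm})$. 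This is classical (Mumford, Castelnuovo) and reduces the theorem to a finite-dimensional GIT question for each large~$m$.

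Next I would unwind the numerical criterion. Up to conjugation a one-parameter subgroup $\lambda$ of $\SL(N+1)$ is diagonal: a basis $x_0,\dots,x_N$ of $\HH^0(C,K_C^n)$ together with integer weights $w_0\geq\cdots\geq w_N$, $\sum_i w_i=0$. Writing $\mathrm{wt}_\lambda$ for the induced weight on monomials, the relevant invariant is
\[
w_\lambda(m)\;:=\;\max\Bigl\{\,{\textstyle\sum_{M\in\mathcal{B}}}\,\mathrm{wt}_\lambda(M)\;:\;\mathcal{B}\ \text{a set of degree-}m\ \text{monomials in the }x_i\ \text{restricting to a basis of }\HH^0(C,K_C^{nm})\,\Bigr\},
\]
and stability of the $m^{th}$ Hilbert point is precisely the assertion that $w_\lambda(m)>0$ for every nontrivial $\lambda$ (the ``balanced'' value $\tfrac{mP(m)}{N+1}\sum_i w_i$ vanishing under the $\SL$-normalization). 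So the whole theorem comes down to bounding $w_\lambda(m)$ \emph{below} by a strictly positive number, uniformly in $\lambda$ and for all $m$ past a threshold.

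To estimate $w_\lambda(m)$ I would pass to the weight filtration of the homogeneous coordinate ring $R=\bigoplus_m\HH^0(C,K_C^{nm})$ and its associated graded ring $\mathrm{gr}_\lambda R$, the coordinate ring of the flat limit $C_0=\lim_{t\to 0}\lambda(t)\cdot C\subset\PP^N$; concretely $C_0=V\bigl(\mathrm{in}_\lambda(I_C)\bigr)$, and a weight-maximizing $\mathcal{B}$ is just the set of standard monomials of $\mathrm{in}_\lambda(I_C)$, so $w_\lambda(m)$ is computed by pairing the weight function against the Hilbert function of $C_0$. For $m\gg 0$ this collapses to an intersection-number expression on $C_0$ -- the point at which the problem becomes essentially that of Chow stability in the sense of Mumford's \emph{Stability of Projective Varieties}, where one may equally well argue. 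The crucial structural input is then that $C_0$ is a connected curve of arithmetic genus $g$ carrying an ample line bundle of degree $2n(g-1)$: Riemann--Roch on $C_0$, together with bounds on the length of its singular and non-reduced locus, forces the weighted sum to exceed the balanced value $0$, with the \emph{strict} inequality reflecting the nondegeneracy of $C$ in $\PP^N$ (it lies in no hyperplane, so its flat limit cannot collapse onto a coordinate subspace of high weight).

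I expect the main obstacle to be exactly this last step: controlling the flat limit $C_0$ -- how singular and how non-reduced it may become under an arbitrary $\lambda$ -- tightly enough to produce a \emph{uniform} positive lower bound for $w_\lambda(m)$, and then checking that the bound is independent of $m$ for $m\gg 0$ and of the particular smooth curve $C$ (the latter by boundedness of the family of $n$-canonical curves of genus~$g$). The substance of the theorem is precisely the statement that a smooth pluricanonical curve cannot degenerate ``too far'' under any one-parameter subgroup, and essentially all the work lies in making that quantitative.
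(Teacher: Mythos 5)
First, a point of reference: the paper does not prove Theorem \ref{T:asymptotic-stability} at all --- it is quoted from Mumford and Gieseker precisely because their arguments are long and non-effective, and the paper's own contribution is a different (generic, all-$m$) statement proved by the Kempf--Morrison method. So your proposal must stand on its own as a reconstruction of the classical proof. Its first half does: the vanishing of $h^1$, the identification of $[C]_m$ with a point of the Grassmannian, the translation of the numerical criterion into extremal weights of monomial bases (your max-over-bases normalization is equivalent to the min-over-bases convention of Proposition \ref{P:hmc} after replacing $\lambda$ by $\lambda^{-1}$), and the observation that for $m\gg 0$ the extremal weight $w_\lambda(m)$ is a degree-two polynomial in $m$ whose normalized leading coefficient is the Chow weight, so that asymptotic Hilbert stability would follow from Chow stability. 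That reduction is correct and is genuinely Mumford's route.

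The gap is that everything after this reduction --- which is the entire content of the theorem --- is deferred, and the strategy you sketch for it would not go through as stated. You propose to bound $w_\lambda(m)$ by controlling the flat limit $C_0=\lim_{t\to 0}\lambda(t)\cdot C$: its genus, its singularities, the length of its non-reduced locus. But for an arbitrary one-parameter subgroup, $C_0$ can be essentially arbitrary (wildly non-reduced, with embedded points, supported on unions of coordinate subspaces), and any bound on its degeneracy strong enough to force positivity of the weight is more or less \emph{equivalent} to the stability one is trying to prove; the argument as outlined is circular. Both known proofs avoid ever analyzing $C_0$. Mumford bounds the Chow weight using \emph{linear stability} of $C\subset\PP^N$ itself, i.e.\ an inequality between the reduced degree of $C$ and that of its images under linear projections; for $n\geq 2$ this follows from $\deg K_C^n\geq 2g+1$, while for $n=1$ the degree is only $2g-2$ and one must invoke Clifford's theorem together with non-hyperellipticity --- which also shows that your proposed source of strictness (``$C$ lies in no hyperplane, so $C_0$ cannot collapse onto a coordinate subspace of high weight'') is far too weak: nondegeneracy alone does not prevent a destabilizing collapse. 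Gieseker instead works entirely on $C$, with the weight filtration $V_0\subseteq\cdots\subseteq V_k=\HH^0(C,K_C^n)$, estimating $h^0$ of the images of products $V_{i_1}\cdots V_{i_m}$ via Riemann--Roch and base-locus bookkeeping and assembling the monomial basis from these subspaces. Without one of these inputs (or a genuine substitute), your outline reduces the theorem to itself.
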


Gieseker and Mumford's arguments are non-effective, and there is no known bound on how large $m$ 
must be in order to obtain the conclusion of the theorem. 
In light of this theorem, it is natural to ask: for which finite values of $m$ do pluricanonically 
embedded smooth curves have stable or semistable Hilbert points? This has been a basic open problem in GIT 
since the pioneering work of Gieseker and Mumford, but has gained renewed interest from recent work 
of Hassett and Hyeon on the log minimal model program for $\M_g$.
Indeed, Hassett and Hyeon observed that a stability result 
for finite Hilbert points of canonically and bicanonically embedded smooth curves 
would enable one to use GIT to construct a sequence of new projective birational models 
of $M_{g}$ that would constitute steps of the log minimal model program for $\M_{g}$ \cite{hassett-hyeon_flip}. 
In this paper, we prove the requisite stability result. 
\begin{theorem}[Main Result]\label{T:main} 
\hfill
\begin{enumerate}
\item
If $C$ is a generic canonically or bicanonically embedded smooth curve, 
then the $m^{th}$ Hilbert point of $C$ is semistable for every $m \geq 2$. 
\item
If $C$ is a generic bicanonically embedded smooth 
curve, then the $m^{th}$ Hilbert point of $C$ is stable for every $m \geq 3$. 
\end{enumerate}
\end{theorem}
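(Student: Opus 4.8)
The plan is to use the openness of Hilbert semistability and stability. In a flat family of canonically (resp. bicanonically) embedded curves over a base $B$, the locus of $b \in B$ for which the $m^{th}$ Hilbert point of the fiber is semistable (resp. stable) is Zariski open; hence, for a fixed $m$ and genus $g$, it suffices to exhibit a \emph{single} canonically (resp. bicanonically) embedded curve $C_0$ --- possibly singular, and not required to be generic --- whose $m^{th}$ Hilbert point is semistable and which lies in the closure of the locus of \emph{smooth} such curves inside the relevant Hilbert scheme. Part (2) follows the same way, with ``stable'' in place of ``semistable'' and $m \ge 3$. So I would divide the argument into (i) producing suitable flat degenerations of smooth (bi)canonical curves, and (ii) verifying the GIT (semi)stability of the special fibers $C_0$; since the test curves will turn out to be (semi)stable for \emph{all} relevant $m$ at once, the genericity assertion follows.

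For (i) I would take the test curves named in the abstract. In the canonical case, let $C_0$ be the canonically embedded balanced ribbon of genus $g$ for $g$ even and the canonically embedded balanced double $A_{2k+1}$-curve, $g = 2k+1$, for $g$ odd; for each one must exhibit an explicit flat one-parameter family of canonically embedded curves, smooth over a punctured disk, with central fiber $C_0$, placing $C_0$ in the closure of the smooth canonical locus. In the bicanonical case no degeneration is needed: take $C_0$ to be the bicanonically embedded Wiman curve of genus $g$, which is already a \emph{smooth} hyperelliptic curve, so that openness of stability near $[C_0]$ at once yields part (2) (and part (1) for bicanonical curves). Note that the balanced ribbon and balanced double $A_{2k+1}$-curve each carry a $\GG_m$ inside their automorphism group acting nontrivially on the canonical embedding, so $[C_0]$ has positive-dimensional stabilizer and can be at best \emph{strictly} semistable; this is why only semistability is asserted for canonical curves.

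Task (ii) is the technical core. For the balanced ribbon and balanced double $A_{2k+1}$-curve, the $\GG_m$-action exhibits $[C_0]$ as a fixed point of a one-parameter subgroup $\lambda_0$ of $\SL\bigl(\HH^0(C_0, \omega_{C_0}^{\otimes m})\bigr)$, and in suitable $\lambda_0$-eigencoordinates the homogeneous ideal of $C_0$ is generated by weight-homogeneous, essentially binomial, equations. By Kempf's theory of optimal destabilizing one-parameter subgroups, any subgroup destabilizing $[C_0]$ may be chosen to commute with $\lambda_0$, hence to lie in a fixed maximal torus $T$; the problem thus becomes the purely combinatorial one of checking $\mu_m([C_0], \rho) \ge 0$ for $\rho$ in $T$. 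Using the monomial presentation, $\mu_m([C_0], \rho)$ is computed by reading the Hilbert--Mumford index off the \emph{state} of the $m^{th}$ Hilbert point --- an explicit set of lattice points obtained from the monomial degeneration $\lim_{t \to 0}\rho(t) \cdot C_0$ --- and the \emph{balanced} normalization is precisely the assertion that the barycenter of the ambient weights lies in the resulting state polytope, equivalently that $\mu_m([C_0], \rho) \ge 0$ for all such $\rho$. Establishing this inequality uniformly in $g$ and in $m \ge 2$ is the crux. For the Wiman curve the finite automorphism group constrains the possible destabilizing subgroups, and combined with the explicit equations of the bicanonically embedded hyperelliptic curve --- governed by the degree-$(2g+2)$ binary form cutting out the branch divisor of the hyperelliptic map --- one computes $\mu_m([C_0], \rho)$ for an arbitrary $\rho$ and shows it is $> 0$ for all $m \ge 3$, the strict positivity stemming from the branch form having only simple roots together with the large symmetry group of the curve; for $m = 2$ the same computation gives $\mu_m \ge 0$, which suffices for part (1).

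I expect the principal obstacle to be the combinatorial estimate in (ii): finding a description of the state polytopes of the balanced ribbon, the balanced double $A_{2k+1}$-curve, and the Wiman curve explicit enough to verify (strict) balancedness simultaneously for all relevant $m$ and all $g$ --- and in particular to handle the extreme cases $m = 2$ and $m = 3$, where the Hilbert point carries the least redundancy and the inequalities are tightest. A secondary, more geometric obstacle is task (i): certifying that the balanced ribbon and the balanced double $A_{2k+1}$-curve really do arise as flat limits of \emph{smooth} canonically embedded curves --- i.e.\ inside the canonical Hilbert scheme --- rather than as abstract flat limits in some auxiliary family.
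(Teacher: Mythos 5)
Your proposal follows essentially the same route as the paper: openness of the (semi)stable locus reduces everything to exhibiting a single (semi)stable test curve in the closure of the smooth locus, and your test curves (balanced ribbon, balanced double $A_{2k+1}$-curve, smooth Wiman curve --- the latter needing no degeneration), the Kempf reduction to a fixed maximal torus via the multiplicity-free automorphism action, and the combinatorial barycenter/monomial-basis verification are exactly the paper's ingredients. One correction: you have the parities reversed --- the balanced ribbon carries its $\GG_m$-action only in odd genus $g=2k+1$ and handles the odd canonical case, while the balanced double $A_{2k+1}$-curve has arithmetic genus $g=2k$ and handles the even canonical case (where it degenerates from smooth trigonal canonical curves on a scroll).
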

Part (1) of the main result is proved in Corollaries \ref{C:ribbon} (odd genus canonical), 
\ref{C:main-trigonal} (even genus canonical), and Theorem \ref{T:bicanonical} (bicanonical case). 
Part (2) of the main result is proved in Theorem \ref{T:bicanonical}. 
This is, to our knowledge, the first example of a result in which the (semi)stability of \emph{all} 
Hilbert points of a given variety is established by a uniform method. In the case of canonically and bicanonically
embedded curves, we recover a weak form of the asymptotic stability theorem by a much simpler proof. 
Furthermore, as a sidelight to our main result, we give an example of an embedded {\em smooth} curve 
whose $m^{th}$ Hilbert point changes from semistable to non-semistable as $m$ decreases (Theorem \ref{T:bielliptic-change}).
We will explain our method of proof in the next section. First, however, let us 
conclude this introduction by describing a fascinating application of the main result, 
anticipated in the work of Hassett and Hyeon \cite{hassett-hyeon_flip}, 
and by considering prospects for future generalizations.

Fix $g \geq 2$, $n \geq 1$, $m\geq 2$, and set $r=(2n-1)(g-1)-1$ if $n\geq 2$, and $r=g-1$ if $n=1$. 
To an $n$-canonically embedded smooth genus $g$ curve $C$ we associate its $m^{th}$ Hilbert point 
$[C]_m \in \PP W_m$; these are defined in more detail in Section \ref{S:kempf} below. 
We denote by $\overline{H}_{g,n}^{\, m}$ the closure in $\PP W_m$ of the locus 
of $m^{th}$ Hilbert points of $n$-canonically embedded smooth curves of genus $g$. Then the $\SL(r+1)$-action 
on $\overline{H}_{g,n}^{\, m}$ admits a natural linearization $\O(1)$, which defines an open locus 
$(\overline{H}_{g,n}^{\, m})^{ss} \subset \overline{H}_{g,n}^{\, m}$ of semistable points. 
{\em Assuming that $(\overline{H}_{g,n}^{\, m})^{ss}$ is non-empty}, one obtains a GIT quotient
\begin{align*}
(\overline{H}_{g,n}^{\, m})^{ss} \gitq \SL(r+1):=\Proj \bigoplus_{k\geq 0} \HH^0\bigl(\, \overline{H}_{g,n}^{\, m}, \O(k)\bigr)^{\SL(r+1)}
\end{align*} 
as a projective variety associated to the algebra of $\SL(r+1)$-invariant functions 
in the homogenous coordinate ring of $\overline{H}_{g,n}^{\, m}$.
 
When $m \gg 0$, the critical assumption $\bigl(\overline{H}_{g,n}^{\, m} \bigr)^{ss} \neq \varnothing$ is satisfied 
by Theorem \ref{T:asymptotic-stability}, and the corresponding quotients have been analyzed using 
GIT \cite{gieseker,gieseker-CIME,schubert,hassett-hyeon_contraction,hassett-hyeon_flip,hyeon-lee_genus3,hyeon-morrison}. 
The results of this analysis can be summarized as follows:
\begin{align*}
\bigl(\overline{H}_{g,n}^{\, m} \bigr)^{ss} \gitq \SL(r+1) \simeq &
\begin{cases}
\M_{g} &\text{ if $n \geq 5, m \gg 0$},\\
 \M_{g}^{\, ps} &\text{ if $n = 3,4, m \gg 0$},\\
\M_{g}^{\, hs} &\text{ if $n = 2, m \gg 0$}.
\end{cases}
\end{align*}
Here, $\M_{g}^{\, ps}$ is the moduli space of pseudostable curves, in which elliptic tails have been replaced by cusps, and $\M_{g}^{\, hs}$ is 
the moduli space of $h$-semistable curves, in which elliptic bridges have been replaced by tacnodes. Furthermore, the birational 
transformations $\M_{g} \ra \M_{g}^{\, ps} \dashrightarrow \M_{g}^{\, hs}$ constitute the first two steps of the log minimal model program, 
namely the first divisorial contraction and the first flip \cite{hassett-hyeon_contraction,hassett-hyeon_flip}.

The key point is that the next stage of the log minimal model program cannot be constructed using an asymptotic stability result. 
Indeed, an examination of the 
formula for the divisor class of the polarization on the GIT quotient 
$\bigl(\overline{H}_{g,n}^{\, m}\bigr)^{ss} \gitq \SL(r+1)$ suggests that the next model occurring in the log minimal model program 
should be 
$\bigl(\overline{H}_{g,2}^{\, 6}\bigr)^{ss} \gitq \SL(3g-3)$.  
Thus, in marked contrast to the cases $n \geq 3$, where finite Hilbert linearizations are not expected 
to yield new birational models of $\M_{g}$, it is widely anticipated that in the cases 
$n=1, 2$, there will exist several values of $m$ at which the corresponding GIT 
quotients undergo nontrivial birational modifications caused by the fact that curves with worse than 
nodal singularities become semistable for low values of $m$. 
For $n=1$ we expect the number 
of threshold values of $m$ at which $\bigl(\overline{H}_{g,n}^{\, m}\bigr)^{ss}$ changes 
to grow with $g$, while for $n=2$ the only interesting values are $m\leq 6$, irrespectively of $g$;
for a detailed analysis of the expected threshold values of $m$ see \cite{handbook} and 
\cite{afs_preprint}. Until now, the main obstacle to verifying these expectations has been proving 
$\bigl(\overline{H}_{g,n}^{\, m}\bigr)^{ss} \neq \varnothing$ for explicit, finite values of $m$ and arbitrary genus $g$. 
Theorem \ref{T:main} removes this obstacle, and thus opens the door to analyzing a 
whole menagerie of new GIT quotients $\bigl(\overline{H}_{g,n}^{\, m}\bigr)^{ss} \gitq \SL(r+1)$.

Finally, let us discuss a slight sharpening of our main result which follows naturally from the methods employed in this 
paper. We observe that the canonically embedded curve of even genus 
for which we establish finite Hilbert semistability in Section \ref{S:monomial-bases-A-curve} 
is in fact trigonal, i.e. it lies in the closure of the locus of canonically embedded 
smooth trigonal curves. Similarly, in Section \ref{S:monomial-bases-rosary}, 
we prove the finite Hilbert semistability of the bicanonically embedded curve of odd genus, which is easily seen to be 
in the closure of the locus of bicanonically embedded smooth bielliptic curves.
From these observations, we obtain the following result:
\begin{theorem}[Stability of trigonal and bielliptic curves]\label{T:main-trigonal} 
\hfill
\begin{enumerate}
\item Suppose $C \subset \PP \HH^0\bigl(C, K_{C}\bigr)$ is a generic canonically embedded smooth trigonal curve of even genus. 
Then the $m^{th}$ Hilbert point of $C$ is semistable for every $m \geq 2$. \par
\item Suppose $C \subset \PP \HH^0\bigl(C, K_{C}^2\bigr)$ is a generic bicanonically embedded smooth bielliptic curve of odd genus. 
Then the $m^{th}$ Hilbert point of $C$ is semistable for every $m \geq 2$.
\end{enumerate}
\end{theorem}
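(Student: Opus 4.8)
The plan is to deduce Theorem~\ref{T:main-trigonal} from the finite Hilbert semistability of the special singular curves already produced, using only the openness of semistability and the irreducibility of the trigonal and bielliptic loci. Recall the two inputs: the analysis of Section~\ref{S:monomial-bases-A-curve} shows that the canonically embedded balanced double $A_{2k+1}$-curve $C_0 \subset \PP\HH^0(C_0,\omega_{C_0})$ of even genus has semistable $m^{th}$ Hilbert point for every $m \geq 2$ (this underlies Corollary~\ref{C:main-trigonal}), and the analysis of Section~\ref{S:monomial-bases-rosary} does the same for the special bicanonically embedded ``rosary'' curve $R_0$ of odd genus (this underlies Theorem~\ref{T:bicanonical}). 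In both cases the curve carries a $\GG_m$-action compatible with its embedding, which is precisely what made the direct verification feasible. On the other hand, the semistable locus $(\overline{H}_{g,n}^{\,m})^{ss}$ is Zariski-open in $\overline{H}_{g,n}^{\,m}$ and $\SL(r+1)$-stable; hence for any irreducible closed $Z \subseteq \overline{H}_{g,n}^{\,m}$ whose generic point is the $m^{th}$ Hilbert point of a smooth curve, $Z$ meeting $(\overline{H}_{g,n}^{\,m})^{ss}$ forces $Z \cap (\overline{H}_{g,n}^{\,m})^{ss}$ to be dense and open in $Z$.

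The theorem then reduces to a single geometric assertion. Let $Z$ be the closure in $\overline{H}_{g,1}^{\,m}$ of the locus of $m^{th}$ Hilbert points of canonically embedded smooth trigonal curves of even genus $g$ (respectively the closure in $\overline{H}_{g,2}^{\,m}$ of the locus of bicanonically embedded smooth bielliptic curves of odd genus $g$). Since the trigonal and bielliptic loci are irreducible and their generic members are canonically, respectively bicanonically, very ample, a generic point of $Z$ is the $m^{th}$ Hilbert point of a generic smooth trigonal, respectively bielliptic, curve. So it is enough to prove that $[C_0]_m \in Z$ (resp. $[R_0]_m \in Z$); combined with the openness observation above, this yields the theorem for all $m \geq 2$.

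I expect this containment to be where whatever content there is resides. Proving it amounts to exhibiting an explicit one-parameter flat degeneration, say a family $\mathcal{C} \ra \Spec k[[t]]$ whose generic fiber is a smooth trigonal curve embedded by its complete canonical system and whose special fiber, as a closed subscheme of the fixed ambient $\PP^{r}$, is exactly $C_0$ in its balanced $\GG_m$-linearized canonical embedding. For the trigonal case I would realize the family inside a $\PP^1$-bundle over $\PP^1$ --- reflecting the degree-$3$ map to $\PP^1$ --- and degenerate the defining equations so that two nodes coalesce into the two $A_{2k+1}$-singularities of $C_0$ while the triple-cover structure persists; one must then verify that the relative dualizing sheaf of the total space restricts to $\omega_{C_0}$ on the central fiber and remains relatively very ample, so that the canonical embedding extends across $t = 0$ and the flat limit in the Hilbert scheme is genuinely $C_0$ and not some other subscheme with the same Hilbert polynomial. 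The bielliptic case is handled in the same spirit, presenting the curves as double covers of a degenerating elliptic curve. Once the flat limit is identified with $C_0$ (resp. $R_0$), its finite Hilbert semistability is exactly the input recalled above, and the proof is complete.
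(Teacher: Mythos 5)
Your proposal follows the paper's argument exactly: both deduce the theorem from the finite Hilbert semistability of the balanced double $A_{2k+1}$-curve (resp.\ the rosary), established in Sections \ref{S:monomial-bases-A-curve} and \ref{S:monomial-bases-rosary}, together with openness of the semistable locus, the only remaining content being that these singular curves are flat limits of canonically embedded smooth trigonal (resp.\ bicanonically embedded smooth bielliptic) curves. The paper realizes the smoothings essentially as you sketch: the double $A_{2k+1}$-curve already sits on the balanced scroll $\PP^1\times\PP^1 \subset \PP^{2k-1}$ as a divisor of class $(3,k+1)$ (Proposition \ref{P:A-curve}), hence deforms within that linear system to a smooth trigonal canonical curve, while the rosary is a quadric section of the cone over a degenerating nodal curve of arithmetic genus one (Lemma \ref{L:rosary}), hence deforms to a smooth bielliptic curve.
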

This result naturally raises the questions: 
Is it true that {\em all} canonically embedded smooth trigonal curves have semistable $m^{th}$ Hilbert points for $m \geq 2$? 
Similarly, do other curves with low Clifford index, such as canonical bielliptic curves, have this property? 
Surprisingly, the answer to both questions is no. In Section \ref{S:non-semistability} of this paper, we prove that the $m^{th}$ Hilbert 
point of a canonically embedded smooth bielliptic curve is non-semistable below 
a certain definite threshold value of $m$ (depending on $g$), 
while the $m^{th}$ Hilbert point of a generic canonically embedded bielliptic
curve of odd genus is semistable for large values of $m$. As for trigonal curves, it is not difficult to see that the 
$2^{nd}$ Hilbert point of a canonically embedded 
trigonal curve with positive Maroni invariant is non-semistable; see \cite[Corollary 3.2]{fedorchuk-jensen}. 
On the other hand, in Section \ref{S:non-semistability} we give heuristic reasons for believing that a 
canonically embedded smooth trigonal curve 
should have semistable $m^{th}$ Hilbert points for $m\geq 3$. 

\subsubsection*{Notation and conventions} We work over the field of complex numbers $\CC$. 
In particular, we denote $\gm:=\Spec \CC[t,t^{-1}]$. 
In Section \ref{S:wiman}, we use the term {\em multiset} to denote a collection of elements with possibly repeating elements.

\subsection*{Acknowledgements}
We learned about the problem of GIT stability of finite Hilbert points many years ago from Brendan Hassett's 
talks on the log minimal model program for $\M_{g}$. 
Over the past several years we learned about many aspects of GIT
from conversations with Ian Morrison and David Hyeon, 
as well as through their many papers on the topic. In addition, we gained a great deal 
from conversations with Aise Johan de Jong, Anand Deopurkar, David Jensen, and David Swinarski.

\section{GIT background}\label{S:kempf}
The proof of our main result is surprisingly simple. In the canonical (resp., bicanonical) case, 
we exhibit a curve $C$ such that the action of $\Aut(C)$ on $V=\HH^0\bigl(C, \omega_C\bigr)$ 
(resp., $V=\HH^0 \bigl(C, \omega_C^2\bigr)$) is multiplicity-free, i.e. no representation occurs more than 
once in the decomposition of $V$ into irreducible $\Aut(C)$-representations. 
As Ian Morrison observed some thirty years ago, under
this hypothesis, powerful results of Kempf imply that the $m^{th}$ Hilbert point of $C$ 
is semistable if and only if it is semistable with respect to one-parameter subgroups of $\SL(V)$ 
which act diagonally on a {\em fixed basis} of $V$. Verifying stability with respect to the resulting fixed torus of $\SL(V)$
is a discrete combinatorial problem which we solve explicitly for every $m \geq 2$. 
We thus prove the semistability of all Hilbert points of $C$ and deduce the semistability of a generic smooth 
curve by openness of the semistable locus. In Section \ref{S:curves}
we will give a precise description of the (rather exotic) curves $C$ appearing in 
our argument. In this section, we recall the relevant definitions from GIT and explain the general framework 
for proving semistability of Hilbert points due to Mumford, as well as the aforementioned refinements of Kempf.

Let us begin by recalling the definition of the $m^{th}$ Hilbert point of an embedded scheme. If $X\subset \PP V$ is a closed subscheme 
such that the restriction map $\HH^0\bigl(\PP V, \O(m)\bigr) \ra \HH^0\bigl(X, \O_{X}(m)\bigr)$ is surjective (equivalently,
$h^1\bigl(X,\I_X(m)\bigr)=0$), set 
\[
W_m:= \bigwedge^{h^0\bigl(X, \O_{X}(m)\bigr)}\HH^0\bigl(\PP V, \O(m)\bigr)^{\vee}.
\]
The \emph{$m^{th}$ Hilbert point of $X \subset \PP V$} is a point $[X]_{m} \in \PP W_m$, defined as follows. First, consider the surjection
\begin{align*}
\HH^0\bigl(\PP V, \O(m)\bigr) \ra \HH^0\bigl(X,\O_X(m)\bigr) \ra 0.
\end{align*}
Taking the $h^0\bigl(X,\O_X(m)\bigr)$-fold wedge product and dualizing, 
we obtain the $m^{th}$ Hilbert point:
\begin{align*}
[X]_m:=\left[\bigwedge^{h^0\bigl(X, \O_{X}(m)\bigr)}\HH^0\bigl(\PP V, \O(m)\bigr) \ra \bigwedge^{h^0\bigl(X, \O_{X}(m)\bigr)} 
\HH^0\bigl(X,\O_X(m)\bigr) \ra 0 \right]^{\vee} 
\in \PP(W_m).
\end{align*}
Recall that if $W$ is any linear representation of $\SL(V)$, a point  $x \in \PP(W)$ is \emph{semistable} if the origin of $W$ is not contained in 
the closure of the orbit of $\tilde x \in W$, where $\tilde{x}$ is any lift of $x$. 
Thus, to show that a Hilbert point $[X]_m \in \PP (W_m)$ is semistable, we must prove that $0 \in W_m$ is not in the closure of 
$\SL(V) \cdot \widetilde{[X]}_m$, where $\widetilde{[X]}_m$ is any lift of $[X]_{m}$. 
An obvious necessary condition is that for any \emph{one-parameter subgroup} $\rho\co \Spec \CC[t,t^{-1}] \rightarrow \SL(V)$, we have 
$\lim_{t \rightarrow 0} \rho(t) \cdot \widetilde{[X]}_m \neq 0$.  A foundational theorem of Mumford asserts that this 
necessary condition is sufficient.
\begin{prop}[Hilbert-Mumford Numerical Criterion] 
Let $X\subset \PP V$ be as above.
The Hilbert point $[X]_m$ is semistable if and only if $\lim_{t \to 0} \rho(t) \cdot \widetilde{[X]}_m \neq 0$ 
for every one-parameter subgroup $\rho\co \Spec \CC[t,t^{-1}] \rightarrow \SL(V)$.
\end{prop}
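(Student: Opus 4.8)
The plan is to reproduce Mumford's proof of the Hilbert-Mumford numerical criterion, of which this proposition is the special case of the linear $\SL(V)$-representation $W_m$; the entire content lies in the ``if'' direction. Write $G=\SL(V)$, $W=W_m$, and $x=\widetilde{[X]}_m\in W$. The ``only if'' direction is immediate: if $[X]_m$ is semistable then $0\notin\overline{G\cdot x}$, so for any one-parameter subgroup $\rho\co\gm\ra G$ the limit $\lim_{t\to 0}\rho(t)\cdot x$, whenever it exists in $W$, lies in $\overline{G\cdot x}$ and is therefore nonzero.

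For the converse I would argue by contraposition: assuming $[X]_m$ is \emph{not} semistable, i.e.\ $0\in\overline{G\cdot x}$, I must exhibit a one-parameter subgroup $\rho$ of $G$ with $\lim_{t\to 0}\rho(t)\cdot x=0$. The first step is to algebraize the degeneration. Since $0$ lies in the closure of the orbit $G\cdot x$ --- a constructible set, open in its closure --- a standard curve-selection argument (choose an irreducible curve through $0$ that meets $G\cdot x$, normalize it, and, if necessary, lift a generic point through the orbit map $G\ra G\cdot x$ after a finite base change) produces a complete discrete valuation ring $R=\CC[[t]]$ with fraction field $K=\CC((t))$ and an element $g=g(t)\in G(K)$ with $\lim_{t\to 0}g(t)\cdot x=0$, meaning $g\cdot x\in W\otimes R$ and $(g\cdot x)(0)=0$. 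The finite base change is the point at which one may be forced onto a ramified cover $t\mapsto t^{1/N}$.

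The second step is to extract a one-parameter subgroup from $g$. Apply the Cartan decomposition of $G(K)$ --- for $\SL_n$ this is just the elementary divisor theorem over the principal ideal domain $R$ --- to write $g=a\,\lambda(t)\,b$ with $a,b\in G(R)$ and $\lambda\co\gm\ra G$ a one-parameter subgroup. Since $a,b\in G(R)$, they act by $R$-module automorphisms of $W\otimes R$ and specialize to $a(0),b(0)\in G(\CC)$, and $x':=b(0)\cdot x=(b\cdot x)(0)$ lies in the orbit $G\cdot x$. Decomposing $W=\bigoplus_i W_i$ into $\lambda$-weight spaces and writing $b\cdot x=\sum_i y_i(t)$ with $y_i(t)\in W_i\otimes R$, one has $\lambda(t)\cdot(b\cdot x)=\sum_i t^i y_i(t)=a^{-1}\cdot(g\cdot x)$, which lies in $W\otimes R$ and reduces to $0$ modulo $t$; hence $v_t\bigl(y_i(t)\bigr)\geq 1-i$ for every $i$, so $y_i(0)=0$ whenever $i\leq 0$, and thus $x'=\sum_{i>0}y_i(0)$ and $\lim_{t\to 0}\lambda(t)\cdot x'=0$. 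Setting $\rho(t):=b(0)^{-1}\lambda(t)\,b(0)$, a one-parameter subgroup of $G$, we get $\rho(t)\cdot x=b(0)^{-1}\lambda(t)\cdot x'$, so $\lim_{t\to 0}\rho(t)\cdot x=b(0)^{-1}\cdot 0=0$; thus the numerical criterion detects the instability, which completes the contrapositive.

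I expect the only genuine obstacle to be the first step --- converting the abstract membership $0\in\overline{G\cdot x}$ into an honest one-parameter \emph{algebraic} degeneration over a discrete valuation ring. That is precisely where the curve-selection argument (or Mumford's alternative route through the structure theory of complete local rings) is indispensable, and where one must accept the ramified base change $t\mapsto t^{1/N}$; this is harmless because a conjugate of a one-parameter subgroup is again a one-parameter subgroup, so the ramification is absorbed in the passage from $\lambda$ to $\rho$. Everything afterward --- the Cartan decomposition over $\CC((t))$ and the weight/valuation bookkeeping --- is routine. Since this proposition is classical, in practice I would simply invoke Mumford's theorem rather than reprove it; the sketch above is the argument I have in mind.
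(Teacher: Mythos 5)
Your proposal is correct, and it matches the paper's treatment in the only sense available: the paper states this proposition without proof, as Mumford's foundational theorem, which is exactly what you say you would do in practice. Your sketch of the classical argument --- curve selection to produce $g(t)\in\SL(V)\bigl(\CC((t))\bigr)$ degenerating $\widetilde{[X]}_m$ to $0$, the Cartan/Iwahori decomposition $g=a\,\lambda(t)\,b$ via elementary divisors over $\CC[[t]]$, and the weight--valuation bookkeeping showing $x'=b(0)\cdot x$ has only positive $\lambda$-weights --- is the standard proof and is sound; the one small imprecision is your remark that the ramified base change is ``absorbed in the passage from $\lambda$ to $\rho$'': the conjugation by $b(0)$ has nothing to do with ramification, which is harmless simply because a one-parameter subgroup written in the parameter $s=t^{1/N}$ is still an honest one-parameter subgroup of $\SL(V)$.
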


Given a one-parameter subgroup $\rho\co \Spec \CC[t, t^{-1}] \rightarrow \SL(V)$, we may reformulate the condition 
$\lim_{t \rightarrow 0} \rho(t) \cdot \widetilde{[X]}_m \neq 0$ as follows. 
First, we may choose a basis $\{x_i\}_{i=0}^{r}$ of $V$ which diagonalizes the action of $\rho$. 
Then $\rho(t) \cdot x_i = t^{\rho_i}x_i$ 
for some integers $\rho_i$ satisfying $\sum_{i=0}^{r}\rho_i=0$. We call $\{x_i\}_{i=0}^r$ a {\em $\rho$-weighted basis}.
If we set $N_m :=h^0\bigl(X, \O_{X}(m)\bigr)$, 
a basis for $W_m=\bigwedge^{N_m} \HH^0\bigl(\PP^r, \O_{\PP^r}(m)\bigr)$ diagonalizing the $\rho$-action 
consists of $N_m$-tuples $e_1\wedge \ldots \wedge e_{N_m}$
of distinct monomials of degree $m$ in the variables $x_i$'s.
If $e_\ell=\prod_{i=0}^{r}x_i^{a_{\ell i}}$, then $\rho$ acts on $e_1 \wedge \ldots \wedge e_{N_m}$ with weight 
$\sum_{\ell=1}^{N_m}\sum_{i=0}^{r}a_{\ell i}\rho_i$.  
Now the condition that $\lim_{t \to 0} \rho(t) \cdot \widetilde{[X]}_m \neq 0$ is equivalent to 
the existence of one such coordinate which is non-vanishing on $[X]_m$ and on which $\rho$ acts with non-positive weight.
The condition that a coordinate $e_1 \wedge \ldots \wedge e_{N_m}$ is non-zero on $[X]_m$
is precisely the condition that the restrictions of $\{e_\ell\}_{\ell=1}^{N_m}$ to $X$ form a basis of $\HH^0\bigl(X, \O_{X}(m)\bigr)$. 
This discussion leads us to the following definition.
\begin{definition}
If $\{x_i\}_{i=0}^r$ is a $\rho$-weighted basis of $V$, 
a {\em monomial basis} of $\HH^0\bigl(X, \O_{X}(m)\bigr)$ is a set $\B=\{e_\ell\}_{\ell=1}^{N_m}$ of degree $m$ monomials
in the variables $\{x_i\}_{i=0}^r$
such that $\B$ 
maps onto a basis of $\HH^0 \bigl(X, \O_{X}(m) \bigr)$ 
via the restriction map $\HH^0 \bigl(\PP V, \O(m) \bigr) \rightarrow \HH^0 \bigl(X, \O_{X}(m) \bigr)$.  

Moreover, if $e_{\ell}=\prod_{i=0}^{r} x_i^{a_{\ell i}}$, 
we define the {\em $\rho$-weight} 
of $\B$ to be $w_{\rho}(\B):=\sum_{\ell=1}^{N_m}\sum_{i=0}^{r}a_{\ell i}\rho_i$. 
\end{definition}
With this terminology, we have the following criterion.
\begin{prop}[Numerical Criterion for Hilbert points]\label{P:hmc}
$[X]_m$ is semistable (resp., stable) if and only if for every $\rho$-weighted basis of $V$, 
there exists a monomial basis of $\HH^0\bigl(X, \O_{X}(m)\bigr)$ of non-positive (resp., negative) $\rho$-weight.
\end{prop}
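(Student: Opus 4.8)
The plan is to obtain both assertions as straightforward consequences of Mumford's numerical criterion, via the weight computation already begun in the paragraph preceding the statement. I will phrase the criterion through the Hilbert--Mumford index: for a point $x$ of a linear $\SL(V)$-representation $W$ with lift $\tilde x=\sum_\chi \tilde x_\chi\in W$ (the decomposition into $\rho$-weight vectors), and a nontrivial one-parameter subgroup $\rho$ of $\SL(V)$, put $\mu(x,\rho):=-\min\{\chi:\tilde x_\chi\ne 0\}$. Then $x$ is semistable if and only if $\mu(x,\rho)\ge 0$ for all such $\rho$, and stable if and only if $\mu(x,\rho)>0$ for all such $\rho$. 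The first equivalence is precisely the preceding Proposition, since $\mu(x,\rho)\ge 0$ says exactly that $\lim_{t\to 0}\rho(t)\cdot\tilde x\ne 0$; the second is the standard stability refinement ($\mu(x,\rho)>0$ meaning that $\lim_{t\to 0}\rho(t)\cdot\tilde x$ does not exist in $W$). Granting these, it suffices to establish, for $\rho$ a one-parameter subgroup of $\SL(V)$ and any $\rho$-weighted basis of $V$, the identity
\[
\mu([X]_m,\rho)\;=\;-\min\{\,w_\rho(\B)\ :\ \B\ \text{a monomial basis of}\ \HH^0(X,\O_X(m))\,\},
\]
after which the two statements follow at once. (In the stable case one must read ``for every $\rho$-weighted basis'' as ``for every nontrivial $\rho$'', i.e. not all $\rho_i$ equal, since otherwise no monomial basis has negative $\rho$-weight.)

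To prove the identity I would simply unwind the definition of $[X]_m$ in the weight basis. A one-parameter subgroup $\rho$ of $\SL(V)$ is diagonalized by some basis $\{x_i\}_{i=0}^r$ of $V$, which is then a $\rho$-weighted basis, and conversely every $\rho$-weighted basis arises this way, so the two quantifiers agree. As recalled in the paragraph above, the induced weight basis of $W_m=\bigwedge^{N_m}\HH^0(\PP V,\O(m))^\vee$ is indexed by the sets $\B$ of $N_m$ distinct degree-$m$ monomials in the $x_i$, with $\rho$ acting on the basis vector $v_\B$ indexed by $\B$ with weight $w_\rho(\B)=\sum_\ell\sum_i a_{\ell i}\rho_i$. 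Writing a lift $\widetilde{[X]}_m=\sum_\B c_\B v_\B$, the coefficient $c_\B$ is, up to one fixed nonzero scalar, the determinant of the matrix of the restrictions $\{e_\ell|_X\}_{e_\ell\in\B}$ with respect to a fixed basis of $\HH^0(X,\O_X(m))$; hence $c_\B\ne 0$ exactly when $\B$ maps onto a basis of $\HH^0(X,\O_X(m))$, i.e. when $\B$ is a monomial basis. The displayed formula for $\mu([X]_m,\rho)$ is then immediate from the definition of $\mu$.

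There is no genuine obstacle here: the Proposition is a bookkeeping reformulation of the numerical criterion, and the content of the paper lies in the \emph{use} of this combinatorial criterion rather than in its proof. The only two points that need care are (i) getting the sign right in the weight of $v_\B$ --- this rests on the conventions already fixed in the text, namely that the $x_i$ are homogeneous coordinates on $\PP V$ and that $W_m$ is built from the $\bigwedge^{N_m}$ of the \emph{dual}, which together make the relevant inequality ``$w_\rho(\B)\le 0$'' rather than its reverse --- and (ii) invoking the correct, stability-detecting form of Mumford's criterion, and restricting to nontrivial one-parameter subgroups, for part two.
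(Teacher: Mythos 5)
Your proposal is correct and follows essentially the same route as the paper, which offers no separate proof of this proposition: it is stated as an immediate consequence of the preceding paragraph's computation of the $\rho$-action on the coordinates of $W_m$ indexed by $N_m$-tuples of monomials, which is precisely the computation you carry out, combined with the Hilbert--Mumford criterion in both its semistable ($\mu\ge 0$) and stable ($\mu>0$) forms. Your two caveats --- that the sign of the weight on the coordinate indexed by $\B$ rests on duality conventions the text leaves implicit, and that the trivial one-parameter subgroup must be excluded for the stability statement to be non-vacuous --- are both well taken, as the paper glosses over both points.
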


The Hilbert-Mumford criterion reduces the problem of proving semistability of $[X]_m$ to a concrete algebro-combinatorial problem concerning 
the defining equations of $X \subset \PP V$. However, this problem is not discretely computable since it requires checking 
\emph{all} one-parameter subgroups of $\SL(V)$. A theorem of Kempf allows us, under certain hypotheses on $\Aut(X)$, to check only those 
one-parameter subgroups of $\SL(V)$ which act diagonally on a fixed basis.  This reduces the problem to one which is discretely computable.

In order to state the next proposition, let us establish a bit more terminology. 
Given an embedding $X\subset \PP V$ by a complete linear system, there is a natural 
action of $\Aut(X)$ on $V=\HH^0\bigl(X, \O_{X}(1)\bigr)$.  Given a linearly reductive subgroup $G\subset \Aut(X)$,
we say that $V$ is a \emph{multiplicity-free} $G$-representation (or simply {\em multiplicity-free} if $G$ is understood) 
if it contains no irreducible $G$-representation more than once in its decomposition into irreducible $G$-representations. 
We say that a basis of $V$, say $\{x_i\}_{i=0}^{r}$, 
is \emph{compatible with the irreducible decomposition of $V$} if each irreducible $G$-representation in $V$ is spanned 
by a subset of the $x_i$'s. We may now state the reformulation of Kempf's results that we will use. We keep the assumption that 
$X$ is embedded by a complete linear system $\vert \O_X(1)\vert$ and that the restriction map
$\HH^0\bigl(\PP V, \O(m)\bigr) \ra \HH^0\bigl(X,\O_X(m)\bigr)$
is surjective.
\begin{prop}[Kempf-Morrison Criterion]\label{P:kempf}
Suppose $G\subset \Aut(X)$ is a linearly reductive subgroup, 
and that $V=\HH^0\bigl(X,\O_X(1)\bigr)$ is a multiplicity-free representation of $G$. 
Let $\{x_i\}_{i=0}^{r}$ be a basis of $V$ which is compatible with the irreducible decomposition of $V$. 
Then $[X]_m$ is semistable (resp., stable) 
if and only if for every one-parameter subgroup $\rho\co \Spec \CC[t, t^{-1}] \rightarrow \SL(V)$ acting diagonally on $\{x_i\}_{i=0}^{r}$, 
we have $\lim_{t \rightarrow 0} \rho(t) \cdot \widetilde{[X]}_m \neq 0$ (resp., $\lim_{t \rightarrow 0} \rho(t) \cdot \widetilde{[X]}_m$ does not exist). 
Equivalently, for every weighted basis $\{x_i\}_{i=0}^{r}$ of $V$, there exists a monomial basis 
of $\HH^0\bigl(X, \O_{X}(m)\bigr)$ of non-positive (resp., negative) weight.
\end{prop}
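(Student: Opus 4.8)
The plan is to deduce this from Kempf's theory of optimal destabilizing one-parameter subgroups. The ``only if'' direction is immediate: a one-parameter subgroup acting diagonally on $\{x_i\}_{i=0}^r$ is in particular a one-parameter subgroup of $\SL(V)$, so its behavior on $\widetilde{[X]}_m$ is already governed by Proposition~\ref{P:hmc}. For the ``if'' direction I would argue by contraposition, first recording the observation that $G$ fixes the point $[X]_m \in \PP W_m$: the Hilbert point $[X]_m$ encodes the subspace $\HH^0\bigl(\PP V, \I_X(m)\bigr) \subset \HH^0\bigl(\PP V, \O(m)\bigr)$ of degree-$m$ forms vanishing on $X$, and $\Aut(X) \supseteq G$ preserves this subspace through its linear action on $V$.

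Now suppose $[X]_m$ is not semistable. By the Hilbert--Mumford criterion there is a nontrivial one-parameter subgroup $\sigma$ of $\SL(V)$ with $\lim_{t \to 0}\sigma(t)\cdot\widetilde{[X]}_m = 0$, and Kempf's instability theorem refines this: there is a \emph{canonical} parabolic subgroup $P = P\bigl([X]_m\bigr) \subset \SL(V)$ attached to the unstable point, the parabolics of all optimal destabilizing one-parameter subgroups are conjugate to $P$, and each optimal destabilizer lies in the center of a Levi factor of $P$. Since $P$ is canonically attached to $[X]_m$, any automorphism of $V$ fixing $[X]_m$ normalizes $P$; in particular $G$ does (via its action on $V$), so $G \subset P$ because parabolics are self-normalizing. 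As $G$ is linearly reductive it is contained in some Levi factor $L$ of $P$; choosing an optimal destabilizer $\sigma$ whose Levi factor is this $L$ — possible because the Levi factors of $P$ are conjugate under its unipotent radical, which permutes the optimal destabilizers — we obtain $\sigma$ central in $L$, hence commuting with $G$. Here the multiplicity-free hypothesis enters: writing $V = \bigoplus_j V_j$ for the decomposition into pairwise non-isomorphic irreducible $G$-representations, Schur's lemma gives $\operatorname{End}_G(V) = \prod_j \CC\cdot\operatorname{id}_{V_j}$, so the centralizer $Z_{\operatorname{GL}(V)}(G)$ consists exactly of the automorphisms acting by a scalar on each $V_j$. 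Thus $\sigma(t)$ acts on $V_j$ by $t^{d_j}$ for some integer $d_j$, and since the basis $\{x_i\}_{i=0}^r$ is compatible with the decomposition $V = \bigoplus_j V_j$, every $x_i$ is a $\sigma$-eigenvector. So $\sigma$ is a nontrivial one-parameter subgroup diagonal in $\{x_i\}_{i=0}^r$ with $\lim_{t \to 0}\sigma(t)\cdot\widetilde{[X]}_m = 0$, contradicting the hypothesis; hence $[X]_m$ is semistable. (The $\sigma$ produced is even constant on each $G$-isotypic summand, which is more than needed; allowing the larger class of all one-parameter subgroups diagonal in $\{x_i\}_{i=0}^r$ is what turns the residual check into the finite combinatorial problem solved in later sections.)

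For the stable version I would run the same argument with ``not semistable'' replaced by ``semistable but not stable''. In that case there is a nontrivial one-parameter subgroup $\sigma$ for which the Hilbert--Mumford weight of $[X]_m$ vanishes — arising from a one-parameter subgroup degenerating $\widetilde{[X]}_m$ to a semistable point on a strictly smaller orbit when the orbit of $\widetilde{[X]}_m$ is not closed, and from the stabilizer (which is reductive and positive-dimensional) when it is closed — and the goal is again to arrange $\sigma$ to commute with $G$, hence to be diagonal in $\{x_i\}_{i=0}^r$, so that it witnesses the failure of the stable half of the criterion. I expect this to be the main obstacle: in the unstable case Kempf's uniqueness of the optimal destabilizer forces compatibility with $G$ for free, whereas in the strictly semistable case one must first pin down the correct canonical object — a parabolic attached to the ``balanced'' configuration, or a Levi factor containing both the reductive stabilizer and $G$ — and verify that $G$ normalizes it. The remaining ingredients are formal: the equivalence of ``commutes with $G$'' and ``diagonal in $\{x_i\}_{i=0}^r$'' via the multiplicity-free hypothesis, and the translation back into the language of monomial bases via Proposition~\ref{P:hmc}.
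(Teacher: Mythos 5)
Your argument for the \emph{semistable} half is correct and is essentially the paper's own proof, just packaged differently. The paper notes that $\Aut(X)\subset P$ forces each step $U_i$ of the $\rho_*$-weight filtration to be $G$-invariant, hence (by multiplicity-freeness) a direct sum of the isotypic pieces $V_j$; therefore the maximal torus $T$ attached to the compatible basis preserves the filtration, so $T\subset P$, and Kempf's Theorem 3.4(c)(4) produces a destabilizing one-parameter subgroup inside $T$. You instead conjugate an optimal destabilizer into the Levi factor containing $G$ and apply Schur's lemma to $Z_{\operatorname{GL}(V)}(G)$. These are the same mechanism; your route needs the extra (true, but not free) facts that a linearly reductive subgroup of $P$ lies in a Levi factor and that $R_u(P)$ acts on the optimal class, whereas the paper's route only needs that $T$ preserves a $G$-stable flag. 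Your version does yield the slightly stronger conclusion that the destabilizer can be taken constant on each $V_j$, but as you note this is not needed.

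The genuine gap is the \emph{stable} half, and you have correctly diagnosed it but not closed it. The difficulty is that Kempf's optimality theory, as you invoke it, requires the supremum of $\mu\bigl(\widetilde{[X]}_m,\lambda\bigr)/\|\lambda\|$ to be strictly positive: only then is there a well-defined optimal class with its canonical parabolic. When $[X]_m$ is semistable but not stable this supremum is $0$, the ``optimal'' class is not defined, and neither of your two sources of a witnessing $\sigma$ comes with a canonical parabolic normalized by $G$ --- in particular, in the closed-orbit case the destabilizer comes from the positive-dimensional stabilizer of the line through $\widetilde{[X]}_m$, and while that stabilizer is normalized by $G$, you would still have to argue separately that one of its one-parameter subgroups can be chosen to commute with $G$. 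The paper's (admittedly terse) resolution is exactly the ``correct canonical object'' you were looking for: Kempf's theory is developed not just for $S=\{0\}$ ($0$-stability, i.e.\ semistability) but for instability relative to an arbitrary closed invariant subset $S$, and Theorem 3.4 --- including the canonical parabolic, Corollary 3.5, and the maximal-torus statement (c)(4) --- holds verbatim in that generality. Choosing $S$ so that $S$-instability of $\widetilde{[X]}_m$ encodes the existence of $\lim_{t\to 0}\rho(t)\cdot\widetilde{[X]}_m$ (rather than its vanishing), one reruns your argument word for word and obtains a one-parameter subgroup diagonal in $\{x_i\}_{i=0}^r$ witnessing non-stability. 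Without this (or some substitute for it), part of the statement --- which is used in Section \ref{S:wiman} to get \emph{stability} of bicanonical curves --- remains unproved.
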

\begin{proof}
If $[X]_m$ is not semistable, then \cite[Theorem 3.4 and Corollary 3.5]{kempf} implies that there is a 
one-parameter subgroup $\rho_*\co \Spec \CC[t, t^{-1}] \to \SL(V)$ with $\lim_{t \rightarrow 0} \rho_*(t) \cdot \widetilde{[X]}_m = 0$ 
such that the parabolic subgroup $P \subseteq \SL(V)$ associated to the $\rho_*$-weight filtration 
\[
0=U_0 \subseteq U_1 \subseteq \cdots \subseteq U_{k-1} \subseteq U_k = V
\]
contains $\Aut(X)$.  Let $V= \bigoplus_j V_j$ be the decomposition into irreducible $G$-representations.  
Since $V$ is multiplicity-free, each $U_i$ can be written as a direct sum of some of the $V_j$'s.  The maximal torus 
$T \subset \SL(V)$ associated to the basis $\{x_i\}_{i=0}^{r}$ fixes each $V_j$ and thus the filtration. Therefore, $T \subset P$.  
By \cite[Theorem 3.4 (c)(4)]{kempf}, there 
exists a one-parameter subgroup $\rho\co \Spec \CC[t, t^{-1}] \to T$ such that $\lim_{t \rightarrow 0} \rho(t) \cdot \widetilde{[X]}_m = 0$.
The statement for semistability follows.

The statement for {\em stability} follows by the same argument by replacing the concept of semistability
($0$-stability in Kempf's terminology) by a more general concept of $S$-stability; see \cite{kempf}. 
We are grateful to Ian Morrison for pointing this out.
\end{proof}

For the sake of concreteness, let us reiterate the Kempf-Morrison criterion in the case of a canonically (resp., bicanonically) 
embedded curve $C \subset \PP^r$. In order to prove that $[C]_{m}$ is semistable, we must first check that $V=\HH^0\bigl(C, K_{C}\bigr)$ 
(resp., $V=\HH^0\bigl(C, K_{C}^2\bigr)$) is a multiplicity-free representation of some linearly reductive $G\subset \Aut(C)$. 
Second, we fix a basis $\{x_i\}_{i=0}^{r}$ of $V$ compatible with the irreducible decomposition of $V$.
Now any one-parameter subgroup $\rho$ acting diagonally on $\{x_i\}_{i=0}^{r}$ is given by an integer weight vector 
$(\rho_0, \ldots, \rho_r)$ satisfying $\sum_{i=0}^{r}\rho_i=0$. To show that $[C]_m$ is semistable with respect to $\rho$, 
we must find a monomial basis $\B$ of $\HH^0\bigl(C,\O_C(m)\bigr)$ such that $w_{\rho}(\B)\leq 0$.
Note that for a fixed monomial basis $\B$, the $\rho$-weight function
$w_{\rho}(\B)$ is linear in $(\rho_0, \ldots, \rho_r)$.
Therefore, each monomial basis determines a half-space of weight vectors for which $[C]_m$ is 
$\rho$\nobreakdash-semistable, namely the half-space 
$w_{\rho}(\B) \leq 0$. It follows that as soon as one produces sufficiently many monomial bases 
such that the union of these half-spaces contains all weight vectors $(\rho_0, \ldots, \rho_r)$ satisfying 
$\sum_{i=0}^{r}\rho_i=0$, the proof of semistability for $[C]_m$ is completed. We summarize this discussion in the following lemma:
\begin{lemma}\label{L:covering} Let $G\subset \Aut(C)$ be a linearly reductive subgroup such that
$V=\HH^0\bigl(C,\O_C(1)\bigr)$ is a multiplicity-free representation of $G$, and let $\{x_i\}_{i=0}^{r}$ be a basis of $V$ 
which is compatible with the irreducible decomposition of $V$.
Suppose there exists a finite set $\{\B_j\}_{j\in J}$ of monomial bases of $\HH^0\bigl(C,\O_C(m)\bigr)$ and 
$\{c_j\}_{j\in J}\subset \mathbb{Q}\cap (0,\infty)$ such that 
\[
\sum_{j\in J} c_j w_{\rho}(\B_j)=0
\] 
for every $\rho\co \gm\ra \SL(V)$ acting on $\{x_i\}_{i=0}^r$ diagonally. 
Then $[C]_m$ is semistable.
\end{lemma}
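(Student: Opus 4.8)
The plan is to deduce this directly from the Kempf--Morrison criterion (Proposition \ref{P:kempf}) by an elementary averaging argument. Since $V = \HH^0\bigl(C, \O_C(1)\bigr)$ is a multiplicity-free representation of the linearly reductive group $G \subset \Aut(C)$ and $\{x_i\}_{i=0}^r$ is compatible with the irreducible decomposition of $V$, Proposition \ref{P:kempf} tells us that $[C]_m$ is semistable if and only if for \emph{every} one-parameter subgroup $\rho\co \gm \ra \SL(V)$ acting diagonally on $\{x_i\}_{i=0}^r$ there exists a monomial basis of $\HH^0\bigl(C, \O_C(m)\bigr)$ of non-positive $\rho$-weight. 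Thus the whole task reduces to producing, for each such $\rho$, a single monomial basis $\B$ with $w_\rho(\B) \leq 0$.

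So I would fix an arbitrary such $\rho$, corresponding to a weight vector $(\rho_0, \ldots, \rho_r)$ with $\sum_i \rho_i = 0$, and apply the hypothesis: the finitely many monomial bases $\{\B_j\}_{j \in J}$ satisfy $\sum_{j \in J} c_j\, w_\rho(\B_j) = 0$ with all $c_j \in \mathbb{Q} \cap (0,\infty)$. If we had $w_\rho(\B_j) > 0$ for every $j \in J$, the left-hand side would be a strictly positive rational number, contradicting the identity. Hence some index $j \in J$ has $w_\rho(\B_j) \leq 0$, and this $\B_j$ is precisely the monomial basis demanded by Proposition \ref{P:kempf}. As $\rho$ was arbitrary, $[C]_m$ is semistable.

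As for obstacles, there is essentially none internal to this lemma: it is a formal packaging device whose only content is the pigeonhole step above. I would, however, note for later use the point already flagged in the paragraph preceding the lemma, namely that for each fixed $\B_j$ the function $w_\rho(\B_j)$ is \emph{linear} in $(\rho_0, \ldots, \rho_r)$, so the identity $\sum_j c_j w_\rho(\B_j) = 0$ need only be checked on a spanning set of admissible weight vectors (for instance those with a single entry equal to $r$ and the rest equal to $-1$). The genuine difficulty, deferred to Sections \ref{S:monomial-bases-A-curve}, \ref{S:monomial-bases-rosary}, and the treatment of the Wiman curves, is the \emph{construction} of such a collection $\{\B_j\}_{j\in J}$ with a positive rational linear dependence among the $w_\rho(\B_j)$; that is where the explicit combinatorics of the defining equations of the balanced ribbon, the balanced double $A_{2k+1}$-curve, and the Wiman curves enters. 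The present lemma merely certifies that once such a collection is found, semistability of $[C]_m$ follows for all $m$ at once.
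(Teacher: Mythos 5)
Your proof is correct and is exactly the argument the paper intends: the lemma is stated as a summary of the preceding discussion, and the content is precisely your pigeonhole step --- since the $c_j$ are positive and $\sum_j c_j w_\rho(\B_j)=0$, some $w_\rho(\B_j)\leq 0$, which is what Proposition \ref{P:kempf} requires. Nothing further is needed.
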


The idea of applying these results of Kempf to the semistability of finite Hilbert points of curves 
is due to Morrison and Swinarski \cite{morrison-swinarski}. 
In their paper, they consider the so-called hyperelliptic {\em Wiman curve} $C$ with its bicanonical embedding.
They check that the 
automorphism group, which is cyclic of order $4g+2$, acts on $\HH^0\bigl(C, K_{C}^{2}\bigr)$ with $3g-3$ distinct characters. They fix a basis 
$\HH^0\bigl(C, K_{C}^2\bigr)=\{x_0, \ldots, x_r\}$ 
compatible with the decomposition of $\HH^0\bigl(C, K_{C}^2\bigr)$ into characters, and then, 
for low values of $g$ and $m$, use a computer to enumerate monomial bases of $\HH^0\bigl(C, \O_C(m)\bigr)$ 
until the associated half-spaces cover the hyperplane $\sum_{i=0}^{r}\rho_i=0$. 

In this paper, we apply the Kempf-Morrison criterion to canonically embedded 
ribbons of odd genus (Section \ref{S:monomial-bases-ribbon}), 
canonically embedded balanced double $A_{2k+1}$-curves of even genus (Section \ref{S:monomial-bases-A-curve}), 
bicanonically embedded rosaries of odd genus (Section \ref{S:monomial-bases-rosary}), 
and bicanonically embedded Wiman curves (Section \ref{S:monomial-bases-wiman}). 
For each $m \geq 2$, we write down \emph{by hand} sufficiently many monomial bases 
to establish the requisite (semi)stability result.

\section{Curves with $\gm$-action: Ribbons, $A_{2k+1}$-curves, and rosaries}\label{S:curves}
As discussed in the previous section, the key to our proof is to find a singular Gorenstein curve $C$ such 
that $\HH^0\bigl(C, \omega_C \bigr)$ (resp., $\HH^0\bigl(C, \omega_C^2 \bigr)$) is a multiplicity-free representation of 
$\Aut(C)$ in the canonical case (resp., bicanonical case).  In this section, we describe the curves we will 
use. In the odd genus canonical case, we will use a certain ribbon with $\GG_m$-action, the so-called 
{\em balanced ribbon}. In the even genus canonical case, we will use the 
{\em balanced double $A_{2k+1}$-curve}, 
i.e. a curve comprised of three $\PP^{1}$'s meeting in two higher tacnodes with trivial crimping. 
In the bicanonical case, we will use the so-called {\em rosary},
i.e. a cycle of $\PP^1$'s attached by tacnodes, introduced by Hassett and Hyeon in their classification of 
asymptotically stable bicanonical curves \cite{hassett-hyeon_flip}. 

A word of motivation as to where on earth these curves come from may be useful. That some class of 
canonically embedded ribbons should be GIT-semistable is intuitively plausible, since ribbons arise as  
flat limits of families of canonically embedded smooth curves degenerating abstractly to 
a hyperelliptic curve. The fact that the balanced ribbon of odd genus is the only ribbon with 
$\GG_m$-action that has the potential to be Hilbert semistable was proved in 
\cite[Theorem 7.2]{afs_preprint}. Hence, it was natural to attempt to prove that this curve is, in fact, semistable. 
Our motivation for considering double $A_{2k+1}$-curves comes from 
the log minimal model program for $\M_{2k}$, where we expect the $2k-4$ dimensional locus 
of double $A_{2k+1}$-curves to replace the 
locus in the boundary divisor $\Delta_{k}\subset \M_{2k}$ consisting 
of nodal curves $C_1\cup C_2$ such that each $C_i$ is a hyperelliptic curve of genus $k$. 
Indeed, this prediction has already been verified in $g=4$ by the second author who showed that 
the divisor $\Delta_2\subset \M_4$ is contracted to the point corresponding to the unique genus $4$
double $A_5$-curve in the final non-trivial log canonical model of $\M_4$ \cite{fedorchuk-genus-4}.
In the bicanonical case, we made use of the classification of asymptotically semistable curves in \cite{hassett-hyeon_flip}. 
We simply looked through the curves on their list for one with a large enough symmetry group to satisfy 
the hypotheses of Proposition \ref{P:kempf}. The rosary was the first curve we checked, and it worked!

\subsection{Canonical case, odd genus: The balanced ribbon with $\gm$-action}
\label{S:ribbon}
In this section we will construct, for every odd $g\geq 3$, a special non-reduced curve $C$ of arithmetic 
genus $g$ whose canonical embedding satisfies the hypotheses of Proposition \ref{P:kempf}. 
Given a positive odd integer $g=2k+1$, where $k\geq 1$, set $U:=\Spec \CC[u,\epsilon]/(\epsilon^2)$, $V:=\Spec \CC[v,\eta]/(\eta^2)$, 
and identify $U-\{0\}$ and $V-\{0\}$ via the isomorphism
\begin{align*}
u &\mapsto v^{-1}+v^{-k-2}\eta,\\
\epsilon &\mapsto v^{-g-1}\eta.
\end{align*}
The resulting scheme $C$ is evidently a complete, 
locally planar curve of arithmetic genus $g$; see \cite[Section 3]{BE} for more details on such curves. 
Note that $C$ admits $\GG_m$-action by the formulae 
\begin{align*}
t \cdot u&= tu, \\
t \cdot v&= t^{-1} v,\\
t \cdot \epsilon&= t^{k+1} \epsilon, \\
t \cdot \eta&=t^{-k-1} \eta. \\
\end{align*}
Since $C$ is locally planar, it is Gorenstein and its dualizing sheaf $\omega_C$ is a line bundle.
Using adjunction, we may identify global sections of $\omega_{C}$ with regular functions 
$f(u, \epsilon)$ on $U$. To be precise, the global sections of $\omega_{C}$ consist of all differentials 
\[
f(u, \epsilon) \frac{du \wedge d\epsilon}{\epsilon^2}
\]
which transform to differentials $h(v, \eta) \frac{dv \wedge d\eta}{\eta^2}$ with $h(v,\eta)$ regular on $V$. 
One easily writes down a basis of $g$ functions satisfying this condition to obtain the following lemma, which is a special
case of a more general \cite[Theorem 5.1]{BE}.
\begin{lemma}\label{L:ribbon-sections}
A basis for $\HH^0 \bigl(C, \omega_C \bigr)$ is given by differentials $f(u, \epsilon) \frac{du \wedge d\epsilon}{\epsilon^2}$ where $f(u, \epsilon)$ runs over the following list of $g$ functions:
\begin{equation*}
 x_i:=u^i, \quad 0\leq i \leq k, \qquad y_{k+i}:=u^{k+i}+iu^{i-1}\epsilon, \quad 1\leq i \leq k.
 \end{equation*}
 \end{lemma}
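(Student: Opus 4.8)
The plan is to exhibit $g$ explicit differentials on the ribbon $C$ and verify two things: that each one is a genuine global section of $\omega_C$ (i.e.\ extends across the chart $V$), and that the $g$ of them are linearly independent, hence a basis since $h^0(C,\omega_C)=g$ for a Gorenstein curve of arithmetic genus $g$.

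First I would set up the adjunction/residue identification carefully. A section of $\omega_C$ restricted to the planar chart $U=\Spec\CC[u,\epsilon]/(\epsilon^2)$ is represented by $f(u,\epsilon)\,\frac{du\wedge d\epsilon}{\epsilon^2}$ with $f$ regular on $U$, and the condition to be global is precisely that under the gluing $u\mapsto v^{-1}+v^{-k-2}\eta$, $\epsilon\mapsto v^{-g-1}\eta$ this expression pulls back to $h(v,\eta)\,\frac{dv\wedge d\eta}{\eta^2}$ with $h$ regular on $V=\Spec\CC[v,\eta]/(\eta^2)$. So the core computation is: substitute the gluing into $f(u,\epsilon)$, expand modulo $\epsilon^2$ (equivalently modulo $\eta^2$), compute the Jacobian $\frac{\partial(u,\epsilon)}{\partial(v,\eta)}$ — again only its value and its $\eta$-linear term matter — and read off when the resulting coefficient of $\frac{dv\wedge d\eta}{\eta^2}$ has no negative powers of $v$.

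Next, I would simply plug in the proposed functions. For $f=u^i$ with $0\le i\le k$: since $u\equiv v^{-1}$ modulo $(\eta)$ and the correction is $v^{-k-2}\eta$, one gets $u^i = v^{-i} + i v^{-(i-1)}v^{-k-2}\eta = v^{-i} + i v^{-i-k-1}\eta$; multiplying by the Jacobian (which I expect to contribute a factor with leading term like $-v^{\,g-\text{something}}$ plus an $\eta$-term) should clear all negative powers exactly when $i\le k$. For $f=u^{k+i}+iu^{i-1}\epsilon$ with $1\le i\le k$: here the point is that the naive leading term $u^{k+i}\mapsto v^{-(k+i)}+\cdots$ would produce a pole, but the $\eta$-linear part of $u^{k+i}$ together with the $\epsilon$-term $iu^{i-1}\epsilon\mapsto i v^{-(i-1)}\cdot v^{-g-1}\eta$ are engineered to cancel the offending negative-power contribution after multiplying by the Jacobian — this is exactly the reason for the coefficient $i$ and the shift $i-1$. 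Since the construction is stated to be a special case of \cite[Theorem 5.1]{BE}, I would either cite that directly or carry out this finite check by hand for the two families.

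Finally, linear independence is immediate: the leading terms $u^0,u^1,\dots,u^k$ and $u^{k+1},\dots,u^{2k}$ in the variable $u$ (the $\epsilon$-corrections being lower order) are distinct monomials, so the $g=2k+1$ functions are visibly independent in $\CC[u,\epsilon]/(\epsilon^2)$, and the restriction map to the chart $U$ is injective on $\HH^0(C,\omega_C)$. Hence they form a basis. The only real obstacle is bookkeeping the $\eta$-linear terms in both the substitution and the Jacobian without sign or exponent errors; I expect the cleanest route is to work formally modulo $\eta^2$ throughout and track only the coefficient of $v^{-1}\eta$ (the unique "dangerous" term, since $\frac{dv\wedge d\eta}{\eta^2}$ has a simple pole), showing it vanishes for each basis element.
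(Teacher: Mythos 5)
Your approach is exactly the paper's: identify sections of $\omega_C$ with $f(u,\epsilon)\,\frac{du\wedge d\epsilon}{\epsilon^2}$ via adjunction, verify by direct substitution through the gluing (function and Jacobian, both taken modulo $\eta^2$) that the $g$ listed functions transform to regular differentials on $V$, and conclude from the distinct leading $u$-powers together with $h^0(C,\omega_C)=g$; the paper leaves the substitution as an exercise, citing \cite[Theorem 5.1]{BE}. One correction to your closing remark: the condition is that the transformed coefficient $h(v,\eta)$ be regular on all of $V$, so you must check that \emph{every} term $v^{-j}\eta$ with $j\geq 1$ cancels (for $y_{2k}$, for instance, the offending term is $v^{-k-1}\eta$, and for $x_i$ with $i\le k-1$ the $\eta$-term is harmless for degree reasons) --- there is no residue-type reduction to the single coefficient of $v^{-1}\eta$.
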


\begin{lemma}\label{L:ribbon-ample}
$\omega_C$ is very ample.
\end{lemma}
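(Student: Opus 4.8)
The plan is to work entirely with the explicit basis of $\HH^0(C,\omega_C)$ produced in Lemma~\ref{L:ribbon-sections}, regarded (via the adjunction identification) as the list of functions $x_0,\dots,x_k,y_{k+1},\dots,y_{2k}$ on the two affine charts $U$ and $V$. Very ampleness means that the corresponding rational map $\varphi\co C\dashrightarrow \PP^{g-1}$ is a closed embedding, which in the setting of a Gorenstein curve amounts to checking two things: (i) $\varphi$ separates points, including the ``thickened'' points of the ribbon, i.e. the restriction of $\varphi$ to each closed fiber over a point of the reduced curve $C_{\mathrm{red}}\cong\PP^1$ is injective on the length\nobreakdash-$2$ subscheme there; and (ii) $\varphi$ separates tangent vectors, i.e. it is unramified. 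Because $C$ is locally planar of arithmetic genus $g$ and the sections in Lemma~\ref{L:ribbon-sections} are written down so transparently, both conditions reduce to short explicit computations in the coordinates $(u,\epsilon)$ and $(v,\eta)$.

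First I would treat the chart $U=\Spec\CC[u,\epsilon]/(\epsilon^2)$. On $U$ the sections restrict to the functions $u^i$ ($0\le i\le k$) and $u^{k+i}+iu^{i-1}\epsilon$ ($1\le i\le k$); I want to show that these generate the maximal ideal $\mathfrak m_P/\mathfrak m_P^2$ at every closed point $P\in U$, up to the ambiguity of working in the affine cone. The key observation is that among these functions we already see $1,u,\dots,u^{2k}$ modulo $\epsilon$ (from $u^i$ together with the leading terms of the $y$'s), so $\varphi$ restricted to $U_{\mathrm{red}}$ is the degree\nobreakdash-$2k$ Veronese-type embedding of the affine line, hence a closed immersion onto its image; and the section $y_{k+1}=u^{k+1}+\epsilon$ supplies, after subtracting the appropriate polynomial in $u$, the function $\epsilon$ itself. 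Thus $u$ and $\epsilon$ both lie in the subring generated by the restricted sections, so $\varphi|_U$ is a closed immersion. The same computation works verbatim on $V=\Spec\CC[v,\eta]/(\eta^2)$ after rewriting the basis in the coordinates $(v,\eta)$ using the gluing isomorphism $u\mapsto v^{-1}+v^{-k-2}\eta$, $\epsilon\mapsto v^{-g-1}\eta$ (here one should be slightly careful to track the transformation factor coming from the differential $\tfrac{du\wedge d\epsilon}{\epsilon^2}\mapsto \tfrac{dv\wedge d\eta}{\eta^2}$, which is exactly what made the list in Lemma~\ref{L:ribbon-sections} the correct one).

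Finally I would check that $\varphi$ separates the two charts, i.e. that no point of the image coming from $U\setminus\{0\}$ coincides with a point coming from $V\setminus\{0\}$ other than via the gluing: this follows because on the overlap the coordinate functions $u$ and $v=u^{-1}+(\text{lower order in }\epsilon)$ are both recovered from the sections, so the image already distinguishes $u$ near $0$ from $u$ near $\infty$. The main obstacle is purely bookkeeping: one must confirm that the twisting factor between $\tfrac{du\wedge d\epsilon}{\epsilon^2}$ and $\tfrac{dv\wedge d\eta}{\eta^2}$ does not destroy the ``leading term'' structure that makes $u,\epsilon$ (resp.\ $v,\eta$) visibly recoverable near the point $v=0$; once that is verified the embedding statement is immediate. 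Alternatively, and perhaps more cleanly, I would invoke the general structure theory of ribbons: by \cite[Theorem 5.1]{BE} (or the discussion in \cite[Section 3]{BE}), a ribbon of genus $g$ on $\PP^1$ is canonically embedded by $|\omega_C|$ precisely when it is not ``split'' in a way that forces a base point, and the explicit gluing data here (with the $v^{-k-2}\eta$ and $v^{-g-1}\eta$ terms both nonzero) is exactly of the non-degenerate type; citing this reduces the lemma to the observation that our $C$ is the balanced ribbon, which is non-split by construction.
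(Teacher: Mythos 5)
Your proof is correct and follows essentially the same route as the paper's (two-line) argument: check that the sections give a closed immersion on each affine chart $U$ and $V$ by explicitly recovering $u$ and $\epsilon$ (resp.\ $v$ and $\eta$) from ratios of the basis elements, and note that points of $C_{\mathrm{red}}\simeq\PP^1$ are separated since the reduced image is a rational normal curve. Your version simply spells out the details (and offers the Bayer--Eisenbud citation as an alternative), so there is nothing to add.
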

\begin{proof}
Using the basis of $\HH^0 \bigl(C,\omega_C \bigr)$ from Lemma \ref{L:ribbon-sections}, 
we see that $\vert \omega_C\vert$ separates points of 
$C_{\mathrm{red}}\simeq \PP^1$ and defines a closed embedding when restricted to $U$ and $V$. The claim follows.
\end{proof}
\begin{prop}\label{P:ribbon-multiplicity-free}
$\HH^0\bigl(C, \omega_{C} \bigr)$ is a multiplicity-free representation of $\gm\subset \Aut(C)$ and
$\{x_0,\dots, x_k, y_{k+1}, \dots, y_{2k}\}$ is compatible with its irreducible decomposition.
\end{prop}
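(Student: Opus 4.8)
The plan is to exhibit the $\gm$-weights of the basis elements listed in Lemma~\ref{L:ribbon-sections} and observe that they are pairwise distinct; since $\gm$ is abelian, every irreducible representation is one-dimensional, so distinct weights immediately give a multiplicity-free decomposition with the $x_i$'s and $y_{k+i}$'s spanning the isotypic pieces.

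First I would compute how $\gm$ acts on the differential $\frac{du\wedge d\epsilon}{\epsilon^2}$. From the formulae $t\cdot u = tu$ and $t\cdot\epsilon = t^{k+1}\epsilon$ we get $t\cdot du = t\,du$ and $t\cdot d\epsilon = t^{k+1}d\epsilon$, so $t$ acts on $\frac{du\wedge d\epsilon}{\epsilon^2}$ with weight $1 + (k+1) - 2(k+1) = 1-(k+1) = -k$. Next, $t$ acts on the function $x_i = u^i$ with weight $i$, and on $y_{k+i} = u^{k+i} + i u^{i-1}\epsilon$ with weight $k+i$ (note that $u^{i-1}\epsilon$ also has weight $(i-1)+(k+1) = k+i$, so $y_{k+i}$ is genuinely a weight vector — this is the one small check worth recording). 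Therefore, as a section of $\omega_C$, the element $x_i$ carries $\gm$-weight $i - k$ for $0\le i\le k$, and $y_{k+i}$ carries weight $(k+i) - k = i$ for $1\le i\le k$.

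Collecting these, the weights appearing are $\{-k, -k+1,\dots, -1, 0\}$ from the $x_i$'s and $\{1, 2,\dots, k\}$ from the $y_{k+i}$'s, which together form the $2k+1 = g$ distinct integers $-k,\dots,k$. Since $\gm$ is diagonalizable, $\HH^0(C,\omega_C)$ decomposes as a direct sum of the $g$ one-dimensional eigenspaces $\CC\cdot x_i$ and $\CC\cdot y_{k+i}$, each occurring exactly once; hence the representation is multiplicity-free and the given basis is, by construction, compatible with the irreducible decomposition.

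I do not anticipate a genuine obstacle here — the argument is a direct weight computation. The only point requiring a moment's care is verifying that each $y_{k+i}$ is actually a $\gm$-eigenvector rather than merely a sum of eigenvectors, i.e. that the two monomials $u^{k+i}$ and $u^{i-1}\epsilon$ have the same weight; this follows from the chosen $\gm$-action, in which $\epsilon$ has weight $k+1$, and is in fact the reason the basis of Lemma~\ref{L:ribbon-sections} was written in that form. Everything else is bookkeeping.
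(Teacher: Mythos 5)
Your proof is correct and follows the same route as the paper: the paper's proof simply asserts that the given basis diagonalizes the $\gm$-action with the $2k+1$ distinct weights $-k,\dots,k$, which is exactly the weight computation you carry out in detail (including the worthwhile check that each $y_{k+i}$ is genuinely an eigenvector because $u^{k+i}$ and $u^{i-1}\epsilon$ share the weight $k+i$).
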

\begin{proof}
The basis $\{x_0,\dots, x_k, y_{k+1}, \dots, y_{2k}\}$ diagonalizes the action of $\GG_m$ on $\HH^0 \bigl(C,\omega_{C} \bigr)$
with the $2k+1$ distinct weights $-k, \dots, -1, 0, 1, \dots, k$.
\end{proof}
In order to apply Proposition \ref{P:kempf}, we will need an effective way of determining when a set of 
monomials in the $g$ variables $\{x_0, \ldots, x_k, y_{k+1}, \ldots, y_{2k}\}$
forms a monomial basis of $\HH^0 \bigl(C, \omega_C^m \bigr)$. To do this, observe that the global sections 
of $\omega_C^m$ are easily identified with regular functions on $U$ via 
$f(u, \epsilon) \mapsto f(u, \epsilon) \dfrac{(du \wedge d\epsilon)^m}{\epsilon^{2m}}$. 
With this convention, we record the following observation used throughout the paper. 
\begin{lemma}[Ribbon Product Lemma]\label{L:RPL}
The expansion in $u$ and $\epsilon$ of the degree $m$ monomial $x_{i_1} \ldots x_{i_\ell}y_{i_{\ell+1}} \ldots y_{i_m}$ 
is $u^{a}+(a-b)u^{a-k-1}\epsilon$, where 
\begin{align*}
a&=i_1+\cdots +i_m,\\
b&=i_1+\cdots+i_\ell+k(m-\ell).
\end{align*}
\end{lemma}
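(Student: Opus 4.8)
The statement is a purely computational identity about multiplying the explicit generators $x_i = u^i$ and $y_{k+i} = u^{k+i} + i u^{i-1}\epsilon$ in the ring $\HH^0\bigl(C, \omega_C^{\bullet}\bigr)$, which by the stated identification is just a subring of $\CC[u,\epsilon]/(\epsilon^2)$ under ordinary multiplication of functions. So the plan is a direct induction on the number of $y$-factors, using the relation $\epsilon^2 = 0$ throughout.

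First I would set up the base cases. A product of $x$'s alone, $x_{i_1}\cdots x_{i_m} = u^{i_1+\cdots+i_m} = u^a$, with $\ell = m$, so $b = i_1 + \cdots + i_m = a$ and the claimed formula reads $u^a + 0\cdot u^{a-k-1}\epsilon = u^a$, as desired. Next, a product with exactly one $y$-factor: $x_{i_1}\cdots x_{i_{m-1}}\, y_{k+i_m} = u^{i_1+\cdots+i_{m-1}}\bigl(u^{k+i_m} + i_m u^{i_m-1}\epsilon\bigr)$. Writing $a = i_1 + \cdots + i_{m-1} + (k + i_m)$ for the total $u$-degree of the leading term and $b = i_1 + \cdots + i_{m-1} + i_m + k(m-1)$... wait, here $\ell = m-1$ so $b = i_1+\cdots+i_{m-1} + k\cdot 1 = i_1 + \cdots + i_{m-1} + k$. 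Then $a - k - 1 = i_1 + \cdots + i_{m-1} + i_m - 1$ and $a - b = i_m$, so the product equals $u^a + i_m u^{a-k-1}\epsilon = u^a + (a-b)u^{a-k-1}\epsilon$, matching the claim.

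For the inductive step I would multiply a monomial already known to have the form $u^{a'} + (a'-b')u^{a'-k-1}\epsilon$ by one more generator $y_{k+j}$, i.e. compute
\[
\bigl(u^{a'} + (a'-b')u^{a'-k-1}\epsilon\bigr)\bigl(u^{k+j} + j u^{j-1}\epsilon\bigr).
\]
Expanding and using $\epsilon^2 = 0$, the product is $u^{a'+k+j} + \bigl(j u^{a'+j-1} + (a'-b')u^{a'+j-1}\bigr)\epsilon = u^{a} + \bigl((a'-b') + j\bigr)u^{a-k-1}\epsilon$, where $a = a' + k + j$ (so $a - k - 1 = a' + j - 1$). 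It remains to check that the new weight is the claimed $a - b$: if the old monomial had $\ell'$ many $x$-factors among $m-1$ total factors, then $b' = (\text{sum of }x\text{-indices}) + k(m-1-\ell')$, and after appending $y_{k+j}$ we have $m$ factors, still $\ell'$ of them $x$'s, so the new $b = (\text{sum of }x\text{-indices}) + k(m - \ell') = b' + k$. Hence $a - b = (a' + k + j) - (b' + k) = (a' - b') + j$, which is exactly the coefficient computed above. The case of appending an $x_j$ instead is even simpler and handled the same way. Since every degree-$m$ monomial is obtained from the empty product by appending generators one at a time, and the order of multiplication is irrelevant, this completes the induction.

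I do not anticipate any real obstacle here: the only subtlety is bookkeeping — making sure the invariants $a$ (total $u$-degree of the leading term, counting the extra $k$ from each $y$) and $b$ (sum of all subscripts-mod-shift, i.e. the sum of $x$-indices plus $k$ times the number of $y$-factors) transform correctly under appending a generator, and that $\epsilon^2 = 0$ is used consistently. One should state explicitly at the outset that multiplication of these sections corresponds to ordinary multiplication of the representing functions $f(u,\epsilon)$ under the identification $f \mapsto f\,(du\wedge d\epsilon)^m/\epsilon^{2m}$, since the whole computation takes place in $\CC[u,\epsilon]/(\epsilon^2)$; everything else is a one-line expansion.
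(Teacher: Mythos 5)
Your proof is correct; the paper gives no argument for this lemma (it is recorded as an elementary observation), and your direct expansion in $\CC[u,\epsilon]/(\epsilon^2)$ — checking that $a$ and $b$ transform correctly as factors are appended — is exactly the intended computation. The induction is not even strictly necessary, since one can expand the full product in one step (only terms with at most one $\epsilon$ survive), but your bookkeeping of $a-b$ as the sum of the offsets $i_j-k$ over the $y$-factors is accurate and complete.
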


The following 
proposition determines a basis for $\HH^0 \bigl(C, \omega_C^m \bigr)$ under the above identification.
\begin{prop}\label{P:projectively-normal-ribbon}  For $m \geq 2$, the product map
$\Sym^m \HH^0 \bigl(C, \omega_C \bigr) \to \HH^0 \bigl(C, \omega_C^{m} \bigr)$ is surjective.
A basis for $\HH^0 \bigl(C, \omega_C^m \bigr)$ is given by differentials 
$f(u, \epsilon) \frac{(du \wedge d\epsilon)^m}{\epsilon^{2m}}$ where $f(u, \epsilon)$ runs over 
the following $(2m-1)(g-1)$ functions on $U$:
\begin{align*}
\{u^i\}_{i=0}^{2mk-(k+1)}, \quad \{u^{i}+(i-k) u^{i-k-1} \epsilon\}_{i=k+1}^{2mk}.
\end{align*}
\end{prop}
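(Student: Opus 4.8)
The plan is to combine a Riemann--Roch dimension count with the Ribbon Product Lemma (Lemma~\ref{L:RPL}). First I would compute $h^0\bigl(C,\omega_C^m\bigr)$. By Serre duality $h^1\bigl(C,\omega_C^m\bigr)=h^0\bigl(C,\omega_C^{1-m}\bigr)$, and for $m\geq 2$ this vanishes: $\omega_C^{1-m}$ restricts to a negative-degree line bundle on $C_{\mathrm{red}}\cong\PP^1$, so any global section dies on $C_{\mathrm{red}}$, hence lies in the square-zero ideal sheaf, where it becomes a section of an even more negative line bundle on $\PP^1$ and so vanishes. Thus $h^1\bigl(C,\omega_C^m\bigr)=0$, and Riemann--Roch gives $h^0\bigl(C,\omega_C^m\bigr)=m(2g-2)+1-g=(2m-1)(g-1)$, exactly the number of functions on the claimed list. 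Consequently it suffices to prove: (a) each listed $f$ yields a global section of $\omega_C^m$; (b) the listed $f$'s are linearly independent; and (c) every listed $f$ lies in the image of $\Sym^m\HH^0\bigl(C,\omega_C\bigr)\to\HH^0\bigl(C,\omega_C^m\bigr)$. Indeed (a) and (b) together with the count make the list a basis, and (c) then forces the product map to be surjective.

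For (a), this is the transformation condition already used in Lemma~\ref{L:ribbon-sections}: using the gluing of $U$ and $V$, transform $f(u,\epsilon)\,\tfrac{(du\wedge d\epsilon)^m}{\epsilon^{2m}}$ into the $(v,\eta)$-chart --- the relevant transition factor is the $m$-th power of the one for $\omega_C$, read off from the gluing --- and check that the result is $h(v,\eta)\,\tfrac{(dv\wedge d\eta)^m}{\eta^{2m}}$ with $h$ regular on $V$; alternatively one may invoke \cite[Theorem 5.1]{BE}, of which Lemma~\ref{L:ribbon-sections} is the $m=1$ case and which describes $\HH^0\bigl(C,\omega_C^m\bigr)$ directly. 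For (b), order the list by the exponent $i$ of the leading term $u^i$, breaking ties on the overlapping range $k+1\leq i\leq 2mk-k-1$ by the presence of an $\epsilon$-term; the resulting leading terms in $\CC[u]\oplus\CC[u]\epsilon$ are pairwise distinct, so independence is immediate.

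Step (c) is the crux. By the Ribbon Product Lemma, a degree-$m$ monomial $x_{i_1}\cdots x_{i_\ell}y_{i_{\ell+1}}\cdots y_{i_m}$ expands as $u^a+(a-b)u^{a-k-1}\epsilon$ with $a,b$ as in Lemma~\ref{L:RPL} and $a-b=\sum_{j>\ell}(i_j-k)\geq 0$. The pure powers $u^i$ with $0\leq i\leq mk$ arise directly as pure-$x$ monomials. For a fixed exponent $a$, whenever two monomials realize it with \emph{different} values of $b$, a suitable integer combination isolates $u^a$ and a difference isolates $u^{a-k-1}\epsilon$; the exponents $a$ for which $b$ is forced occur only near $a=0$ and $a=2mk$, and these are exactly the exponents at which the list carries a single $\epsilon$-corrected function rather than $u^a$ and $u^{a-k-1}\epsilon$ separately --- at those exponents the forced combination produces precisely that function. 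Writing down explicit such combinations for the remaining pure powers $u^i$ with $mk<i\leq 2mk-k-1$ and for the boundary $\epsilon$-corrections places the entire list inside the image. (For the surjectivity claim alone it would be enough to realize \emph{any} spanning set of $\HH^0\bigl(C,\omega_C^m\bigr)$ inside the image; the basis claim is already secured by (a), (b) and the count.)

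The main obstacle is the bookkeeping in step (c): determining exactly which exponents $a$ admit only one value of $b$, and tracking the integer coefficients in the combinations realizing the boundary $\epsilon$-corrections and the high-degree pure powers $u^i$ with $i>mk$. If one instead leans on \cite{BE} both for (a) and for the precise form of $\HH^0\bigl(C,\omega_C^m\bigr)$, step (c) collapses to a pure spanning argument --- exhibit enough products to span a space of dimension $(2m-1)(g-1)$ --- and one concludes by Riemann--Roch.
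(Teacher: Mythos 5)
Your strategy coincides with the paper's: compute $h^0\bigl(C,\omega_C^m\bigr)=(2m-1)(g-1)$ by Riemann--Roch, check the listed functions are linearly independent, and show they lie in the image of the product map, which yields the basis claim and surjectivity simultaneously. (Your step (a) is redundant once (c) is done, since anything in the image is automatically a global section, but that is harmless.) The dimension count, the independence argument, and the treatment of the middle range $k+1\leq a\leq 2mk-k-1$ --- where two monomials of the same $u$-degree $a$ but different $b$ let you isolate $u^a$ and $u^{a-k-1}\epsilon$ separately --- are all sound.

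The gap is the sentence ``at those exponents the forced combination produces precisely that function.'' It does not, and this is exactly the computation that cannot be waved at. By Lemma~\ref{L:RPL}, every degree-$m$ monomial of $u$-degree $a\geq 2mk-k$ has $b=mk$: reaching such an $a$ forces $\ell\leq 1$, and if $\ell=1$ the single $x$-factor must be $x_k$, so in either case $b=i_1+\cdots+i_\ell+k(m-\ell)=mk$. Hence the unique element of the image with leading term $u^a$ in that range is $u^a+(a-mk)u^{a-k-1}\epsilon$ --- for instance $y_{2k}^m=u^{2mk}+mk\,u^{2mk-k-1}\epsilon$ --- whereas the list carries $u^a+(a-k)u^{a-k-1}\epsilon$. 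For $m\geq 2$ these differ by the nonzero multiple $(m-1)k\,u^{a-k-1}\epsilon$, and $u^{a-k-1}\epsilon$ is \emph{not} a section when $a\geq 2mk-k$ (the pure $\epsilon$-parts produced by the image stop at $u^{2mk-2k-2}\epsilon$, and the image already has dimension $(2m-1)(g-1)=h^0$). So the listed functions for $2mk-k\leq i\leq 2mk$ lie neither in the image nor in $\HH^0\bigl(C,\omega_C^m\bigr)$, and your steps (a) and (c) both fail for them. Concretely, for $g=5$, $m=2$ the list ends with $u^8+6u^5\epsilon$, but the actual section is $y_4^2=u^8+4u^5\epsilon$, and $u^5\epsilon$ is not a section. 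To be fair, the paper's own proof commits the same slip: it asserts $y_{2k}^{m-1}x_k=u^{2mk-k}+(2mk-2k)u^{2mk-2k-1}\epsilon$, contradicting Lemma~\ref{L:RPL}, which gives coefficient $mk-k$. The defect is really in the statement: the coefficient $(i-k)$ should be $(i-mk)$ for $i\geq 2mk-k$ (in the middle range any choice giving a nonzero $\epsilon$-part works, so at $i=mk$ one should keep $u^{i-k-1}\epsilon$ itself rather than use the coefficient $i-mk=0$). With that correction your outline goes through, and the surjectivity assertion is unaffected.
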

\begin{proof}
We will show that the image of the product map 
$\Sym^m\HH^0 \bigl(C, \omega_C \bigr) \rightarrow \HH^0 \bigl(C, \omega_C^m \bigr)$ contains the given 
functions. Since $h^0(C, \omega_C^m)=(2m-1)(g-1)$ by Riemann-Roch, and because 
the given functions are linearly independent, this will prove the proposition.

Lemma \ref{L:RPL} gives $u^a=x_0^{m-1}x_a$ for $0\leq a\leq k$, 
$u^{2mk-k}+(2mk-2k)u^{2mk-2k-1}\epsilon=y_{2k}^{m-1}x_k$, 
and $u^{(2m-1)k+a}+\bigl((2m-2)k+a\bigr)u^{(2m-2)k+a-1}\epsilon=y_{2k}^{m-1}y_a$ for $1\leq a \leq k$.
For the intermediate $u$-degrees, note simply that since the dimension of the 
space 
$\{cu^i+d u^{i-k-1}\epsilon\,:\, c,d \in  \CC\}$ is two, we need to exhibit two linearly 
independent functions of this form as degree $m$ monomials in $\{x_0, \ldots, y_{2k}\}$. 
Using Lemma \ref{L:RPL}, this is an easy exercise which we leave to the reader.
\end{proof}

This result gives a very simple way of checking whether a set $\B$ of degree $m$ monomials in 
$\{x_0, \ldots, y_{2k}\}$ projects to a basis for 
$\HH^0\bigl(C,\O_C(m)\bigr)$. If we simply view the monomials in $\B$ as polynomials in $\CC[u,\epsilon]/(\epsilon^2)$ via the identification 
preceding Lemma \ref{L:RPL}, then $\B$ is a monomial basis of $\HH^0\bigl(C,\O_C(m)\bigr)$ 
 if and only if
\begin{enumerate}
\item $\B$ contains one polynomial of each $u$-degree $0, \ldots, k$,
\item $\B$ contains two linearly independent polynomials of each $u$-degree \\ $k+1, \ldots, (2m-1)k-1$,
\item $\B$ contains one polynomial of each  $u$-degree $2mk-k,\ldots, 2mk$. 
\end{enumerate}
We can rephrase this as follows.
\begin{lemma}\label{L:ribbon-basis}
A set of degree $m$ monomials 
\[
\{x_{i_1} \cdots x_{i_\ell}y_{i_{\ell+1}} \cdots y_{i_m}\}_{(i_1, \ldots, i_m) \in S}
\]
forms a monomial basis of $\HH^0\bigl(C,\O_C(m)\bigr)$ if and only if the following two conditions hold:
\begin{enumerate}
\item For $0 \leq a \leq k$ and $(2m-1)k \leq a \leq 2mk$, there is exactly one index vector $(i_1, \ldots, i_m) \in S$ with $i_1+\cdots+i_m=a$.
\item For $k < a <(2m-1)k$, there are exactly two index vectors $(i_1, \ldots, i_m) \in S$  satisfying $i_1+\cdots+i_m=a$. 
Furthermore, for these two index vectors, the associated integers $i_{\ell+1}+\cdots+i_m-k(m-\ell)$ are distinct.
 \end{enumerate}
 \end{lemma}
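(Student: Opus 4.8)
The plan is to read off Lemma~\ref{L:ribbon-basis} from Proposition~\ref{P:projectively-normal-ribbon} together with the Ribbon Product Lemma: the lemma is nothing but a combinatorial reformulation, in terms of the index set $S$, of the criterion recorded immediately after Proposition~\ref{P:projectively-normal-ribbon} --- namely, that a set of degree $m$ monomials maps to a basis of $\HH^0\bigl(C,\O_C(m)\bigr)$ if and only if, when its members are expanded in $\CC[u,\epsilon]/(\epsilon^2)$, it contains exactly one function of each $u$-degree in $\{0,\dots,k\}\cup\{(2m-1)k,\dots,2mk\}$ and exactly two linearly independent functions of each $u$-degree in $\{k+1,\dots,(2m-1)k-1\}$. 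So the only work is to translate ``contains $r$ (linearly independent) functions of $u$-degree $a$'' into a condition on index vectors.

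That translation is exactly what Lemma~\ref{L:RPL} provides. Write a degree $m$ monomial as $e_I=x_{i_1}\cdots x_{i_\ell}y_{i_{\ell+1}}\cdots y_{i_m}$, indexed by $I=(i_1,\dots,i_m)$, with the understanding that the $x$-variables carry the indices $\le k$ and the $y$-variables the indices $>k$. By Lemma~\ref{L:RPL}, $e_I$ expands as $u^{a}+(a-b)\,u^{a-k-1}\epsilon$ with $a=i_1+\cdots+i_m$ and $b=i_1+\cdots+i_\ell+k(m-\ell)$; since $a-b=i_{\ell+1}+\cdots+i_m-k(m-\ell)$, the function $e_I$ has $u$-degree $a=\sum_j i_j$, and its coefficient of $u^{a-k-1}\epsilon$ is precisely the integer $i_{\ell+1}+\cdots+i_m-k(m-\ell)$ that appears in the statement of the lemma. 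Hence the monomials $e_I$ with $I\in S$ of a fixed $u$-degree $a$ correspond to the index vectors in $S$ summing to $a$, and two of them are linearly independent in $\HH^0\bigl(C,\O_C(m)\bigr)$ exactly when their associated integers $i_{\ell+1}+\cdots+i_m-k(m-\ell)$ differ. In the two extreme $u$-degree ranges, where the space of sections of a given $u$-degree is one-dimensional, two distinct index vectors with the same sum can never yield two independent sections, so there ``exactly one function of $u$-degree $a$'' simply becomes ``exactly one index vector summing to $a$''.

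Feeding this into the criterion above reproduces conditions~(1) and~(2) of the lemma verbatim: condition~(1) is the requirement that for $a\in\{0,\dots,k\}\cup\{(2m-1)k,\dots,2mk\}$ there be exactly one index vector of $S$ with $i_1+\cdots+i_m=a$, and condition~(2) is the requirement that for $k<a<(2m-1)k$ there be exactly two such index vectors, with their associated integers $i_{\ell+1}+\cdots+i_m-k(m-\ell)$ distinct. I do not expect a serious obstacle; the only points needing care are the boundary $u$-degrees $a=k,\,k+1,\,(2m-1)k-1,\,(2m-1)k$, where one must verify against the explicit basis of Proposition~\ref{P:projectively-normal-ribbon} that the dimension of the space of sections of $u$-degree $a$ jumps from $1$ to $2$ and back exactly at the claimed values, and the low range $a\le k$, where the impossibility of a $u^{a-k-1}\epsilon$ term forces $\ell=m$ (every factor an $x$) and $a-b=0$.
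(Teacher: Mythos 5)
Your proposal is correct and is essentially the paper's own proof, which simply declares the lemma ``immediate from the preceding observations and the Ribbon Product Lemma'' --- i.e.\ exactly the translation, via Lemma~\ref{L:RPL}, of the three-item criterion following Proposition~\ref{P:projectively-normal-ribbon} into conditions on the index vectors. You also correctly flag the only points needing care (the boundary $u$-degrees where the graded pieces drop from dimension $2$ to $1$, and the fact that in the one-dimensional ranges any single monomial, being a nonzero section with leading term $u^a$, automatically spans), which the paper leaves implicit.
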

 \begin{proof}
Immediate from the preceding observations and the Ribbon Product Lemma \ref{L:RPL}.
\end{proof}

\subsection{Canonical case, even genus: The balanced double $A_{2k+1}$-curve with $\gm$-action}
\label{S:A-curves}
In this section we will construct special singular curves of even genus, whose canonical embeddings satisfy the hypotheses of Proposition \ref{P:kempf}. We define a {\em double $A_{2k+1}$-curve} to be any curve obtained by gluing three copies of $\PP^1$ along two
$A_{2k+1}$ singularities (Figure \ref{F:double-A-curve}). The arithmetic genus of a double $A_{2k+1}$-curve is $g=2k$, and double $A_{2k+1}$-curves have 
$2k-4$ moduli corresponding to the crimping of the $A_{2k+1}$-singularities, i.e. deformations that preserve the analytic types
of the singularities as well as the normalization of the curve (see \cite{fred} for a comprehensive treatment of crimping moduli). 
Indeed, the moduli space of crimping for an $A_{2k+1}$-singularity with automorphism-free branches 
has dimension $k$, but the presence of automorphisms of the pointed $\PP^1$'s in our situation
reduces the dimension of crimping moduli by $4$. Among double $A_{2k+1}$-curves, there is a unique double $A_{2k+1}$-curve with $\gm$-action, 
corresponding to the trivial choice of crimping for both $A_{2k+1}$-singularities. We call 
this curve the {\em balanced double $A_{2k+1}$-curve}.

Now let us give a more precise description of the balanced double $A_{2k+1}$-curve: Let $C_0, C_1, C_2$ denote three copies of 
$\PP^1$, and label the uniformizers at 0 (resp., at $\infty$) by $u_0, u_1,u_2$ (resp., by $v_0,v_1, v_2$). 
Fix an integer $k\geq 2$, and let $C$ be the arithmetic genus $g=2k$ curve obtained by gluing three $\PP^1$'s along two
$A_{2k+1}$ singularities with trivial crimping. More precisely, we impose an $A_{2k+1}$ singularity at 
$(\infty \in C_0) \sim (0 \in C_1)$ by 
gluing $C_0\setminus 0$ and $C_1\setminus \infty$ into an affine singular curve
\begin{equation}\label{E:gluing-1}
\Spec \CC[(v_0,u_1), (v_0^{k+1}, -u_1^{k+1})] \simeq \Spec \CC[x,y]/(y^2-x^{2k+2}).
\end{equation}
Similarly, we impose an $A_{2k+1}$ singularity at $(\infty \in C_1) \sim (0 \in C_2)$
by gluing $C_1\setminus 0$ and $C_2\setminus \infty$ into 
\begin{equation}\label{E:gluing-2}
\Spec \CC[(v_1,u_2), (v_1^{k+1}, -u_2^{k+1})] \simeq \Spec \CC[x,y]/(y^2-x^{2k+2}).
\end{equation}

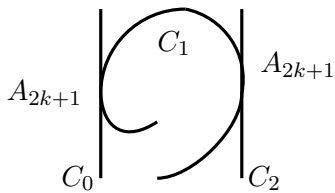
\begin{figure}[hbt]
\begin{centering}
\begin{tikzpicture}[scale=1.5]
		\node  (0) at (-3.5, 3) {};
		\node  (1) at (-2.75, 3) {};
		\node  (2) at (-2.25, 3) {};
		\node  (3) at (-1.75, 2.5) {$A_{2k+1}$};
		\node  (4) at (-3.5, 2.25) {};
		\node  (4x) at (-4, 2.25) {$A_{2k+1}$};
		\node  (5) at (-2.25, 2.25) {};
		\node  (6) at (-3, 2) {};
		\node  (7) at (-4.25, 1.5) {};
		\node  (8) at (-3.5, 1.5) {};
		\node  (8c) at (-3.7, 1.5) {$C_0$};
		\node  (9) at (-3, 1.5) {};
		\node  (10) at (-2.25, 1.5) {};
		\node  (10c) at (-2.05, 1.5) {$C_2$};
		\node  (C2) at (-2.85, 2.7) {$C_1$};
		\draw [very thick, bend left=45] (1.center) to (5.center);
		\draw [very thick] (2.center) to (10.center);
		\draw [very thick,bend left=300, looseness=1.50] (4.center) to (6.center);
		\draw [very thick] (0.center) to (8.center);
		\draw [very thick,bend left=45] (4.center) to (1.center);
		\draw [very thick,bend right=315, looseness=0.75] (5.center) to (9.center);
\end{tikzpicture}\end{centering}
\vspace{-0.5pc}
\caption{Double $A_{2k+1}$-curves}\label{F:double-A-curve}
\end{figure}

The automorphism group of $C$ is given by $\Aut(C) = \GG_m \rtimes \ZZ_2$ where 
$\ZZ_2$ acts via $u_i \leftrightarrow v_{2-i}$ and 
$\GG_m = \Spec \CC[t, t^{-1}]$ acts via 
\begin{align*}
t\cdot u_0 &= t u_0, \\
t\cdot u_1 &= t^{-1}u_1, \\
t\cdot u_2 &= t u_2.
\end{align*}

Using the description of the dualizing sheaf on a singular curve as in \cite[Ch.IV]{serre-corps} or \cite[Ch.II.6]{barth},
we can write down a basis of $\HH^0\bigl(C, \omega_C\bigr)$ as follows:
\begin{equation}\label{E:basis}
x_i =\left(u_0^{i}\, \frac{du_0}{u_0}, \, u_1^{-i}\, \frac{du_1}{u_1}, \, 0\right), 
\qquad y_i=\left(0,\, u_1^{i}\, \frac{du_1}{u_1},\, u_2^{-i}\, \frac{du_2}{u_2}  \right), \qquad 1\leq i \leq k.
\end{equation}

It is straightforward to generalize this description to the spaces of pluricanonical differentials.
\begin{lemma} 
\label{L:pluricanonical-bases}
For $m\geq 2$, the product map $\Sym^m \HH^0\bigl(C,\omega_C\bigr)\ra \HH^0\bigl(C, \omega_C^m\bigr)$ is surjective and
a basis of $\HH^0\bigl(C, \omega_C^m\bigr)$ consists of the following $(2m-1)(2k-1)$ differentials:
\begin{equation*}
\omega_j=\left(u_0^{j}\,\frac{(du_0)^m}{u_0^m}, \, u_1^{-j}\, \frac{(du_1)^m}{u_1^m},\, 0 \right), \quad 
\eta_j = \left(0, \, u_1^{j}\, \frac{(du_1)^m}{u_1^m}, \, u_2^{-j}\, \frac{(du_2)^m}{u_2^m} \right), \quad
m \leq j \leq mk.  
\end{equation*}
and
\begin{align*}
\chi_{\ell} &= \left(0, \, u_1^{\ell} \, \frac{(du_1)^m}{u_1^m}, \, 0 \right), \qquad -k(m-1)+1\leq \ell \leq k(m-1)-1.
\end{align*}
\end{lemma}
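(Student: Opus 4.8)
The plan is to mimic exactly the strategy used for the balanced ribbon in Proposition \ref{P:projectively-normal-ribbon}: first establish surjectivity of the multiplication map by exhibiting each of the claimed $(2m-1)(2k-1)$ pluricanonical differentials as an explicit product of $m$ canonical differentials from the basis \eqref{E:basis}, and then invoke Riemann--Roch to conclude that this spanning set of the correct cardinality is actually a basis. Since $\omega_C$ is ample on $C$ (the curve $C$ is canonically embedded), $h^0\bigl(C,\omega_C^m\bigr)=(2m-1)(g-1)=(2m-1)(2k-1)$ by Riemann--Roch for the Gorenstein curve $C$; so it suffices to show that the $(2m-1)(2k-1)$ listed differentials lie in the image of $\Sym^m \HH^0\bigl(C,\omega_C\bigr)$ and are linearly independent. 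Linear independence is immediate from the fact that the three ``blocks'' $\{\omega_j\}$, $\{\eta_j\}$, $\{\chi_\ell\}$ have pairwise disjoint support patterns on the three components $C_0,C_1,C_2$ and, within each block, are indexed by distinct powers of $u_1$ (or $u_0$, $u_2$) on the middle component.

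The computational heart is the product formula. First one records, as in Lemma \ref{L:RPL}, how the basis elements \eqref{E:basis} multiply: a product of the $x_i$'s and $y_i$'s restricted to each of the three $\PP^1$'s is simply a monomial in the appropriate uniformizer, because on $C_0$ only the $x_i$ have support, on $C_2$ only the $y_i$, and on the middle component $C_1$ the exponents add (with the $x_i$ contributing $u_1^{-i}$ and the $y_i$ contributing $u_1^{+i}$). Concretely, I would check:
\begin{itemize}
\item[(i)] $\omega_j = x_1^{m-1}x_{j-(m-1)}$ for the relevant range, giving all $\omega_j$ with $m\le j\le mk$; symmetrically $\eta_j = y_1^{m-1}y_{j-(m-1)}$ gives all $\eta_j$.
\item[(ii)] For the middle-component differentials $\chi_\ell$ supported only on $C_1$: a product $x_{a_1}\cdots x_{a_p}\,y_{b_1}\cdots y_{b_q}$ with $p,q\ge 1$ (so $p+q=m$) vanishes on both $C_0$ and $C_2$ and equals $u_1^{\,\sum b - \sum a}\,(du_1)^m/u_1^m$ on $C_1$. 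As $p,q$ range over positive integers with $p+q=m$ and the $a_i,b_j$ range over $\{1,\dots,k\}$, the exponent $\sum b - \sum a$ ranges over all integers from $-k(m-1)+1$ to $k(m-1)-1$, which is exactly the index set for $\chi_\ell$.
\end{itemize}
Step (ii) is the only point requiring a moment's care: one must verify that every integer in the stated interval is realized. This follows because with $p+q=m$ and both positive, $\sum a \in [\,p,\,pk\,]$ and $\sum b\in[\,q,\,qk\,]$ are each realized as full integer intervals, so $\sum b-\sum a$ sweeps out $[\,q-pk,\;qk-p\,]$; taking the union over all valid $(p,q)$ (e.g. $p=1,q=m-1$ gives $[(m-1)-k,\;k(m-1)-1]$ and $p=m-1,q=1$ gives $[1-k(m-1),\;k-(m-1)]$, and intermediate splittings fill any gap) yields precisely $[-k(m-1)+1,\;k(m-1)-1]$, using $m\ge 2$.

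The main obstacle, such as it is, is purely bookkeeping: making sure the index ranges in (i) and (ii) match the claimed lists exactly and that the total count comes out to $(2m-1)(2k-1) = 2(mk-m+1)+(2k(m-1)-1)$ — i.e. the two ``outer'' blocks contribute $mk-m+1$ differentials each and the ``middle'' block $\chi_\ell$ contributes $2k(m-1)-1$, and indeed $2(mk-m+1)+2k(m-1)-1 = 2mk-2m+2+2km-2k-1 = 4mk-2m-2k+1 = (2m-1)(2k-1)$. Once the arithmetic is confirmed there is no genuine difficulty, and the proof concludes exactly as in Proposition \ref{P:projectively-normal-ribbon}: a spanning set of the right size in a space of known dimension is a basis, and surjectivity of $\Sym^m\HH^0(C,\omega_C)\to \HH^0(C,\omega_C^m)$ follows since the image contains this basis.
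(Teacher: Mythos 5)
Your proof follows essentially the same route as the paper's: Riemann--Roch gives $h^0\bigl(C,\omega_C^m\bigr)=(2m-1)(2k-1)$, and one then checks that the listed differentials are precisely the $m$-fold products of the $x_i$'s (the $\omega_j$), of the $y_i$'s (the $\eta_j$), and the mixed products (the $\chi_\ell$); your interval-covering argument in step (ii) and the count are correct. One small slip: the explicit factorization $\omega_j=x_1^{m-1}x_{j-(m-1)}$ only produces $\omega_j$ for $m\leq j\leq k+m-1$, since $x_{j-(m-1)}$ is undefined once $j-(m-1)>k$; for the full range $m\leq j\leq mk$ you should instead use general products $x_{i_1}\cdots x_{i_m}$, whose total degrees $i_1+\cdots+i_m$ sweep out all of $[m,mk]$ --- exactly as you already do for the $\chi_\ell$.
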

\begin{proof}
By Riemann-Roch formula, $h^0\bigl(C, \omega_C^m\bigr)=(2m-1)(2k-1)$. Thus, it suffices to observe that the given 
$(2m-1)(2k-1)$ differentials all lie in the image of the map 
$\Sym^m \HH^0\bigl(C,\omega_C\bigr) \rightarrow \HH^0\bigl(C, \omega_C^m\bigr)$. 
Using the basis of $\HH^0\bigl(C,\omega_C\bigr)$ given by \eqref{E:basis},
one easily checks that the differentials $\{\omega_j\}_{j=m}^{mk}$ are precisely those arising as $m$-fold products of 
$x_i$'s, the differentials $\{\eta_j\}_{j=m}^{mk}$ are those arising as $m$-fold products of $y_i$'s, and the differentials 
$\{\chi_\ell\}_{\ell=-k(m-1)+1}^{k(m-1)+1}$ are those arising as mixed $m$-fold products of $x_i$'s and $y_i$'s.
\end{proof}

Next, we show that $|\omega_C|$ is a very ample linear system, 
so that $C$ admits a canonical embedding, and the corresponding Hilbert points are well defined.

\begin{prop}\label{P:A-curve}
$\omega_C$ is very ample. The complete linear system $|\omega_C|$ embeds $C$ as a curve on a balanced rational normal scroll 
\[
\PP^1\times \PP^1 \stackrel{\vert \O(1, k-1)\vert}{\longhookrightarrow} \PP^{g-1}.
\]
Moreover, 
$C_0$ and $C_2$ map to $(1,0)$-curves on $\PP^1 \times \PP^1$, and $C_1$ maps to a $(1,k+1)$ curve. In particular, $C$ is a $(3,k+1)$ curve 
on $\PP^1\times \PP^1$ and has a $g^1_3$ cut out by the $(0,1)$ ruling.
\end{prop}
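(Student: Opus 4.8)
The plan is to prove Proposition~\ref{P:A-curve} by making the scroll completely explicit from the basis \eqref{E:basis} and then reading off all the claimed incidences directly. First I would observe that the $2k$ basis sections split naturally into two blocks: the $x_i$ ($1\le i\le k$) are supported on $C_0\cup C_1$ and vanish identically on $C_2$, while the $y_i$ ($1\le i\le k$) are supported on $C_1\cup C_2$ and vanish on $C_0$. Since on $C_1$ the uniformizers are related by $v_1=u_1^{-1}$, the ratios of the $x_i$ among themselves give the coordinate $u_0$ on $C_0$ and $u_1$ on $C_1$, and likewise the ratios of the $y_i$ give $u_1$ on $C_1$ and $u_2$ on $C_2$. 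Concretely I would introduce homogeneous coordinates $[s:t]$ on the first $\PP^1$ by, roughly, $s/t = x_i/x_{i-1}$ (the degree-$1$ scaling common to both blocks of basis elements), so that $C_0$ and $C_2$ each map to a single point of the first factor (they ``use'' only a $1$-dimensional family of ratios), while $C_1$, which is hit by \emph{both} blocks, sweeps out the whole first $\PP^1$. This is the geometric origin of the statement that $C_0,C_2$ are $(1,0)$-curves and $C_1$ is a $(1,\ast)$-curve.

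Next I would pin down the scroll itself. The images of $C_0$ and $C_2$ are two disjoint lines (rational normal curves of degree $1$) in $\PP^{g-1}=\PP^{2k-1}$, spanning complementary $\PP^{k-1}$'s: $C_0$ lies in $\PP\langle x_1,\dots,x_k\rangle$ and $C_2$ in $\PP\langle y_1,\dots,y_k\rangle$. The join of a degree-$1$ and a degree-$(k-1)$ rational normal curve, suitably parametrized, is exactly the balanced scroll $\PP^1\times\PP^1\hookrightarrow\PP^{2k-1}$ via $|\O(1,k-1)|$; here I would instead realize it directly as the scroll swept out by the pencil of lines joining corresponding points of the images of $C_0$ and $C_2$, matched up so that $C_1$ — being a degree-$(k+1)$ rational curve meeting each generating line — lies on it. Then by Lemma~\ref{L:pluricanonical-bases} (or just the basis \eqref{E:basis}) I can check that $h^0(\O_C(1)) = 2k = h^0$ of the hyperplane bundle on this scroll restricted appropriately, so the canonical map factors through the scroll; very ampleness of $\omega_C$ on each of $C_0,C_1,C_2$ and separation of the two singular points then follows exactly as in Lemma~\ref{L:ribbon-ample}, by inspecting the explicit sections. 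It remains to compute the bidegrees: restricting the two rulings $\O(1,0)$ and $\O(0,1)$ of $\PP^1\times\PP^1$ to $C$, one finds $C\cdot\O(1,0)=3$ (the three components each meet a general fiber of the first ruling once — indeed $C_0,C_1,C_2$ each map isomorphically onto, or as a section over, that $\PP^1$) and $C\cdot\O(0,1)=k+1$, coming entirely from $C_1$ since $C_0,C_2$ are fibers of the second ruling; hence $C$ is a $(3,k+1)$-curve, $C_0,C_2$ are $(1,0)$ and $C_1$ is $(1,k+1)$, and the $(0,1)$-ruling cuts out a $g^1_3$ on $C$.

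The computations I would actually carry out in detail: (i) writing the canonical map in coordinates on each of the three $\PP^1$'s and checking the images are a point, a degree-$(k+1)$ curve, and a point, respectively, with the right incidences at the two $A_{2k+1}$-points; (ii) exhibiting the scroll as the image of $\PP(\O\oplus\O(k-1))$ (or of $\PP^1\times\PP^1$ under $|\O(1,k-1)|$) together with an explicit embedding of $C$ into it compatible with \eqref{E:basis}; (iii) the intersection-number computation for the bidegree, most cleanly done by pulling back a general member of each ruling and counting points on $C_{\mathrm{red}}$. The main obstacle, I expect, is step~(ii): one has to be careful that the $A_{2k+1}$-singularities of $C$ are compatible with $C$ being a Cartier divisor of class $(3,k+1)$ on the \emph{smooth} surface $\PP^1\times\PP^1$ — i.e.\ that the tacnodal contacts of the three components are exactly what a $(3,k+1)$-curve specializing to $3$ fibers-plus-a-curve would have — and matching the ``trivial crimping'' condition with the $\gm$-equivariant structure on the scroll. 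Everything else is bookkeeping with the explicit bases already provided by \eqref{E:basis} and Lemma~\ref{L:pluricanonical-bases}, in the spirit of Lemmas~\ref{L:ribbon-ample} and~\ref{P:projectively-normal-ribbon}.
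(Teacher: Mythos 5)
Your overall strategy --- make everything explicit from the basis \eqref{E:basis}, identify the scroll and the classes of the components, and read off the bidegree --- is the same as the paper's, but as written it contains both factual errors and a genuine gap. On the errors: the images of $C_0$ and $C_2$ are not lines. Since $x_i$ restricts to $u_0^{i-1}\,du_0$ on $C_0$, the component $C_0$ is embedded as a degree $k-1$ rational normal curve spanning $\PP\lspan\{x_1,\dots,x_k\}\simeq\PP^{k-1}$, and likewise for $C_2$; the balanced scroll is the join of these two degree-$(k-1)$ directrices (equivalently, the determinantal variety \eqref{E:determinantal}), not the join of a line with a degree-$(k-1)$ curve, which would live in $\PP^{k+1}$ rather than $\PP^{2k-1}$. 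Relatedly, the ratio $x_i/x_{i-1}$ equals $u_0$ on $C_0$ and is therefore \emph{not} constant there; the coordinate on the factor for which $C_0$ and $C_2$ are fibers is $x_i/y_{k+1-i}$, which is $[1:0]$ on $C_0$, $[0:1]$ on $C_2$, and $[1:u_1^{k+1}]$ on $C_1$ (this last computation is what actually produces the class $(1,k+1)$ and the $g^1_3$). These slips are repairable, but they indicate that the coordinate computation you list as step (i) was not carried out, and it is exactly that computation which constitutes the proof.

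More seriously, very ampleness at the two $A_{2k+1}$-points is never established. Separation of points and tangents away from the singularities is easy from the explicit bases; the crux of the proposition is that $|\omega_C|$ is a closed embedding at a point where two smooth branches meet with contact of order $k+1$. You defer this to ``exactly as in Lemma \ref{L:ribbon-ample}'' (which concerns the ribbon, a different curve) and then re-flag the same issue as ``the main obstacle,'' without resolving it. The paper closes this gap in two steps: first, since the images of $C_0$ and $C_1$ lie on the scroll in classes $(1,0)$ and $(1,k+1)$, their total intersection number is $k+1$, while the $A_{2k+1}$-gluing already forces contact $\geq k+1$ at the singular point; hence the images meet only there and with contact exactly $k+1$, so no excess identification occurs. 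Second, one checks directly that $x_{k-1}=(v_0,u_1)\cdot x_k$ and $y_1=(0,u_1^{k+1})\cdot x_k$ span the cotangent space at the singularity under the identification $\CC[(v_0,u_1),(v_0^{k+1},-u_1^{k+1})]\simeq\CC[x,y]/(y^2-x^{2k+2})$. Without some version of this argument your proposal does not prove that $\omega_C$ is very ample, which is the substantive claim of the proposition.
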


\begin{proof} 
To see that the canonical embedding of $C$ lies on a balanced rational normal scroll in $\PP^{2k-1}$, 
recall that the scroll 
is the determinantal variety (see \cite[Lecture 9]{harris}) defined by:
\begin{equation}\label{E:determinantal}
\rank
 \left(\begin{array}{cccc|cccc}
x_{1} & x_{2} & \cdots & x_{k-1} & y_{k} & y_{k-1} & \cdots & y_{2} \\  
 x_{2} & x_{3} & \cdots & x_{k} & y_{k-1} & y_{k-2} & \cdots & y_{1}
 \end{array}\right)\leq 1.
 \end{equation}
From our explicit description of the basis of $\HH^0\bigl(C, \omega_C\bigr)$ given by \eqref{E:basis}, 
one easily sees that the differentials $x_i$'s and $y_i$'s on $C$ 
satisfy the determinantal condition \eqref{E:determinantal}.
Moreover,
we see that $\vert \omega_C\vert$ embeds $C_0$ and $C_2$ as degree $k-1$ rational normal curves in $\PP^{2k-1}$ 
lying in the class $(1,0)$ on the scroll. Also, we see that $|\omega_C|$ embeds $C_1$ via the very ample linear 
system 
\[
\lspan\{ 1, u_1, \dots, u_1^{k-1}, u_1^{k+1}, \dots, u_1^{2k}\} \subset \vert \O_{\PP^1}(2k)\vert
\] 
as a curve in the class $(1,k+1)$. 
It follows that $|\omega_C|$ separates points and tangent vectors on each component of $C$. We now prove that $|\omega_C|$ 
separates points of different components and tangent vectors at the $A_{2k+1}$-singularities.  
First, observe that $C_0$ and $C_2$ span disjoint subspaces. Therefore, being $(1,0)$ 
curves, they must be distinct and non-intersecting. 
Second, $C_0$ and $C_1$ are the images of the two branches of an $A_{2k+1}$-singularity and so have
contact of order at least $k+1$. However, being $(1,0)$ and $(1,k+1)$ curves on the scroll, they have order of contact at most $k+1$. It follows
that the images of $C_0$ and $C_1$ on the scroll meet precisely in an $A_{2k+1}$-singularity. We conclude that $\vert \omega_C\vert$ is a closed embedding
at each $A_{2k+1}$-singularity. 

We can also directly verify that $\vert \omega_C\vert$ separates tangent vectors at an $A_{2k+1}$ singularity of $C$, 
say the one with uniformizers $v_0$ and $u_1$. The local generator of $\omega_C$ at this singularity is
\[
x_{k}=\left(-\frac{dv_0}{v_0^{k+1}}, \frac{du_1}{u_1^{k+1}}, 0 \right).
\] 
On the open affine chart $\Spec \CC[(v_0, u_1), (v_0^{k+1}, -u_1^{k+1})]$ defined in Equation \eqref{E:gluing-1},
we have $x_{k-1}=(v_0, u_1) \cdot x_{k}$ and $y_1=(0, u_1^{k+1}) \cdot x_{k}$. 
Under the identification $\CC[(v_0, u_1), (v_0^{k+1}, -u_1^{k+1})] \simeq \CC[x,y]/(y^2-x^{2k+2})$, 
we have $(v_0,u_1)=x$ and $(0,u_1^{k+1})=(x^{k+1}-y)/2$.
We conclude that $x_{k-1}$ and $y_1$ span the cotangent space, and thus separate tangent vectors, at the singularity. 
\end{proof}

Finally, the following elementary observation is the key to analyzing the stability of Hilbert points of $C$.

\begin{lemma}\label{L:multiplicityfree}  $\HH^0\bigl(C, \omega_C\bigr)$ is a multiplicity-free $\Aut(C)$-representation
and the basis $\{x_i, y_i\}_{i=1}^{k}$ is compatible with its irreducible decomposition.
\end{lemma}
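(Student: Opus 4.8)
The plan is to read off the $\GG_m$-weights of the basis $\{x_i,y_i\}_{i=1}^k$ displayed in \eqref{E:basis} and then combine this with the semidirect-product structure $\Aut(C)=\GG_m\rtimes\ZZ_2$. Concretely: show that $\HH^0(C,\omega_C)$ is already multiplicity-free as a $\GG_m$-representation, promote this to multiplicity-freeness over $\Aut(C)$, and finally identify the irreducible $\Aut(C)$-summands as the coordinate planes $\lspan\{x_i,y_i\}$.

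First I would compute the weights directly from the $\GG_m$-action $t\cdot u_0=tu_0$, $t\cdot u_1=t^{-1}u_1$, $t\cdot u_2=tu_2$: one checks immediately that $x_i=\bigl(u_0^{i}\,\tfrac{du_0}{u_0},\,u_1^{-i}\,\tfrac{du_1}{u_1},\,0\bigr)$ is an eigenvector of weight $i$ and $y_i=\bigl(0,\,u_1^{i}\,\tfrac{du_1}{u_1},\,u_2^{-i}\,\tfrac{du_2}{u_2}\bigr)$ is an eigenvector of weight $-i$, for $1\le i\le k$. Hence, as a $\GG_m$-representation, $\HH^0(C,\omega_C)$ is the direct sum of the $2k$ one-dimensional weight spaces $\CC x_1,\dots,\CC x_k,\CC y_1,\dots,\CC y_k$, and the corresponding weights $1,\dots,k,-1,\dots,-k$ are pairwise distinct. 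In particular $\HH^0(C,\omega_C)$ is multiplicity-free over the subgroup $\GG_m$, and this forces it to be multiplicity-free over $\Aut(C)$: were some irreducible $\Aut(C)$-summand to occur with multiplicity $\ge 2$, its restriction to $\GG_m$ would contribute a repeated irreducible $\GG_m$-constituent, contradicting what was just observed.

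It then remains to exhibit the decomposition into irreducible $\Aut(C)$-representations and check that each summand is spanned by a subset of $\{x_i,y_i\}$. Let $\sigma$ generate the $\ZZ_2$-factor. Since $\sigma$ normalizes $\GG_m$ it acts on the character lattice $\ZZ$ of $\GG_m$ by $\pm1$; because $\sigma$ interchanges $C_0$ and $C_2$ while $\GG_m$ has the same weight on $u_0$ and on $u_2$, the identification $u_i\leftrightarrow v_{2-i}$ shows at once that $\sigma t\sigma^{-1}=t^{-1}$ (explicitly, $\sigma$ exchanges $x_i$ and $-y_i$). Therefore $\sigma$ carries the $\GG_m$-weight-$\lambda$ space to the weight-$(-\lambda)$ space, so $\sigma(\CC x_i)=\CC y_i$ for every $i$. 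Consequently each plane $\lspan\{x_i,y_i\}$ is $\Aut(C)$-invariant, and it is irreducible because its only $\GG_m$-stable lines are $\CC x_i$ and $\CC y_i$, which $\sigma$ swaps. Thus $\HH^0(C,\omega_C)=\bigoplus_{i=1}^{k}\lspan\{x_i,y_i\}$ is the decomposition into irreducibles — the summands are pairwise non-isomorphic, as they carry distinct $\GG_m$-weights — and the basis $\{x_i,y_i\}_{i=1}^{k}$ is compatible with it by construction. There is no real obstacle here; the only point requiring a little care is the action of $\sigma$ on $\GG_m$ (equivalently, the explicit value of $\sigma^{*}x_i$), and one could even skip the first half by citing the general principle that multiplicity-freeness is inherited from a subgroup — but one still needs that $\sigma$ preserves the coordinate planes $\lspan\{x_i,y_i\}$, which is exactly what the computation $\sigma t\sigma^{-1}=t^{-1}$ provides.
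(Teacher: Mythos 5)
Your proposal is correct and its core is the same computation as the paper's proof: the $\GG_m$-weights of $x_i$ and $y_i$ are $i$ and $-i$, so $\HH^0\bigl(C,\omega_C\bigr)$ splits into $2k$ distinct characters of $\GG_m$, which is all that is needed for the Kempf--Morrison criterion (applied with $G=\GG_m$). Your additional analysis of the $\ZZ_2$-factor --- the relation $\sigma t\sigma^{-1}=t^{-1}$, $\sigma^*x_i=-y_i$, and the resulting two-dimensional irreducible summands $\lspan\{x_i,y_i\}$ --- is a correct and welcome completion of the $\Aut(C)$-statement that the paper's one-line proof leaves implicit.
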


\begin{proof} 
Note that $\GG_m\subset \Aut(C)$ acts on $x_i$ with weight $i$ and on $y_i$ with weight $-i$. Thus $\HH^0\bigl(C, \omega_C\bigr)$ 
decomposes into $g=2k$ distinct characters of $\GG_m$. 
\end{proof}

\subsection{Bicanonical case, odd genus: The rosary with $\gm$-action}
\label{S:rosary}
In this section we will construct, in every odd genus, a singular curve $C$ whose bicanonical embedding satisfies the hypotheses 
of Proposition \ref{P:kempf}. 
For any odd integer $g\geq 3$, we define $C$ to be the curve, called a {\em rosary} in \cite[Section 8.1]{hassett-hyeon_flip},
obtained from a set of $(g-1)$ $\PP^1$'s indexed by $i\in \ZZ_{g-1}$ and having uniformizers $u_i$ at $0$ and $v_i$ at $\infty$
(so that $u_i=1/v_i$) by cyclically identifying $v_i$ with $u_{i+1}$ to specify $g-1$ tacnodes. 
Note that $\GG_m \rtimes D_{g-1}\subset \Aut(C)$, where the dihedral group $D_{g-1}$ permutes the components 
and $\GG_m=\Spec \CC[t,t^{-1}]$ acts by $u_i \mapsto t^{(-1)^i} u_i$.  
We should remark that in the case of even genus, one may still define the curve $C$, but $C$ does not admit $\GG_m$-action and does not satisfy the hypotheses 
of Proposition \ref{P:kempf}. Thus, in what follows, we always assume $g$ odd.

\begin{lemma}\label{L:rosary-sections}
(a) A basis for $\HH^0\bigl(C,\omega_C\bigr)$ is given by the following differentials:
\begin{align*}
\omega_i&=\left(\dots, 0, du_i, \frac{du_{i+1}}{u_{i+1}^2}, 0, \ldots\right), \quad i\in \ZZ_{g-1}, \\
\eta&=\left(\frac{du_0}{u_0}, \frac{du_1}{u_1}, \dots, \frac{du_{g-2}}{u_{g-2}}\right).
\end{align*}
(b)  A basis for $\HH^0\bigl(C,\omega_C^2 \bigr)$ is given by the following differentials: 
\begin{align*}
x_i&=\omega_i^2, \qquad i\in \ZZ_{g-1}, \\
y_i&=\omega_i\eta, \qquad i\in \ZZ_{g-1}, \\
z_i&=\omega_{i-1}\omega_{i}, \quad  i\in \ZZ_{g-1}.
\end{align*}
\end{lemma}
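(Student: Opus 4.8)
The plan is to identify global sections of $\omega_C$ (resp.\ $\omega_C^2$) with tuples of meromorphic differentials (resp.\ quadratic differentials) on the $g-1$ components $\PP^1$ that satisfy the appropriate matching conditions at the $g-1$ tacnodes, and then simply exhibit the asserted tuples and count. Recall that at a tacnode formed by identifying $v_i$ with $u_{i+1}$ (where $u_i v_i=1$ on each component), the conductor of the local ring is the square of the maximal ideal of the normalization, so a section of $\omega_C$ restricted to this tacnode is a pair of differentials on the two branches whose polar parts, expressed in the local uniformizers, agree up to the usual sign/residue bookkeeping through order two. Concretely, a differential on the $i$-th $\PP^1$ that contributes at the tacnode with the $(i+1)$-st component may have a pole of order at most $2$ at $\infty$ (i.e.\ at $v_i=0$), the $i$-th component contributes a pole of order at most $2$ at $0$ on the $(i+1)$-st component, and the leading (order two) coefficients must be opposite while the order-one (residue) coefficients must match. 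This is the standard description of $\omega_C$ for a curve with tacnodes, as in \cite[Ch.IV]{serre-corps} or \cite[Ch.II.6]{barth}, exactly as used for the double $A_{2k+1}$-curve above.

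First I would write down, for each $i\in\ZZ_{g-1}$, the tuple $\omega_i$ which is supported on the $i$-th and $(i+1)$-st components: on the $i$-th it is the regular differential $du_i$ (which has a double pole at $v_i=0$ with no residue, since $du_i = -dv_i/v_i^2$), and on the $(i+1)$-st it is $du_{i+1}/u_{i+1}^2 = -dv_{i+1}\cdot(\text{reg})$, i.e.\ a differential with a double pole at $u_{i+1}=0$ and no residue, and one checks the order-two leading coefficients are opposite and all residues vanish, so the tacnode matching conditions are satisfied at the $i$-th tacnode (and trivially at every other one, where $\omega_i$ vanishes to sufficient order). Next I would write down $\eta = (du_0/u_0,\ldots,du_{g-2}/u_{g-2})$: on each component this is a differential with simple poles (residue $\pm1$) at both $0$ and $\infty$, and the cyclic residue-matching at every tacnode works out because consecutive components share the residue of this logarithmic differential. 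This produces $g-1+1 = g$ sections; since $h^0(C,\omega_C)=g$ by Riemann--Roch and these $g$ differentials are visibly linearly independent (their polar behavior at the various tacnodes, and the component supports, distinguish them), they form a basis, giving part (a).

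For part (b), the key structural point is that $\omega_C^2$ is again a line bundle (the rosary is nodal--tacnodal hence Gorenstein, in fact locally planar), $h^0(C,\omega_C^2) = 3(g-1)$ by Riemann--Roch, and the product map $\Sym^2\HH^0(C,\omega_C)\to\HH^0(C,\omega_C^2)$ is surjective — this last fact I would establish exactly as in Proposition~\ref{P:projectively-normal-ribbon} and Lemma~\ref{L:pluricanonical-bases}, by checking that the proposed products already span a $3(g-1)$-dimensional space of quadratic differentials with the right pole orders at each tacnode. Then I would observe that among the $\binom{g+1}{2}$ pairwise products of the basis $\{\omega_i,\eta\}$, the $3(g-1)$ products $x_i=\omega_i^2$, $y_i=\omega_i\eta$, $z_i=\omega_{i-1}\omega_i$ (for $i\in\ZZ_{g-1}$) are linearly independent: $x_i$ is supported on components $i,i+1$ with a fourth-order pole pattern, $y_i$ similarly but with one fewer order of pole coming from $\eta$, and $z_i$ is supported on components $i-1,i,i+1$ with its characteristic pole at the shared tacnodes — these supports and pole orders make a linear relation impossible. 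Since $3(g-1)$ independent sections live in a $3(g-1)$-dimensional space, they form a basis. The main obstacle, and the step I would be most careful about, is pinning down the precise matching conditions at a tacnode (the order-two coefficient sign, whether residues match or are negated, and which products land in the conductor) so that the claimed differentials genuinely are global sections and so that the dimension count is airtight; everything else is bookkeeping parallel to the $A_{2k+1}$-curve case already treated.
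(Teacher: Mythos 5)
Your proposal is correct and follows essentially the same route as the paper: verify via the Rosenlicht/duality description of the dualizing sheaf that the listed differentials satisfy the residue and order-two matching conditions at each tacnode, then conclude by linear independence together with the Riemann--Roch counts $h^0(C,\omega_C)=g$ and $h^0(C,\omega_C^2)=3g-3$. (One small inaccuracy that does not affect the argument: $z_i=\omega_{i-1}\omega_i$ is supported only on the $i$-th component, not on components $i-1,i,i+1$, since the product vanishes wherever either factor does.)
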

\begin{proof} Using duality on singular curves as in \cite[Ch.IV]{serre-corps} or \cite[Ch.II.6]{barth},
it is straightforward to verify that each differential from (a) is a Rosenlicht differential and 
hence is an element of $\HH^0 \bigl(C,\omega_C \bigr)$. Since these $g$ differentials are linearly 
independent, Part (a) is established. Part (b) follows immediately: 
The $(3g-3)$ differentials from (b) are products of elements in $\HH^0\bigl(C,\omega_C\bigr)$
and are easily seen to be linearly independent.
\end{proof}

\begin{lemma}\label{L:rosary-ample}
$\omega_C$ is very ample for odd $g\geq 5$ and $\omega_C^2$ is very ample for odd $g\geq 3$.
\end{lemma}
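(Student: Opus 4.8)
The plan is to exhibit, in each case, enough sections of the appropriate line bundle to separate points and tangent vectors, working component-by-component and then across the tacnodes, just as in the proof of Proposition \ref{P:A-curve}. For the bicanonical statement ($\omega_C^2$ very ample, odd $g\geq 3$) I would first restrict the sections $x_i,y_i,z_i$ from Lemma \ref{L:rosary-sections}(b) to a fixed component, say the $i$-th $\PP^1$ with uniformizer $u_i$. On that component $\omega_i$ restricts to $du_i$, which has a double zero at $\infty$ and no zero at $0$, while $\omega_{i-1}$ restricts to $du_i/u_i^2$ (a double zero at $0$) and $\eta$ restricts to $du_i/u_i$; the resulting monomials $x_i=du_i^2, z_i = du_i\,du_{i+1}$-restriction $=u_i^{-2}du_i^2, z_{i+1}$-restriction $= u_i^{2}\,du_i^2\cdot(\text{unit})$, and $y_i = u_i^{-1}du_i^2$, $y_{i+1}$-restriction $=u_i\,du_i^2$ together span a $5$-dimensional subspace of $\HH^0(\PP^1,\O(4))$ containing $u_i^{-2}, u_i^{-1}, 1, u_i, u_i^2$ (up to the common twist $du_i^2$), i.e. the full complete linear system $|\O_{\PP^1}(4)|$ restricted via $u_i^{-2}\mapsto$ coordinates. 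Hence $|\omega_C^2|$ restricts to a very ample (in fact $4$-dimensional complete) system on each component, separating its points and tangent vectors.

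Next I would check separation across a tacnode, say the one gluing $v_i\sim u_{i+1}$. Only the differentials supported near this node matter: $\omega_i,\omega_{i+1},\eta$ are the relevant canonical sections, and the products $x_i,y_i,z_i,x_{i+1},z_{i+1}$ involve the local coordinate. In the local ring of the tacnode $\CC[[s,\epsilon]]/(\epsilon^2-s^?)$—more precisely the tacnode $\Spec\CC[x,y]/(y^2-x^4)$ with the two branches having uniformizers $s$ and $s$ matched to second order—I would verify that the restrictions of these bicanonical sections generate the maximal ideal modulo its square on each branch and distinguish the two branches, exactly as the computation with $x_{k-1}$ and $y_1$ did at the end of Proposition \ref{P:A-curve}. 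The point is that a tacnode $y^2=x^4$ has embedding dimension $2$, and two sections vanishing to orders $(1,2)$ and $(2,1)$ along the two branches (which $z_i$ and, say, $y_{i+1}$ provide) span the cotangent space; since $g\geq 3$ means there are at least two components, these sections are genuinely present. For $g=3$ there are only two components glued at two tacnodes, so I would just double-check that case by hand since the indices wrap around tightly.

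For the canonical statement ($\omega_C$ very ample for odd $g\geq 5$) I would argue similarly using the basis $\{\omega_i,\eta\}$ of Lemma \ref{L:rosary-sections}(a). Restricted to the $i$-th component, $\omega_{i-1}\mapsto du_i/u_i^2$, $\omega_i\mapsto du_i$, $\eta\mapsto du_i/u_i$, and the other $\omega_j$ vanish identically; so the restricted system is $\lspan\{u_i^{-2},u_i^{-1},1\}\otimes du_i^{?}$, which is $|\O_{\PP^1}(2)|$ and hence very ample on each component. Separation across the tacnode $v_i\sim u_{i+1}$ uses $\omega_i$ (vanishing to order $2$ on the $u_i$-branch at $\infty$, i.e. order $0$ in $v_i$, and order $2$ in $u_{i+1}$ at $0$)—wait, one must be careful: near the node the two sections $\omega_{i-1}$ and $\omega_i$ restricted to the two branches vanish to complementary orders, so they separate branches and span the cotangent space, provided $i-1\neq i+1$ in $\ZZ_{g-1}$, i.e. $g-1\neq 2$, i.e. $g\neq 3$; this is exactly why the canonical case needs $g\geq 5$. (For $g=3$ the canonical model is a conic, not an embedding, consistent with a genus-$3$ hyperelliptic-type degeneration.) The main obstacle I anticipate is bookkeeping the wrap-around of indices in $\ZZ_{g-1}$ and the precise vanishing orders of each Rosenlicht differential on each of the two branches at each tacnode; once those local computations are organized into a table, both very-ampleness claims follow formally from the separation criterion, exactly as in Proposition \ref{P:A-curve}.
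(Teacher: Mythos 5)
Your overall strategy is the same as the paper's: restrict the linear system to each component to see a complete very ample system there, then do a local computation at each tacnode in the style of Proposition \ref{P:A-curve}. But the tacnode computation, which is the heart of the lemma, is wrong as written in both cases. The local ring at the tacnode $v_i\sim u_{i+1}$ is $\{(f,g)\ :\ f(0)=g(0),\ f'(0)=g'(0)\}$, with maximal ideal generated by $\mathsf{x}=(v_i,u_{i+1})$ and $\mathsf{y}=(0,u_{i+1}^2)$, and $\mathfrak{m}/\mathfrak{m}^2=\langle \mathsf{x},\mathsf{y}\rangle$. First, no element of this ring vanishes to orders $(1,2)$ on the two branches: the condition $f'(0)=g'(0)$ forces the vanishing orders to be simultaneously $1$ or simultaneously $\geq 2$, so the two sections you invoke in the bicanonical case do not exist. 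Worse, the actual sections you name give, after dividing by the local generator $x_i=\omega_i^2$, the functions $z_i/x_i=(v_i^2,0)=\mathsf{x}^2-\mathsf{y}\equiv-\mathsf{y}$ and $y_{i+1}/x_i=(0,u_{i+1}^3)=\mathsf{x}\mathsf{y}\equiv 0$ in $\mathfrak{m}/\mathfrak{m}^2$, which span only a line. Likewise in the canonical case, $\omega_{i-1}/\omega_i=(v_i^2,0)$ and $\omega_{i+1}/\omega_i=(0,u_{i+1}^2)$ are congruent to $-\mathsf{y}$ and $\mathsf{y}$ modulo $\mathfrak{m}^2$: they separate the branches but do not span the cotangent space, regardless of whether $i-1\neq i+1$. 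The indispensable section is $\eta$, with $\eta/\omega_i=(v_i,u_{i+1})=\mathsf{x}$ (resp.\ $y_i/x_i=\mathsf{x}$ in the bicanonical case); this is exactly the pair $\eta,\omega_{i+1}$ the paper uses, and your argument never produces the class of $\mathsf{x}$.

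There are smaller slips as well. On component $i$ the sections $z_{i+1}=\omega_i\omega_{i+1}$ and $y_{i+1}=\omega_{i+1}\eta$ restrict to zero, since $\omega_{i+1}$ is supported on components $i+1$ and $i+2$; the complete degree-$4$ system on the $i$-th component is in fact spanned by $x_{i-1},y_{i-1},z_i,y_i,x_i$, restricting to $u_i^{-4},u_i^{-3},u_i^{-2},u_i^{-1},1$ times $du_i^2$ (your conclusion that the restricted system is complete and very ample survives, but not via the sections you list). You also do not address separating points on different components: for non-adjacent components this is automatic because the supports of the basis sections are disjoint, while for adjacent components one checks that the sections supported on only one of the two components have no common zero away from the shared tacnode. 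Finally, once $\omega_C$ is shown very ample for odd $g\geq 5$, the very ampleness of $\omega_C^2$ in that range is automatic, so only $g=3$ requires a direct bicanonical check --- which is how the paper organizes the argument.
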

\begin{proof}
We prove that $C$ is canonically embedded for $g\geq 5$. 
First, observe that $\vert \omega_C\vert$ embeds 
each $\PP^1$ as a conic in $\PP^{g-1}$, and that the plane spanned by the $i^{th}$ conic meets 
only the planes spanned by the cyclically adjacent conics, and meets each of these
only at the corresponding tacnode. This shows that $\vert \omega_C\vert$ separates points and tangent vectors
at smooth points. To see
that $\vert \omega_C\vert$ separates tangent vectors at the tacnode obtained by the identification $v_i=u_{i+1}$, note that 
the local generator of $\omega_C$ at this tacnode is $\omega_i$. Locally around the 
tacnode, we have $\eta=(v_i, u_{i+1})\cdot \omega_i$ and $\omega_{i+1}=(0, u_{i+1}^2)\cdot \omega_i$. Under the identification 
$\CC[(v_i, u_{i+1}),(0, u_{i+1}^2)]\simeq \CC[x,y]/\bigl(y(x^2-y)\bigr)$, we have $(v_i, u_{i+1})=x$ and $(0, u_{i+1}^2)=y$. 
We conclude that $\eta$ and $\omega_{i+1}$ span the cotangent space, and thus separate tangent vectors, at the tacnode. 

A straightforward computation shows that $\omega_C^2$ is also very ample for $g=3$. We finish by noting
that $C$ is hyperelliptic in genus $3$ and thus is not canonically embedded.
\end{proof} 
The $\GG_m$-action on $\HH^0\bigl(C, \omega_C\bigr)$ is given by 
\begin{align*}
t\cdot \omega_i &=t^{(-1)^i}\omega_i, 
\\
t\cdot \eta &= \eta.
\end{align*}
The $\GG_m$-action on $\HH^0\bigl(C, \omega_C^{2}\bigr)$ is given by $x_i \mapsto (t^2)^{(-1)^i}x_i$, 
$y_i \mapsto t^{(-1)^i}y_i$, $z_i \mapsto z_i$.  
We define the weight of a monomial to be its $\GG_m$-weight. 
\begin{prop}\label{P:rosary-multiplicity-free} Both
$\HH^0\bigl(C, \omega_C\bigr)$ and $\HH^0\bigl(C, \omega_{C}^2\bigr)$ 
are multiplicity-free representations of $\gm\rtimes \ZZ_{g-1}\subset \Aut(C)$. 
Moreover, the basis $\{\omega_0, \dots, \omega_{g-2}, \eta\}$ is compatible with the irreducible decomposition 
of $\HH^0\bigl(C, \omega_C\bigr)$,
and the basis $\{x_i, \, y_i,\, z_i \ : \ i\in \ZZ_{g-1}\}$  is compatible with the irreducible decomposition of $\HH^0\bigl(C, \omega_{C}^2\bigr)$.

\end{prop}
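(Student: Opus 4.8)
The plan is to prove both statements by an explicit character computation for the generalized dihedral group $G=\gm\rtimes\ZZ_{g-1}$, treating $\HH^0(C,\omega_C)$ and $\HH^0(C,\omega_C^2)$ in parallel. First I would record the $\gm$-weights of the displayed bases, which are exactly those tabulated just before the statement: on $\HH^0(C,\omega_C)$ the differential $\omega_i$ has weight $(-1)^i$ and $\eta$ has weight $0$, while on $\HH^0(C,\omega_C^2)$ the differentials $x_i$, $y_i$, $z_i$ have weights $2(-1)^i$, $(-1)^i$, $0$. Since $g-1$ is even, the weight $\pm1$ and weight $\pm2$ eigenspaces each have dimension $(g-1)/2$, the weight $0$ eigenspace of $\HH^0(C,\omega_C)$ is the line spanned by $\eta$, and the weight $0$ eigenspace of $\HH^0(C,\omega_C^2)$ is the $(g-1)$-dimensional span of the $z_i$. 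In particular $\gm$ alone is not multiplicity-free once $g\geq 5$, so the finite part $\ZZ_{g-1}$ has to do the separating.

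The second step is to pin down the action of the generator $\sigma$ of $\ZZ_{g-1}$, i.e.\ rotation of the necklace by one bead. From the explicit formulas one reads off $\sigma\cdot\omega_i=\omega_{i+1}$ and $\sigma\cdot\eta=\eta$, hence $\sigma\cdot x_i=x_{i+1}$, $\sigma\cdot y_i=y_{i+1}$, $\sigma\cdot z_i=z_{i+1}$, indices mod $g-1$. Two structural facts matter. First, $\sigma$ normalizes $\gm$ and conjugates it by inversion $t\mapsto t^{-1}$, so that $\langle\sigma^2\rangle\cong\ZZ_{(g-1)/2}$ centralizes $\gm$ and $G$ really is generalized dihedral. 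Second, the cycle structure of $\sigma$ on the $\gm$-weight blocks is clean: on the span of the $z_i$ the element $\sigma$ is a single $(g-1)$-cycle; and on the sum of the weight $+n$ and weight $-n$ eigenspaces (for the $\omega$'s, the $x$'s, or the $y$'s) $\sigma$ interchanges the two eigenspaces while $\sigma^2$ acts on each of them as a single $(g-1)/2$-cycle, using that the even residues mod $g-1$ form one orbit under $+2$ since $g-1$ is even.

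Now I would feed this into the standard little-group description of the irreducibles of $G=\gm\rtimes\ZZ_{g-1}$: the weight $0$ character of $\gm$ has full stabilizer and yields $g-1$ one-dimensional irreducibles pulled back from $\ZZ_{g-1}$, while a weight $n\neq0$ character of $\gm$ has stabilizer exactly $\langle\sigma^2\rangle$ and, for each of the $(g-1)/2$ characters of $\langle\sigma^2\rangle$, induces a single two-dimensional irreducible on which $\gm$ acts with weights $n$ and $-n$. Combining with the cycle computation, the $z$-block of $\HH^0(C,\omega_C^2)$ carries the regular representation of $\ZZ_{g-1}$ and splits into all $g-1$ distinct characters, and each two-sided weight block splits, upon diagonalizing $\sigma^2$, into $(g-1)/2$ two-dimensional irreducibles with pairwise distinct $\langle\sigma^2\rangle$-characters. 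So $\HH^0(C,\omega_C)$ is the $\eta$-line together with $(g-1)/2$ pairwise non-isomorphic two-dimensional irreducibles of weights $\{1,-1\}$, and $\HH^0(C,\omega_C^2)$ is $g-1$ pairwise distinct one-dimensional summands together with $(g-1)/2$ two-dimensional irreducibles of weights $\{1,-1\}$ and $(g-1)/2$ of weights $\{2,-2\}$. Any two of these constituents with different absolute $\gm$-weight already differ upon restriction to $\gm$, any two with the same absolute $\gm$-weight differ on $\sigma^2$, and the dimensions sum to $g$ and to $3g-3$ respectively; hence both representations are multiplicity-free. To realize these decompositions on an explicit basis one replaces, within the $z$-block and within each repeated $\gm$-weight block, the displayed monomials by their discrete Fourier transform over the relevant cyclic group; each resulting vector is a $\langle\sigma\rangle$-eigenvector and spans, together with its partner of opposite $\gm$-weight when that weight is nonzero, one of the irreducible summands, giving the basis compatible with the irreducible decomposition.

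The representation theory here is routine once the combinatorics is set up, so I expect the only real work to lie in that setup: checking carefully that for $g\geq 5$ the rotation $\sigma$ — and its square on the two-sided weight blocks — acts with all eigenvalues distinct, so that none of the induced two-dimensional irreducibles collide, and disposing of the small-genus boundary cases (most notably $g=3$, where several of the $z_i$ coincide and the uniform description breaks down) by hand.
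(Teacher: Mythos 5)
Your treatment of the multiplicity-freeness claim is correct and is essentially the paper's own argument carried out in full: the paper's two-sentence proof consists precisely of the observations that each of the blocks $\{\omega_i\}$, $\{x_i\}$, $\{y_i\}$, $\{z_i\}$ is the regular representation of $\ZZ_{g-1}$ and that the blocks are separated by their $\gm$-weights, and your little-group analysis of $\gm\rtimes\ZZ_{g-1}$ is the rigorous version of that bookkeeping. The cycle structure of $\sigma$ and $\sigma^2$, the identification of $\langle\sigma^2\rangle$ as the stabilizer of a nonzero $\gm$-character, and the dimension counts are all right; and your flag about $g=3$ (where $z_0=z_1$, so the displayed spanning set of $\HH^0\bigl(C,\omega_C^2\bigr)$ degenerates) is a legitimate catch that the paper does not address.

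The gap is in the second half of the statement. The proposition asserts that the \emph{displayed} bases $\{\omega_0,\dots,\omega_{g-2},\eta\}$ and $\{x_i,y_i,z_i\}$ are compatible with the irreducible decomposition, where compatibility was defined in Section 2 to mean that every irreducible summand is spanned by a subset of the basis vectors. Your own decomposition shows that for $g\geq 5$ this fails: the two-dimensional irreducibles are spanned by discrete Fourier combinations such as $\sum_j \zeta^{-j}\omega_{2j}$ together with their $\sigma$-translates, and none of these is a coordinate subspace for the monomial basis. Your proof responds by silently replacing the monomial basis with its Fourier transform and calling \emph{that} the compatible basis — but this proves a different statement from the one asserted, and the substitution is not cosmetic. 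The only purpose of the compatibility clause is to feed Proposition \ref{P:kempf}, whose proof requires the maximal torus diagonal in the chosen basis to preserve each irreducible summand, and all of the weight computations in Sections \ref{S:monomial-bases-rosary} and \ref{S:canonical-rosary} are carried out for one-parameter subgroups diagonal in $\{x_i,y_i,z_i\}$ (resp.\ $\{\omega_i,\eta\}$). If only the Fourier basis is compatible, then the Kempf reduction licenses checking one-parameter subgroups diagonal in the Fourier basis, which is a different torus (for instance, the parabolic stabilizing the flag $0\subset\lspan\{\sum_j\zeta^{-j}x_{2j},\ \sigma\sum_j\zeta^{-j}x_{2j}\}\subset V$ contains $\gm\rtimes\ZZ_{g-1}$ but not the monomial torus). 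So you must either prove compatibility of the monomial basis in whatever weaker sense actually makes the proof of Proposition \ref{P:kempf} go through, or state explicitly that your argument establishes only the first half of the proposition; as written, the compatibility clause is not proved, and your own computation is the clearest evidence that it cannot hold in the literal sense of the paper's definition once $g\geq 5$.
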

\begin{proof}
 The action of $\ZZ_{g-1} \subset D_{g-1}$ on the span of $\left\{ \omega_i \right\}_{i=0}^{g-2}$ 
 (resp.,  $\left\{ x_i \right\}_{i=0}^{g-2}$, $\left\{y_i \right\}_{i=0}^{g-2}$, 
 $\left\{ z_i \right\}_{i=0}^{g-2}$) corresponds to the regular representation of $\ZZ_{g-1}$ 
 and is thus multiplicity-free.  Since the weight of 
 $\omega_i$ is $\pm 1$ and of $\eta$ is $0$ (resp., the weight of $x_i$ is $\pm 2$, of $y_i$ is $\pm 1$, and of $z_i$ is $0$), 
 it follows that $\HH^0\bigl(C, \omega_{C}\bigr)$ (resp., $\HH^0\bigl(C, \omega_{C}^2\bigr)$) is a multiplicity-free representation
 of $\GG_m \rtimes \ZZ_{g-1}$. 
\end{proof}

The following lemmas are elementary and so we omit the proofs.
\begin{lemma}
\label{L:rosary-basis-canonical} The multiplication map 
$\Sym^m \HH^0\bigl(C, \omega_C \bigr) \to \HH^0 \bigl(C, \omega_C^{m} \bigr) $ is surjective. 
A set $\B$ of degree $m$ monomials in $\omega_0,\dots, \omega_{g-2},\eta$ forms a monomial basis of $\HH^0 \bigl(C,\omega_C^m \bigr)$
if and only if the following conditions are satisfied:
\begin{enumerate}
\item $\B$ contains the $(g-1)$ monomials $\{\omega_i ^m\}_{i=0}^{g-2}$ of weight $\pm m$,
\item $\B$ contains the $(g-1)$ monomials $\{\omega_i^{m-1}\eta\}_{i=0}^{g-2}$ of weight $\pm (m-1)$,
\item $\B$ contains $(g-1)$ linearly independent monomials of each weight $2-m \leq j \leq m-2$.
\end{enumerate}
\end{lemma}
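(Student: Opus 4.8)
The plan is to make $\HH^0(C,\omega_C^m)$ completely explicit, as a space of tuples of sections on the components $R_0,\dots,R_{g-2}\cong\PP^1$ subject to gluing conditions at the $g-1$ tacnodes; once this model is in hand, every assertion of the lemma is a routine inspection carried out one $\gm$-weight at a time. Fix odd $g\ge 5$ (the relevant range, since the rosary is not canonically embedded in genus $3$, and there each $R_i$ meets exactly the two cyclically adjacent components). Restricting the basis of Lemma~\ref{L:rosary-sections}(a) to $R_i$ shows that $\omega_{i-1},\eta,\omega_i$ restrict to the basis $u_i^{-2}\,du_i,\ u_i^{-1}\,du_i,\ du_i$ of $\HH^0\bigl(R_i,\omega_C|_{R_i}\bigr)=\HH^0(\PP^1,\O(2))$, while every other $\omega_j$ restricts to $0$ on $R_i$. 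Hence $\omega_C^m|_{R_i}\cong\O(2m)$ with basis $u_i^{-j}(du_i)^m$, $0\le j\le 2m$, so a global section of $\omega_C^m$ is a tuple $(\sigma_i)$, $\sigma_i=\sum_{j=0}^{2m}c_{i,j}\,u_i^{-j}(du_i)^m$, that is compatible at each tacnode. Using the local model of the tacnode $v_i=u_{i+1}$ already recorded in the proof of Lemma~\ref{L:rosary-ample}, where the local ring is $\{(f,g):f(0)=g(0),\ f'(0)=g'(0)\}$, the compatibility at the $i$-th tacnode is exactly the pair of linear relations $c_{i,0}=\pm c_{i+1,2m}$ and $c_{i,1}=\pm c_{i+1,2m-1}$. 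Thus $\HH^0(C,\omega_C^m)$ is the subspace of $\bigoplus_i\CC^{2m+1}$ cut out by these $2(g-1)$ equations, of dimension $(2m+1)(g-1)-2(g-1)=(2m-1)(g-1)$, agreeing with Riemann-Roch; in particular surjectivity of $\Sym^m\HH^0(\omega_C)\ra\HH^0(\omega_C^m)$ will follow as soon as enough products have been exhibited.

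Next I would bring in the $\gm$-grading, which diagonalizes the whole picture: the coordinate $c_{i,j}$ carries weight $(-1)^i(m-j)$, so in each fixed weight $w$ at most one coordinate per component is nonzero, namely $c_{i,\,m-(-1)^iw}$, and the only coordinates entering a gluing relation are $c_{i,0},c_{i,1},c_{i,2m-1},c_{i,2m}$, whose weights lie in $\{\pm m,\pm(m-1)\}$. This yields a clean dichotomy. For $|w|\le m-2$ none of the gluing coordinates has weight $w$, so the weight-$w$ subspace of $\HH^0(C,\omega_C^m)$ is the \emph{unconstrained} direct sum $\bigoplus_i\CC\cdot u_i^{-(m-(-1)^iw)}(du_i)^m$, of dimension $g-1$; moreover each of its coordinate vectors is a monomial, being a pure-$R_i$ product $\omega_{i-1}^{a}\eta^{b}\omega_i^{c}$ with $a,c\ge 1$ (such a product vanishes on every component except $R_i$, where it equals $u_i^{-(2a+b)}(du_i)^m$, and $2a+b$ runs over all of $\{2,\dots,2m-2\}$). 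For the extreme weights, observe that a degree-$m$ monomial in $\omega_0,\dots,\omega_{g-2},\eta$ has weight $m$ only if every factor is an even-indexed $\omega_j$, and distinct even-indexed $\omega_j$'s have disjoint support, so the only nonzero such monomials are $\{\omega_i^m\}_{i\ \mathrm{even}}$; likewise weight $m-1$ forces $(m-1)$ even-indexed $\omega$'s times one $\eta$, i.e.\ $\{\omega_i^{m-1}\eta\}_{i\ \mathrm{even}}$, and symmetrically in weights $-m$ and $-(m-1)$ with odd indices. Restricting the gluing relations to, say, the weight-$m$ subspace leaves only $c_{i,0}=\pm c_{i+1,2m}$ with $i$ even, each expressing an odd-indexed coordinate in terms of an even-indexed one with no further relation, so $\{\omega_i^m\}_{i\ \mathrm{even}}$ is a basis of the $\tfrac{g-1}{2}$-dimensional weight-$m$ subspace, and similarly in weights $-m$ and $\pm(m-1)$.

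Assembling the three weight ranges covers all of $\HH^0(C,\omega_C^m)$ by products of sections of $\omega_C$, which proves the surjectivity statement. For the basis criterion, I would use that both $\HH^0(C,\omega_C^m)$ and the set of degree-$m$ monomials are $\gm$-graded, so a set $\B$ of monomials maps onto a basis if and only if it does so in each weight. In weights $\pm m$ the only nonzero monomials are the $\omega_i^m$ and they already form a basis, so $\B$ must contain all of them---condition~(1); in weights $\pm(m-1)$ the only nonzero monomials are the $\omega_i^{m-1}\eta$ and they form a basis---condition~(2); and in each weight $j$ with $|j|\le m-2$ the subspace is $(g-1)$-dimensional, so $\B$ maps onto a basis there precisely when it contains $g-1$ linearly independent monomials of weight $j$---condition~(3).

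The only genuinely computational input is the local gluing relation at a tacnode, but that is the same computation carried out in the proof of Lemma~\ref{L:rosary-ample}, so I expect no difficulty there. The one point worth a second look is that running around the cycle imposes no extra relation on the extreme-weight subspaces; this is immediate once one notices that in those subspaces each component carries a single relevant coordinate and the gluing couples them in one direction only (even-indexed to odd-indexed, or vice versa), so the corresponding cyclic linear system has exactly the expected rank. Everything else is bookkeeping, which is presumably why the lemma is stated without proof.
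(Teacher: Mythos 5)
Your proof is correct, and since the paper explicitly omits the proof of this lemma as ``elementary,'' your argument (normalize, describe $\HH^0(C,\omega_C^m)$ as tuples on the $g-1$ components glued by two linear conditions per tacnode, then decompose everything by $\gm$-weight and observe that the gluing relations live only in weights $\pm m$, $\pm(m-1)$) is precisely the computation the authors are suppressing. The one restriction you impose, odd $g\ge 5$ so that each component meets exactly the two cyclically adjacent ones, is consistent with the paper, which only invokes this lemma in Theorem \ref{T:rosary-canonical} for $g\ge 5$ (for $g=3$ the two components meet at both tacnodes and $\omega_C$ is not even very ample).
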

The reader may wish to check, as an example, that
$\{ \omega_i^{j}\eta^{m-j}\}_{i=0}^{g-2}$ and $\{ \omega_{i}^{j+1}\omega_{i-1}\eta^{m-j-2}\}_{i=0}^{g-2}$
give $2g-2$ linearly independent monomials, with $(g-1)$ monomials of weights $j$ and $-j$ each. Thus, taking the union of all these monomials, 
together with $\{\omega_i ^m\}_{i=0}^{g-2}$ and  $\{\omega_i^{m-1}\eta\}_{i=0}^{g-2}$ gives a monomial basis of $\HH^0\bigl(C,\omega_C^m\bigr)$. 

\begin{lemma} \label{L:rosary-basis-bicanonical}
The multiplication map $\Sym^m \HH^0\bigl(C, \omega_C^2\bigr) \to \HH^0\bigl(C, \omega_C^{2m}\bigr)$ is surjective.
A set $\B$ of degree $m$ monomials in $\{x_i\}_{i=0}^{g-2}$, $\{y_i\}_{i=0}^{g-2}$, 
$\{z_i\}_{i=0}^{g-2}$ forms a monomial basis of $\HH^0\bigl(C,\omega_C^{2m}\bigr)$ if and only if the following conditions are satisfied:
\begin{enumerate}
\item $\B$ contains the $(g-1)$ monomials $\{x_i ^m\}_{i=0}^{g-2}$ of weight $\pm 2m$,
\item $\B$ contains the $(g-1)$ monomials $\{x_i^{m-1}y\}_{i=0}^{g-2}$ of weight $\pm (2m-1)$,
\item $\B$ contains $(g-1)$ linearly independent monomials of each weight $2-2m \le j \le 2m-2$. 
\end{enumerate}
\end{lemma}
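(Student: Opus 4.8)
The plan is to reduce the whole statement to a weight-by-weight dimension count for the $\gm$-action on $\HH^0\bigl(C,\omega_C^{2m}\bigr)$, exactly in the spirit of Lemma~\ref{L:rosary-basis-canonical}. First I would dispose of surjectivity of $\Sym^m\HH^0\bigl(C,\omega_C^2\bigr)\to\HH^0\bigl(C,\omega_C^{2m}\bigr)$ by bootstrapping from the canonical case: by Lemma~\ref{L:rosary-basis-canonical}, applied with exponents $2$ and $2m$, both $\Sym^2\HH^0\bigl(C,\omega_C\bigr)\to\HH^0\bigl(C,\omega_C^2\bigr)$ and $\Sym^{2m}\HH^0\bigl(C,\omega_C\bigr)\to\HH^0\bigl(C,\omega_C^{2m}\bigr)$ are surjective; over $\CC$ the natural multiplication $\Sym^m\bigl(\Sym^2\HH^0(C,\omega_C)\bigr)\to\Sym^{2m}\HH^0\bigl(C,\omega_C\bigr)$ is surjective since $(2m)$-th powers of linear forms span $\Sym^{2m}$; and then the commuting square of multiplication maps forces $\Sym^m\HH^0\bigl(C,\omega_C^2\bigr)\to\HH^0\bigl(C,\omega_C^{2m}\bigr)$ to be surjective as well. (Alternatively one may write the needed monomials down by hand, as in the remark following Lemma~\ref{L:rosary-basis-canonical}.) In particular $\HH^0\bigl(C,\omega_C^{2m}\bigr)$ is spanned by degree-$m$ monomials in the $x_i,y_i,z_i$.

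Next, Riemann--Roch gives $h^0\bigl(C,\omega_C^{2m}\bigr)=(4m-1)(g-1)$, since $\omega_C^{2m}$ has degree $4m(g-1)>2g-2$ and hence $h^1=0$. The $\gm$-action grades $\HH^0\bigl(C,\omega_C^{2m}\bigr)=\bigoplus_j W_j$, the monomials in $x_i,y_i,z_i$ are weight vectors, and by the previous paragraph each $W_j$ is spanned by degree-$m$ monomials of weight $j$. Consequently a set $\B$ of degree-$m$ monomials is a monomial basis of $\HH^0\bigl(C,\omega_C^{2m}\bigr)$ if and only if, for each weight $j$, the weight-$j$ elements of $\B$ form a basis of $W_j$. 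So it remains to compute $\dim W_j$ and, at the extreme weights, to enumerate the available monomials.

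For the dimension count I would first get an upper bound from the $\gm$-equivariant injection $\HH^0\bigl(C,\omega_C^{2m}\bigr)\hookrightarrow\bigoplus_{i\in\ZZ_{g-1}}\HH^0\bigl(C_i,\omega_C^{2m}\vert_{C_i}\bigr)$ given by restriction to components. A standard computation at a tacnode identifies $\omega_C^{2m}\vert_{C_i}\cong\omega_{C_i}^{2m}\bigl(4m\cdot p_0+4m\cdot p_\infty\bigr)\cong\O_{\PP^1}(4m)$, where $p_0,p_\infty$ are the two points where $C_i$ meets the rest of $C$, with basis $\{(du_i)^{2m}u_i^{-s}\}_{s=0}^{4m}$; since $t\in\gm$ scales $(du_i)^{2m}u_i^{-s}$ by $t^{(-1)^i(2m-s)}$, each summand has one-dimensional weight spaces in every weight of absolute value at most $2m$ and vanishes in all other weights. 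Hence $W_j=0$ for $|j|>2m$ and $\dim W_j\le g-1$ for $|j|\le 2m$. At the extreme weights a short combinatorial argument, using that $\omega_i\omega_{i'}=0$ whenever $i\neq i'$ lie at cyclic distance $\ge 2$ (which, since $g-1$ is even and $g\ge5$, covers any two distinct indices of the same parity), shows that the only degree-$m$ monomials of weight $\pm 2m$ are the $x_i^m$ with $i$ of the appropriate parity and the only ones of weight $\pm(2m-1)$ are the $x_i^{m-1}y_i$; these $(g-1)/2$ monomials have pairwise disjoint supports, hence are linearly independent, so $\dim W_{\pm 2m}=\dim W_{\pm(2m-1)}=(g-1)/2$. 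Subtracting from the total gives $\sum_{|j|\le 2m-2}\dim W_j=(4m-3)(g-1)$, a sum of $4m-3$ terms each $\le g-1$, whence every intermediate weight space has dimension exactly $g-1$. Assembling: a monomial basis must contain the unique monomials spanning $W_{\pm 2m}$ (condition (1)) and $W_{\pm(2m-1)}$ (condition (2)) and must contain $g-1$ linearly independent monomials of each intermediate weight (condition (3)), and conversely any $\B$ meeting (1)--(3) is a monomial basis. The case $g=3$ is checked directly, the extreme-weight combinatorics being trivial there.

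I expect the main obstacle to be the bookkeeping at the extreme weights — ruling out ``mixed'' monomials of weight $\pm 2m$ and $\pm(2m-1)$ — together with pinning down the restriction $\omega_C^{2m}\vert_{C_i}$ correctly; once the bound $\dim W_j\le g-1$ and these extreme cases are in hand, the intermediate dimensions come for free by counting, so one never needs to construct intermediate monomials explicitly for the dimension statement, even though such monomials exist and are precisely the ones figuring in condition (3).
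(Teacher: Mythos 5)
The paper declares this lemma ``elementary'' and omits its proof altogether, so there is no argument of the authors to compare against line by line; your write-up is a correct and complete substitute for $g\geq 5$, and its skeleton --- decompose $\HH^0\bigl(C,\omega_C^{2m}\bigr)$ into $\gm$-weight spaces, note that a set of weight-homogeneous monomials is a basis if and only if it is a basis weight by weight, then count --- is surely what the authors intend (compare the worked example after Lemma~\ref{L:rosary-basis-canonical}, where they exhibit $g-1$ independent monomials in each intermediate weight). Your one genuine departure is in the dimension count: instead of producing spanning monomials in every weight, you get the upper bound $\dim W_j\leq g-1$ from the equivariant injection into $\bigoplus_i\HH^0\bigl(C_i,\omega_C^{2m}\vert_{C_i}\bigr)$ (valid since the rosary is reduced), compute the four extreme weight spaces exactly, and let Riemann--Roch force every intermediate weight space to be full. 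This is cleaner, gives the ``only if'' direction for free, and spares you from ever writing down intermediate monomials. The identification $\omega_C^{2m}\vert_{C_i}\cong\O_{\PP^1}(4m)$ with one-dimensional weight spaces in each weight $|j|\leq 2m$ checks out, as does the vanishing $\omega_i\omega_{i'}=0$ for distinct indices of equal parity, which is exactly what kills the mixed monomials at weights $\pm 2m$ and $\pm(2m-1)$.

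The one soft spot is $g=3$, which you defer to a ``direct check'' while suggesting only the extreme-weight combinatorics changes. In fact more breaks there: the genus-$3$ rosary is hyperelliptic, so $\Sym^2\HH^0\bigl(C,\omega_C\bigr)\to\HH^0\bigl(C,\omega_C^2\bigr)$ is \emph{not} surjective and your bootstrap for surjectivity of $\Sym^m\HH^0\bigl(C,\omega_C^2\bigr)\to\HH^0\bigl(C,\omega_C^{2m}\bigr)$ fails at its first step; moreover the paper's own Lemma~\ref{L:rosary-sections}(b) degenerates there, since $z_0=z_1=\omega_0\omega_1$ gives only five distinct elements of a six-dimensional space, so the lemma as literally stated is ill-posed at $g=3$. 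Since the paper does invoke the bicanonical rosary at $g=3$ (see the remark in the proof of Theorem~\ref{T:rosary}), you should either carry out that case honestly with the full $\HH^0\bigl(C,\omega_C^2\bigr)$ or record explicitly that your proof, like the lemma itself, requires $g\geq 5$.
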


\section{Monomial bases and semistability} \label{S:monomial-bases}

\subsection{Canonically embedded ribbon}\label{S:monomial-bases-ribbon}

Let $C$ denote the balanced ribbon as defined in Section \ref{S:ribbon}. In this section, we prove the odd genus case 
of the first part of our Main Result.

\begin{theorem}\label{T:ribbon}
If $C \subset \PP \HH^0\bigl(C, \omega_C\bigr)$ is a canonically embedded balanced ribbon, 
then the Hilbert points $[C]_m$ are semistable for all $m \geq 2$.
\end{theorem}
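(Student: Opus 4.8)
The plan is to apply the Kempf–Morrison criterion (Proposition \ref{P:kempf}) with $G=\gm\subset\Aut(C)$, which is legitimate by Lemma \ref{L:ribbon-ample} (very ampleness), Proposition \ref{P:projectively-normal-ribbon} (projective normality, so the hypothesis $h^1(C,\I_C(m))=0$ holds), and Proposition \ref{P:ribbon-multiplicity-free} (multiplicity-freeness with the distinguished basis $\{x_0,\dots,x_k,y_{k+1},\dots,y_{2k}\}$). Fix a one-parameter subgroup $\rho$ acting diagonally on this basis; since $\gm$ already acts diagonally with weights $-k,\dots,k$, a $\rho$-weighted basis is the same basis but with arbitrary integer weights $\rho_{-k},\dots,\rho_k$ summing to zero, where $\rho_i$ is the weight on $x_i$ (for $0\le i\le k$) and $\rho_{-i}$ the weight on $y_{k+i}$. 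By Lemma \ref{L:covering}, it suffices to exhibit, for each $m\ge2$, a finite family of monomial bases $\{\B_j\}$ of $\HH^0(C,\omega_C^m)$ and positive rationals $\{c_j\}$ with $\sum_j c_j w_\rho(\B_j)\equiv 0$ as a linear function of $(\rho_{-k},\dots,\rho_k)$. Concretely, by Lemma \ref{L:RPL} the $\rho$-weight of a monomial $\prod x_{i_p}\prod y_{i_q}$ is a sum of the corresponding $\rho$-indices, so $w_\rho(\B_j)=\sum_{v\in S_j}\big(\text{sum of indices of the factors of the monomial indexed by }v\big)$, and the identity $\sum_j c_j w_\rho(\B_j)=0$ is equivalent to the combinatorial statement that each index $i\in\{-k,\dots,k\}$ appears with the same total weighted multiplicity across all the bases.

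The main work, then, is constructing the monomial bases. Here I would lean on Lemma \ref{L:ribbon-basis}: a set of degree-$m$ monomials is a basis precisely when its index-sums $a=i_1+\cdots+i_m$ hit each value $0,\dots,k$ and $(2m-1)k,\dots,2mk$ exactly once, hit each value $k+1,\dots,(2m-1)k-1$ exactly twice, and the two monomials of a repeated index-sum have distinct ``$b$-invariants'' $i_{\ell+1}+\cdots+i_m-k(m-\ell)$ (i.e. one is genuinely an $x$-monomial in the relevant degree and the other mixes in $y$'s). The natural first family is the ``symmetric'' one already hinted at in the ribbon section: $\{x_0^{m-1}x_a\}_{a=0}^k$, $\{y_{2k}^{m-1}x_k\}$, $\{y_{2k}^{m-1}y_{k+a}\}_{a=1}^k$ at the extreme index-sums, together with, for each intermediate index-sum, one pure-$x$ monomial $x_0^{m-1}x_a$ (or $x_0^{m-2}x_{a_1}x_{a_2}$, etc.) and one $y$-containing monomial — call this basis $\B_{\mathrm{sym}}$. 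Its $\rho$-weight is not identically zero, so I will then build a companion family obtained by symmetry: the involution $x_i\leftrightarrow y_{2k-i}$ (equivalently $i\mapsto -i$ on indices, reflecting the $\ZZ_2$ symmetry of the ribbon, or the reindexing $u\mapsto$ the other uniformizer) sends a monomial basis to a monomial basis, and averaging $\B$ with its image kills the antisymmetric part of the weight. What remains is the symmetric part; to cancel it I expect to need a handful of further explicitly-chosen bases that redistribute weight between ``inner'' and ``outer'' indices, with rational coefficients $c_j$ tuned to produce the vanishing. This is exactly the ``write down sufficiently many monomial bases by hand'' step advertised in Section \ref{S:kempf}, and it is where all the genus-dependence and case-checking lives.

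The hard part will be organizing this finite collection of bases uniformly in $m$ and $g=2k+1$, rather than doing a separate ad hoc computation for each small case — in particular, verifying condition (2) of Lemma \ref{L:ribbon-basis} (distinct $b$-invariants at every repeated index-sum) simultaneously for all the bases in the family, and then checking the single linear identity $\sum_j c_j w_\rho(\B_j)=0$. I would handle this by first treating the ``antisymmetrization'' step abstractly (it works for any basis), so that the genuine content reduces to covering the \emph{symmetric} weight vectors $(\rho_i)$ with $\rho_i=\rho_{-i}$; on that subspace $w_\rho(\B_{\mathrm{sym}})$ becomes a manageable explicit linear form in $\rho_0,\dots,\rho_k$, and I would cancel it using bases that swap a factor $x_0$ for $y_{2k}$ (or $x_a$ for $y_{k+a}$) in a controlled way, paying attention only to keeping the index-sum multiplicities and $b$-invariants correct. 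Once the identity is verified, Lemma \ref{L:covering} gives semistability of $[C]_m$ for every $m\ge2$, completing the proof.
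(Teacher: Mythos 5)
Your framework is the same as the paper's---Kempf--Morrison via Proposition \ref{P:kempf}, reduction to the covering identity of Lemma \ref{L:covering}, and Lemma \ref{L:ribbon-basis} as the membership test---and your symmetrization remark is sound in principle (the paper's higher-degree basis $\B^-$ is indeed built to be $\iota$-invariant). As a side note, under your own indexing the involution $x_i\leftrightarrow y_{2k-i}$ acts on indices by $i\mapsto i-k$, not $i\mapsto -i$; the latter holds only if one indexes the variables by their $\gm$-weights.

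The genuine gap is that the entire content of the theorem---the explicit construction of the monomial bases and the verification of $\sum_j c_j\,w_\rho(\B_j)=0$---is exactly the step you defer (``a handful of further explicitly-chosen bases \dots with rational coefficients tuned to produce the vanishing''). This is not a routine afterthought. Even after antisymmetrizing, the symmetric weight vectors form a $k$-dimensional space, so a generic ``handful'' of bases would need occurrence vectors whose positive span covers all of it. The paper sidesteps this with a structural device your sketch does not identify: each basis it uses ($\B^+,\B^-$ for $m=2$; $\B_1^+,\B_2^+,\B^-$ for $m\geq 3$) is engineered so that the inner variables $x_1,\dots,x_{k-1},y_{k+1},\dots,y_{2k-1}$ all occur equally often and $x_0$, $y_{2k}$ occur equally often, whence (using $\sum_i\rho_i=0$) every weight collapses to a linear form in the two quantities $\rho_0+\rho_{2k}$ and $\rho_k$ alone, and the covering identity reduces to placing the origin in the positive span of two or three vectors in the plane---the explicit inequalities of Lemma \ref{L:ribbon-linear-dependence}. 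Producing such bases and certifying them via Lemma \ref{L:ribbon-basis} is where the real work lies (Definitions \ref{D:Petri-1} and \ref{D:anti-Petri-1}, with the sets $S_0,\dots,S_5$ and separate cases for the parities of $k$ and $m$), and none of it is supplied. As written, your argument establishes the strategy but not the theorem.
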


\begin{corollary}\label{C:ribbon}
Suppose $C \subset \PP \HH^0\bigl(C, K_{C}\bigr)$ is a canonically embedded generic smooth curve of odd genus. 
Then the $m^{th}$ Hilbert point of $C$ is semistable for every $m \geq 2$.
\end{corollary}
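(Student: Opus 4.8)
\textbf{Proof proposal for Corollary \ref{C:ribbon}.}

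The plan is to deduce the statement about generic smooth curves of odd genus from Theorem \ref{T:ribbon} by a standard semicontinuity argument, using the fact that the balanced ribbon $C$ arises as a flat limit of canonically embedded smooth curves. First I would recall that for a fixed $m \geq 2$, the locus of $m^{th}$ Hilbert points that are semistable is open in $\overline{H}_{g,1}^{\,m}$: this is immediate from the Hilbert--Mumford criterion together with the fact that GIT semistability is an open condition on the parameter space. Thus it suffices to exhibit a \emph{single} semistable point of $\overline{H}_{g,1}^{\,m}$ lying in the closure of the locus of $m^{th}$ Hilbert points of canonically embedded smooth curves of genus $g$, and then invoke irreducibility of (the relevant component of) $\overline{H}_{g,1}^{\,m}$ to conclude that the generic such point is semistable.

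The key geometric input is that the balanced ribbon $C$ of genus $g = 2k+1$ is a flat limit of canonically embedded smooth curves. Concretely, one would take a one-parameter family $\pi\colon \mathcal{C} \to \Delta$ over a disc whose general fiber is a smooth canonical curve of genus $g$ degenerating abstractly to a hyperelliptic curve, such that the associated family of canonical models (in the relative $\PP H^0(\mathcal{C}/\Delta,\omega_{\mathcal{C}/\Delta})$, which is possible since $\omega_C$ is very ample by Lemma \ref{L:ribbon-ample} and $h^0$ is constant by projective normality, Proposition \ref{P:projectively-normal-ribbon}) has special fiber the ribbon $C$ --- this is the classical picture of ribbons as limits of canonical curves, and the balanced ribbon is the one carrying a $\GG_m$-action. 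Flatness of the family of $m^{th}$ Hilbert points then shows $[C]_m$ lies in $\overline{H}_{g,1}^{\,m}$, in the closure of the smooth locus. Since by Theorem \ref{T:ribbon} the point $[C]_m$ is semistable and semistability is open, the generic canonically embedded smooth curve of odd genus has semistable $m^{th}$ Hilbert point, for each $m \geq 2$.

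The main obstacle here is not combinatorial but verifying the degeneration statement cleanly: one must confirm that the flat limit of the canonical embeddings is precisely the balanced ribbon (and not some other ribbon or a different non-reduced scheme), and that the Hilbert scheme component containing canonical curves is irreducible so that ``generic'' is meaningful. The first point is handled by the construction of the ribbon in Section \ref{S:ribbon} together with the classification in \cite{afs_preprint} identifying the balanced ribbon as the unique ribbon with $\GG_m$-action; the second is standard since smooth canonical curves form an irreducible family. Everything else --- the hard work of producing monomial bases of non-positive weight for every $m \geq 2$ --- is already subsumed in Theorem \ref{T:ribbon}.
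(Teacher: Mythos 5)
Your overall strategy coincides with the paper's: openness of the semistable locus, irreducibility of $\overline{H}_{g,1}^{\,m}$ (which holds essentially by definition, since it is the closure of the irreducible locus of $m^{th}$ Hilbert points of smooth canonical curves), and the fact that the balanced ribbon's Hilbert point lies in $\overline{H}_{g,1}^{\,m}$, so that Theorem \ref{T:ribbon} supplies the required single semistable point. The one place where your argument has a genuine gap is the justification of the degeneration statement --- precisely the step you yourself flag as ``the main obstacle.''

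You propose to realize the balanced ribbon as the flat limit of canonical models along a one-parameter family of smooth curves degenerating abstractly to a hyperelliptic curve, and to pin down the limit using the classification in \cite{afs_preprint} of ribbons with $\GG_m$-action. This does not work as stated: such a degeneration produces \emph{some} ribbon supported on the rational normal curve, but which ribbon appears depends on the choice of family (on the normal direction in which one approaches the hyperelliptic locus), and the result of \cite{afs_preprint} only says that among ribbons admitting a $\GG_m$-action the balanced one is the unique candidate for semistability --- it says nothing about which ribbons actually arise as limits of smooth canonical curves. The input the paper actually uses is the reverse implication, namely Fong's theorem \cite{fong} that canonically embedded ribbons are smoothable, so the balanced ribbon \emph{deforms to} a smooth canonical curve; combined with Proposition \ref{P:projectively-normal-ribbon}, which guarantees that $[C]_m$ is a well-defined point of $\PP W_m$, this places $[C]_m$ in $\overline{H}_{g,1}^{\,m}$. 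With that substitution your argument closes up and becomes identical to the paper's proof.
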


\begin{proof}[Proof of Corollary \ref{C:ribbon}]
Quite generally, the locus of semistable points $(\overline{H}_{g,1}^{\, m})^{ss} \subset \overline{H}_{g,1}^{\, m}$ is open \cite{git}. 
Since $\overline{H}_{g,1}^{\, m}$ is an irreducible variety whose generic point is the $m^{th}$ Hilbert point of a canonically embedded 
smooth genus $g$ curve, it remains to find a single semistable point in $\overline{H}_{g,1}^{\, m}$.
The balanced ribbon $C$ deforms to a smooth canonical curve by \cite{fong} and 
Proposition \ref{P:projectively-normal-ribbon} shows that $[C]_{m}\in \overline{H}_{g,1}^{\, m}$.
Applying Theorem \ref{T:ribbon} finishes the proof.
\end{proof}

We have already seen that there is a distinguished basis $\{x_0,\dots, x_k, y_{k+1},\dots, y_{2k}\}$ 
of $\HH^0\bigl(C, \omega_C\bigr)$ on which $\gm\subset \Aut(C)$ acts with 
distinct weights (Proposition \ref{P:ribbon-multiplicity-free}). 
According to Lemma \ref{L:covering}, to prove Theorem \ref{T:ribbon}
it suffices to find a set of monomial bases such that an effective linear combination of their $\rho$-weights is $0$ with respect 
to every one-parameter subgroup $\rho\co \gm \ra \SL(g)$.
For ease of exposition, we will treat the cases $m=2$ and $m \geq 3$ separately.

\subsubsection{Monomial bases of $\HH^0\bigl(C,\omega_C^2\bigr)$.}\label{S:ribbon:m=2}

First, we define two monomial bases, $\B^{+}$ and $\B^{-}$, of $\HH^0\bigl(C, \omega_C^2\bigr)$ as follows. 
We define $\B^{+}$ to be the set of quadratic monomials divisible by one of $x_0$, $x_k$, or $y_{2k}$. 
More precisely, 
\begin{equation}\label{E:B+}
\B^{+}:=\left\{
 \{x_0x_i\}_{i=0}^{k}, \{x_0y_i\}_{i=k+1}^{2k}, 
 \{x_kx_i\}_{i=1}^{k}, \{x_ky_i\}_{i=k+1}^{2k},  
 \{y_{2k}x_i\}_{i=1}^{k-1}, \{y_{2k}y_i\}_{i=k+1}^{2k}
\right\}.
\end{equation}
We define $\B^{-}$ as follows:
\begin{equation}\label{E:B-}
\B^{-}:=\left\{
\begin{aligned}
&\{x_{i}^2\}_{i=0}^{k},  \{y_{i}^2\}_{i=k+1}^{2k},  \\  
& \{x_{i}x_{i+1}\}_{i=0}^{k-1}, x_ky_{k+1}, \{y_iy_{i+1}\}_{i=k+1}^{2k-1}, \\ 
&\{x_iy_{i+k}\}_{i=1}^{k-1}, \{x_iy_{i+k+1}\}_{i=0}^{k-1}
\end{aligned}\right\}.
\end{equation}

\begin{lemma}
$\B^{+}$ and $\B^{-}$ are monomial bases of $\HH^0\bigl(C, \omega_C^2\bigr)$. For any 
one-parameter subgroup $\rho$ acting on $(x_0,\dots, y_{2k})$ diagonally with weights
 $(\rho_0, \ldots, \rho_{2k})$ the $\rho$-weights of $\B^+$ and $\B^-$ are:
\begin{align*}
&w_{\rho}(\B^{+})=(g-2)(\rho_0+\rho_k+\rho_{2k}),\\
&w_{\rho}(\B^{-})=-2(\rho_0+\rho_k+\rho_{2k}).
\end{align*}
\end{lemma}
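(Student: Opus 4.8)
The plan is to verify directly that $\B^{+}$ and $\B^{-}$ are monomial bases by applying the criterion of Lemma \ref{L:ribbon-basis}, and then to compute their $\rho$-weights by summing the contributions of the individual monomials, using the Ribbon Product Lemma \ref{L:RPL} to keep track of $u$-degrees.

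First I would treat $\B^{+}$. Each listed monomial is a product of two of the variables $x_0,\dots,x_k,y_{k+1},\dots,y_{2k}$, so by Lemma \ref{L:RPL} its $u$-degree $a$ is just the sum of the two indices. I would tabulate these sums: the block $\{x_0x_i\}_{i=0}^{k}$ contributes $u$-degrees $0,\dots,k$; the block $\{x_0y_i\}_{i=k+1}^{2k}$ contributes $k+1,\dots,2k$; $\{x_kx_i\}_{i=1}^{k}$ contributes $k+1,\dots,2k$; $\{x_ky_i\}_{i=k+1}^{2k}$ contributes $2k+1,\dots,3k$; $\{y_{2k}x_i\}_{i=1}^{k-1}$ contributes $2k+1,\dots,3k-1$; and $\{y_{2k}y_i\}_{i=k+1}^{2k}$ contributes $3k+1,\dots,4k$. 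For $m=2$ the three ranges required by Lemma \ref{L:ribbon-basis} are $0\le a\le k$, $k<a<3k$, and $3k\le a\le 4k$. One checks that each degree in $[0,k]\cup[3k,4k]$ is hit exactly once and each degree in $(k,3k)$ exactly twice; the degree $a=2k$ is hit by $x_0y_{2k}$ and $x_kx_k$, and one must verify the secondary invariants $i_{\ell+1}+\cdots+i_m-k(m-\ell)$ from Lemma \ref{L:ribbon-basis} are distinct for each doubly-covered degree — for $a=2k$ these are, say, $2k-k=k$ versus $0-0$ (reading $x_k^2$ as having $\ell=2$), which differ. A short uniform check handles the remaining degrees, so $\B^{+}$ is a monomial basis. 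The same bookkeeping, slightly more involved because of the three-row layout of \eqref{E:B-}, establishes that $\B^{-}$ is a monomial basis: the squares and adjacent products $x_ix_{i+1}$, $y_iy_{i+1}$ supply one polynomial of each $u$-degree, and the mixed terms $x_iy_{i+k}$, $x_iy_{i+k+1}$ supply the needed second polynomial of each intermediate degree with a distinct secondary invariant.

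For the weight computation, the $\rho$-weight of a degree-$2$ monomial $x_ix_j$ (or with $y$'s) equals $\rho_i+\rho_j$ under the identification of the index of $y_{k+i}$ with its subscript, since $w_\rho$ is additive over the monomial basis and linear in the weight vector. So $w_\rho(\B^{+})=\sum_{e\in\B^{+}}(\text{sum of the two subscript-weights of }e)$; each variable $x_s$ or $y_s$ appears in $\B^{+}$ a certain number of times, and $w_\rho(\B^{+})$ is $\sum_s(\#\text{appearances of index }s)\,\rho_s$. The claim $w_\rho(\B^{+})=(g-2)(\rho_0+\rho_k+\rho_{2k})$ is then the assertion that index $0$, index $k$, and index $2k$ each appear $g-2=2k-1$ times while every other index appears $0$ times net — which is visibly what the definition \eqref{E:B+} arranges, since every monomial in $\B^{+}$ is divisible by exactly one of $x_0,x_k,y_{2k}$ and the "other factor" runs over a set of size $g-2$ for each (after accounting for the overlaps $x_0x_0$, $x_kx_k$, and the deliberate omission of $y_{2k}x_k$ and $y_{2k}^2\cdots$ — these exclusions are exactly what makes the count come out to $g-2$ and not $g$). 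Dually, $w_\rho(\B^{-})=-2(\rho_0+\rho_k+\rho_{2k})$ because in $\B^{-}$ index $0$ appears in $x_0^2$ and $x_0x_1$ and $x_0y_{k+1}$, etc.; one checks that the net multiplicity of index $0$, of index $k$, and of index $2k$ is $-2$ each while all others are $0$, using $\sum_i\rho_i=0$ to absorb the bulk of the terms. I would present the two multiplicity counts in a line or two each rather than a full table.

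\textbf{Main obstacle.} The genuinely fiddly step is the verification that $\B^{-}$ is a monomial basis: unlike $\B^{+}$, its monomials are spread across three index patterns and several of them are \emph{mixed} products $x_iy_{i+k}$ to which Lemma \ref{L:RPL} assigns both a primary $u$-degree $a$ and a nontrivial secondary coefficient $a-b$, so one must check not only that the multiset of $u$-degrees is correct but that at each intermediate degree the two chosen monomials have distinct secondary invariants (equivalently, span the two-dimensional space $\{cu^a+du^{a-k-1}\epsilon\}$). Getting the index ranges in \eqref{E:B-} to align at the boundaries (around $a=k$, $a=k+1$, and $a=2k$, where $x_ky_{k+1}$ sits as a lone "transition" monomial) without off-by-one errors is where care is needed; the weight computation itself, once the bases are in hand, is a routine finite sum.
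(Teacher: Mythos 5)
Your proposal is correct and follows essentially the same route as the paper: one checks $\B^{+}$ and $\B^{-}$ against the criterion of Lemma \ref{L:ribbon-basis} using the Ribbon Product Lemma \ref{L:RPL}, and then computes the weights by counting how often each variable occurs and invoking $\sum_{i}\rho_i=0$ (the paper records the raw counts — $x_0,x_k,y_{2k}$ occur $g+1$ times each in $\B^{+}$ and $4$ times each in $\B^{-}$, all other variables $3$ resp.\ $6$ times — which is exactly your ``net multiplicity'' bookkeeping of $g-2$, $0$, $-2$, $0$). The one slip is the parenthetical assertion that every monomial of $\B^{+}$ is divisible by \emph{exactly one} of $x_0,x_k,y_{2k}$ (false for $x_0x_k$, $x_0y_{2k}$, $x_ky_{2k}$); this heuristic does not actually produce the count, but the counts you commit to verifying are the right ones, so nothing essential is affected.
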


\begin{proof} Using Lemma \ref{L:ribbon-basis}, one easily checks that $\B^+$ and $\B^-$ are monomial bases. 
To compute the weight of $\B^{+}$ observe that variables $\{x_i, y_{k+i}\}_{i=1}^{k-1}$ each occur $3$ times and variables
$\{x_0, x_k, y_{2k}\}$ each occur $g+1$ times in Display \eqref{E:B+}. It follows that 
\[
w_{\rho}(\B^{+})=3\sum_{i=1}^{k-1} (\rho_i+\rho_{k+i})+(g+1)(\rho_0+\rho_k+\rho_{2k})=(g-2)(\rho_0+\rho_k+\rho_{2k}),
\]
where the last equality follows from the relation $\sum_{i=0}^{2k}\rho_i=0$.

Similarly, variables $\{x_i, y_{k+i}\}_{i=1}^{k-1}$ each occur $6$ times and variables $\{x_0, x_k, y_{2k}\}$ each occur $4$ times 
in Display \eqref{E:B-}. It follows that 
\[
w_{\rho}(\B^{-})=6\sum_{i=1}^{k-1} (\rho_i+\rho_{k+i})+4(\rho_0+\rho_k+\rho_{2k})=-2(\rho_0+\rho_k+\rho_{2k}),
\]
where the last equality again follows from the relation $\sum_{i=0}^{2k}\rho_i=0$.
\end{proof}

\begin{corollary} \label{C:ribbon2}
The $2^{nd}$ Hilbert point of $C$ is semistable.
\end{corollary}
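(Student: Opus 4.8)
The plan is to deduce Corollary \ref{C:ribbon2} directly from the two monomial bases $\B^{+}$ and $\B^{-}$ already constructed, together with Lemma \ref{L:covering}. The point is that the weight expressions computed in the preceding lemma, namely $w_{\rho}(\B^{+})=(g-2)(\rho_0+\rho_k+\rho_{2k})$ and $w_{\rho}(\B^{-})=-2(\rho_0+\rho_k+\rho_{2k})$, are \emph{proportional} with opposite signs: both are scalar multiples of the single linear functional $\rho\mapsto \rho_0+\rho_k+\rho_{2k}$. Hence a suitable positive rational combination of them vanishes identically on the hyperplane $\sum_{i=0}^{2k}\rho_i=0$.

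Concretely, I would take $c_{+}=2$ and $c_{-}=g-2$ (both strictly positive since $g=2k+1\geq 3$), so that
\[
c_{+}\,w_{\rho}(\B^{+})+c_{-}\,w_{\rho}(\B^{-})
 = 2(g-2)(\rho_0+\rho_k+\rho_{2k}) - 2(g-2)(\rho_0+\rho_k+\rho_{2k}) = 0
\]
for \emph{every} one-parameter subgroup $\rho\co \gm\ra \SL(g)$ acting diagonally on the basis $\{x_0,\dots,x_k,y_{k+1},\dots,y_{2k}\}$. By Proposition \ref{P:ribbon-multiplicity-free}, this basis is compatible with the irreducible decomposition of the multiplicity-free $\gm$-representation $\HH^0(C,\omega_C)$, so the hypotheses of Lemma \ref{L:covering} are met with $J=\{+,-\}$, the monomial bases $\B^{+},\B^{-}$ of $\HH^0(C,\omega_C^2)$, and the constants $c_{+},c_{-}\in \mathbb{Q}\cap(0,\infty)$ above. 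Lemma \ref{L:covering} then yields that $[C]_2$ is semistable, which is exactly the assertion.

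There is essentially no obstacle here: the entire content has been front-loaded into the construction of $\B^{+}$ and $\B^{-}$ and the verification (via Lemma \ref{L:ribbon-basis} and the Ribbon Product Lemma \ref{L:RPL}) that they are genuine monomial bases with the stated weights. The only thing to be slightly careful about is that one must invoke the Kempf--Morrison reduction (Proposition \ref{P:kempf}, packaged as Lemma \ref{L:covering}) so that it suffices to test one-parameter subgroups of the \emph{fixed} torus rather than all of $\SL(g)$ — and this is legitimate precisely because $\HH^0(C,\omega_C)$ is multiplicity-free over $\gm$. I would close by remarking that the proportionality of $w_{\rho}(\B^{+})$ and $w_{\rho}(\B^{-})$ to the functional $\rho_0+\rho_k+\rho_{2k}$ is not accidental: the three variables $x_0,x_k,y_{2k}$ are the $\gm$-extremal weight vectors (weights $0$, $k$, $k$... in fact $-k$ and $k$ at the ends), and it is exactly these extremal coordinates that control destabilization, which is why two bases suffice in the $m=2$ case whereas more will be needed for $m\geq 3$.
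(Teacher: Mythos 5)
Your proposal is correct and is essentially identical to the paper's own proof: the paper likewise observes that $2w_{\rho}(\B^{+})+(g-2)w_{\rho}(\B^{-})=0$ for every $\rho$ acting diagonally on the distinguished basis and concludes by Lemma \ref{L:covering}. The additional remarks about multiplicity-freeness and the extremal coordinates are consistent with the paper's framework and add nothing that changes the argument.
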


\begin{proof}
We have $2w_{\rho}(\B^+)+(g-2)w_{\rho}(\B^-)=0$ for any $\rho\co \gm\ra \SL(g)$ acting diagonally on the distinguished basis. 
The claim follows by Lemma \ref{L:covering}.
\end{proof}

\subsubsection{Monomial bases of $\HH^0\bigl(C,\omega_C^m\bigr)$ for $m\geq 3$.}
Finding monomial bases in higher degrees is slightly more cumbersome than in the case $m=2$. First, 
we will need three monomial bases in every degree $m \geq 3$. Second, 
the precise form of one of these bases depends on the residue of $g=2k+1$ modulo $4$. 
Nevertheless, the proof is conceptually no different than in the case $m=2$. 
Finally, we work throughout with $m$ fixed and each basis used in degree $m$ is defined independently as a set of degree $m$ monomials,
though we have, for simplicity, suppressed the dependence on $m$ in our notation.

We begin by defining two higher-degree analogues
of the basis $\B^{+}$ from Section \ref{S:ribbon:m=2}. 
\begin{definition}\label{D:Petri-1} 
We define $\B^{+}_{1}$ to be the set of degree $m$ monomials in the ideal 
\[
(x_0,x_k)^{m-1}\cdot (x_0,\dots, x_{k-1}, y_{k+1}, \dots, y_{2k})
+(x_k,y_{2k})^{m-1}\cdot (x_0,\dots, x_{k-1}, y_{k+1}, \dots, y_{2k})+x_k^m.
\]
We define $\B^{+}_2$ to be the set of degree $m$ monomials in the ideal 
 \begin{multline*}
 (x_0,y_{2k})^{m-1}\cdot (x_1,\dots, x_{k-1}, y_{k+1}, \dots, y_{2k-1}) +x_k\cdot (x_0,y_{2k})^{m-2}\cdot (x_1,\dots, x_{k-1}, y_{k+1}, \dots, y_{2k-1}) \\
 +(x_0,y_{2k})^{m}+x_k(x_0,y_{2k})^{m-1}+x^2_k(x_0,y_{2k})^{m-2}+x^3_k(x_0,y_{2k})^{m-3} 
 \end{multline*}
\end{definition}
\begin{lemma}\label{L:Petri-weight-1}
$\B^{+}_1$ and $\B^{+}_{2}$ are monomial bases of $\HH^0\bigl(C, \omega_C^m\bigr)$. 
For any one-parameter subgroup $\rho$ acting on $(x_0,\dots, y_{2k})$ diagonally with weights
 $(\rho_0, \ldots, \rho_{2k})$ the $\rho$-weights of $\B^{+}_1$ and $\B^{+}_2$ are:
\begin{align*}
w_{\rho}(\B_1^{+})=\bigl((m-1)^2(g-1)-(2m-3)\bigr)\rho_k+\left(\frac{m(m-1)}{2}(g-1)-1\right)(\rho_0+\rho_{2k}), \\
w_{\rho}(\B_2^{+})=\bigl((m-1)(g-1)+(2m-5)\bigr)\rho_k+\bigl((m-1)^2(g-1)-(2m-3)\bigr)(\rho_0+\rho_{2k}).
\end{align*}
\end{lemma}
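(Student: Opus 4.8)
The plan is to verify Lemma \ref{L:Petri-weight-1} in two independent stages: first confirm that $\B_1^+$ and $\B_2^+$ really are monomial bases of $\HH^0\bigl(C,\omega_C^m\bigr)$ by checking the numerical conditions of Lemma \ref{L:ribbon-basis}, and then compute the two $\rho$-weights by carefully bookkeeping how many times each variable appears in each basis. Throughout, the key computational device is the Ribbon Product Lemma \ref{L:RPL}: for a degree-$m$ monomial $x_{i_1}\cdots x_{i_\ell}y_{i_{\ell+1}}\cdots y_{i_m}$ it records both the $u$-degree $a=i_1+\cdots+i_m$ and the ``secondary invariant'' $a-b$ with $b=i_1+\cdots+i_\ell+k(m-\ell)$, which together decide whether a chosen collection spans $\HH^0\bigl(C,\omega_C^m\bigr)$.

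For the basis property, I would proceed $u$-degree by $u$-degree as dictated by Lemma \ref{L:ribbon-basis}. For $\B_1^+$, the monomials coming from $(x_0,x_k)^{m-1}\cdot(x_0,\dots,x_{k-1},y_{k+1},\dots,y_{2k})$ sweep out the lower range of $u$-degrees and the monomials from $(x_k,y_{2k})^{m-1}\cdot(\cdots)$ sweep out the upper range, with $x_k^m$ supplying the single middle $u$-degree $mk$; one checks that in the ``doubled'' range $k<a<(2m-1)k$ exactly two monomials of each $u$-degree occur and that the two associated integers $i_{\ell+1}+\cdots+i_m-k(m-\ell)$ are distinct (these distinctness checks are where one exploits that the first factor is a power of $(x_0,x_k)$ resp.\ $(x_k,y_{2k})$ — one monomial uses only $x$-variables in a tail, the other carries a $y$-variable, forcing different secondary invariants). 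The argument for $\B_2^+$ is the same in spirit but bookkeeping-heavier because of the extra terms $x_k^j(x_0,y_{2k})^{m-j}$ for $j=0,1,2,3$ patching the $u$-degrees near the center; the point of including powers of $x_k$ up to $3$ is precisely to cover the handful of central $u$-degrees that the pure products $(x_0,y_{2k})^{m-1}\cdot(x_1,\dots)$ miss, and $|x_k$ shifts the $u$-degree by exactly $k$ each time.

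For the weight computations, I would count, for each variable $x_i$ or $y_{k+i}$, the total number of times it occurs across all monomials in the basis, call these multiplicities $\mu_i$; then $w_\rho(\B)=\sum \mu_i\rho_{(\cdot)}$, and one simplifies using $\sum_{i=0}^{2k}\rho_i=0$ exactly as in the proof for $m=2$. The symmetry of both ideals under $x_i\leftrightarrow y_{2k-i}$ (equivalently $\rho_i\leftrightarrow\rho_{2k-i}$) forces $\rho_0$ and $\rho_{2k}$ to enter with equal coefficients, explaining the shape $A\rho_k+B(\rho_0+\rho_{2k})$ of the answer; the two numbers $A,B$ are then pinned down by counting occurrences of $x_k$ (for $A$, after using the relation) and of $x_0$ (for $B$). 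Because the total number of monomials is $h^0(C,\omega_C^m)=(2m-1)(g-1)$ and we know each is a product of $m$ variables, $\sum_i\mu_i=m(2m-1)(g-1)$ gives a useful consistency check on the counts.

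The main obstacle I anticipate is not conceptual but combinatorial: accurately enumerating the monomials in $\B_2^+$ across the boundary $u$-degrees where the several ideal summands overlap, and simultaneously keeping the multiplicity counts $\mu_i$ correct so that the final coefficients match the stated $\bigl((m-1)(g-1)+(2m-5)\bigr)$ and $\bigl((m-1)^2(g-1)-(2m-3)\bigr)$. It would be prudent to organize the count as: (i) monomials in which $x_k$ does not appear, (ii) monomials with exactly $j$ factors of $x_k$ for $j=1,2,3$, treating each stratum separately, summing the contributions, and only at the end invoking $\sum\rho_i=0$ to absorb the ``bulk'' terms $\sum_{i=1}^{k-1}(\rho_i+\rho_{k+i})$ into $-(\rho_0+\rho_k+\rho_{2k})$. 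The analogous but milder issue for $\B_1^+$ — overlap of the two degree-$(m-1)$ ideal powers along the central $u$-degree $mk$, resolved by the lone generator $x_k^m$ — should be dispatched first as a warm-up.
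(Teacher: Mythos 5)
Your plan is correct and follows exactly the paper's own argument: the basis property is checked via Lemma \ref{L:ribbon-basis} (through the Ribbon Product Lemma \ref{L:RPL}), and the weights are obtained by counting the multiplicity of each variable and absorbing the bulk sum $\sum_{i}\rho_i$ via $\sum_{i=0}^{2k}\rho_i=0$, yielding coefficients $\mu_k-(2m-1)$ on $\rho_k$ and $\mu_0-(2m-1)$ on $\rho_0+\rho_{2k}$ as you describe. The paper records the same counts ($x_k$ appearing $(m-1)^2(g-1)+2$ times in $\B_1^+$, etc.) and nothing more, so carrying out your bookkeeping completes the proof.
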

\begin{proof}
Using Lemma \ref{L:ribbon-basis}, it is easy to see that $\B^+_1$ and $\B^+_2$ are monomial bases. 
Next, note that in $\B_1^{+}$ the variable $x_k$ appears $(m-1)^2(g-1)+2$ times, 
variables $x_0$ and $y_{2k}$ each appear $\binom{m}{2}(g-1)+2m-2$ times,
and variables $x_1,\dots, x_{k-1}, y_{k+1},\dots, y_{2k-1}$
each appear $2m-1$ times. Recalling that $\sum_{i=0}^{2k} \rho_i=0$, we deduce the
formula for $w_{\rho}(\B_1^{+})$.

The $\rho$-weight of $\B_2^{+}$ is computed analogously by observing that 
in $\B_2^{+}$ the variable $x_k$ appears $(m-1)(g-1)+(4m-6)$ times, variables $x_0$ and $y_{2k}$ each appear $(m-1)^2(g-1)+2$ times, 
and variables $x_1,\dots, x_{k-1}$ and $y_{k+1},\dots, y_{2k-1}$ each appear $2m-1$ times.
\end{proof}

Next, we construct higher-degree analogues of the basis $\B^-$ from Section \ref{S:ribbon:m=2}. 
Throughout the construction, we let $\iota$ be the involution exchanging $x_i$ and $y_{2k-i}$ and leaving $x_k$ fixed. 

Let $\ell=\lfloor k/2 \rfloor$. We introduce the following sets of monomials:
\begin{align*}
S_0&:=
\begin{cases} \left\{x_k^m, \ x_0\, y_{2k}\, x_k^{m-2}\right\} & \text{if $m$ is odd}, \\ 
\left\{x_k^m, \ x_\ell\, y_{2k-\ell}\, x_k^{m-2}\right\} & \text{if $m$ is even.} \end{cases}\\
S_1 &:=\left\{
\begin{aligned}
& x_i^{m-d}\, x_{i+1}^{d}\ : \quad 0\leq i\leq k-1,\ 0\leq d\leq m-1 \\
\end{aligned}\right\} \\
S_2 &:=\left\{
\begin{aligned}
&x_i^{m-1-d} \,x_{i+1}^{d}\,y_{i+k+1}\ : \quad 0 \leq i \leq \ell-2, \ 0\leq d \leq m-1 \\
&x_{\ell-1}^{m-1-d} \,x_{\ell}^{d}\, y_{\ell+k}\ : \qquad 0\leq d \leq m-2 \end{aligned}\right\}  
\end{align*}
The definition of the next set of monomials depends on parity of $k$. If $k=2\ell$, we define
\begin{align*}
S_3 &:=\left\{ x_i^{m-1-d} \,x_{i+1}^{d}\,y_{k+2\ell-1-i}\ : \quad \ell \leq i\leq k-2, \ 1\leq d\leq m-2 \right\};
\end{align*}
and if $k=2\ell+1$, we define
\begin{align*}
S_3 &:=\left\{
\begin{aligned}
&x_i^{m-1-d} \,x_{i+1}^{d}\,y_{k+2\ell-1-i}\ : \quad \ell \leq i\leq k-3, \ 1\leq d\leq m-2 \\
&x_{k-2}^{m-2-d} \,x_{k-1}^{d}\,x_{\ell}\,y_{3\ell+1} \ : \quad 0\leq d\leq m-2 \end{aligned}\right\}  
\end{align*}
We proceed to define
\begin{align*}
S_4 &:=\left\{x_{k-1}^{m-2-d} \,x_{k}^{d}\,(x_0y_{2k})\ : \quad 0\leq d\leq m-4 \right\} \\
S_5 &:=\begin{cases} \left\{x_{k-1}\,x_\ell^{(m-1)/2}\, y_{2k-\ell}^{(m-1)/2}\right\} & \text{if $m$ is odd}, \\ \\
\left\{x_{k-1}\,x_k\,x_\ell^{(m-2)/2}\,y_{2k-\ell}^{(m-2)/2}\right\} & \text{if $m$ is even.} \end{cases} 
\end{align*}

\begin{definition}\label{D:anti-Petri-1} 
We define a set $\B^{-}$ of degree $m$ monomials by 
\[
\B^{-}:= S_0\cup \bigcup_{i=1}^5 \bigl(S_i \cup \iota(S_i)\bigr).
\]
\end{definition}

\begin{lemma}\label{L:weight-anti-Petri-1} For $m\geq 3$,
$\B^{-}$ is a monomial basis of $\HH^0\bigl(C,\O_C(m)\bigr)=\HH^0\bigl(C,\omega_C^m\bigr)$. For any $\rho\co \gm \ra \SL(g)$ 
acting on $(x_0,\dots, y_{2k})$ diagonally with weights
 $(\rho_0, \ldots, \rho_{2k})$ we have 
\begin{align*}
w_{\rho}(\B^{-})=\begin{cases} -(m^2-3m+5)(\rho_0+\rho_{2k})-(5m-10)\rho_{k} & \text{if $m$ is odd}, \\
-(m^2-3m+6)(\rho_0+\rho_{2k})-(5m-12)\rho_{k} & \text{if $m$ is even}. \end{cases}  
\end{align*}
\end{lemma}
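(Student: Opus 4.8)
The plan is to verify the two assertions of Lemma~\ref{L:weight-anti-Petri-1} separately: first that $\B^{-}$ is a monomial basis of $\HH^0\bigl(C, \omega_C^m\bigr)$, and then the weight formula. For the first claim, I would invoke the combinatorial criterion of Lemma~\ref{L:ribbon-basis}: it suffices to check that the multiset of $u$-degrees $\{a = i_1 + \cdots + i_m\}$ realized by the monomials of $\B^{-}$ hits each value $0, \ldots, k$ and each value $(2m-1)k, \ldots, 2mk$ exactly once, hits each value $k+1, \ldots, (2m-1)k-1$ exactly twice, and that for each doubly-hit value the two associated integers $b = i_{\ell+1} + \cdots + i_m - k(m-\ell)$ (in the notation of the Ribbon Product Lemma~\ref{L:RPL}) are distinct. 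Since $\B^{-}$ is built as $S_0 \cup \bigcup_{i=1}^5 (S_i \cup \iota(S_i))$ and $\iota$ sends a monomial of $u$-degree $a$ to one of $u$-degree $2mk - a$, it is enough to track the low-degree contributions from $S_0$ (the "central" degrees $mk$ and nearby) together with $S_1, \ldots, S_5$, and then use the $\iota$-symmetry to deduce the high-degree side. Concretely, $S_1$ supplies, for each $i$, the arithmetic progression of degrees $mi, mi+1, \ldots, mi + (m-1)$ as $d$ ranges over $0, \ldots, m-1$; as $i$ ranges over $0, \ldots, k-1$ these sweep out $0, 1, \ldots, mk - 1$ covering the low degrees once, and the remaining sets $S_2, S_3, S_4, S_5$ (plus $S_0$) are precisely tuned to supply the \emph{second} copy of each degree in the doubly-hit range while keeping $b$-values distinct; the case split $k = 2\ell$ versus $k = 2\ell+1$ in the definition of $S_3$, and the parity split in $S_0$ and $S_5$, exist exactly to make the bookkeeping work out at the "seam" near degree $mk$.

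For the weight computation, the key observation (already used in the proofs of Lemma~\ref{L:Petri-weight-1} and the $m=2$ case) is that $w_{\rho}(\B^{-})$ is just $\sum_i n_i \rho_i$, where $n_i$ is the total number of times the variable with index $i$ appears among all monomials of $\B^{-}$; then one reduces using $\sum_{i=0}^{2k} \rho_i = 0$. Because $\iota$ fixes $x_k$ and swaps $x_i \leftrightarrow y_{2k-i}$, the multiplicities satisfy $n_i = n_{2k-i}$ for all $i$, so the weight is automatically of the form $\alpha(\rho_0 + \rho_{2k}) + \beta \rho_k + \gamma \sum_{\text{middle}} \rho_j$ for suitable constants, and after substituting $\sum \rho_i = 0$ it collapses to $(\alpha - \gamma)(\rho_0 + \rho_{2k}) + (\beta - \gamma)\rho_k$. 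Thus the real work is to count: (i) how many monomials of $\B^{-}$ are divisible by $x_0$ (equivalently $y_{2k}$), (ii) how many are divisible by $x_k$, and (iii) the common multiplicity $\gamma$ of a "generic middle" variable — each of these is a finite sum over the explicitly listed blocks $S_0, \ldots, S_5$, with the $d$-index contributing simple linear or binomial-type counts (e.g.\ $S_1$ contributes $x_i$ with multiplicity $\sum_{d=0}^{m-1}(m-d) = \binom{m+1}{2}$ and $x_{i+1}$ with multiplicity $\sum_{d=0}^{m-1} d = \binom{m}{2}$). Carrying out these sums, doubling for $\iota$, and matching parities should reproduce the stated closed forms $-(m^2 - 3m + 5)(\rho_0 + \rho_{2k}) - (5m-10)\rho_k$ for $m$ odd and $-(m^2 - 3m + 6)(\rho_0 + \rho_{2k}) - (5m-12)\rho_k$ for $m$ even.

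I expect the main obstacle to be the \emph{basis verification} rather than the weight formula, and within that, the most delicate part is confirming that the two $b$-values coincide nowhere in the doubly-covered degree range — i.e.\ that $\B^{-}$ does not accidentally contain two monomials of the same $u$-degree $a$ that also produce the same $\epsilon$-coefficient $(a-b)$ and hence only span a one-dimensional space. This requires understanding, degree by degree, which block each of the two monomials of a given degree comes from, and checking their $b$-values are genuinely different; the piecewise definitions (the parity of $m$ controlling $S_0$ and $S_5$, the parity of $k$ controlling $S_3$, and the boundary rows like the $0 \le d \le m-2$ exceptional cases in $S_2$ and $S_3$) are there precisely to handle the handful of problematic degrees near $a = mk$, $a = k$, and $a = (2m-1)k$, so the argument will necessarily involve a short case analysis at these seams. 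As the authors indicate, the bulk of the interior argument is "an easy exercise" in applying Lemma~\ref{L:ribbon-basis} monomial-block by monomial-block; I would organize the writeup so that the routine sweeping is dispatched quickly and the seam cases are checked by hand.
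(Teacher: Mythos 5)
Your plan follows the paper's proof essentially verbatim: verify the basis claim via Lemma~\ref{L:ribbon-basis} and the $\iota$-symmetry by sweeping $u$-degrees block by block, then compute the weight by counting variable multiplicities and reducing with $\sum_{i=0}^{2k}\rho_i=0$. The one simplification you miss is that the independence check you flag as the delicate step is automatic in the paper's setup: every monomial of $S_1$ is a product of $x$'s alone and hence a pure power of $u$ (so $a=b$), while every monomial of $S_0\setminus\{x_k^m\}$ and of $S_2,\dots,S_5$ has a nonzero $\epsilon$-coefficient, so the two monomials in each doubly-covered degree are linearly independent without any degree-by-degree comparison of $b$-values.
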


\begin{proof} Although the precise definition of $\B^{-}$ depends on the parity of $k$, our proof of the lemma 
does not. Thus we suppress the parity of $k$ in what follows. To prove that $\B^{-}$ is a monomial 
basis, we make use of the identification of $\HH^0\bigl(C,\omega_C^m\bigr)$ with functions in $\CC[u,\epsilon]/(\epsilon^2)$ made in
Section \ref{S:ribbon}. 
To begin, observe that $\B^{-}$ is invariant under $\iota$. Since $\iota$ maps a monomial of $u$-degree $d$ to 
a monomial of $u$-degree $2mk-d$, it suffices, in view of Lemma \ref{L:ribbon-basis}, to show that $\B^{-}$ 
contains one monomial of each $u$-degree $d=0,\dots, k$ and two linearly independent monomials of 
each $u$-degree $d=k+1,\dots, mk$.
To do this, note that $S_0$ consists of 
two linearly independent monomials of $u$-degree $km$; that
$S_1$ consists by the Ribbon Product Lemma \ref{L:RPL} 
of exactly pure powers of $u$ of each $u$-degree $d=0,\dots, mk-1$;
and that $S_2 \cup S_3 \cup S_4\cup S_5$ contains exactly one monomial
of each $u$-degree $d=k+1,\dots, mk-1$ with a non-zero $\epsilon$ term. 
This finishes the proof that $\B^{-}$ is a monomial basis.

To compute the $\rho$-weight of $\B^{-}$, we observe that in $S_1\cup \{x_k^m\} \cup \iota(S_1)$ all variables with the exception 
of $x_0$ and $y_{2k}$ occur the same number of times, namely $2\sum_{d=1}^{m-1}d+m=m^2$ times, while $x_0$ and $y_{2k}$ each 
occur $\sum_{d=1}^{m}d=m(m+1)/2$ times. It follows that the $\rho$-weight of $S_1\cup \{x_k^m\} \cup \iota(S_1)$
is 
\[
m^2\sum_{i=1}^{2k-1}\rho_i+\frac{m(m+1)}{2}(\rho_0+\rho_{2k})=-m(m-1)(\rho_0+\rho_{2k})/2,
\]
where the last equality follows from $\sum_{i=0}^{2k} \rho_i=0$. 
Similarly, one can easily see that in the remaining monomials of $\B^{-}$  
each of the variables
$x_1,\dots, x_{k-1}, y_{k+1},\dots, y_{2k}$ occurs exactly $m(m-1)$ times; each of the variables $x_0$ and $y_{2k}$ 
occurs 
\begin{align*}
\begin{cases} (m^2+3m-10)/2 
& \text{(if $m$ is odd)} \\
(m^2+3m-12)/2 
& \text{(if $m$ is even)}
\end{cases} \quad \text{times};
\end{align*}
and $x_k$ occurs 
\begin{align*}
\begin{cases} m^2-6m+10 
& (\text{if $m$ is odd}) \\
m^2-6m+12  
& (\text{if $m$ is even}) 
\end{cases} \quad \text{times}.
\end{align*}

Using $\sum_{i=0}^{2k} \rho_i=0$, it follows that the total $\rho$-weight of 
these remaining monomials is
\begin{align*}
&-\frac{(m^2-5m+10)}{2}(\rho_0+\rho_{2k})-(5m-10)\rho_k \quad  \text{if $m$ is odd},  \\
&-\frac{(m^2-5m+12)}{2}(\rho_0 + \rho_{2k})-(5m-12)\rho_k\ \quad \text{if $m$ is even}.
\end{align*}
The claim follows.
\end{proof}

\begin{lemma}\label{L:ribbon-linear-dependence} 
There exist $c_0, c_1, c_2\in \mathbb{Q}\cap (0,\infty)$ such that 
\[
c_0w_{\rho}(\B^{-})+c_1w_{\rho}(\B^{+}_1)+c_2w_{\rho}(\B^{+}_2)=0
\]
for all one-parameter subgroups of $\SL(g)$
acting on the basis $(x_0,\dots, y_{2k})$ diagonally.
\end{lemma}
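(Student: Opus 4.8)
The plan is to exploit the fact that, by Lemmas~\ref{L:Petri-weight-1} and~\ref{L:weight-anti-Petri-1}, each of the three weight functions $w_{\rho}(\B^{-})$, $w_{\rho}(\B^{+}_{1})$, $w_{\rho}(\B^{+}_{2})$ is a $\mathbb{Q}$-linear combination of the two linear forms $P:=\rho_0+\rho_{2k}$ and $Q:=\rho_k$ alone. Writing
\[
w_{\rho}(\B^{-})=a^{-}P+b^{-}Q,\qquad w_{\rho}(\B^{+}_{1})=a_1^{+}P+b_1^{+}Q,\qquad w_{\rho}(\B^{+}_{2})=a_2^{+}P+b_2^{+}Q,
\]
and setting $v^{-}:=(a^{-},b^{-})$, $v_i^{+}:=(a_i^{+},b_i^{+})\in\mathbb{Q}^2$, what we must produce is a relation $c_0 v^{-}+c_1 v_1^{+}+c_2 v_2^{+}=0$ with $c_i\in\mathbb{Q}\cap(0,\infty)$, valid on the subspace of $\mathbb{Q}^2$ swept out by $(P,Q)$ as $\rho$ runs over diagonal one-parameter subgroups of $\SL(g)$. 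For $g\geq 5$ that subspace is all of $\mathbb{Q}^2$, so we need an honest positive dependence among the three vectors; for $g=3$, where $\rho_0+\rho_k+\rho_{2k}=0$ forces $P+Q=0$, it is only the single equation $c_0(b^{-}-a^{-})+c_1(b_1^{+}-a_1^{+})+c_2(b_2^{+}-a_2^{+})=0$.

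First I would run the sign analysis. Reading off the explicit formulas and using the evident positivity, for $m\geq 3$ and $g\geq 3$, of $m^2-3m+5$, $m^2-3m+6$, $5m-10$, $5m-12$, $\tfrac{m(m-1)}{2}(g-1)-1$, $(m-1)^2(g-1)-(2m-3)$ and $(m-1)(g-1)+(2m-5)$, one sees that $v_1^{+}$ and $v_2^{+}$ lie in the open first quadrant while $v^{-}$ lies in the open third quadrant. Hence, for $g\geq 5$, the three vectors admit a strictly positive relation if and only if $-v^{-}$ lies in the open convex cone spanned by $v_1^{+}$ and $v_2^{+}$; since $-v^{-}$, $v_1^{+}$ and $v_2^{+}$ all lie in the open first quadrant, this is in turn equivalent to the slope interlacing
\[
\frac{b_2^{+}}{a_2^{+}}\ <\ \frac{b^{-}}{a^{-}}\ <\ \frac{b_1^{+}}{a_1^{+}},
\]
which in particular forces $v_1^{+}$ and $v_2^{+}$ to be linearly independent, so that the resulting coefficients are genuinely positive rather than merely nonnegative.

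The crux is to verify these two inequalities. Clearing denominators (each $a$ has a definite sign), both become polynomial inequalities that are \emph{affine-linear} in $N:=g-1$; I would check that the coefficient of $N$ has the correct sign for every $m\geq 3$ and that the inequality holds at $N=4$, the smallest odd genus exceeding $3$ — each such check reducing to a positivity statement about an explicit cubic polynomial in $m$, dispatched by a routine discriminant or monotonicity argument. For $g=3$ one instead computes directly that $b_1^{+}-a_1^{+}=(m-2)(m-3)$ and $b_2^{+}-a_2^{+}=-2(m-2)(m-3)$: when $m\geq 4$ these have strictly opposite signs, so the single required linear equation admits a strictly positive solution, and when $m=3$ all three coefficients $b^{-}-a^{-}$, $b_1^{+}-a_1^{+}$, $b_2^{+}-a_2^{+}$ vanish and any positive $(c_0,c_1,c_2)$ works. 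Since every quantity involved is rational, the $c_i$ may be taken in $\mathbb{Q}\cap(0,\infty)$, completing the argument. The main obstacle I anticipate is not conceptual but bookkeeping: carrying the joint $m$- and $g$-dependence — including the parity-of-$m$ split in the formula for $w_{\rho}(\B^{-})$ — cleanly through the slope computation, together with the mild nuisance that at $g=3$ the slope inequalities degenerate to equalities and so must be treated by the separate one-dimensional argument above.
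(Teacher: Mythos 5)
Your proposal is correct and is essentially the paper's own argument: the proof in the paper consists precisely of the two slope--interlacing inequalities
\[
\frac{m(m-1)(g-1)-2}{2(m-1)^2(g-1)-2(2m-3)} \;\leq\; \frac{m^2-3m+5}{5m-10} \;\leq\; \frac{(m-1)^2(g-1)-(2m-3)}{(m-1)(g-1)+(2m-5)}
\]
(with the middle term replaced by $\frac{m^2-3m+6}{5m-12}$ for even $m$), asserted to hold for all $g,m\geq 3$, which is exactly your cone/interlacing criterion after inverting the slopes. Your separate treatment of $g=3$ is a worthwhile refinement rather than a departure: there both inequalities degenerate to equalities, the span of $(\rho_0+\rho_{2k},\rho_k)$ is only one-dimensional, and the paper's non-strict inequalities alone would not force strictly positive coefficients, so your reduction to the single linear equation in that case (and the factorizations $(m-2)(m-3)$ and $-2(m-2)(m-3)$, which check out) genuinely tightens the argument.
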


\begin{proof}
We need to show that $w_{\rho}(\B^{-})$ given by Lemma \ref{L:weight-anti-Petri-1} and considered 
as the linear function in $(\rho_0,\dots, \rho_{2k})$ is the negative of an effective linear combination 
of $w_{\rho}(\B^{+}_1)$ and $w_{\rho}(\B^{+}_2)$ given by Lemma \ref{L:Petri-weight-1}.
In the case of odd $m$, the claim holds because the inequalities
\begin{equation*}
\frac{m(m-1)(g-1)-2}{2(m-1)^2 (g-1)-2(2m-3)} \leq \frac{(m^2-3m+5)}{5m-10} 
\leq \frac{(m-1)^2 (g-1)-(2m-3)}{(m-1)(g-1)+(2m-5)}, 
\end{equation*}
are satisfied for all $g, m\geq 3$. 
In the case of even $m$, we require the same inequalities save that the middle term is replaced by $\frac{(m^2-3m+6)}{5m-12}$.
\end{proof}

\begin{proof}[Proof of Theorem \ref{T:ribbon}.] 
The case of $m=2$ was handled in Corollary \ref{C:ribbon2}.  If $m \geq 3$, 
the claim follows from Lemma \ref{L:ribbon-linear-dependence} and Lemma \ref{L:covering}.
\end{proof}

\subsection{Canonically embedded $A_{2k+1}$-curve} \label{S:monomial-bases-A-curve}

Let $C$ denote the balanced double $A_{2k+1}$-curve as defined in Section \ref{S:A-curves}.
In this section, we prove the even genus case of the first part of our Main Result. 
Since $\HH^0\bigl(C,\omega_C\bigr)$ is a multiplicity-free representation of $\gm \subset \Aut(C)$
by Lemma \ref{L:multiplicityfree}, we can apply the Kempf-Morrison Criterion (Proposition \ref{P:kempf}) to prove semistability of $C$.
Namely, to prove that $[C]_m$ is semistable, it suffices to check that 
for every one-parameter subgroup 
$\rho\co \gm \rightarrow \SL(g)$ acting diagonally on the distinguished basis 
$\{x_1, \ldots, x_{k}, y_1, \ldots, y_{k}\}$ with integer weights $\lambda_1, \ldots, \lambda_{k}, \nu_1, \ldots, \nu_{k}$, 
there exists a monomial basis of $\HH^0\bigl(C, \omega_C^m\bigr)$ of non-positive $\rho$-weight.  
Explicitly, this means that we must exhibit a set $\B$ of $(2m-1)(2k-1)$ 
degree $m$ monomials in the variables $\{x_i,y_i\}_{i=1}^{k}$ 
with the properties that:
\begin{enumerate}
\item $\B$ maps to a basis of $\HH^0\bigl(C, \omega_C^m\bigr)$ 
via $\Sym^m \HH^0\bigl(C,\omega_C\bigr) \rightarrow \HH^0\bigl(C, \omega_C^m\bigr)$.
\item $\B$ has non-positive $\rho$-weight, that is, if $\B=\{e_\ell\}_{\ell=1}^{(2m-1)(2k-1)}$ and 
$e_\ell=\prod_{i=1}^{k}x_i^{a_{\ell i}}y_i^{b_{\ell i}}$, then 
\[
\sum_{\ell=1}^{(2m-1)(2k-1)} \sum_{i=1}^k (a_{\ell i}\lambda_i+b_{\ell i}\nu_i) \leq 0.
\]
\end{enumerate}

\begin{theorem}\label{T:double-A-curve}
If $C \subset \PP \HH^0\bigl(C, \omega_C\bigr)$ is a canonically embedded balanced double $A_{2k+1}$-curve, then the Hilbert 
points $[C]_m$ are semistable for all $m \geq 2$.
\end{theorem}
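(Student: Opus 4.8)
\textbf{Proof plan for Theorem \ref{T:double-A-curve}.}
The plan is to mimic exactly the strategy used for the balanced ribbon in Section \ref{S:monomial-bases-ribbon}: since $\HH^0\bigl(C,\omega_C\bigr)$ is multiplicity-free for $\gm\subset\Aut(C)$ by Lemma \ref{L:multiplicityfree}, the Kempf-Morrison Criterion (Proposition \ref{P:kempf}) together with Lemma \ref{L:covering} reduces the theorem to the following purely combinatorial task: produce a finite collection $\{\B_j\}$ of monomial bases of $\HH^0\bigl(C,\omega_C^m\bigr)$, in the variables $\{x_i,y_i\}_{i=1}^k$, and positive rationals $\{c_j\}$ so that $\sum_j c_j\, w_\rho(\B_j)=0$ as a linear function of the weight vector $(\lambda_1,\dots,\lambda_k,\nu_1,\dots,\nu_k)$ subject to $\sum_i(\lambda_i+\nu_i)=0$. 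Equivalently, by Lemma \ref{L:pluricanonical-bases}, a set $\B$ of $(2m-1)(2k-1)$ degree-$m$ monomials is a monomial basis precisely when the expansions of its elements hit each of the differentials $\{\omega_j\}_{j=m}^{mk}$, $\{\eta_j\}_{j=m}^{mk}$, and $\{\chi_\ell\}_{\ell=-k(m-1)+1}^{k(m-1)-1}$ exactly once; so the first step is to record this ``basis criterion'' explicitly as a lemma (a pure product rule: $x_{i_1}\cdots x_{i_m}=\omega_{i_1+\cdots+i_m}$, similarly for $y$'s, and a mixed product of some $x$'s and some $y$'s lands in $\chi_{(\sum i) - (\sum j)}$ with an appropriate normalization). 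I expect this to be routine given the scroll picture in Proposition \ref{P:A-curve}.

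Next I would exploit the $\ZZ_2$-symmetry $x_i\leftrightarrow y_i$ of $\Aut(C)$: it suffices to build bases that are either $\ZZ_2$-invariant or come in $\ZZ_2$-conjugate pairs, which automatically makes $w_\rho$ depend only on the symmetric combinations $\lambda_i+\nu_i$, cutting the number of free parameters roughly in half. Then, exactly as in the ribbon case, I would write down a small number of explicit families of monomial bases. A natural ``positive'' basis $\B^{+}$ is obtained by concentrating multiplication on the extreme variables $x_k$ (weight $k$ under $\gm$) and its conjugate $y_k$ — e.g. the degree-$m$ monomials lying in $(x_k)^{m-1}\cdot(\text{all variables})$ together with their $\ZZ_2$-conjugates and enough ``interior'' monomials of the form $x_k^{a}x_i^{m-a}$, $y_k^a y_i^{m-a}$, and mixed $x_k^a y_k^b(\cdots)$ to cover the $\chi_\ell$ range; its $\rho$-weight will be a positive multiple of $\sum_i(\lambda_i+\nu_i)$-type tail sums, hence of $\lambda_k+\nu_k$ after using the trace-zero relation. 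A complementary ``negative'' basis $\B^{-}$ is built to be as spread-out as possible — powers $x_i^m$, $y_i^m$, consecutive products $x_i^{m-d}x_{i+1}^d$, $y_i^{m-d}y_{i+1}^d$, and carefully chosen mixed monomials producing each needed $\chi_\ell$ exactly once — whose $\rho$-weight is a negative multiple of $\lambda_k+\nu_k$. As in the ribbon argument the precise form of the mixed monomials, and possibly the count of $x_k$'s, may depend on the parity of $m$ (and perhaps of $k$), so I would split into those cases.

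Finally, I would verify that an explicit positive combination $c_0 w_\rho(\B^{-}) + c_1 w_\rho(\B^{+}) + \cdots = 0$ identically in $(\lambda_i,\nu_i)$, which after the $\ZZ_2$-reduction and the trace-zero relation amounts to checking a single rational inequality (or a short chain of them) valid for all $g=2k$ and all $m\geq 2$ — just as in Lemma \ref{L:ribbon-linear-dependence}. I expect the main obstacle to be the $m\geq 3$ combinatorics: writing down monomial bases whose expansions cover the full range $\{\chi_\ell\}_{\ell=-k(m-1)+1}^{k(m-1)-1}$ of ``interior'' differentials \emph{exactly once} while simultaneously keeping the weight of one family provably non-positive requires a genuinely explicit and slightly delicate construction, and the parity bookkeeping (as in the sets $S_0,\dots,S_5$ of the ribbon case) is where the real work lies. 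The case $m=2$ should be short and can be treated first as a warm-up, in direct analogy with Corollary \ref{C:ribbon2}.
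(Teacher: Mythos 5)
Your reduction (Kempf--Morrison plus Lemma \ref{L:covering}), your basis criterion, and your treatment of the mixed monomials via an involution-symmetric $\B^{+}/\B^{-}$ dichotomy in the single parameter $\lambda_k+\nu_k$ all match what the paper does for the ``$\chi$-part'' of $\HH^0\bigl(C,\omega_C^m\bigr)$ (its Lemma \ref{L:basis}, including the split into cases by the parity of $k$). But there is a genuine gap in how you propose to handle the rest of the space. By Lemma \ref{L:pluricanonical-bases}, any monomial basis of $\HH^0\bigl(C,\omega_C^m\bigr)$ is forced to contain exactly one pure $x$-monomial of each total degree $m,\dots,mk$ (spanning $\lspan\{\omega_j\}$) and one pure $y$-monomial of each such degree (spanning $\lspan\{\eta_j\}$); only the remaining $2k(m-1)-1$ monomials are mixed. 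The pure $x$-block is therefore a monomial basis of $\HH^0\bigl(\PP^1,\O_{\PP^1}(m(k-1))\bigr)$ for the degree-$(k-1)$ rational normal curve, and its $\rho$-weight depends on \emph{all} of $\lambda_1,\dots,\lambda_k$ individually, not just on $\lambda_k$: for instance your spread-out choice $\{x_i^{m-d}x_{i+1}^d\}$ uses $x_1$ and $x_k$ each $m(m+1)/2$ times but every other $x_i$ exactly $m^2$ times. The $\ZZ_2$-symmetrization $x_i\leftrightarrow y_i$ only converts this into a dependence on all of $\lambda_i+\nu_i$, $i=1,\dots,k$; it does not reduce to a multiple of $\lambda_k+\nu_k$. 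To cancel these extra parameters in the covering combination you would need the pure blocks of your collection $\{\B_j, c_j\}$ to form a \emph{balanced} multibasis of $\HH^0\bigl(\PP^1,\O(m(k-1))\bigr)$, i.e.\ one in which every $x_i$ occurs equally often on average --- and that is precisely equivalent to semistability of the $m^{th}$ Hilbert point of the rational normal curve, a nontrivial sub-theorem your plan neither proves nor cites.

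The paper sidesteps this by decomposing $\B=\B_\omega\cup\B_\eta\cup\B_\chi$ and invoking Kempf's semistability of rational normal curves \cite[Corollary 5.3]{kempf} to produce $\B_\omega$ of $\rho$-weight at most $m(mk-m+1)\lambda$ and $\B_\eta$ of weight at most $m(mk-m+1)\nu$, where $\lambda$ and $\nu$ are the averages of the $\lambda_i$ and $\nu_i$; since $\lambda+\nu=0$ this contribution is non-positive for \emph{every} $\rho$, and only the $\chi$-part requires the explicit $\B^{+}/\B^{-}$ combinatorics you describe. Your plan is salvageable either by adding this citation or by constructing balanced multibases of the pure blocks by hand (as the paper does elsewhere, in Propositions \ref{P:balanced-rational-curve} and \ref{P:V-bases}, for the Wiman curve), but as written the step ``hence of $\lambda_k+\nu_k$ after using the trace-zero relation'' does not go through for the pure $x$- and $y$-monomials.
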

As an immediate corollary of this result, we obtain a proof of Theorem \ref{T:main-trigonal} (1)
and hence of Theorem \ref{T:main}:
\begin{corollary}[Theorem \ref{T:main-trigonal} (1)]\label{C:main-trigonal}
A generic canonically embedded smooth trigonal curve of even genus 
has semistable $m^{th}$ Hilbert point for every $m \geq 2$.
\end{corollary}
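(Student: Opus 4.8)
The plan is to deduce Corollary~\ref{C:main-trigonal} from Theorem~\ref{T:double-A-curve} exactly as Corollary~\ref{C:ribbon} was deduced from Theorem~\ref{T:ribbon}, using openness of the semistable locus together with the existence of a suitable degeneration. First I would recall that the locus of semistable points $(\overline{H}_{g,1}^{\,m})^{ss}\subset \overline{H}_{g,1}^{\,m}$ is open \cite{git}, and that $\overline{H}_{g,1}^{\,m}$ is irreducible with generic point the $m^{th}$ Hilbert point of a canonically embedded smooth genus $g$ curve. Hence it suffices to exhibit a single semistable point of $\overline{H}_{g,1}^{\,m}$ lying in the image of the canonically embedded smooth trigonal locus, for each even $g$ and each $m\geq 2$.

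The natural candidate is the balanced double $A_{2k+1}$-curve $C$ of genus $g=2k$ from Section~\ref{S:A-curves}. Theorem~\ref{T:double-A-curve} gives that $[C]_m$ is semistable for all $m\geq 2$, so the two things to verify are: (i) $[C]_m$ actually lies in $\overline{H}_{g,1}^{\,m}$, i.e. $C$ is a flat limit of canonically embedded smooth curves with $h^1(\I_C(m))=0$; and (ii) $C$ lies in the closure of the canonically embedded smooth \emph{trigonal} locus. For (i), Lemma~\ref{L:pluricanonical-bases} shows the multiplication map $\Sym^m\HH^0(C,\omega_C)\to\HH^0(C,\omega_C^m)$ is surjective, which gives $h^1(\I_C(m))=0$, and Proposition~\ref{P:A-curve} shows $\omega_C$ is very ample, so the canonical Hilbert point $[C]_m$ is well defined; one then invokes smoothability of $C$ within canonical curves. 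For (ii), the key input is already recorded in Proposition~\ref{P:A-curve}: the canonical model of $C$ lies on a balanced rational normal scroll as a curve of class $(3,k+1)$, and the $(0,1)$ ruling of $\PP^1\times\PP^1$ cuts out a $g^1_3$ on $C$. A general deformation of $C$ on the scroll (keeping it a $(3,k+1)$ curve) is a smooth trigonal curve of genus $g$ whose canonical embedding is the restriction of the scroll's embedding, so $[C]_m$ lies in the closure of the canonically embedded smooth trigonal locus; one should note the trigonal locus is itself irreducible so this closure is a well-defined subvariety of $\overline{H}_{g,1}^{\,m}$.

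Concretely, the argument runs: fix even $g=2k\geq 4$ and $m\geq 2$; let $C$ be the balanced double $A_{2k+1}$-curve with its canonical embedding. By Theorem~\ref{T:double-A-curve}, $[C]_m$ is semistable. By Proposition~\ref{P:A-curve} and Lemma~\ref{L:pluricanonical-bases}, $[C]_m\in\overline{H}_{g,1}^{\,m}$, and $C$ is a specialization of canonically embedded smooth trigonal curves (deform within the $(3,k+1)$ linear system on the fixed scroll, and note that a general such curve is smooth and trigonal of genus $g$ by a standard Bertini-type argument on the scroll). By openness of semistability, the generic point of the trigonal locus in $\overline{H}_{g,1}^{\,m}$ — hence the $m^{th}$ Hilbert point of a generic canonically embedded smooth trigonal curve of genus $g$ — is semistable. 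Since $g$ and $m$ were arbitrary, the corollary follows.

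The main obstacle, such as it is, is entirely in establishing (ii) cleanly: one must be slightly careful that a general deformation of the double $A_{2k+1}$-curve \emph{on the scroll} is both smooth and genuinely trigonal (not, say, hyperelliptic with the $g^1_3$ being $g^1_2$ plus a base point), and that its canonical embedding agrees with the scroll embedding — this is where the balancedness of the scroll and the class computation $(3,k+1)$ in Proposition~\ref{P:A-curve} do the work, since a smooth curve of class $(3,k+1)$ on the balanced scroll has genus $2k$ and degree $\deg\omega$ in $\PP^{2k-1}$, forcing the embedding to be canonical by a dimension count. All of this is standard scroll geometry, so the substance of the corollary really is just Theorem~\ref{T:double-A-curve} plus the observation recorded in Proposition~\ref{P:A-curve}; the deduction itself is formal.
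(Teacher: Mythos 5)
Your proposal follows the paper's own argument essentially verbatim: Theorem~\ref{T:double-A-curve} gives semistability of the balanced double $A_{2k+1}$-curve, Proposition~\ref{P:A-curve} realizes its canonical model as a $(3,k+1)$ curve on the balanced scroll so that it deforms flatly to a smooth canonically embedded trigonal curve of genus $2k$, and openness of the semistable locus concludes. The additional care you take with well-definedness of the Hilbert point and with smoothness/trigonality of the general $(3,k+1)$ curve is correct and only makes explicit what the paper leaves implicit.
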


\begin{proof}[Proof of Corollary]
Recall from Proposition \ref{P:A-curve} that 
the canonical embedding of the balanced double $A_{2k+1}$-curve $C$ lies on a balanced surface scroll in $\PP^{2k-1}$
in the divisor class $(3, k+1)$.
It follows that $C$ deforms flatly to a smooth curve in the class $(3, k+1)$ on the scroll. Such a curve is a canonically embedded 
smooth trigonal curve of genus $2k$.
The semistability of a generic deformation of $C$ follows from the openness of semistable locus. 
\end{proof}

\begin{proof}[Proof of Theorem \ref{T:double-A-curve}]
Recall from Lemma \ref{L:pluricanonical-bases} that 
\[
\HH^0\bigl(C,\omega_C^m\bigr)
=\lspan \{ \omega_j \}_{j=m}^{mk}\oplus \lspan\{\eta_j \}_{j=m}^{mk}\oplus\lspan \{\chi_\ell\}_{\ell=-k(m-1)+1}^{k(m-1)-1}.
\]
Now, given a one-parameter subgroup $\rho$ as above, we will construct the requisite monomial basis $\B$ as a union
\[
\B=\B_{\omega} \cup \B_{\eta} \cup \B_{\chi},
\]
where $\B_{\omega}, \B_{\eta}$, and $\B_{\chi}$ are collections of degree 
$m$ monomials which map onto the bases of the subspaces spanned by 
$\{ \omega_j \}_{j=m}^{mk}, \{\eta_j \}_{j=m}^{mk}$, and $\{\chi_\ell\}_{\ell=-k(m-1)+1}^{k(m-1)-1}$, respectively.

To construct $\B_{\omega}$ and $\B_{\eta}$, we use Kempf's 
proof of the stability of Hilbert points of a rational normal curve. 
More precisely, consider the component $C_0$ of $C$ with the uniformizer $u_0$ at $0$.
Clearly, $\omega_{C}|_{C_0} \simeq \O_{\PP^1}(k-1)$. The restriction map 
$\HH^0\bigl(C, \omega_C\bigr) \rightarrow \HH^0 \bigl(\PP^1, \O_{\PP^1}(k-1) \bigr)$ identifies $\{x_i\}_{i=1}^{k}$ with a basis of 
$\HH^0 \bigl(\PP^1, \O_{\PP^1}(k-1)\bigr)$ given by $\{1,u_0,\dots, u_0^{k-1}\}$. 
Under this identification, the subspace $\lspan\{\omega_j\}_{j=m}^{mk}$ is identified with 
$\HH^0\bigl(\PP^1, \O_{\PP^1}(m(k-1))\bigr)$. 
Set $\lambda:=\sum_{i=1}^k \lambda_i/k$. Given a one-parameter subgroup $\widetilde{\rho}\co \gm \ra \SL(k)$ acting 
on $\{x_1,\dots, x_k\}$ 
diagonally with weights $(\lambda_1-\lambda, \dots, \lambda_k-\lambda)$, Kempf's result on the semistability of a rational normal 
curve in $\PP^{k-1}$ \cite[Corollary 5.3]{kempf},
implies the existence of a monomial basis $\B_\omega$ of $\HH^0\bigl(\PP^1, \O_{\PP^1}(m(k-1))\bigr)$ 
with non-positive $\widetilde{\rho}$-weight.  
Under the above identification, $\B_{\omega}$ is a monomial basis of $\lspan\{\omega_j\}_{j=m}^{mk}$ of $\rho$-weight
at most $m(mk-m+1)\lambda$. Similarly, if $\nu:=\sum_{i=1}^k\nu_i/k$, we deduce the existence 
of a monomial basis $\B_{\eta}$ of  $\lspan\{\eta_j\}_{j=m}^{mk}$ whose $\rho$-weight is at most $m(mk-m+1)\nu$. 
Since $\lambda+\nu=0$, it follows that the total $\rho$-weight of $\B_{\omega} \cup \B_{\eta}$ is non-positive.

Thus, to construct a monomial basis $\B$ of non-positive $\rho$-weight, 
it remains to construct a monomial basis $\B_{\chi}$ of non-positive $\rho$-weight
for the subspace 
\[
\lspan\{\chi_\ell\}_{\ell=-(m-1)k-1}^{(m-1)k-1} \subset \HH^0\bigl(C, \omega_C^m \bigr).
\]
In Lemma \ref{L:basis}, proved below, we show the existence of such a basis. Thus, we obtain the desired monomial basis $\B$ and finish the proof.
\end{proof}

Note that if we define the {\em weighted degree} by $\deg(x_i)=i$ and $\deg(y_i)=-i$, then a set $\B_{\chi}$ of $2k(m-1)-1$ degree $m$ monomials 
in $\{x_1,\ldots, x_k, $ $y_1, \ldots, y_k\}$ maps to a basis of $\lspan\{\chi_\ell\}_{\ell=-(m-1)k-1}^{(m-1)k-1} $ 
if and only if it satisfies the following two conditions:
\begin{enumerate}
\item Each monomial has both $x_i$ and $y_i$ terms,
\item Each weighted degree from $(m-1)k-1$ to $-(m-1)k+1$ occurs exactly once.
\end{enumerate}
We call such a set of monomials a {\em $\chi$-basis}. The following combinatorial lemma completes the proof of Theorem \ref{T:double-A-curve}.
\begin{lemma}\label{L:basis}
Suppose $\rho\co \gm\ra \SL(2k)$ is a one-parameter subgroup which 
acts on \\ $\{x_1, \ldots, x_{k}, y_1, \ldots, y_{k}\}$ diagonally with integer weights $\lambda_1, \ldots, \lambda_{k}, \nu_1, \ldots, \nu_{k}$
satisfying \\ $\sum_{i=1}^k (\lambda_i+\nu_i)=0$. Then there exists a $\chi$-basis with non-positive $\rho$-weight.
\end{lemma}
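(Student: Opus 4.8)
The plan is to turn the statement into pure combinatorics of monomials and then produce, once and for all, a finite collection of $\chi$-bases whose $\rho$-weights add up with positive rational coefficients to the zero functional, after which the lemma follows exactly as in Lemma~\ref{L:covering}. Recall that a $\chi$-basis is a set of $2k(m-1)-1$ degree-$m$ monomials in $x_1,\dots,x_k,y_1,\dots,y_k$, each divisible by some $x_i$ and by some $y_j$, such that the weighted degrees (with $\deg x_i=i$, $\deg y_i=-i$) realize each integer in $[-(m-1)k+1,(m-1)k-1]$ exactly once; for a monomial $e=\prod_i x_i^{a_i}y_i^{b_i}$ one has $w_\rho(e)=\sum_i(a_i\lambda_i+b_i\nu_i)$, which is linear in $(\lambda_1,\dots,\lambda_k,\nu_1,\dots,\nu_k)$ on the hyperplane $\sum_i(\lambda_i+\nu_i)=0$. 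Suppose $\B_\chi^{(1)},\dots,\B_\chi^{(s)}$ are $\chi$-bases and $c_1,\dots,c_s$ are positive rationals with the property that in the formal sum $\sum_j c_j\B_\chi^{(j)}$ every one of the $2k$ variables $x_i,y_i$ occurs with the same total coefficient $N$. Then $\sum_j c_j w_\rho(\B_\chi^{(j)})=N\sum_i(\lambda_i+\nu_i)=0$ identically, so for each $\rho$ at least one $\B_\chi^{(j)}$ has non-positive $\rho$-weight, which is precisely the assertion of the lemma. Note that a single perfectly balanced $\chi$-basis cannot exist, since the total number of variable slots is $m(2k(m-1)-1)\equiv -m\pmod{2k}$, which is divisible by $2k$ only when $2k\mid m$; so the averaging over several $\chi$-bases is genuinely necessary.

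To build such a collection I would first exhibit one explicit $\chi$-basis $\B_\chi^{\circ}$ and then act on it by the available symmetries. The key symmetry is the involution $\iota$ interchanging $x_i$ and $y_i$: since $\iota$ sends a monomial of weighted degree $d$ to one of weighted degree $-d$ and preserves the ``mixed'' condition, it carries $\chi$-bases to $\chi$-bases, and in $\B_\chi^{\circ}\cup\iota(\B_\chi^{\circ})$ each pair $x_i,y_i$ already occurs equally often. To equalize across the index $i$ as well, I would choose $\B_\chi^{\circ}$ from a ``staircase'' family of monomials of the shape $x_p\,x_q^{\,a}y_q^{\,b}$ (together with their $\iota$-images), whose supports can be cyclically rotated through $\{1,\dots,k\}$, yielding $\chi$-bases $\B_\chi^{(0)},\dots,\B_\chi^{(k-1)}$; summing these together with their $\iota$-images then uses every $x_i$ and every $y_i$ the same number of times. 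The monomial of weighted degree $0$ must be treated separately since it has no $\iota$-partner of different weighted degree: when $m$ is even one can use an $\iota$-fixed monomial such as $(x_1y_1)^{m/2}$, and when $m$ is odd one includes a short $\iota$-orbit of mixed weighted-degree-$0$ monomials whose combined variable usage is symmetric --- exactly the kind of parity bookkeeping appearing in the ribbon case (Definition~\ref{D:anti-Petri-1}, Lemma~\ref{L:weight-anti-Petri-1}).

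The main obstacle is the first, seemingly mundane, step: exhibiting an explicit family of monomials that is simultaneously (i) a genuine $\chi$-basis, i.e. hits each of the $2k(m-1)-1$ weighted degrees exactly once using only mixed monomials, and (ii) sufficiently flexible --- under cyclic rotation of indices and under $\iota$ --- that a positive combination of its orbit is perfectly balanced across all $2k$ variables, uniformly in $m\ge 2$ and $k\ge 2$. Verifying (i) amounts to checking that the weighted degrees of the proposed monomials, as the exponent parameters range over their prescribed sets, tile the interval $[-(m-1)k+1,(m-1)k-1]$ with no gaps and no repeats; this is a finite but delicate count, complicated by boundary effects near the extreme weighted degrees $\pm((m-1)k-1)$ and near $0$, and by the parity of $m$. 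Once a workable family is in hand, the balancing computation in (ii) is a routine tally of exponents of the sort carried out in Lemmas~\ref{L:Petri-weight-1} and \ref{L:weight-anti-Petri-1}, and the lemma --- hence Theorem~\ref{T:double-A-curve} --- follows.
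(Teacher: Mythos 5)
Your reduction of the lemma to the existence of a finite set of $\chi$-bases whose $\rho$-weights combine, with positive rational coefficients, to a multiple of $\sum_i(\lambda_i+\nu_i)$ is correct and is exactly the framing of Lemma~\ref{L:covering}; the divisibility remark showing that a single balanced $\chi$-basis cannot exist is also correct. But this reduction is not the content of Lemma~\ref{L:basis} --- the entire content is the explicit construction, which you defer as ``the main obstacle,'' and the one concrete mechanism you offer for producing the needed family does not work. You propose to equalize the occurrences of the variables across the index $i$ by ``cyclically rotating'' the supports of a staircase family through $\{1,\dots,k\}$. The $\chi$-basis condition, however, is \emph{not} invariant under permutations of the indices: a $\chi$-basis must realize each weighted degree in $[-(m-1)k+1,(m-1)k-1]$ exactly once with $\deg x_i=i$, $\deg y_i=-i$, and replacing $x_q^a y_q^b$ by $x_{q'}^a y_{q'}^b$ changes the weighted degree by $(a-b)(q'-q)$. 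So rotating a monomial such as your $x_p x_q^a y_q^b$ destroys the basis property unless $a=b$. The only genuine symmetry available is the involution $\iota$ (coming from $\ZZ_2\subset\Aut(C)$), and $\iota$ only balances $x_i$ against $y_i$; it can never equalize occurrences across distinct indices $i$. (A smaller slip: a $\chi$-basis contains exactly one monomial of weighted degree $0$, so for odd $m$ one cannot put an ``$\iota$-orbit'' of degree-$0$ monomials inside a single basis; the orbit must be spread over several bases of the collection.)

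The paper's proof circumvents exactly this difficulty, and in a structurally different way from full balancing. Rather than equalizing all $2k$ variables at once, it constructs families of $\chi$-bases in which the $2k-2$ variables $x_1,\dots,x_{k-1},y_1,\dots,y_{k-1}$ occur equally often, so that on the hyperplane $\sum_i(\lambda_i+\nu_i)=0$ the total weight collapses to a multiple of the single functional $\lambda_k+\nu_k$. The free parameter $s$ is inserted only through factors of fixed weighted degree --- $(x_sy_s)^i$ of degree $0$, $x_sx_{k-s}$ of degree $k$, $x_k y_s y_{k-s}$ of degree $0$ --- precisely so that varying $s$ and applying $\iota$ preserves the $\chi$-basis property while averaging out the non-extremal variables. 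One such family ($T_1\cup T_2(s)$) yields a positive multiple of $\lambda_k+\nu_k$ and another ($S_1\cup\iota(S_1)\cup S_2(s)$, with separate treatment of even and odd $k$) a negative multiple, and the sign dichotomy on $\lambda_k+\nu_k$ finishes the proof. Your target of a single perfectly balanced positive combination is logically equivalent to what the paper obtains, but without a correct substitute for the invalid rotation step, your proposal does not yet constitute a proof.
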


\begin{proof}[Proof of Lemma \ref{L:basis} for $m=2$]

Take the first $\chi$-basis to be
\begin{align*}
\B_1:=\left\{ 
\{x_{i}\,y_{k-i}\}_{1\leq i \leq k-1},  \{x_{i}\,y_{k-i+1}\}_{1 \leq i \leq k} \right\}.
\end{align*}
In this basis, all variables except $x_k$ and $y_k$ occur twice and $x_k$, $y_k$ occur once each. Thus 
\[
w_{\rho}(\B_1)=2(\lambda_1+\cdots+\lambda_{k-1})+2(\nu_1+\cdots+\nu_{k-1})+\lambda_k+\nu_k=-(\lambda_k+\nu_k).
\]
Take the second $\chi$-basis to be
\begin{align*}
\B_2:=\left\{ \{x_{k}\,y_{i}\}_{1\leq i \leq k},  \{x_{i}\,y_{k}\}_{1 \leq i \leq k-1} \right\}.
\end{align*}
We have
\[
w_{\rho}(\B_2)=k(\lambda_k+\nu_k)+\sum_{i=1}^{k-1} (\lambda_i+\nu_i)=(k-1)(\lambda_k+\nu_k).
\]
We conclude that for any one-parameter subgroup $\rho$, we have $(k-1)w_{\rho}(\B_1)+w_{\rho}(\B_2)=0$. It follows 
that either $\B_1$ or $\B_2$ gives a $\chi$-basis of non-positive weight.
\end{proof}

\begin{proof}[Proof of Lemma \ref{L:basis} for $m\geq 3$] 
We will prove the Lemma by exhibiting one collection of $\chi$-bases whose $\rho$-weights sum to a positive multiple of $\lambda_k +\nu_k$ and 
a collection of $\chi$-bases whose $\rho$-weights sum to a negative multiple of $\lambda_k+\nu_k$. Since, for any given one-parameter subgroup 
$\rho$, we have either $\lambda_k+\nu_k \geq 0$ or $\lambda_k +\nu_k \leq 0$, 
it follows at once that one of our $\chi$-bases must have non-positive weight. 

Throughout this section, we let $\iota$ be the involution exchanging $x_i$ and $y_i$.
We begin by writing down $\chi$-bases maximizing the occurrences of $x_k$ and $y_k$ while balancing the occurrences of the other variables.
Define $T_1$ as the set of all degree $m$ monomials having both $x_i$ and $y_i$ terms that belong to the ideal
\begin{align*}
(x_k, y_k)^{m-1}(x_1, \dots, x_{k}, y_1, \dots, y_{k}).
\end{align*}
The $\rho$-weight of $T_1$ is
\[
\left(k(m-1)+(2k-1)\binom{m-1}{2}\right)(\lambda_k+\nu_k) + (m-1)\sum_{i=1}^{k-1}(\lambda_i+\nu_i).
\]

Note that $T_1$ misses only the $m-2$ weighted degrees 
\[
k(m-3), k(m-5), \dots,-k(m-5),-k(m-3).
\]  
For each $s=1, \dots, k-1$, 
define a set of $m-2$ monomials having exactly these missing degrees by 
\begin{align*}
T_2(s) &: = \{x_{k}^{m-2-d}\,y^d_k\,(x_{k-s}x_{s})\ : \quad 1\leq d \leq m-2\} 
\end{align*}
For each $s$, the sets $T_1 \cup T_2(s)$ and $T_1 \cup \iota(T_2(s))$ are $\chi$-bases. Using $\sum_{i=1}^{k}(\lambda_i +\nu_i)=0$, one sees at once that the sum 
of the $\rho$-weights of such bases, as
$s$ ranges from $1$ to $k-1$, is a {\em positive multiple} of $(\lambda_k+\nu_k)$.

We now write down bases minimizing the occurrences of $x_k$ and $y_k$.  
We handle the case when $k$ is even and odd separately.

\subsubsection*{Case of even $k$:} If $k=2\ell$, we define the following set of monomials where the weighted degrees range from  $k(m-1)-1$ to $m$:
\begin{equation*}
S_1 := \left\{ 
\begin{aligned}
&x^{m-1-d}_{i}\,x^{d}_{i-1}\,y_{k+1-i}\ : &&\quad \ell+2 \leq i \leq k, \ 0 \leq d \leq m-1 \\
&x^{m-1-d}_{i}\,x^{d}_{i-1}\,y_{i-1}\ : &&\quad 2 \leq i \leq \ell+1, \ 0 \leq d \leq m-3
\end{aligned}\right\}
\end{equation*}
In the set $S_1 \cup \iota(S_1)$, the variables $x_k$ and $y_k$ occur $(m^2-m)-\binom{m}{2}$ times,  
$x_{\ell+1}$ and $y_{\ell+1}$ occur $(m^2-m)-1$ times, $x_{\ell}$ and $y_{\ell}$ occur $(m^2-m)-m$ times, 
and $x_1$ and $y_1$ occur $m^2-m-\left(\binom{m}{2}-1\right)$ times while all of the other variables occur $m^2-m$ times. To complete $S_1 \cup \iota(S_1)$ to a $\chi$-basis, we define, for each $s=1, \ldots, k-1$, the following set of monomials where the weighted degrees
range from $m-1$ to $1-m$:
\begin{equation*}
S_2(s):= \left\{
\begin{aligned}
&x_{\ell+1}\,y_{\ell}\,x_1^{m-2} \\
&x_{\ell}\,y_{\ell}\,(x_sy_s)^i\,x_1^{m-2i-2}\ :  &&\quad \text{for $0\leq 2i \leq m-2$}, \\
&x_{\ell}\,y_{\ell}\,(x_sy_s)^{i}\,y_1^{m-2i-2}\ :  &&\quad \text{for $0\leq 2i < m-2$},  \\
&(x_{k}\,y_{s}\,y_{k-s})\,(x_sy_s)^i\,x_1^{m-2i-3}\ :  &&\quad \text{for $0\leq 2i \leq m-3$}, \\
&(x_{k}\,y_{s}\,y_{k-s})\,(x_sy_s)^i\,y_1^{m-2i-3}\ : &&\quad \text{for $0\leq 2i < m-3$}, \\
&y_{\ell+1}\,x_\ell \,y_{1}^{m-2} && 
\end{aligned}
 \right\}
\end{equation*}
For each $s = 1, \ldots, k-1$, the sets $S_1 \cup \iota(S_1) \cup S_2(s)$ and $S_1 \cup \iota(S_1) \cup \iota(S_2(s))$ are 
$\chi$-bases.
We compute that in the union 
\begin{equation*}
\bigcup_{s=1}^{k-1} \bigl(S_1 \cup \iota(S_1) \cup S_2(s)\bigr) \cup \bigl(S_1 \cup \iota(S_1) \cup \iota\left(S_2(s)\right)\bigr)
\end{equation*}
of $2(k-1)$ $\chi$-bases the variables $x_k$ and $y_k$ each occurs 
\[
2(k-1)(m^2-m)-(k-1)(m^2-2m+2)
\]
times while all of the other variables occur 
\[
2(k-1)(m^2-m)+(m-2)(m-1)
\] 
times. 

Using the relation $\sum_{i=1}^{k}(\lambda_i +\nu_i)=0$, we conclude that the sum of the $\rho$-weights of all such $\chi$-bases is a {\em negative multiple} of 
$(\lambda_k + \nu_k)$.

\subsubsection*{Case of odd $k$:} If $k=2\ell+1$ is odd, $\chi$-bases whose $\rho$-weight is a negative multiple 
of $(\lambda_k + \nu_k)$ can be constructed analogously to the case when $k$ is even. 
For the reader's convenience, we spell out the details.  
We define the following set of monomials where the weighted degrees range from  $k(m-1)-1$ to $m-1$:
\begin{equation*}
S_1 :=  \left\{ 
\begin{aligned}
&x^{m-1-d}_{i}\,x^{d}_{i-1}\,y_{k+1-i}\ : &&\quad \ell+3 \leq i \leq k, \ 0 \leq d \leq m-1 \\
&x^{m-1-d}_{i}\,x^{d}_{i-1}\,y_{i-2}\ : &&\quad 3 \leq i \leq \ell+2, \ 0 \leq d \leq m-3 \\
&x_{\ell+2}\,y_{\ell}\,x_2^{m-2} && \\
&x_{\ell+1}\,y_{\ell}\,x_{2}^{m-2-d}\,x_1^d\ : &&\quad 0 \leq d \leq m-2 
\end{aligned}\right\}
\end{equation*}

In the set of monomials $S_1 \cup \iota(S_1)$, the variables $x_k$ and $y_k$ occur $\binom{m}{2}$ times, $x_{\ell+1}$ 
and $y_{\ell+1}$ occur $m^2-m-(m-1)$ times, 
and $x_1$ and $y_1$ occur $m^2-m - \binom{m-1}{2}$ times, while all of the other variables occur $m^2-m$ times.  Finally, for each $s=1, \ldots, k-1$, we define the following set of monomials where the weighted degrees range from $m-2$ to $2-m$:
\begin{equation*}
S_2(s):= \left\{
\begin{aligned}
&x_{\ell+1}\,y_{\ell+1}\,(x_sy_s)^i\,x_1^{m-2-2i}\ : &&\quad \text{for $0\leq 2i \leq m-2$}, \\
&x_{\ell+1}\,y_{\ell+1}\,(x_sy_s)^i\,y_1^{m-2-2i}\ : &&\quad \text{for $0\leq 2i < m-2$},  \\
&(x_k \,y_s \,y_{k-s})\,(x_sy_s)^i\,x_1^{m-3-2i}\ : &&\quad \text{for $0\leq 2i \leq m-3$}, \\
&(x_k \,y_s \,y_{k-s})\,(x_sy_s)^i\,y_1^{m-3-2i}\ : &&\quad \text{for $0\leq 2i < m-3$} 
\end{aligned}
\right\}
\end{equation*}
For each $s=1,\dots, k-1$, the sets 
$S_1 \cup \iota(S_1)\cup S_2(s)$ and $S_1 \cup \iota(S_1) \cup \iota(S_2(s))$ are $\chi$-bases.
We compute that in the union 
\begin{equation*}
\bigcup_{s=1}^{k-1} \bigl(S_1 \cup \iota(S_1) \cup S_2(s)\bigr) \cup \bigl(S_1 \cup \iota(S_1) \cup \iota(S_2(s)) \bigr)
\end{equation*}
of $2(k-1)$ $\chi$-bases the variables $x_k$ and $y_k$ each occurs 
$2(k-1)\binom{m}{2}+(k-1)(m-2)$
times while all of the other variables occur 
$2(k-1)(m^2-m)+(m-2)(m-1)$ times. 

Using the relation $\sum_{i=1}^{k}(\lambda_i +\nu_i)=0$, we conclude that
the total $\rho$-weight of these 
$\chi$-bases is a {\em negative multiple} of $(\lambda_k + \nu_k)$ and we are done.
\end{proof}

\subsection{Bicanonically embedded rosary}\label{S:monomial-bases-rosary}

We continue our study of the rosary $C$ defined in Section \ref{S:rosary}. In this section, we prove the 
Theorem \ref{T:main-trigonal} (2).

\begin{theorem}\label{T:rosary}
If $C \subset \PP \HH^0\bigl(C, \omega_{C}^2\bigr)$ is a bicanonically embedded rosary, then the Hilbert points $[C]_m$ 
are semistable for all $m \geq 2$.
\end{theorem}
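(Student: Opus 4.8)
The plan is to apply the Kempf--Morrison Criterion (Proposition~\ref{P:kempf}). This is legitimate: $\omega_C^2$ is very ample (Lemma~\ref{L:rosary-ample}) and $V=\HH^0(C,\omega_C^2)$ is a multiplicity-free representation of $G=\gm\rtimes\ZZ_{g-1}\subset\Aut(C)$ with compatible basis $\{x_i,y_i,z_i\}_{i\in\ZZ_{g-1}}$ (Proposition~\ref{P:rosary-multiplicity-free}). So, fixing $m\ge2$, it suffices by Lemma~\ref{L:covering} to produce a finite set of monomial bases $\{\B_\alpha\}$ of $\HH^0(C,\omega_C^{2m})$ and positive rationals $\{c_\alpha\}$ with $\sum_\alpha c_\alpha w_\rho(\B_\alpha)=0$ for every one-parameter subgroup $\rho$ acting diagonally on $\{x_i,y_i,z_i\}$. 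Since $w_\rho$ is linear in $\rho$ and the relevant $\rho$ satisfy $\sum_i(\rho_{x_i}+\rho_{y_i}+\rho_{z_i})=0$, this identity is equivalent to asking that $\sum_\alpha c_\alpha\mathbf m(\B_\alpha)$ be a positive multiple of the all-ones vector, where $\mathbf m(\B)\in\ZZ_{\ge0}^{3g-3}$ records the multiplicities with which the $3g-3$ variables occur among the monomials of $\B$. Lemma~\ref{L:rosary-basis-bicanonical}, which recognizes monomial bases by a short list of numerical conditions on $\gm$-weights, is exactly the tool for producing such bases by hand.

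To keep the construction manageable I would exploit the cyclic symmetry. The shift $\sigma\co i\mapsto i+1$ lies in $\ZZ_{g-1}\subset\Aut(C)$; it negates $\gm$-weights (as $g-1$ is even) but preserves all three conditions of Lemma~\ref{L:rosary-basis-bicanonical}, so it carries monomial bases to monomial bases. I would therefore choose finitely many \emph{seed} bases $\B_1,\dots,\B_r$, take all of their cyclic translates $\{\sigma^a\B_j\}$, and look for nonnegative coefficients $c_{j,a}$ with $\sum_{j,a}c_{j,a}\mathbf m(\sigma^a\B_j)=\kappa\mathbf 1$. A convenient first reduction uses translation-invariant coefficients $c_{j,a}=c_j$: then $\sum_a\mathbf m(\sigma^a\B_j)=A_j\mathbf 1_x+B_j\mathbf 1_y+C_j\mathbf 1_z$, where $(A_j,B_j,C_j)$ are the total $x$-, $y$-, and $z$-multiplicities of $\B_j$ and $A_j+B_j+C_j=m(4m-1)(g-1)$ is independent of $j$; the problem collapses to finding seeds whose ``profiles'' $(A_j,B_j,C_j)$ have the balanced point $\tfrac{m(4m-1)(g-1)}{3}(1,1,1)$ in their convex hull. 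Should the profiles prove too coarse one retains the freedom of general, translation-dependent $c_{j,a}$.

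For the seeds I would write down, guided by Lemma~\ref{L:rosary-basis-bicanonical}, an $x$-heavy basis, a $y$-heavy basis, a $z$-heavy basis, and, if needed, further ``mixed'' bases: in each the forced monomials $x_i^m$ (weights $\pm2m$) and $x_i^{m-1}y_i$ (weights $\pm(2m-1)$) are common, while for each $\gm$-weight $j$ with $|j|\le2m-2$ one selects $g-1$ linearly independent monomials of that weight whose $x$- (respectively $y$-, respectively $z$-) content is as large as the weight constraint permits. Here one must use both monomials supported on a single component --- e.g. $z_\ell^{\,m-\lceil|j|/2\rceil}x_\ell^{\lfloor|j|/2\rfloor}$ for the $z$-heavy choice, with a single $y$ adjoined when $j$ is odd, and $z_\ell^m$ for $j=0$ --- and monomials pure in a single index $i$ (hence nonvanishing on both $C_i$ and $C_{i+1}$), such as the relevant powers of $y_i$; the latter are needed to realize a full set of $g-1$ independent sections of extremal $x$-content in a given weight. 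A direct count then produces the profiles (with a harmless case distinction according to the parity of $m$, the weight-$0$ monomial behaving differently in the two cases), and one verifies by solving a small linear system that the balanced point is a positive rational combination of them; semistability of $[C]_m$ then follows from Lemma~\ref{L:covering}, with $m=2$ treated separately for cleanliness, exactly as in Section~\ref{S:monomial-bases-ribbon}.

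The hard part is not conceptual but is the combinatorial bookkeeping of the third paragraph. Two features make it delicate. First, each marked point of a component carries $\omega$-sections of a single $\gm$-weight and its negative according to the parity of that component's index, so for every residue class of $j$, $|j|$, and $m$ modulo $2$ one must exhibit an explicit monomial of the prescribed weight and extremal $x$-, $y$-, or $z$-content. Second --- and this is the real subtlety --- deciding when a proposed set of weight-$j$ monomials is genuinely linearly independent in $\HH^0(\omega_C^{2m})$ requires care: a component-supported monomial and one pure in a single index can have proportional restrictions to a common component yet remain independent globally, so the naive ``one monomial per component'' count must be checked against the actual structure of $\HH^0(\omega_C^{2m})$. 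Once enough valid bases are written down and their profiles computed, the final positivity check reduces to elementary inequalities in $m$, and the theorem follows.
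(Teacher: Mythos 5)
Your plan is correct and is essentially the paper's own proof: the paper applies the Kempf--Morrison criterion via Lemma~\ref{L:covering}, writes down cyclically symmetric monomial bases built from the forced monomials $\{x_i^m\}$, $\{x_i^{m-1}y_i\}$ plus, for each $\gm$-weight, component-supported monomials of the shape you describe, and reduces everything to exactly your ``profile'' computation in $(X^{\rho},Y^{\rho},Z^{\rho})$ with $X^{\rho}+Y^{\rho}+Z^{\rho}=0$. Concretely it uses one averaged pair of bases minimizing the $y$-content (profile proportional to $(2m^2-2m+1,\,3m-2,\,2m^2-2m+1)$) and one maximizing it (profile $(m^2,\,2m^2-m,\,m^2)$), which straddle the balanced point since $(2m-3)(m-1)>0$ for $m\geq 2$, so the bookkeeping you defer does go through.
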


\begin{corollary}[Theorem \ref{T:main-trigonal} (2)]\label{C:rosary}
Suppose $C \subset \PP \HH^0 \bigl(C, K_{C}^2 \bigr)$ is a generic bicanonically embedded smooth bielliptic curve of odd genus. 
Then the $m^{th}$ Hilbert point of $C$ is semistable for every $m \geq 2$.  
\end{corollary}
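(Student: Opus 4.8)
The plan is to deduce Corollary \ref{C:rosary} from Theorem \ref{T:rosary} exactly as Corollary \ref{C:ribbon} was deduced from Theorem \ref{T:ribbon}, via openness of the GIT-semistable locus together with a deformation argument identifying the rosary as a point of the relevant Hilbert compactification. Concretely, I would argue as follows. First, recall from the setup in the introduction that $\overline{H}_{g,2}^{\, m}\subset \PP W_m$ denotes the closure of the locus of $m^{th}$ Hilbert points of bicanonically embedded smooth genus $g$ curves, that this is an irreducible variety, and that the semistable locus $(\overline{H}_{g,2}^{\, m})^{ss}$ is open in it by \cite{git}. Since openness plus irreducibility means a single semistable point in $\overline{H}_{g,2}^{\, m}$ already forces the generic point to be semistable, and since the generic point is the $m^{th}$ Hilbert point of a generic bicanonically embedded smooth curve, it suffices to exhibit one semistable point of $\overline{H}_{g,2}^{\, m}$ that lies on the bielliptic stratum.

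Next I would produce that point using the rosary $C$. The key inputs are already available: Lemma \ref{L:rosary-ample} shows $\omega_C^2$ is very ample for odd $g\geq 3$, so the bicanonical Hilbert points $[C]_m$ are well defined; Lemma \ref{L:rosary-basis-bicanonical} shows that $\Sym^m\HH^0(C,\omega_C^2)\to \HH^0(C,\omega_C^{2m})$ is surjective, which is exactly the projective normality statement needed to conclude $h^1(C,\I_C(m))=0$ and hence that $[C]_m\in \overline{H}_{g,2}^{\, m}$ provided $C$ smooths to a bicanonical curve. For the smoothing, I would invoke that the rosary — a cycle of $\PP^1$'s glued at $g-1$ tacnodes — arises in Hassett--Hyeon's classification \cite{hassett-hyeon_flip} as an asymptotically semistable bicanonical curve, and in particular as a flat limit of bicanonically embedded smooth curves; this is precisely why it appears on their list. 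Moreover, as noted in the introduction (Section \ref{S:rosary} discussion), the rosary is visibly a degeneration of bicanonically embedded smooth \emph{bielliptic} curves: a smooth bielliptic curve of genus $g$ is a double cover of an elliptic curve, and degenerating the elliptic base to a cycle of rational curves while the cover degenerates appropriately produces exactly this cycle of conics in the bicanonical embedding. Hence $[C]_m$ lies in the closure of the bielliptic stratum of $\overline{H}_{g,2}^{\, m}$.

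Finally, apply Theorem \ref{T:rosary}: the bicanonical Hilbert point $[C]_m$ of the rosary is semistable for every $m\geq 2$. Combining this with the previous two paragraphs — $[C]_m$ is a semistable point lying in the closure of the bielliptic locus of the irreducible variety $\overline{H}_{g,2}^{\, m}$, and semistability is open — we conclude that the generic bicanonically embedded smooth bielliptic curve of odd genus has semistable $m^{th}$ Hilbert point for all $m\geq 2$, which is the assertion of the corollary.

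The only genuinely delicate step is the smoothing-within-the-bielliptic-locus claim, i.e. that the rosary deforms flatly to a bicanonically embedded smooth bielliptic curve (not merely to some smooth curve). For odd $g$ this is exactly the content alluded to in Section \ref{S:rosary}, where the rosary is described as "easily seen to be in the closure of the locus of bicanonically embedded smooth bielliptic curves"; the honest verification amounts to exhibiting a one-parameter family of bielliptic curves (a family of double covers $\widetilde{C}_t\to E_t$ of a degenerating family of elliptic curves, with $E_0$ a cycle of $\PP^1$'s) whose bicanonical models specialize to $C$, and then noting that the bicanonical Hilbert scheme is projective so the limiting Hilbert point is $[C]_m$. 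I expect this to be the main obstacle in the sense of requiring the only real geometric input; everything else is formal (openness of semistability, irreducibility, projective normality from Lemma \ref{L:rosary-basis-bicanonical}) or already proved (Theorem \ref{T:rosary}).
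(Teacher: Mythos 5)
Your proposal is correct and follows essentially the same route as the paper: the paper's proof of Corollary \ref{C:rosary} is simply ``Theorem \ref{T:rosary} plus Lemma \ref{L:rosary}'' combined with openness of the semistable locus, and the ``delicate step'' you isolate---that the rosary deforms flatly to a smooth bielliptic curve---is precisely the content of Lemma \ref{L:rosary}. There the authors realize it concretely by exhibiting the rosary as the quadric section $x_0x_1+\cdots+x_{g-2}x_0=x_{g-1}^2$ of the cone over the cycle of lines $E\subset\PP^{g-2}$ and smoothing $E$ to an elliptic curve, which is the same double-cover degeneration you sketch.
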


\begin{proof}[Proof of Corollary] 
This follows immediately from Theorem \ref{T:rosary} and Lemma \ref{L:rosary}.
\end{proof}

\begin{proof}[Proof of Theorem \ref{T:rosary}] 
We follow the notation of Lemma \ref{L:rosary-sections} (b). 
We need to show that for any one-parameter subgroup $\rho \co \GG_m \to \SL(3g-3)$ acting on the basis 
$\{x_i, y_i, z_i\,: \ i\in \ZZ_{g-1}\}$
of $\HH^0\bigl(C,\O_C(1) \bigr)=\HH^0\bigl(C,\omega_C^2 \bigr)$ 
diagonally, there is 
a monomial basis of $\HH^0 \bigl(C,\O_C(m) \bigr)=\HH^0 \bigl(C,\omega_C^{2m} \bigr)$ of non-positive $\rho$-weight.

We now define several monomial bases of $\HH^0 \bigl(C,\omega_C^{2m} \bigr)$. To begin, set
\begin{align*}
S_0 &:=\left\{ x_i^{m}, \quad x_i^{m-1}y_i: \qquad i\in \ZZ_{g-1}\right\}, \\ 
S_1 &:=\left\{
\begin{aligned}
&x_i^d\, z_i^{m-d}, &&x_i^d\, z_{i+1}^{m-d}: && i\in \ZZ_{g-1}, \quad 1\leq d \leq m-1 \\ 
&x_i^d\, y_iz_i^{m-d-1}, &&x_i^d\,y_i\,z_{i+1}^{m-d-1}: && i\in \ZZ_{g-1}, \quad 0\leq d \leq m-2 
\end{aligned}\right\}, \\
S_2 &:=\begin{cases} \left\{(y_{i-1}\,y_{i})^{\ell}\,z_i: \quad i\in \ZZ_{g-1} \right\} & \text{ if $m=2\ell+1$ is odd}, \\
\left\{(y_{i-1}\,y_{i})^{\ell}\,z^2_i: \quad i\in \ZZ_{g-1} \right\} & \text{ if $m=2\ell+2$ is even}.
\end{cases} \\
S'_2 &:=\begin{cases} \left\{(y_{i-1}\,y_{i})^{\ell}\,z_i: \quad i\in \ZZ_{g-1} \right\} & \text{ if $m=2\ell+1$ is odd}, \\
\left\{(y_{i-1}\,y_{i})^{\ell+1}: \quad i\in \ZZ_{g-1} \right\} & \text{ if $m=2\ell+2$ is even}.
\end{cases}
\end{align*}
Note that the choice of $S_0$ is prescribed by Lemma \ref{L:rosary-basis-bicanonical} (1--2) and 
that there are $(g-1)$ linearly independent
monomials of weight $j$ in $S_1$, for each $1\leq |j|\leq 2m-2$, and our choice of these monomials minimizes the occurrences
of $y_i$'s. Also, $S_2$ and $S'_2$ each contains $(g-1)$ linearly independent monomials of weight $0$. 
It follows that the following are monomial bases of $\HH^0\bigl(C,\omega_C^{2m} \bigr)$
\begin{align*}
\B_1^{+} &:=S_0\cup S_1\cup S_2, \\
\B_2^{+} &:=S_0\cup S_1\cup S'_2. 
\end{align*}

\begin{remark} When $g=3$ and $m$ is even, $S_2'$ contains only one element. In this case, 
we take $\B_1^{+}:=S_0\cup S_1\cup \{(y_{0}y_{1})^{\ell}z_0^2, (y_{0}y_{1})^{\ell+1}\}$ and 
$\B_2^{+}:=S_0\cup S_1\cup \{(y_{0}y_{1})^{\ell}z_1^2, (y_{0}y_{1})^{\ell+1}\}$.
\end{remark}

Let $X^{\rho}, Y^{\rho}, Z^{\rho}$ denote the sum of the $\rho$-weights of the $x_i$'s, $y_i$'s, $z_i$'s, respectively. 
In order to balance the occurrences of $x_i$'s and $z_i$'s, we consider the average of the $\rho$-weights of $\B_1^{+}$ and $\B_{2}^{+}$ 
and obtain 
\begin{equation*}
\frac{1}{2}\bigl(w_{\rho}(\B_1^{+})+w_{\rho}(\B_1^{+})\bigr)=(2m^2-2m+1)X^{\rho}+(3m-2)Y^{\rho}+(2m^2-2m+1)Z^{\rho}.
\end{equation*}

Next we define an alternate pair of monomial bases maximizing the occurrences of $y_i$'s. To do so, we set
\begin{equation*}
T_1 :=\left\{
\begin{aligned}
&x_i^{d} \,y_i^{m-d}, &&x_i^{d+1}\,y_i^{m-d-2}\,z_i && :i\in \ZZ_{g-1}, \quad 0\leq d\leq m-2 \\
&y_i^d \,z_i^{m-d}, &&y_i^d \,z_{i+1}^{m-d} && :i\in \ZZ_{g-1}, \quad 2\leq d \leq m-1 \\
&y_i \,z_i^{m-1}, &&y_i \,z_{i+1}^{m-1} && :i\in \ZZ_{g-1}.  
\end{aligned}\right\} 
\end{equation*}
and define 
\begin{align*}
\B_1^{-} &:=S_0\cup T_1\cup S_2, \\
\B_2^{-} &:=S_0\cup T_1\cup S'_2. 
\end{align*}
One easily checks that $\B_1^{-}$ and $\B_2^{-}$ are monomial bases of $\HH^0\bigl(C,\omega_C^{2m}\bigr)$ 
and that the average of their $\rho$-weights is 
\begin{equation*}
m^2X^{\rho}+(2m^2-m)Y^{\rho}+m^2Z^{\rho}.
\end{equation*}

Using $X^{\rho}+Y^{\rho}+Z^{\rho}=0$, we obtain 
\[
(m^2-m)\bigl(w_{\rho}(\B_1^{+})+w_{\rho}(\B_1^{+})\bigr)+(2m^2-5m+3)\bigl(w_{\rho}(\B_1^{-})+w_{\rho}(\B_2^{-})\bigr)=0
\]
for any one-parameter subgroup $\rho$. Lemma \ref{L:covering} 
now finishes the proof of the theorem.
\end{proof}

\section{Non-semistability results}
\label{S:non-semistability}

\subsection{Canonically embedded rosary}\label{S:canonical-rosary}
Let $C$ denote the rosary defined in Section \ref{S:rosary}. In this section, we analyze finite Hilbert 
stability of the {\em canonical embedding} of $C$. We find that $C$ is the first known example of a canonical curve in arbitrary (odd) genus 
such that stability of its Hilbert points depends on $m$: $[C]_m$ is  
semistable for large $m$ but becomes non-semistable for small $m$. More precisely, we have the following result. 
\begin{theorem}\label{T:rosary-canonical}
Let $C \subset \PP \HH^0\bigl(C, \omega_{C}\bigr)$ be the canonically embedded rosary of odd genus $g\geq 5$.
Then $[C]_m$ is semistable if and only if $g\leq 2m+1$.
\end{theorem}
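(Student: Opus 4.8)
The plan is to analyze the canonical embedding of the rosary $C$ using the same torus-theoretic machinery as in the semistable cases, but now the goal is a sharp characterization rather than mere semistability. Recall from Proposition \ref{P:rosary-multiplicity-free} that $\HH^0\bigl(C,\omega_C\bigr)$ is multiplicity-free for $\gm \rtimes \ZZ_{g-1}$, with the compatible basis $\{\omega_0,\dots,\omega_{g-2},\eta\}$; by Lemma \ref{L:rosary-ample} this is a very ample system for odd $g \geq 5$, and by Lemma \ref{L:rosary-basis-canonical} the ring is generated in degree $1$ with an explicit combinatorial description of monomial bases of $\HH^0\bigl(C,\omega_C^m\bigr)$. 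So Proposition \ref{P:kempf} applies, and $[C]_m$ is semistable if and only if every one-parameter subgroup $\rho$ acting diagonally on $\{\omega_i,\eta\}$ admits a monomial basis of non-positive $\rho$-weight. I would split the argument into the two directions dictated by the threshold $g \le 2m+1$.

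First, the non-semistability direction (the ``only if''): for $g \geq 2m+3$ I would exhibit a single destabilizing one-parameter subgroup $\rho$. The natural candidate exploits the special role of $\eta$, which has $\gm$-weight $0$ and appears in every irreducible summand's complement. I would try a weight vector that assigns $\eta$ a large positive weight $a>0$ and distributes a compensating negative weight across the $\omega_i$'s, e.g. $\rho(\eta) = (g-1)$ and $\rho(\omega_i) = -1$ for all $i$. Any degree $m$ monomial basis must, by Lemma \ref{L:rosary-basis-canonical}, contain the $(g-1)$ monomials $\omega_i^m$ (weight $-m$ each) and the $(g-1)$ monomials $\omega_i^{m-1}\eta$ (weight $(g-1)-(m-1)$ each), plus $(g-1)$ monomials of each intermediate weighted degree. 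I would compute the minimum possible $\rho$-weight of such a basis: the freedom lies in choosing the intermediate-weight monomials to use as few $\eta$'s as possible, but the constraint that one needs $(g-1)$ linearly independent monomials in each weighted-degree slab forces a definite number of $\eta$-factors to appear. Summing the contributions and using the trace-zero relation, I expect the minimum to be a positive expression in $g$ and $m$ precisely when $g > 2m+1$, giving non-semistability; the boundary case $g=2m+1$ should give minimum exactly zero.

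Second, the semistability direction (the ``if''): for $g \leq 2m+1$ I would produce, for every $\rho$, a monomial basis of non-positive weight, ideally via the covering strategy of Lemma \ref{L:covering} — a fixed finite family $\{\B_j\}$ of monomial bases with a positive rational combination of their $\rho$-weights identically zero. Here I would adapt the $\B_1^{+}, \B_2^{+}, \B_1^{-}, \B_2^{-}$ construction from the proof of Theorem \ref{T:rosary}, replacing products of $x_i,y_i,z_i$ by monomials in $\omega_i,\eta$; symmetrizing over the $\ZZ_{g-1}$-action keeps the weight computations tractable. The key is that when $g \leq 2m+1$ there is ``enough room'' in degree $m$ to build bases that minimize and maximize the $\eta$-count while balancing the $\omega_i$'s, so that an effective linear combination of weights vanishes; when $g > 2m+1$ this balancing is obstructed, consistent with the first part.

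\textbf{Main obstacle.} The hard part will be the bookkeeping in the semistable direction near the threshold: showing that the covering monomial bases actually exist (i.e. satisfy the linear-independence conditions of Lemma \ref{L:rosary-basis-canonical}) for \emph{all} $g \leq 2m+1$ simultaneously, and that the resulting system of half-spaces genuinely covers the weight hyperplane. The non-semistability direction is comparatively easy once the right one-parameter subgroup is guessed, since it only requires a lower bound on the weight of an arbitrary monomial basis, which follows from counting $\eta$-occurrences forced by the weighted-degree constraints.
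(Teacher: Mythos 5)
Your proposal follows essentially the same route as the paper: for $g\geq 2m+3$ the paper uses exactly your one-parameter subgroup ($\rho(\omega_i)=-1$, $\rho(\eta)=g-1$) and bounds the weight of any monomial basis from below by counting the $(m-1)(g-1)$ forced occurrences of $\eta$; for $g\leq 2m+1$ it builds precisely the two $\omega$-balanced bases you describe, one minimizing and one maximizing the $\eta$-count, whose weights are multiples of $W=-\rho_{g-1}$ of opposite sign. The only details your sketch omits are that the exceptional case $(g,m)=(5,2)$ needs three bases rather than two, and that at the boundary $g=2m+1$ the counting lower bound is negative rather than zero (which is harmless, since positivity is only needed for $g\geq 2m+3$).
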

\begin{proof} We follow the notation of Lemma \ref{L:rosary-sections} (a). 
First, we show that $[C]_m$ is semistable for $g\leq 2m+1$. This is accomplished by the same technique as in the 
previous sections, namely by using Lemma \ref{L:rosary-basis-canonical} to find non-positive monomial bases of $\HH^0\bigl(C,\omega_C^m\bigr)$.  
Let $\rho\co \GG_m \to \SL(g)$ be a one-parameter subgroup acting on the basis $(\omega_0,\dots, \omega_{g-2},\eta)$ 
diagonally with weights $(\rho_0,\dots, \rho_{g-2}, \rho_{g-1})$.
Set $W:=\sum_{i=0}^{g-2} \rho_i=-\rho_{g-1}$. We will construct bases in which all the $\omega_i$
appear equally often and hence these bases have $\rho$-weights that are multiples of $W$:

First, we find a basis in which $\eta$ appears as seldom as possible. We define a 
basis $\B^{+}$ to be 
the following set of monomials:
\begin{equation*} \B^{+}:=\left\{
\begin{aligned}
&\omega_i^m, \ \omega_i^{m-1}\eta  &&:i\in \ZZ_{g-1}, \\
&\omega_i^{m-d}\omega_{i-1}^{d}, \ \omega_i^d\omega_{i-1}^{m-d} && :i\in \ZZ_{g-1}, \quad 1\leq m-2d\leq m-2, \\
&\omega_i^{m-d-1}\omega_{i-1}^{d} \eta, \ \omega_i^d\omega_{i-1}^{m-d-1} \eta && :i\in \ZZ_{g-1}, \quad 2 \leq m-2d \leq m-2, \\
&\begin{cases} \omega_i^\ell \omega_{i-1}^\ell & \text{ if $m=2\ell$} \\
\omega_i^{\ell-1} \omega_{i-1}^{\ell-1} \eta & \text{ if $m=2\ell-1$} \end{cases} && : i\in \ZZ_{g-1}.
\end{aligned} \right\}
\end{equation*}

The $\rho$-weight of $\B^{+}$ is
\[
(2m^2-2m+1)W + (m-1)(g-1)\rho_{g-1}=\bigl(2m^2-2m+1-(m-1)(g-1)\bigr)W.
\]

We now find a basis in which $\eta$ appears as often as possible. Namely, we set 
\begin{equation*} 
\B^{-}:=\left\{
\begin{aligned}
&\omega_i^m, &&\omega_i^{m-1}\eta  &&:i\in \ZZ_{g-1}, \\
&\omega_i^d \eta^{m-d}, &&\omega_i \omega_{i-1}^{d+1}\eta^{m-d-2} &&:i\in \ZZ_{g-1}, \quad 1\leq d\leq m-2, \\
&\omega_i \omega_{i-1} \eta^{m-2} && &&:i\in \ZZ_{g-1}.
\end{aligned} \right\}
\end{equation*}
Then the $\rho$-weight of the basis $\B^{-}$ is
\[
(m^2+m-1)W+(m-1)^2(g-1)\rho_{g-1}=\bigl(m^2+m-1-(m-1)^2(g-1)\bigr)W.
\]
If $(g,m)\neq (5,2)$ and $g\leq 2m+1$, then either $\B^{+}$ or $\B^{-}$ has non-positive weight with respect to $\rho$. 
If $(g,m)=(5,2)$, then it is easy to find three explicit monomial bases that accomplish the same result.
This finishes the proof of semistability. 

Conversely, suppose $g\geq  2m+3$. Consider 
the one-parameter subgroup $\rho$ acting with weight $(-1)$ on $\omega_i$'s and weight $g-1$ on $\eta$.
If $\B$ is a monomial basis of $\HH^0 \bigl(C,\O_C(m) \bigr)=\HH^0 \bigl(C,\omega_C^m \bigr)$, 
then for each odd $\ell$ each monomial of weight $\pm(m-\ell)$ with respect to $\GG_m\subset \Aut(C)$ 
necessarily has an $\eta$ term (see Lemma \ref{L:rosary-basis-canonical}). 
It follows that the variable $\eta$ of weight $(g-1)$ occurs at least $(m-1)(g-1)$ times among  
monomials of $\B$. The remaining at most $m(2m-1)(g-1)-(m-1)(g-1)$ variables occurring in $\B$ all have
weight $(-1)$. It follows that the total $\rho$-weight of 
$\B$ is at least 
\begin{multline*}
(g-1)(m-1)(g-1)-\bigl(m(2m-1)(g-1)-(m-1)(g-1)\bigr) \\=(g-1)\left((m-1)(g-1)-(2m^2-2m+1)\right) \geq (g-1)(2m-3)> 0.
\end{multline*}
Thus $\rho$ destabilizes $C$.
\end{proof}

\subsection{Canonically embedded bielliptic curves}
Our main result raises a natural question of whether Hilbert points of 
smooth canonically embedded curves can at all be non-semistable. An indirect way to see that the answer is affirmative is as follows. 
By \cite[Section 5]{hassett-hyeon_flip}, there is an open locus in $\bigl(\overline{H}_{g,1}^{\, m}\bigr)^{ss}$ over whose $\SL(g)$-quotient, 
the tautological GIT polarization is a positive multiple of $s^m_g\lambda-\delta$, where $\lambda$ and $\delta$ are the Hodge and boundary classes
and 
\begin{equation}\label{E:polarization}
s_g^m:=8+\frac{4}{g}-\frac{2(g-1)}{gm}+\frac{2}{gm(m-1)}.
\end{equation}
By generalizing the proof of \cite[Proposition 4.3]{CH}, 
we see that if $B\ra \Mg{g}$ is a family of stable curves whose generic fiber is canonically embedded and the slope $(\delta\cdot B)/(\lambda\cdot B)$
is greater than $s_g^m$, 
then every curve in $B$ with a well-defined $m^{th}$ Hilbert point
must have non-semistable $m^{th}$ Hilbert point.

Two observations now lead to a candidate for a non-semistable canonically embedded {\em smooth} curve. 
The first is that
$s_g^m\leq 8$ for $g\geq 2m+1+1/(m-1)$. The second is that families 
of bielliptic curves of slope $8$ can be constructed by taking a double cover of a constant family of elliptic curves
(e.g. \cite{xiao, barja}). 
In the following result, we establish that canonical bielliptic curves indeed become non-semistable 
for small values of $m$, and show that a generic canonical bielliptic curve is semistable for $m$ large enough.

\begin{theorem}\label{T:bielliptic-change}
A canonically embedded smooth bielliptic curve of genus $g$ has non-semistable
$m^{th}$ Hilbert point for all $m \leq (g-3)/2$. A generic canonically embedded bielliptic curve of odd genus has 
semistable $m^{th}$ Hilbert point for all $m\geq (g-1)/2$.
\end{theorem}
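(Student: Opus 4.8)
\emph{Part 1 (non-semistability for $m\le(g-3)/2$).} The plan is to exhibit a single destabilizing one-parameter subgroup and invoke the Hilbert--Mumford criterion, Proposition \ref{P:hmc}; no automorphisms of $C$ are needed. Let $C\subset\PP\HH^0(C,\omega_C)$ be a canonically embedded smooth bielliptic curve of genus $g$, with bielliptic $2{:}1$ map $\pi\co C\to E$ onto an elliptic curve and bielliptic involution $\sigma$. I would first recall the classical structure of such a canonical model. The invariant subspace $\HH^0(C,\omega_C)^{\sigma}\cong\HH^0(E,\omega_E)$ is one dimensional, so $\HH^0(C,\omega_C)=\CC\,e_0\oplus V$ with $e_0$ spanning it and $V$ the $(g-1)$-dimensional $\sigma$-anti-invariant subspace; moreover $e_0=\pi^{\ast}(\text{a differential on }E)$ vanishes along the ramification divisor, $V$ is base-point-free, and since the coordinates in $V$ change sign under $\sigma$ the morphism $\phi_V$ factors as $C\xrightarrow{\pi}E\hookrightarrow\PP(V^{\ast})=\PP^{g-2}$ with $E$ an elliptic normal curve of degree $g-1$; equivalently the canonical image of $C$ lies on the cone over $E$ with vertex $P=[e_0:0:\cdots:0]\notin C$. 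By Max Noether's theorem $\Sym^m\HH^0(C,\omega_C)\twoheadrightarrow\HH^0(C,\omega_C^m)$ for all $m\ge1$, so $[C]_m$ is well defined.

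Fix the basis $\{e_0,x_1,\dots,x_{g-1}\}$ with $\{x_i\}$ a basis of $V$, and let $\rho\co\gm\to\SL\bigl(\HH^0(C,\omega_C)\bigr)$ act with weight $g-1$ on $e_0$ and weight $-1$ on each $x_i$. For a monomial basis $\B$ of $\HH^0(C,\omega_C^m)$, write $a(\B)$ for the total number of occurrences of $e_0$; since the $(2m-1)(g-1)$ monomials of $\B$ have degree $m$, the variables $x_i$ occur $m(2m-1)(g-1)-a(\B)$ times in total, so $w_{\rho}(\B)=g\,a(\B)-m(2m-1)(g-1)$. The one real point is a lower bound on $a(\B)$: a degree $m$ monomial in the $x_i$'s alone restricts on $C$ to the $\pi$-pullback of the corresponding degree $m$ form on $E\subset\PP^{g-2}$, so such monomials span inside $\HH^0(C,\omega_C^m)$ a subspace of dimension at most $h^0(E,\O_E(m))=m(g-1)$. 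Hence at most $m(g-1)$ of the monomials of $\B$ are free of $e_0$, giving $a(\B)\ge(m-1)(g-1)$, and therefore
\[
w_{\rho}(\B)\ \ge\ g(m-1)(g-1)-m(2m-1)(g-1)\ =\ (g-1)\bigl(g(m-1)-m(2m-1)\bigr).
\]
If $m\le(g-3)/2$, i.e.\ $g\ge2m+3$, the bracket is at least $(2m+3)(m-1)-m(2m-1)=2m-3>0$ for $m\ge2$, so $w_{\rho}(\B)>0$ for every monomial basis $\B$; by Proposition \ref{P:hmc}, $[C]_m$ is not semistable.

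\emph{Part 2 (generic semistability for $m\ge(g-1)/2$).} Here the plan is to reuse the analysis of the \emph{canonically embedded rosary} carried out in Theorem \ref{T:rosary-canonical}, which shows that the canonical rosary $C_0$ of odd genus $g\ge5$ has semistable $m$th Hilbert point precisely when $g\le2m+1$, i.e.\ when $m\ge(g-1)/2$. It then suffices to know that the canonically embedded rosary lies in the closure of the locus of canonically embedded \emph{smooth bielliptic} curves of odd genus $g$ inside $\PP W_m$, i.e.\ that $C_0$ admits a flat deformation to a canonically embedded smooth bielliptic curve. Granting this, for $m\ge(g-1)/2$ the point $[C_0]_m$ is a semistable point of that irreducible locus, and since the semistable locus is open \cite{git}, a generic canonically embedded bielliptic curve of odd genus has semistable $m$th Hilbert point. (For $g=3$ the statement is vacuous, as $\PP W_m$ is then a point.)

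\emph{Where the difficulty lies.} In Part 1 the only substantive input is the dimension count for the span of the ``pure $x$'' monomials, which uses solely that $\pi(C)$ is an elliptic curve of degree $g-1$; the rest is bookkeeping and one numerical inequality. The real work is the smoothing statement needed in Part 2: one must produce an explicit flat family over a disc whose general fibre is a canonically embedded smooth bielliptic curve and whose special fibre is the rosary of Section \ref{S:rosary} --- in the spirit of, but independent of, the smoothings invoked elsewhere in the paper (e.g.\ via \cite{fong} for ribbons). I expect to obtain this by simultaneously degenerating the base elliptic curve and letting the $2g-2$ branch points collide in a controlled way, and then identifying the flat limit of the canonical models with the rosary; the local computation at the resulting tacnodes is the technical heart of the argument.
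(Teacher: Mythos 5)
Your Part 1 is correct and is, in substance, exactly the paper's argument: the same one-parameter subgroup (weight $g-1$ on the $\sigma$-invariant coordinate spanning the vertex of the cone, weight $-1$ on the $g-1$ anti-invariant coordinates), the same key estimate that the monomials free of $e_0$ span at most $h^0\bigl(E,\O_E(m)\bigr)=m(g-1)$ dimensions so that $e_0$ occurs at least $(m-1)(g-1)$ times in any monomial basis, and the same resulting lower bound $(g-1)\bigl(m(g+1)-2m^2-g\bigr)>0$ for $g\ge 2m+3$, $m\ge 2$. Nothing to add there.

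Part 2 identifies the right strategy --- quote Theorem \ref{T:rosary-canonical} for the canonical rosary and then place the rosary in the closure of the canonically embedded smooth bielliptic locus --- but the one substantive ingredient, the flat smoothing of the canonical rosary to a smooth bielliptic canonical curve, is left unproved; you yourself call it ``the technical heart,'' so as written this is a genuine gap. Moreover, the route you sketch (degenerate the base elliptic curve, collide the $2g-2$ branch points, and analyze the resulting tacnodes locally) is substantially harder than what is needed. The paper's Lemma \ref{L:rosary} disposes of it in a few lines by exploiting the very picture you set up in Part 1: in the canonical coordinates $(\omega_0,\dots,\omega_{g-2},\eta)$ of Lemma \ref{L:rosary-sections}(a), the rosary is precisely the quadric section
\[
x_0x_1+x_1x_2+\cdots+x_{g-2}x_0=x_{g-1}^2
\]
of the projective cone over the cycle of $g-1$ lines $E\subset\PP^{g-2}$, a nodal curve of arithmetic genus $1$ embedded by a degree-$(g-1)$ linear system. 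Since $\HH^1\bigl(E,\O_E(1)\bigr)=0$, this $E$ deforms flatly in $\PP^{g-2}$ to a smooth elliptic normal curve, and carrying the quadric along yields the desired flat family with smooth bielliptic general fibre --- no branch-point collision or local tacnode computation is required. If you want to complete your write-up, you should either reproduce this construction or supply the degeneration you describe in full. One small correction: your parenthetical that the $g=3$ case is vacuous because $\PP W_m$ is a point holds only for $m\le 3$; for $m\ge 4$ a plane quartic has a nontrivial $m^{th}$ Hilbert point (the paper's argument likewise covers only odd $g\ge 5$, the genus-$3$ rosary being hyperelliptic).
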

\begin{proof}
Let $C$ be a bielliptic canonical curve. Then $C$ is a quadric section of a projective cone over an elliptic curve 
$E\subset \PP^{g-2}$ 
embedded by a complete linear system of degree $g-1$. Choose projective coordinates $[x_0:\ldots: x_{g-1}]$
such that the vertex of the cone has coordinates $[0:0:\ldots:0:1]$.
Let $\rho$ be the one-parameter subgroup of $\SL(g)$ acting with weights $(-1,-1,\dots, -1, g-1)$.
For every monomial basis of $\HH^0\bigl(C, \O_C(m)\bigr)$, the number of monomials of $\rho$-weight $-m$,
that is degree $m$ monomials in the variables $x_0,\dots, x_{g-2}$, 
is bounded above by $h^0\bigl(E,\O_E(m)\bigr)=m(g-1)$.
The remaining at least $(m-1)(g-1)$ elements of the monomial basis 
have $\rho$-weight at least $g-m$. Thus the $\rho$-weight of any monomial basis of 
$\HH^0 \bigl(C,\O_C(m) \bigr)$ is at least 
\begin{equation}\label{E:bielliptic-weight}
(m-1)(g-1)(g-m)-m^2(g-1)
=(g-1)\left(m(g+1)-2m^2-g\right).
\end{equation}
If $m\leq (g-3)/2$, then \eqref{E:bielliptic-weight} is positive, and thus $\rho$ destabilizes $[C]_m$.

To prove the generic semistability of bielliptic curves in the range $m\geq (g-1)/2$, 
note that we have already seen that the canonically embedded rosary of odd genus $g\geq 5$ has semistable $m^{th}$ Hilbert point
if and only if $g\leq 2m+1$ (Theorem \ref{T:rosary-canonical}).
It remains to observe that the rosary deforms flatly to a smooth bielliptic curve. 
This is accomplished in Lemma \ref{L:rosary} below. 
\end{proof}

\begin{lemma}
\label{L:rosary}
The rosary of genus $g\geq 4$ deforms flatly to a smooth bielliptic curve.
\end{lemma}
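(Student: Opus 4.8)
The plan is to realize the rosary $C$ as a double cover of a nodal curve of arithmetic genus one, and then deform $C$ by simultaneously smoothing the base curve and extending the branch data.

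\emph{Realizing $C$ as a double cover.} First I would consider the involution $\tau\in\Aut(C)$ that negates every uniformizer, $\tau\co u_i\mapsto -u_i$; this is a well-defined automorphism compatible with the gluings for every $g\geq 4$ (and for $g$ odd it is the element $-1\in\gm\subset\Aut(C)$). Since $\tau$ preserves each component and acts nontrivially on it, the quotient map $\pi\co C\to\overline{E}:=C/\langle\tau\rangle$ is finite and flat of degree two. On each component $\PP^1_i/\langle\tau\rangle\cong\PP^1$, and a short local computation at a tacnode shows that $\tau$ carries it to a node: in the notation of the proof of Lemma~\ref{L:rosary-ample}, the $\tau$-invariant subring of the tacnode $\CC[(v_i,u_{i+1}),(0,u_{i+1}^2)]$ is generated by $(v_i^2,0)$ and $(0,u_{i+1}^2)$, a copy of $\CC[s,t]/(st)$. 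Thus $\overline{E}$ is a cycle of $g-1$ copies of $\PP^1$ glued at $g-1$ nodes, i.e.\ a N\'eron $(g-1)$-gon; it is Gorenstein of arithmetic genus one with $\omega_{\overline E}\cong\O_{\overline E}$. Writing $\pi_*\O_C=\O_{\overline E}\oplus\mathcal{L}^{-1}$ with branch divisor $D\in|\mathcal{L}^{\otimes 2}|$, the identity $1-g=\chi(\O_C)=\chi(\O_{\overline E})+\chi(\mathcal{L}^{-1})=-\deg\mathcal{L}$ forces $\deg\mathcal{L}=g-1$ and $\deg D=2g-2$, and the local model above shows $\pi$ is branched precisely along the $g-1$ nodes of $\overline E$ (so $\mathcal{L}$ has degree $1$ on each component).

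\emph{Deforming.} Next I would pick a one-parameter family $\mathcal{E}\to\Delta$ with $\mathcal{E}$ smooth, central fiber $\overline E$, and smooth elliptic general fiber $E_t$ --- for instance a generic disk in the versal deformation of the N\'eron polygon, which smooths all $g-1$ nodes simultaneously. Since the relative Picard scheme of $\mathcal{E}/\Delta$ is smooth, $\mathcal{L}$ extends to a line bundle on $\mathcal{E}$, still written $\mathcal{L}$, of relative degree $g-1$; and since $\mathcal{L}^{\otimes 2}$ has positive degree on every component of every fiber, $h^1$ of $\mathcal{L}^{\otimes 2}$ vanishes fiberwise, so the section defining $D$ lifts to a section $\sigma$ of $\mathcal{L}^{\otimes 2}$ over $\mathcal{E}$, whose zero scheme $\mathcal{D}$ is flat over $\Delta$. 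Then
\[
\mathcal{C}:=\Spec_{\mathcal{E}}\bigl(\O_{\mathcal{E}}\oplus\mathcal{L}^{-1}\bigr),\qquad \mathcal{L}^{-2}\xrightarrow{\ \sigma\ }\O_{\mathcal{E}},
\]
is finite and flat over $\mathcal{E}$, hence $\mathcal{C}\to\Delta$ is flat; its central fiber is the double cover of $\overline E$ branched along $D$, which is $C$, and for a general choice of $\sigma$ its general fiber is a double cover of the smooth elliptic curve $E_t$ branched along a reduced divisor of degree $2g-2$, hence a smooth curve of genus $g$ carrying a degree-two map to $E_t$ --- a smooth bielliptic curve. (As all fibers are Gorenstein and $\omega_C$, resp.\ $\omega_C^2$, is very ample, the attendant (bi)canonical embedding deforms as well, which is what the applications in Corollary~\ref{C:rosary} and Theorem~\ref{T:bielliptic-change} need.)

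\emph{Main obstacle.} The one point that needs care is ensuring that the general fiber $\mathcal{D}_t$ of the branch divisor stays \emph{reduced} --- exactly $2g-2$ distinct points, rather than $2g-3$ points with one doubled --- as $t\to 0$; this is where a generic, rather than arbitrary, lift $\sigma$ is used, the point being that $D$ is reduced on the smooth locus of $\overline E$ and a generic smoothing separates the two analytic branches of $\overline E$ through each node. Identifying the central fiber of $\mathcal{C}$ with the rosary itself (rather than with some other double cover sharing its numerical invariants) is, by contrast, automatic: the quotient $C\to C/\langle\tau\rangle$ of the first step exhibits exactly the datum $(\overline E,\mathcal{L},D)$.
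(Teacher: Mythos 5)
Your argument is correct, and it reaches the conclusion by a genuinely different route from the paper's. Both proofs rest on the same geometric fact --- the rosary is a degree-two cover of a cycle of $g-1$ rational curves of arithmetic genus one, ramified exactly over the nodes --- but they implement the smoothing differently. The paper works entirely inside projective space: using the basis of Lemma \ref{L:rosary-sections}(a) it exhibits the canonical model of the rosary as a quadric section ($x_0x_1+\cdots+x_{g-2}x_0=x_{g-1}^2$) of the cone over the $(g-1)$-gon of lines $E\subset\PP^{g-2}$, deforms $E$ projectively to a smooth elliptic normal curve (citing Koll\'ar for unobstructedness of the embedded deformation, using $\HH^1(E,\O_E(1))=0$), and carries the quadric along; this immediately produces the flat \emph{embedded} family that Corollary \ref{C:rosary} and Theorem \ref{T:bielliptic-change} actually consume. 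You instead make the bielliptic structure explicit: the involution $u_i\mapsto -u_i$, the computation that the $\tau$-invariants of the tacnode ring form a node, the splitting $\pi_*\O_C=\O_{\overline E}\oplus\mathcal{L}^{-1}$ with $\deg\mathcal{L}=g-1$ and multidegree $(1,\dots,1)$, and then a deformation of the triple $(\overline E,\mathcal{L},\sigma)$ over a smoothing of the N\'eron polygon. This is more intrinsic and makes the Riemann--Hurwitz bookkeeping and the bielliptic structure of the nearby fibers transparent; the price is the extra (routine) step of converting the abstract flat family into a flat family of (bi)canonically embedded curves via relative very ampleness and constancy of $h^0$, which you correctly flag. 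One small remark: the reducedness of the general branch fiber, which you single out as the main obstacle, is in fact automatic for \emph{any} lift of $\sigma$, since near a node the total space $\mathcal{E}$ is $xy=t$ and $\sigma$ restricts on the central fiber to a local equation of the length-two Cartier divisor $x+y$, so $\mathcal{D}$ is a smooth curve near each node and a smooth curve finite over the disk has reduced general fiber in characteristic zero; so genericity of the lift is not actually needed there.
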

\begin{proof} Let $C$ be the rosary considered in Section \ref{S:rosary}. 
Consider $\PP^{g-2}$ with projective coordinates $[x_0:\ldots:x_{g-2}]$ 
and define $E\subset\PP^{g-2}$ to be the union of $g-1$ lines $L_i : \{x_{i+1}=\cdots=x_{i+g-3}=0\}$
for $i\in \ZZ_{g-1}$. Then $E$ is a nodal curve of arithmetic genus $1$. Since $\HH^1\bigl(E,\O_E(1)\bigr)=0$, we can deform $E$ flatly inside $\PP^{g-2}$ 
to a smooth elliptic curve by \cite[p.83]{Kol}. Using the basis $(\omega_0, \dots, \omega_{g-2}, \eta)$ 
of $\HH^0\bigl(C,\omega_C\bigr)$ described in Lemma \ref{L:rosary-sections} (a),
we observe that the canonical embedding of $C$ is cut out by the quadric 
\[
x_0x_1+x_1x_2+\cdots+x_{g-2}x_0=x_{g-1}^2
\]
on the projective cone over $E$ in $\PP^{g-1}$.  Since $E$ deforms to a smooth elliptic curve, 
it follows that $C$ deforms to a smooth bielliptic curve.
\end{proof}

\begin{remark}[Trigonal curves of higher Maroni invariant] Theorem \ref{T:main-trigonal} (1) shows that 
a generic trigonal curve with Maroni invariant $0$ has semistable $m^{th}$ Hilbert point for all $m\geq 2$. 
In joint work of the second author with Jensen, it is shown that every trigonal curve with Maroni invariant $0$ 
has semistable $2^{nd}$ Hilbert point and every trigonal curve with a positive Maroni invariant has non-semistable $2^{nd}$ Hilbert point 
\cite{fedorchuk-jensen}. In view of the asymptotic stability of canonically embedded 
curves (Theorem \ref{T:asymptotic-stability}), 
this result suggests that {\em every} smooth trigonal curve of Maroni invariant $0$ has semistable $m^{th}$ Hilbert
point for every $m\geq 2$. One also expects that for a generic smooth trigonal curve of positive Maroni invariant 
already the third Hilbert point is semistable. Indeed, Equation \ref{E:polarization} shows that the polarization on an open subset of
$\bigl(\overline{H}_{g,1}^{\, 3}\bigr)^{ss}\gitq \SL(g)$ is a multiple of 
\begin{equation*}
\left(\frac{22}{3}+\frac{5}{g}\right)\lambda-\delta.
\end{equation*}
On the other hand, the maximal possible slope for a family of generically smooth trigonal curves of genus $g$
 is $36(g+1)/(5g+1)$ by \cite{stankova}. 
We note that 
\[
36(g+1)/(5g+1)\leq \left(\frac{22}{3}+\frac{5}{g}\right)
\]
whenever $(g-3)(2g-5)\geq 0$. Thus we expect that the $3^{rd}$ Hilbert 
point of every canonically embedded smooth trigonal curve of genus $g\geq 4$ is stable. 
\end{remark}

\section{Stability of bicanonical curves}\label{S:wiman}
While the major theme of this paper is establishing finite Hilbert semistability of very singular
curves, our methods can be used to establish stability of smooth curves as well. In fact, 
the original motivation for our work is the problem of stability of low degree Hilbert
points of smooth bicanonical curves. 
\begin{conjecture}[I. Morrison]\label{conjecture} 
A smooth bicanonical curve of genus $g\geq 3$ has stable $m^{th}$ Hilbert point whenever
$(g,m)\neq (3,2)$.
\end{conjecture}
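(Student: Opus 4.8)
The plan is to combine the semistability results of this paper with a standard orbit-closure analysis, so that the only genuinely new geometric input concerns \emph{isotrivial degenerations} of smooth bicanonical curves. Fix $g\geq 3$ and $m\geq 2$ with $(g,m)\neq(3,2)$, let $C\subset\PP V$ be a smooth bicanonical curve with $V=\HH^0(C,\omega_C^2)$, and note that $\Aut(C)$ is finite (as $g\geq 2$) and coincides with the stabilizer of $[C]_m$ in $\SL(V)$, the bicanonical embedding being intrinsic. Hence $[C]_m$ is \emph{stable} if and only if it is semistable \emph{and} its $\SL(V)$-orbit is closed in the semistable locus of $\overline{H}_{g,2}^{\,m}$. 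The argument therefore breaks into two parts: (1) every smooth bicanonical curve has semistable $m^{th}$ Hilbert point; (2) no such curve has a non-closed semistable orbit. The generic case of (1) and (2) together is already Theorem \ref{T:main}(2).

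For part (1), the task is to show that the closed, $\SL(V)$-invariant non-semistable locus of $\overline{H}_{g,2}^{\,m}$ does not meet the smooth locus. By Kempf's theory \cite{kempf}, if $[C]_m$ were non-semistable it would admit a rationally canonical optimal destabilizing one-parameter subgroup $\rho_*$; its weight filtration on $V$ induces, via the surjection $\Sym^m V\twoheadrightarrow\HH^0(C,\omega_C^{2m})$ (projective normality of the bicanonical embedding, valid for $g\geq 3$), a filtration of $\HH^0(C,\omega_C^{2m})$, and hence a formula for the minimal $\rho_*$-weight of a monomial-type basis. I would bound this weight using the fact that $C$ is cut out by equations of low degree --- by quadrics when $g\geq 4$, which is exactly where the exclusion $(g,m)=(3,2)$ enters --- together with Clifford's theorem for the general hyperplane section to control the Hilbert function of the graded pieces of the filtration. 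The upshot should be that a monomial basis $\mathcal{B}$ with $w_{\rho_*}(\mathcal{B})\leq 0$ always exists, i.e. $[C]_m$ is semistable; for $m\gg0$ this is contained in Theorem \ref{T:asymptotic-stability}, so the real content here is uniformity in $m$.

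For part (2), suppose $[C]_m$ is semistable but its orbit is not closed. Then there is a one-parameter subgroup $\rho\co\gm\to\SL(V)$ such that $\lim_{t\to0}\rho(t)\cdot[C]_m=[C_0]_m$ exists and lies in a strictly smaller orbit; since $t\mapsto\rho(t)\cdot C$ has constant isomorphism type $C$ for $t\neq0$, the curve $C_0$ is an isotrivial degeneration of $C$ of arithmetic genus $g$ lying in the closure of the bicanonical locus, and it carries a nontrivial $\gm$ in its stabilizer. A smooth curve of genus $\geq 2$ has no $\gm$-action, and a smooth limit would be abstractly isomorphic to $C$ and still bicanonically embedded, hence in the $\SL(V)$-orbit of $[C]_m$; so $C_0$ is singular or non-reduced. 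Moreover, as $C$ is already stable, the stable model of $C_0$ is $C$, which forces $C_0$ to be $C$ equipped with extra rational or non-reduced structure carrying the $\gm$-action. One then classifies the resulting finitely many configurations --- in the spirit of the balanced ribbons, double $A_{2k+1}$-curves and rosaries of Section \ref{S:curves}, but now relative to $C$ --- and shows that each such $C_0$ is \emph{non-semistable} by exhibiting an explicit destabilizing one-parameter subgroup through the numerical criterion of Proposition \ref{P:hmc} (concretely, the subgroup contracting the added component). This contradicts the fact that $C_0$, being in the orbit closure of the semistable point $[C]_m$, is itself semistable, and the proof is complete. The exceptional pair $(3,2)$ is expected to surface precisely here: for it the relevant degeneration $C_0$ is strictly semistable rather than non-semistable, so that the smooth genus-$3$ curve need only be strictly semistable when $m=2$.

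The main obstacle is part (1) carried out \emph{uniformly in $g$ and $m$}: one must control the section ring --- and implicitly the low-degree syzygies --- of an arbitrary smooth bicanonical curve well enough to bound the Hilbert--Mumford weight of Kempf's optimal destabilizer, whereas everywhere else in the paper it suffices to understand a single, highly symmetric curve in each genus. A secondary difficulty is making the classification in part (2) genuinely exhaustive, ruling out all isotrivial degenerations for every $(g,m)\neq(3,2)$ at once --- including the borderline hyperelliptic, bielliptic and trigonal configurations, precisely where the finite Hilbert (semi)stability threshold is most delicate.
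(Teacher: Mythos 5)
The statement you are addressing is stated in the paper as an open \emph{conjecture} of Morrison; the paper does not prove it, and only establishes the generic case via the Wiman curve (Theorem \ref{T:bicanonical}). Your proposal is a reasonable research outline for attacking the full conjecture, and its overall architecture (semistability of every smooth curve, plus exclusion of semistable isotrivial degenerations, plus finiteness of $\Aut(C)$) is the standard indirect route to stability. But as written it is not a proof: both of its load-bearing steps are announced rather than carried out, and you say so yourself (``I would bound this weight\dots'', ``The upshot should be\dots'', ``One then classifies\dots''). In part (1) the entire difficulty is the uniform-in-$m$ weight estimate for Kempf's optimal destabilizer. For an arbitrary smooth bicanonical curve there is no multiplicity-free $\Aut(C)$-representation, so the Kempf--Morrison reduction of Proposition \ref{P:kempf} to a fixed torus is unavailable, and what remains is precisely the Gieseker-type estimate that is only known asymptotically (Theorem \ref{T:asymptotic-stability}); invoking quadric generation and Clifford's theorem names the ingredients but produces no inequality. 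No semistability statement for \emph{every} smooth bicanonical curve is obtained this way.

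Part (2) contains, in addition, a concretely false intermediate claim: that because $C$ is Deligne--Mumford stable, the flat limit $C_0=\lim_{t\to0}\rho(t)\cdot C$ must be ``$C$ equipped with extra rational or non-reduced structure.'' The stable reduction of the isotrivial family is $C\times\Delta$ only after a ramified base change; the special fiber of the given family need not contain any component isomorphic to $C$ (already $y^2=x^3+t^6$ degenerates a smooth elliptic curve to a cuspidal rational curve, and canonical ribbons are flat limits of smooth canonical curves with hyperelliptic stable limit). So the configurations to be ruled out are not ``finitely many decorations of $C$'' but an a priori unbounded class of non-reduced and badly singular subschemes of $\PP V$ with the right Hilbert polynomial, which is exactly why the paper's authors settle for exhibiting one good degenerate curve per genus rather than classifying all of them. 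Until part (1) is made quantitative and part (2) is made exhaustive against the correct (much larger) class of limits, the argument establishes nothing beyond the generic statement already proved in the paper.
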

This problem was implicitly stated by Morrison 
\cite{morrison-git} in the wider context of GIT approaches to the log minimal model program for $\M_g$. 
In fact, it follows from the conjectural description, due to Hassett and Hyeon, of the second flip of $\M_g$
as the GIT quotient of the variety of $6^{th}$ Hilbert points of bicanonical genus $g$ curves 
that almost all bicanonically embedded Deligne-Mumford stable curves should have stable $m^{th}$ Hilbert points 
for every $m\geq 6$ \cite[Section 7.5]{morrison-git}. 

Here, we make a step toward Conjecture \ref{conjecture} by establishing the following result. 
\begin{theorem}[Stability of generic bicanonical curves]\label{T:bicanonical}
A generic bicanonically embedded smooth curve of genus $g\geq 3$ has stable $m^{th}$ Hilbert
point for every $m\geq 3$. In addition, a generic bicanonically embedded smooth curve of genus $g\geq 4$ 
has semistable $2^{nd}$ Hilbert point.
\end{theorem}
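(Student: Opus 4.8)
The plan is to deduce Theorem \ref{T:bicanonical} from the corresponding statement for a single, highly symmetric smooth curve — the Wiman curve $C$ of genus $g$, the hyperelliptic curve $y^2=x^{2g+1}-1$ — together with openness of the (semi)stable locus. Since $C$ is itself a smooth bicanonically embedded curve, its $m^{th}$ Hilbert point $[C]_m$ lies in $\overline{H}_{g,2}^{\,m}$; as this variety is irreducible with generic point the Hilbert point of a generic smooth bicanonical curve, and the (semi)stable locus is open, it suffices to prove that $[C]_m$ is stable for every $m\geq 3$ (any $g\geq 3$) and semistable for $m=2$ (when $g\geq 4$). The automorphism $\sigma\co (x,y)\mapsto(\zeta x,-y)$, with $\zeta$ a primitive $(2g+1)$-st root of unity, generates a cyclic subgroup $\ZZ_{4g+2}\subseteq\Aut(C)$, and one checks (this is carried out in Section \ref{S:monomial-bases-wiman}) that $\HH^0(C,\omega_C^2)$ splits into $3g-3$ distinct characters of this group, hence is multiplicity-free. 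Fixing the eigenbasis of differentials of the form $x^{a}y^{\varepsilon}(dx/y)^2$ and applying the Kempf--Morrison Criterion (Proposition \ref{P:kempf}), stability (resp.\ semistability) of $[C]_m$ reduces to the assertion that for every integer weight vector $\rho=(\rho_1,\dots,\rho_{3g-3})$ with $\sum_i\rho_i=0$ there is a monomial basis of $\HH^0(C,\omega_C^{2m})=\HH^0(C,\O_C(m))$ of strictly negative (resp.\ non-positive) $\rho$-weight.

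The combinatorial heart of the argument is then, exactly as in Sections \ref{S:monomial-bases-ribbon}--\ref{S:monomial-bases-rosary}, to write the requisite monomial bases down by hand. The sections of $\omega_C^{2m}$ on a hyperelliptic curve are spanned by the differentials $x^{a}(dx/y)^{2m}$ and $x^{b}y(dx/y)^{2m}$ over explicit ranges of $a$ and $b$, and — once these are re-expressed in the eigenbasis — a set of such monomials is a basis precisely when it realizes each $x$-degree the correct number of times, a bookkeeping criterion of the same flavor as Lemma \ref{L:ribbon-basis} and Lemma \ref{L:rosary-basis-bicanonical}. For each $m\geq 3$ I would exhibit a short list of monomial bases $\B_1,\dots,\B_N$, each one balancing the occurrences of all but a few distinguished eigenvariables so that $w_\rho(\B_j)$ is a simple linear form in a bounded number of the $\rho_i$, together with positive rationals $c_j$ satisfying $\sum_j c_j\,w_\rho(\B_j)\equiv 0$; by Lemma \ref{L:covering} this already yields semistability. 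To upgrade to stability one must, in addition, arrange that the open half-spaces $\{\rho: w_\rho(\B_j)<0\}$ cover the punctured weight hyperplane — equivalently that the forms $w_{(\cdot)}(\B_j)$ have no common zero besides $\rho=0$ — which typically means enlarging the list by a few further bases adapted to the directions along which the first list degenerates.

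For the remaining case $m=2$ with $g\geq 4$ I would argue separately, producing a small explicit family of monomial bases of $\HH^0(C,\omega_C^4)$ whose weight forms admit a positive linear dependence, giving non-positivity and hence semistability through Lemma \ref{L:covering}; here one should not expect strict negativity for all $\rho$, and indeed for $g=3$ the second Hilbert point of the Wiman curve is non-semistable, which is exactly why the case $(g,m)=(3,2)$ is excluded, in agreement with Conjecture \ref{conjecture}.

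The main obstacle is the explicit, hand-written construction of these monomial bases uniformly in $g$ and $m$: the earlier treatment of the Wiman curve by Morrison and Swinarski was computer-assisted and confined to small $(g,m)$, so the real work is to find families of bases whose combinatorial patterns — and whose variable-occurrence counts — depend on $g$ and on the residue classes of $m$ only through a bounded amount of casework, and then to verify the required linear relations among their weights along with the covering property that distinguishes stability from semistability. A secondary technical point is establishing the multiplicity-free decomposition of $\HH^0(C,\omega_C^2)$ carefully and handling the small-genus boundary instances (notably $g=3$) honestly rather than absorbing them into the generic statement.
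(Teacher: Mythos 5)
Your proposal follows essentially the same route as the paper: reduce to the Wiman curve $w^2=z^{2g+1}+1$, use the multiplicity-free $\mu_{4g+2}$-action on $\HH^0(C,K_C^2)$ together with the Kempf--Morrison criterion, and then produce explicit monomial (multi)bases whose weights combine to give negativity for $m\geq 3$ and non-positivity for $m=2$, $g\geq 4$, with $(g,m)=(3,2)$ correctly excluded as genuinely non-semistable. The one caveat is that the entire technical content of the paper's argument --- the balanced multibases built from Lemma \ref{L:path-lemma} and Proposition \ref{P:balanced-rational-curve}, and the Type I/Type II bases whose weights reduce to $\epsilon\lambda_i+\delta\nu_j$ --- is exactly the part you defer, so your text is a correct plan rather than a proof.
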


Our proof of Theorem \ref{T:bicanonical} begins with the original idea of Morrison and Swinarski 
\cite{morrison-swinarski} in that we also consider the {\em Wiman hyperelliptic curves} 
and apply Kempf's instability results \cite{kempf}.
Our strategy is however different in that instead of using symbolic computations 
with the ideal of the Wiman curve as in \cite{morrison-swinarski},
we exploit the high degree of symmetry of the Wiman curve, 
together with the fact that it is defined by a single equation,
to construct monomial bases by hand. We establish stability of the Wiman curve in Theorem \ref{T:wiman}, which immediately implies
Theorem \ref{T:bicanonical} by openness of semistability.

\subsection{Wiman curves}\label{S:wiman-curves}
Recall that a genus $g$ curve $C$ is a {\em Wiman curve} if it is defined by the equation
\begin{equation}\label{E:wiman}
w^2=z^{2g+1}+1. 
\end{equation} 
By \cite[Section 6]{morrison-swinarski}, we have 
\begin{align}\label{E:wiman-sections}
\HH^0\bigl(C, K_C^2\bigr)
=\CC\left\langle z^i\frac{(dz)^2}{w^2}\right\rangle_{0\leq i \leq 2g-2} \bigoplus
\CC\left\langle z^jw\frac{(dz)^2}{w^2}\right\rangle_{0\leq j \leq g-3}.
\end{align}

Since $C$ is a smooth curve, $\vert K_C^2 \vert$ defines a closed embedding 
$C\hookrightarrow \PP^{3g-4}$ for $g\geq 3$. From now on, we let $\O_C(1)=K_C^2$.
When discussing global sections of $\HH^0\bigl(C, \O_C(m)\bigr)=\HH^0\bigl(C,K_C^{2m}\bigr)$, we simply 
write $f(z,w)$ to denote an element $f(z,w)(dz)^{2m}/w^{2m}$.
We also fix once and for all a distinguished basis of $\HH^0\bigl(C,\O_C(1)\bigr)$ given by the following 
$3g-3$ functions: 
\begin{align*}
x_i&:=z^{i}, \quad 0\leq i \leq 2g-2, \\
y_j&:=z^{j}w, \quad 0\leq j \leq g-3.
\end{align*}
For $m\geq 1$ and $k\leq m$, a
monomial of the form $\prod_{a=1}^k x_{i_a} \prod_{b=1}^{m-k} y_{j_b}$ will be called a {\em $(k,m-k)$-monomial}.

The space of $(k,m-k)$-monomials in $\Sym^m \HH^0\bigl(C,\O_C(1)\bigr)$ maps injectively into 
$\HH^0\bigl(C, \O_C(m)\bigr)$ and we denote its image by $W(k,m-k)$. 
We note that  
\begin{equation*}
W(k, m-k) = \bigoplus_{d=0}^{(2g-2)k+(g-3)(m-k)} \CC\left\langle z^d w^{m-k}\right\rangle. 
\end{equation*}

For every $k\leq m-2$, Equation \eqref{E:wiman} gives rise to an injective linear map 
\[
r\colon W(k,m-k) \ra W(k+2, m-k-2),
\] defined by 
$r(z^d w^{m-k})=z^d(z^{2g+1}+1)w^{m-k-2}$, that realizes $W(k,m-k)$ as the subspace of $W(k+2,m-k-2)$. 
We record that 
\begin{equation*}
\dim_{\CC} W(k+2,m-k-2)/ r\bigl(W(k,m-k)\bigr) =2g+2, 
\end{equation*} 
and that there are isomorphisms 
\begin{align}
\HH^0\bigl(C,\O_C(m)\bigr) &\simeq W(m, 0)\oplus W(m-1,1), \label{E:isomorphism}\\ 
\HH^0\bigl(C,\O_C(m)\bigr) &\simeq \bigoplus_{k=0}^{m} W(k, m-k)/r\bigl(W(k-2,m-k+2)\bigr).  \label{E:isomorphism-2}
\end{align}

\begin{definition}\label{D:mk-basis}
If $V\subset \HH^0\bigl(C,\O_C(m)\bigr)$ is a linear subspace, 
a monomial basis of $V$ composed of $(k,m-k)$-monomials is called a {\em $(k,m-k)$-monomial basis}.
\end{definition}

\begin{lemma}\label{L:wiman-hilbert-point}
The $m^{th}$ Hilbert point of $C\hookrightarrow \PP\HH^0\bigl(C, K_C^2\bigr)$ is well-defined.
\end{lemma}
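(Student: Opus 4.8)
The plan is to unwind what it means for the Hilbert point to be well-defined and reduce it to a surjectivity statement already packaged in the setup. By the construction of the $m^{th}$ Hilbert point recalled in Section \ref{S:kempf}, the point $[C]_m$ is defined precisely when the restriction map $\HH^0\bigl(\PP V, \O(m)\bigr)\to \HH^0\bigl(C,\O_C(m)\bigr)$ is surjective, where $V=\HH^0\bigl(C,K_C^2\bigr)=\HH^0\bigl(C,\O_C(1)\bigr)$; equivalently, $h^1\bigl(C,\I_C(m)\bigr)=0$. Since $C$ is embedded in $\PP V$ by the complete linear system $|\O_C(1)|$, this is equivalent to surjectivity of the multiplication map
\[
\mu_m\colon \Sym^m \HH^0\bigl(C,\O_C(1)\bigr)\longrightarrow \HH^0\bigl(C,\O_C(m)\bigr),
\]
so the task reduces to showing that $\mu_m$ is surjective.

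First I would observe that the image of $\mu_m$ contains both subspaces $W(m,0)$ and $W(m-1,1)$ of $\HH^0\bigl(C,\O_C(m)\bigr)$: by definition $W(m,0)$ is spanned by the images under $\mu_m$ of the $(m,0)$-monomials $\prod_{a=1}^{m}x_{i_a}$ and $W(m-1,1)$ by the images of the $(m-1,1)$-monomials $\bigl(\prod_{a=1}^{m-1}x_{i_a}\bigr)y_j$, each of which is a product of elements of $\HH^0\bigl(C,\O_C(1)\bigr)$. I would then invoke the decomposition \eqref{E:isomorphism}, which realizes $\HH^0\bigl(C,\O_C(m)\bigr)$ as the internal direct sum $W(m,0)\oplus W(m-1,1)$. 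Combining the two gives $\HH^0\bigl(C,\O_C(m)\bigr)=W(m,0)+W(m-1,1)\subseteq \operatorname{im}(\mu_m)$, so $\mu_m$ is surjective and the Hilbert point is well-defined.

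If one prefers not to treat \eqref{E:isomorphism} as a black box, the same conclusion follows from a direct dimension count. Writing a section of $K_C^{2m}$ as a function $f(z,w)$ via $f\mapsto f(z,w)(dz)^{2m}/w^{2m}$, the functions $z^d$ for $0\le d\le (2g-2)m$ together with $z^dw$ for $0\le d\le (2g-2)(m-1)+g-3$ are regular sections of $K_C^{2m}$; they are linearly independent because $1$ and $w$ are independent over $\CC(z)$; and they number
\[
\bigl((2g-2)m+1\bigr)+\bigl((2g-2)(m-1)+g-2\bigr)=4m(g-1)-g+1,
\]
which equals $h^0\bigl(C,K_C^{2m}\bigr)$ by Riemann--Roch, since $\deg K_C^{2m}=4m(g-1)\ge 2g-1$. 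Hence they form a basis, and each lies in $\operatorname{im}(\mu_m)$ because each exponent in the stated ranges is a sum of $m$ (respectively $m-1$) integers in $[0,2g-2]$, possibly together with one index $j\le g-3$. Either way, there is no substantial obstacle here: the content of the lemma is contained entirely in the explicit description of the pluricanonical sections of the Wiman curve recorded just above, and the only point requiring any care is that the summands in \eqref{E:isomorphism} are, by their very construction, images of subspaces of $\Sym^m\HH^0\bigl(C,\O_C(1)\bigr)$.
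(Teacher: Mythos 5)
Your proposal is correct and follows essentially the same route as the paper: the Hilbert point is well-defined iff $\Sym^m\HH^0\bigl(C,\O_C(1)\bigr)\to\HH^0\bigl(C,\O_C(m)\bigr)$ is surjective, and this is immediate from the identification $\HH^0\bigl(C,\O_C(m)\bigr)\simeq W(m,0)\oplus W(m-1,1)$ of \eqref{E:isomorphism}, whose summands are by construction images of monomials in the $x_i$'s and $y_j$'s. Your supplementary dimension count via Riemann--Roch is a correct (and welcome) justification of \eqref{E:isomorphism} itself, but adds nothing beyond the paper's argument.
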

\begin{proof}
We need to show that $\Sym^m \HH^0 \bigl(C,\O_C(1) \bigr) \ra \HH^0 \bigl(C,\O_C(m) \bigr)$ is surjective. 
This follows immediately from the identification $\HH^0 \bigl(C,\O_C(m) \bigr)\simeq W(m, 0)\oplus W(m-1,1)$. 
\end{proof}
We recall that $\Aut(C)\simeq \mu_{4g+2}$, the cyclic group of order $4g+2$ \cite{wiman}. The action of the generator is given by 
\[
\zeta \cdot z=\zeta^{2} z, \qquad
\zeta \cdot w=\zeta^{2g+1}w.
\]
We immediately obtain the following observation.
\begin{lemma}\label{L:wiman-mult}
$\HH^0 \bigl(C,K_C^2\bigr)$ is a multiplicity-free representation of $\Aut(C)\simeq \mu_{4g+2}$ 
and the basis $\{x_0,\dots, x_{2g-2}, y_0, \dots, y_{g-3}\}$
is compatible with the irreducible decomposition of $\HH^0 \bigl(C,K_C^2\bigr)$.
\end{lemma}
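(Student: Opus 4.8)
The plan is to verify directly that each element of the distinguished basis $\{x_0,\dots,x_{2g-2},y_0,\dots,y_{g-3}\}$ is an eigenvector for the cyclic group $\Aut(C)\simeq\mu_{4g+2}$, and that the $3g-3$ resulting characters are pairwise distinct; multiplicity-freeness and compatibility of the basis are then immediate consequences.

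First I would compute the action of a generator $\zeta$ on the differential $(dz)^2/w^2$. Since $\zeta\cdot z=\zeta^{2}z$ we get $dz\mapsto\zeta^{2}\,dz$, hence $(dz)^2\mapsto\zeta^{4}(dz)^2$; and since $\zeta\cdot w=\zeta^{2g+1}w$ and $\zeta^{4g+2}=1$ we get $w^{2}\mapsto w^{2}$. Thus $(dz)^2/w^2$ has weight $4$, so $\zeta$ acts on $x_i=z^{i}(dz)^2/w^2$ by the character $\zeta\mapsto\zeta^{2i+4}$ and on $y_j=z^{j}w\,(dz)^2/w^2$ by $\zeta\mapsto\zeta^{2j+2g+5}$. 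In particular each basis vector spans a one-dimensional $\mu_{4g+2}$-subrepresentation, so $\HH^0(C,K_C^2)$ is a direct sum of $3g-3$ characters carried by the $x_i$ and $y_j$.

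Second I would check these $3g-3$ characters are distinct. The exponents $2i+4$, for $0\le i\le 2g-2$, are the even residues $4,6,\dots,4g$, all distinct modulo $4g+2$ since $2i+4$ ranges over an interval of length $4g-4<4g+2$; the exponents $2j+2g+5$, for $0\le j\le g-3$, are the odd residues $2g+5,2g+7,\dots,4g-1$, distinct for the same reason; and the two families are disjoint because one consists of even and the other of odd integers in $\{0,\dots,4g+1\}$. Hence no character is repeated, which is exactly the assertion of the lemma.

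There is essentially no obstacle here: the only content is the weight bookkeeping of the first two steps, and the single point deserving a moment's care is confirming that none of the $3g-3$ exponents collide modulo $4g+2$, which follows at once once the even exponents (from the $x_i$) are separated from the odd ones (from the $y_j$), using the hypothesis $g\ge 3$ to ensure $2g+5\le 4g-1$.
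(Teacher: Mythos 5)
Your proof is correct and follows essentially the same route as the paper: both arguments read off the $\mu_{4g+2}$-characters of the basis elements from the explicit description of the sections (your exponents $2i+4$ and $2j+2g+5$ agree with the paper's $2i-4g+2$ and $2j-2g+3$ modulo $4g+2$) and then observe that the characters in the $x$-family are even, those in the $y$-family are odd, and each family spans too short an interval to repeat modulo $4g+2$. Your write-up just makes the distinctness check more explicit than the paper does.
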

\begin{proof}
Consulting Equation \eqref{E:wiman-sections}, we see that the weights of the $\mu_{4g+2}$-action
on the listed generators are $2i-4g+2$, where $0\leq i \leq 2g-2$, and $2j-2g+3$, where $0\leq j \leq g-3$.
\end{proof}

\begin{theorem}\label{T:wiman}
The bicanonically embedded Wiman curve $C\subset \PP\HH^0 \bigl(C,K_C^2\bigr)$
has stable $m^{th}$ Hilbert point for every $m\geq 3$ if $g\geq 3$, 
and has semistable $2^{nd}$ Hilbert point if $g\geq 4$.  
\end{theorem}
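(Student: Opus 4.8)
The plan is to apply the Kempf--Morrison criterion (Proposition~\ref{P:kempf}) with $G=\Aut(C)\simeq\mu_{4g+2}$, which is legitimate by Lemma~\ref{L:wiman-mult}. Thus it suffices to prove that for every one-parameter subgroup $\rho$ acting diagonally on the distinguished basis, with weights $a_0,\dots,a_{2g-2}$ on the $x_i$ and $b_0,\dots,b_{g-3}$ on the $y_j$ (so $\sum_i a_i+\sum_j b_j=0$), there is a monomial basis of $\HH^0\bigl(C,\O_C(m)\bigr)$ of strictly negative $\rho$-weight when $m\ge 3$, and of non-positive $\rho$-weight when $m=2$ and $g\ge 4$.

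The combinatorial skeleton I would use is the $w$-parity decomposition $\HH^0\bigl(C,\O_C(m)\bigr)\simeq W(m,0)\oplus W(m-1,1)$ of \eqref{E:isomorphism}: the even-$w$-parity monomials (types $(m,0),(m-2,2),\dots$) span $W(m,0)=\bigoplus_{d=0}^{(2g-2)m}\CC\langle z^d\rangle$ and the odd-$w$-parity monomials span $W(m-1,1)=\bigoplus_{d}\CC\langle z^dw\rangle$, so a monomial basis of $\HH^0\bigl(C,\O_C(m)\bigr)$ is precisely a pair consisting of a monomial basis of each summand by monomials of the appropriate parity, and its $\rho$-weight is the sum of the weights of the chosen monomials. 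The crucial point is that since every $z^d$ (resp.\ $z^dw$) in the relevant range is the image of many monomials, and since the single defining equation $w^2=z^{2g+1}+1$ of \eqref{E:wiman} lets one trade a $(k,m-k)$-monomial for a $(k+2,m-k-2)$-monomial — i.e.\ exchange two $x$-factors for two $y$-factors while representing the same subspace — one has a great deal of freedom over the variable-occurrence counts of a monomial basis.

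For the $W(m,0)$-summand I would argue exactly as in the proof of Theorem~\ref{T:double-A-curve}: the span $\langle x_0,\dots,x_{2g-2}\rangle$ is $\HH^0\bigl(\PP^1,\O(2g-2)\bigr)$ (pulled back along the hyperelliptic map $z\colon C\to\PP^1$), under which $W(m,0)=\HH^0\bigl(\PP^1,\O(m(2g-2))\bigr)$ and $(m,0)$-monomial bases of it become degree-$m$ monomial bases of the latter, so Kempf's stability of the rational normal curve \cite[Corollary~5.3]{kempf} produces such a basis of $\rho$-weight bounded by a controllable multiple of $\sum_i a_i$. The new work, which I expect to be the main obstacle, is the $W(m-1,1)$-summand, for which no off-the-shelf result applies: here I would construct the required monomial bases by hand, writing down for each degree an $(m-1,1)$-monomial $x_{i_1}\cdots x_{i_{m-1}}y_j$ and, near the top of the range, also $(m-3,3)$-monomials to relieve the $x$- and $y$-variables that would otherwise be overloaded, using the $\mu_{4g+2}$-symmetry together with the combinatorial reflection $z\mapsto 1/z$ (which need not be an automorphism of $C$ — it is used only to pair up monomial bases) to keep occurrence counts in hand. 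The laborious core is then the exact bookkeeping — counting precisely how often each $x_i$ and each $y_j$ appears in each basis, so that after the prescribed positive-rational combination all the ``interior'' variables occur equally often — and this is the analogue of Lemmas~\ref{L:weight-anti-Petri-1} and~\ref{L:basis} above.

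Finally I would assemble the pieces. Using $\sum_i a_i+\sum_j b_j=0$, a suitable positive-rational combination of (products of the $W(m,0)$- and $W(m-1,1)$-bases above) has total $\rho$-weight identically zero as a linear function of $(a_i,b_j)$; by Lemma~\ref{L:covering} this yields semistability whenever such a balanced family exists — which is the case for all $(g,m)$ except $(3,2)$, where a short parity count on the (very few) available degree-$2$ monomials shows that the balanced occurrence ratio cannot be attained, so that $[C]_2$ is in fact destabilized, consistently with the exclusion of $(3,2)$ in Conjecture~\ref{conjecture}. In particular this already gives the $m=2$, $g\ge4$ assertion. To upgrade to stability for $m\ge3$, I would arrange the construction so that the occurrence-count vectors of the chosen bases affinely span the hyperplane of vectors in $\mathbb{Q}^{3g-3}$ with coordinate sum $m(4m-1)(g-1)$; then the linear functionals $\rho\mapsto w_\rho(\B_t)$ have no common zero on the hyperplane of traceless diagonal one-parameter subgroups other than the origin, so since a positive combination of these functionals vanishes identically, every nonzero $\rho$ makes some $w_\rho(\B_t)$ strictly negative — which is exactly Kempf's criterion for stability. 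A handful of additional explicit monomial bases would dispatch the degenerate one-parameter subgroups (those supported on very few coordinates, where most constructed bases have weight zero) and the smallest cases $g=3$ and $m=3$, where the ambient space is too small for the generic argument to leave room. Throughout, it is the fact that $C$ is cut out by the single equation $w^2=z^{2g+1}+1$ that keeps all of these counts tractable by hand.
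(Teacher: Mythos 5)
Your setup is right: Kempf--Morrison via Lemma~\ref{L:wiman-mult}, the parity decomposition $\HH^0\bigl(C,\O_C(m)\bigr)\simeq W(m,0)\oplus W(m-1,1)$ of \eqref{E:isomorphism}, Kempf's rational normal curve result for the $(m,0)$-part, and the endgame via Lemma~\ref{L:covering}. The gap is in your repertoire of monomial types. You propose to span $W(m,0)$ by $(m,0)$-monomials only and $W(m-1,1)$ by $(m-1,1)$- and $(m-3,3)$-monomials only. Every basis you can build then uses the variables $y_j$ at most $3\dim W(m-1,1)=3\bigl((2g-2)(m-1)+g-2\bigr)<6(g-1)m$ times in total. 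Now test the one-parameter subgroup $\rho$ acting with weight $g-2$ on every $x_i$ and $-(2g-1)$ on every $y_j$ (traceless, diagonal on the distinguished basis). Since the total number of variable occurrences in any monomial basis of $\HH^0\bigl(C,\O_C(m)\bigr)$ is $m(4m-1)(g-1)$, one has $w_\rho(\B)=(g-2)m(4m-1)(g-1)-(3g-3)\#y(\B)$, so $w_\rho(\B)\le 0$ forces $\#y(\B)\ge\tfrac{1}{3}(g-2)m(4m-1)$. This exceeds your cap $6(g-1)m$ as soon as $(g-2)(4m-1)>18(g-1)$ --- e.g.\ for all $m\ge 5$ and $g$ large, and for every $g\ge3$ once $m$ is large enough. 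So against this $\rho$ all of your bases have strictly positive weight, your positive-rational combination summing to zero cannot exist, and the argument cannot even reach semistability, let alone stability. The problem is not ``relieving overloaded variables'' near the top of the range; it is that you never construct the $y$-heavy extreme needed to counterbalance the $x$-heavy Kempf-type bases.

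This is exactly what the paper's second decomposition \eqref{E:isomorphism-2} supplies: the Type~II bases use $(0,m)$-, $(1,m-1)$-, and, for each $2\le k\le m$, $(k,m-k)$-monomial bases of the successive quotients $W(k,m-k)/r\bigl(W(k-2,m-k+2)\bigr)$, descending all the way to $k=0,1$ rather than stopping at $(m-3,3)$. Proposition~\ref{P:type-II} shows the resulting balanced weight $a\Lambda+bN$ has $a<b$, which is precisely the sign opposite to the Type~I weight of Proposition~\ref{P:type-I} (Remark~\ref{remark}), so that $\Lambda+N=0$ lets the two cancel. You would need to add this second family of bases (and the accompanying bookkeeping, the analogue of Lemma~\ref{L:type-II-key} and Proposition~\ref{P:balanced-rational-curve}) for your plan to go through. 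Your mechanism for upgrading to stability (making the occurrence-count vectors affinely span the relevant hyperplane) is in the right spirit but is realized in the paper more concretely: the Type~II construction is perturbed to produce, for every $i$ and $j$, a multibasis of weight exactly $\epsilon\lambda_i+\delta\nu_j$, whence negativity for some choice of $(i,j)$ since some weight of any nontrivial traceless $\rho$ is negative.
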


\begin{proof}[Proof of Theorem \ref{T:wiman}] 
Lemma \ref{L:wiman-mult} and Proposition \ref{P:kempf}
imply that it suffices to check stability of $C$ with respect to 
one-parameter subgroups acting diagonally on the distinguished basis
$\{x_0, \dots, x_{2g-2}, y_0, \dots, y_{g-3}\}$ of $\HH^0\bigl(C,\O_C(1) \bigr)=\HH^0\bigl(C,K_C^2 \bigr)$.
Suppose $\rho\colon \mathbb{G}_m \ra \SL(3g-3)$ is a one-parameter subgroup acting diagonally on this basis. 
We need to show that there is a monomial basis of $\HH^0 \bigl(C,\O_C(m) \bigr)$ 
whose $\rho$-weight is negative if $m\geq 3$ (resp., non-positive if $m=2$).
We do this in Corollary \ref{C:wiman-2nd} for $m=2$ 
and Corollary \ref{T:wiman-m-3} for $m\geq 3$.
\end{proof}
Let $\{\lambda_i\}_{i=0}^{2g-2}$ be the weights with which $\rho$ acts on $\{x_i\}_{i=0}^{2g-2}$ and let
$\{\nu_j\}_{j=0}^{g-3}$ be the weights with which $\rho$ acts on $\{y_j\}_{j=0}^{g-3}$. 
We also set $\Lambda:=\sum_{i=0}^{2g-2} \lambda_i$ and $N:=\sum_{j=0}^{g-3} \nu_j$.
Note that $\Lambda+N=0$.

Before proceeding to the construction of the requisite monomial bases, we introduce additional terminology.
A multiset $\mathbb{S}=\{\B_1, \dots, \B_s\}$ of (monomial) bases 
of a subspace $V\subset \HH^0\bigl(C,\O_C(m)\bigr)$ will be called
a {\em (monomial) multibasis} of $V$. 
If $\mathbb{S}=\{\B_k\}_{k=1}^s$ and $\mathbb{T}=\{\mathcal{R}_\ell\}_{\ell=1}^{t}$,
we will write $\mathbb{S}\cup \mathbb{T}$ to denote their concatenation. We will simply write $d\cdot \mathbb{S}$ to 
denote $\cup_{r=1}^{d} \mathbb{S}$. 

If $\rho$ is a one-parameter subgroup of $\SL(3g-3)$, 
we define the $\rho$-weight of $\mathbb{S}=\{\B_k\}_{k=1}^s$ to be 
\[
w_{\rho}(\mathbb{S}):=\frac{1}{s}\sum_{k=1}^s w_{\rho}(\B_k).
\]
Our motivation for considering
multibases comes from an elementary observation that existence of a monomial multibasis of non-positive (negative)
$\rho$-weight implies existence of a monomial basis of non-positive (negative)
$\rho$-weight. Multibases have the following useful property:
If $\mathbb{S}_1=\{\B_k\}_{k=1}^{s}$ and $\mathbb{S}_2=\{\mathcal{R}_\ell\}_{\ell=1}^{t}$ 
are multibases of subspaces $V_1, V_2\subset \HH^0\bigl(C,\O_C(m)\bigr)$ and
$V_1 \cap V_2=\{0\}$, then we can form the multibasis 
\[
\mathbb{S}_1+\mathbb{S}_2:=\left\{\B_k\cup \mathcal{R}_\ell \right\}_{1\leq k \leq s, \ 1\leq \ell \leq t}
\]
of $V_1+V_2$.
Evidently, $w_{\rho}(\mathbb{S}_1+\mathbb{S}_2)=w_{\rho}(\mathbb{S}_1)+w_\rho(\mathbb{S}_2)$.

We say that a monomial 
multibasis $\mathbb{S}$ is $X$-balanced if the variables
$\{x_i\}_{i=0}^{2g-2}$ occur the same number of times in $\mathbb{S}$. Similarly, we define $Y$-balanced monomial
multibases. Finally, $\mathbb{S}$ will be called {\em balanced} if it is both $X$- and $Y$-balanced.
The $\rho$-weight of a balanced monomial multibasis is a linear combination of $\Lambda$ and $N$.

\subsection{Key combinatorial lemmas}
\begin{lemma}\label{L:path-lemma}
Suppose $x_0, \dots, x_n$, $y_0, \dots, y_m$ are weighted variables such that  
$\deg x_i =i$ for $0\leq i \leq n$, and $\deg y_j=j$ for $0\leq j \leq m$.
Then there exists a multiset of quadratic monomials $S=\{x_i y_j\}_{(i,j)\in I}$ satisfying the following conditions:
\begin{enumerate}
\item Every degree in the range $[0, n+m]$ occurs $|S|/(n+m+1)$ times in $S$.
\item Each variable $x_i$ occurs $|S|/(n+1)$ times in $S$. 
\item Each variable $y_j$ occurs $|S|/(m+1)$ times in $S$. 
\end{enumerate}
\end{lemma}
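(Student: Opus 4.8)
The plan is to prove this as a pure combinatorics statement, decoupled from the rest of the paper, by writing down one explicit $S$; the name ``path lemma'' is a hint that monotone lattice paths are the right device. Identify the quadratic monomial $x_iy_j$ with the lattice point $(i,j)$ in the rectangle $R=\{0,\dots,n\}\times\{0,\dots,m\}$, and call a sequence of $n+m$ unit steps, each either east or north, from $(0,0)$ to $(n,m)$ a \emph{monotone path}. Along any monotone path the coordinate sum $i+j$ increases by exactly $1$ at each step, so the path visits exactly one lattice point of each degree $0,1,\dots,n+m$. I would take $S$ to be the multiset in which the monomial $x_iy_j$ occurs with multiplicity
\[
p(i,j):=\binom{i+j}{i}\binom{(n-i)+(m-j)}{n-i},
\]
the number of monotone paths through $(i,j)$; equivalently, $S$ is the disjoint union, over all $\binom{n+m}{n}$ monotone paths $P$, of the sets $\{\,x_iy_j:(i,j)\in P\,\}$. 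Its size is $|S|=(n+m+1)\binom{n+m}{n}$.

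Property (1) is then immediate from the path description: each of the $\binom{n+m}{n}$ monotone paths contributes exactly one monomial of each degree $d\in[0,n+m]$, so every degree occurs exactly $\binom{n+m}{n}=|S|/(n+m+1)$ times. For property (2) I need the number of (path, visited point) incidences whose first coordinate is $i$, which is $\sum_{j=0}^{m}p(i,j)$. Substituting $k=i+j$ and invoking the classical identity $\sum_{k}\binom{k}{a}\binom{M-k}{b}=\binom{M+1}{a+b+1}$ with $M=n+m$, $a=i$, $b=n-i$ gives $\sum_{j=0}^{m}p(i,j)=\binom{n+m+1}{n+1}$, independent of $i$; and $(n+1)\binom{n+m+1}{n+1}=(n+m+1)\binom{n+m}{n}=|S|$, so this equals $|S|/(n+1)$. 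Property (3) follows by the mirror computation: using $\binom{i+j}{i}=\binom{i+j}{j}$ and $\binom{(n-i)+(m-j)}{n-i}=\binom{(n-i)+(m-j)}{m-j}$ and the same identity with $a=j$, $b=m-j$, one gets $\sum_{i=0}^{n}p(i,j)=\binom{n+m+1}{m+1}=|S|/(m+1)$.

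The only ingredient that is not purely formal is the binomial identity $\sum_{k=a}^{M-b}\binom{k}{a}\binom{M-k}{b}=\binom{M+1}{a+b+1}$, which I would dispatch in one line: partitioning the $\binom{M+1}{a+b+1}$ subsets of size $a+b+1$ of $\{0,1,\dots,M\}$ according to the value $k$ of their $(a+1)$-st smallest element yields $\binom{k}{a}\binom{M-k}{b}$ subsets for each $k$. I do not anticipate any genuine obstacle; the one point to be careful about is that the summation ranges in these Vandermonde-type identities really do capture every nonzero term, which holds because $p(i,j)$ vanishes as soon as $(i,j)$ leaves the rectangle $R$, so no boundary corrections are needed. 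If a smaller $S$ is desired one can divide all multiplicities by their gcd, but it is cleanest to leave $S$ as above.
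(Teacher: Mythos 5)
Your multiplicities $p(i,j)=\binom{i+j}{i}\binom{(n-i)+(m-j)}{n-i}$ are exactly the coefficients $c_{ij}$ used in the paper's proof, so this is the same construction; the paper merely asserts the three balancing properties, whereas you supply the (correct) lattice-path interpretation and the Vandermonde-type identity that verify them. The argument is correct as written.
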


\begin{proof}
Let $c_{ij}=\binom{i+j}{i}\binom{n+m-i-j}{n-i}$. Then $c_{ij}$'s
satisfy the following:
\begin{enumerate}
\item[(i)] $\sum\limits_{i+j=d} c_{ij}$ is the same for all $d$ in the range $[0, n+m]$.
\item[(ii)] $\sum \limits_{j=0}^m c_{ij}$  is the same for all $0\leq i\leq n$.
\item[(iii)] $\sum \limits_{i=0}^n c_{ij}$  is the same for all $0\leq j\leq m$.
\end{enumerate}
The multiset $S$ in which the monomial $x_iy_j$ occurs $c_{ij}$ times satisfies all requisite conditions. 
\end{proof}

Using preceding lemmas, 
we prove several results that enable our proof of Theorem \ref{T:wiman}. 
\begin{prop}\label{P:balanced-rational-curve}
Let $x_i:=z^i$ for $0\leq i\leq n$. 
For every $0\leq k\leq n$, there exists an $X$-balanced quadratic monomial 
multibasis $\mathbb{H}^{n}_{k}$
of $Z_k:=\lspan\{z^{i} \ : \ k\leq i \leq 2n-k\}$.
\end{prop}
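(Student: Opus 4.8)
The plan is to turn the existence of $\mathbb{H}^n_k$ into a finite combinatorial balancing problem and then solve that problem. The key reduction is this: since $Z_k=\lspan\{z^i\,:\,k\le i\le 2n-k\}$ has dimension $2n-2k+1$, any collection of quadratic monomials $\{x_{i_d}x_{d-i_d}\}_{d=k}^{2n-k}$ — one for each degree $d\in[k,2n-k]$, with $0\le i_d\le n$ and $0\le d-i_d\le n$ — restricts under $x_i\mapsto z^i$ to $\{z^k,\dots,z^{2n-k}\}$ and is therefore automatically a monomial basis of $Z_k$. Hence it suffices to build a multiset $M$ of quadratic monomials in $x_0,\dots,x_n$, each of degree in $[k,2n-k]$, such that every degree $d\in[k,2n-k]$ occurs exactly $N_0$ times in $M$ and every variable $x_0,\dots,x_n$ occurs the same total number of times in $M$. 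The degree condition lets us split $M$ into $N_0$ monomial bases of $Z_k$, and the variable condition makes the resulting multibasis $X$-balanced.

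When $k=0$ this is precisely Lemma \ref{L:path-lemma}, applied to the two variable sets $x_0,\dots,x_n$ and $y_0,\dots,y_n$ (so $m=n$) and then identifying $y_j:=x_j$: in the resulting path-lemma multiset $S$ every degree in $[0,2n]$ is represented equally often, and each variable occurs $|S|/(n+1)+|S|/(n+1)$ times, the same for all variables. So $\mathbb{H}^n_0$ exists.

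For $k\ge 1$ I would phrase the construction of $M$ as a transportation problem: the ``sources'' are the degrees $d\in[k,2n-k]$ and the ``sinks'' are the indices $v\in[0,n]$, where a source $d$ feeds a sink $v$ only through the monomial $x_vx_{d-v}$ — legal when $\max(0,d-n)\le v\le\min(d,n)$, and necessarily feeding the sinks $v$ and $d-v$ simultaneously. Fixing the common source output $N_0$ and sink intake $N_1$ so that $(2n-2k+1)N_0=(n+1)N_1$, one verifies the Hall / max-flow–min-cut inequalities: because each admissible range $[\max(0,d-n),\min(d,n)]$ is an interval, the extremal sets one must test are intervals, and the inequalities reduce to elementary ones such as $k(n-1)\le n^2$ and $(n-k)(2k+1)\ge 0$, which hold for all $1\le k\le n$. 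This produces a nonnegative rational weighting; averaging it over the involution $v\leftrightarrow d-v$ makes the associated monomial multiplicities symmetric, and clearing denominators yields the required integral multiset $M$. Alternatively, one can try to exhibit an explicit such weighting built from products of binomial coefficients, in the spirit of Lemma \ref{L:path-lemma}.

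The main obstacle is exactly the construction of this balanced weighting when $k\ge 1$. The naive choice — spreading each degree $d$ uniformly over its admissible monomials — badly over-represents the central indices and starves $x_0$ and $x_n$; already for $n=2$, $k=1$ the only $X$-balanced multibasis uses $x_0x_2$ and never $x_1^2$ for degree $2$. One must therefore bias the choices toward the extreme indices in a precisely controlled way, and proving that the resulting transportation polytope is nonempty (or writing the explicit weights down) is the technical heart of the argument; the reduction and the case $k=0$ are routine.
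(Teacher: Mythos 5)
Your reduction is sound: a multiset of quadratic monomials with all degrees in $[k,2n-k]$, in which every degree occurs equally often and every variable occurs equally often, does split into an $X$-balanced multibasis of $Z_k$ (any monomial of degree $d$ restricts to a nonzero multiple of $z^d$, so one monomial per degree is automatically a basis). Your $k=0$ case is also correct and is a genuinely different, and arguably cleaner, route than the paper's: specializing Lemma \ref{L:path-lemma} to $m=n$ and identifying $y_j$ with $x_j$ immediately gives the balanced multibasis, whereas the paper builds $\mathbb{H}^n_0$ from the two explicit bases $\B^-$ and $\B^+$.

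The gap is the $k\ge 1$ case, which you correctly identify as the heart of the matter but do not actually prove. The appeal to Hall/max-flow–min-cut is not valid for the network you describe: each monomial $x_vx_{d-v}$ delivers one unit of ``flow'' from the source $d$ to \emph{two} sinks simultaneously (and two units to the single sink $v$ when $d=2v$). This coupling means you are not looking at a bipartite transportation polytope but at a symmetric (three-line) one --- equivalently, you need a symmetric nonnegative matrix supported on $\{(i,j): 0\le i,j\le n,\ k\le i+j\le 2n-k\}$ with constant row sums \emph{and} constant antidiagonal sums --- and no off-the-shelf interval-Hall criterion characterizes feasibility of such polytopes. Your proposed repair, averaging a bipartite-feasible flow over the involution $v\leftrightarrow d-v$, preserves the constant degree counts but destroys the constant variable counts (it replaces the sink intake at $v$ by the average of the intakes at $v$ and at the reflected indices, which need not be constant), so it does not reduce the problem to the bipartite case; and the two inequalities you quote are asserted rather than derived. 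The paper avoids this entirely by a descending induction on $k$: starting from $\mathbb{H}^n_n=\{x_ix_{n-i}\}_{i=0}^n$, it extends $\mathbb{H}^n_{k+1}$ to a multibasis $\mathbb{T}_0$ of $Z_k$ by adjoining monomials of degrees $k$ and $2n-k$, computes the resulting imbalance (which is concentrated in the coefficient of $\lambda_0+\lambda_n$), and cancels it by mixing in positive multiples of the two explicit bases $\B^-$ and $\B^+$, whose $\lambda_0+\lambda_n$ coefficients err in opposite directions. To complete your argument you would need either such an explicit construction or an honest feasibility proof for the symmetric transportation polytope; as written, the claim that the polytope is nonempty for $1\le k\le n-1$ is unsupported.
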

\begin{proof}
To keep track of the number of appearances of variables $x_i$'s in multibases, 
we assume that a one-parameter subgroup $\rho\co \gm \ra \operatorname{GL}(n+1)$ acts 
on $\{x_i\}_{i=0}^{n}$ with weights $\{\lambda_i\}_{i=0}^{n}$. If $\mathbb{S}$ is a fixed multibasis 
of $Z_k$, then $w_{\rho}(\mathbb{S})$ is a linear function in $\lambda_i$'s. Denote $\Lambda:=\sum_{i=0}^n \lambda_i$.
Evidently, $\mathbb{S}$ is $X$-balanced if and only if 
$w_{\rho}(\mathbb{S})=\frac{2(2n-2k+1)}{n+1}\Lambda$ for every $\rho$.

We proceed by descending induction on $k$.
If $k=n$, then $\mathbb{H}^n_{n}:=\{x_ix_{n-i}\}_{i=0}^{n}$ 
is an $X$-balanced quadratic multibasis of $Z_n=\CC\langle z^n\rangle$. 

Suppose now $k\leq n-1$. Consider the following monomial bases of $Z_k$:
\begin{align*}
\B^{-}&:=\{x_i\, x_{k+i}, \ x_{i+1}\, x_{k+i} \ : \ 0\leq i \leq n-k-1\}\cup\{x_{n-k}\, x_{n}\}, \\
\B^{+}&:=\{x_0\, x_i \ : k\leq i \leq n\} \cup \{x_n\, x_i \ : \ 1\leq i \leq n-k\}.
\end{align*}
Their weights are 
\begin{align*}
w_{\rho}(\B^{-}) =\lambda_0+2\sum_{i=1}^{n-k}\lambda_i+2\sum_{i=k}^{n-1}\lambda_i+\lambda_n,\qquad
w_{\rho}(\B^{+}) =(n-k)(\lambda_0+\lambda_n)+\sum_{i=k}^{n} \lambda_i +\sum_{i=0}^{n-k} \lambda_i.
\end{align*}
If $k=0$, then $\mathbb{H}^{n}_0:=n \cdot \B^{-} \cup \B^{+}$ is an $X$-balanced monomial basis of $Z_0$.
If $k\geq 1$, then let 
 $\mathbb{H}_{k+1}^{n}$ be a balanced monomial multibasis of $Z_{k+1}$, which exists by the induction assumption. 
Let
$\mathbb{T}_0:=\mathbb{H}_{k+1}^n+\{x_ix_{k-i}\ \colon \ 1\leq i\leq k-1\}+\{x_{n-i}x_{n-k+i}\ \colon \ 1\leq i\leq k-1\}$
be a multibasis of $Z_{k}=Z_{k+1}+\CC\langle z^k\rangle+\CC\langle z^{n-k}\rangle$. Then
\begin{equation*}
w_{\rho}(\mathbb{T}_0)
=2\frac{(2n-2k-1)}{n+1}\Lambda+\frac{2}{k-1}\left(\sum_{i=1}^{k-1} \lambda_i+\sum_{i=n-k+1}^{n-1}\lambda_{i}\right).
\end{equation*}
It follows that the weight of $\mathbb{T}^{-}:=(k-1)\cdot \mathbb{T}_0\cup \B^{-}$ is
\[
\frac{1}{k}\left(\frac{2(k-1)(2n-2k-1)}{(n+1)}\Lambda+4\Lambda-3(\lambda_0+\lambda_n)\right)
\]
and the weight of $\mathbb{T}^{+}:=(k-1)\cdot \mathbb{T}_0\cup 2\cdot \B^{+}$ is
\[
\frac{1}{k+1}\left(\frac{2(k-1)(2n-2k-1)}{(n+1)}\Lambda+4\Lambda+(2n-2k-2)(\lambda_0+\lambda_n)\right).
\]
It follows that the multibasis $\mathbb{H}_k^{n}:=k(2n-2k-2) \cdot \mathbb{T}^{-} \cup 3(k+1)\cdot \mathbb{T}^{+}$ is 
a well-defined $X$-balanced monomial multibasis of $Z_k$. 
\end{proof}

\begin{remark}
The statement of Proposition \ref{P:balanced-rational-curve} for $k=0$ is equivalent to semistability of the $2^{nd}$ 
Hilbert point of a rational normal curve of degree $n$,
proved by Kempf in \cite[Corollary 5.3]{kempf}. 
A geometric interpretation of the remaining cases is more elusive. 
\end{remark}
\begin{prop}\label{P:V-bases}
There exists a balanced $(k,m-k)$-monomial multibasis $\mathbb{S}(k,m-k)$ of the space $W(k,m-k)\subset \HH^0\bigl(C,\O_C(m)\bigr)$.
\end{prop}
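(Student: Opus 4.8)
The plan is to prove Proposition \ref{P:V-bases} by induction on $m$, peeling off a single variable at each stage and using the path‑counting Lemma \ref{L:path-lemma} to flatten the resulting convolution of weight ranges. First recall that $W(k,m-k)=\bigoplus_{d=0}^{N}\CC\langle z^{d}w^{m-k}\rangle$ with $N:=(2g-2)k+(g-3)(m-k)$; hence a set of $(k,m-k)$-monomials maps to a monomial basis of $W(k,m-k)$ exactly when it contains precisely one monomial of each total $z$-degree $d\in\{0,\dots,N\}$, and a multiset of such monomials is a monomial multibasis exactly when every degree in $[0,N]$ occurs the same number of times. Recall also that a multibasis is \emph{balanced} iff, across all of its members, each $x_{i}$ occurs the same number of times and each $y_{j}$ occurs the same number of times.

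For the base case $m\le 1$ there is nothing to do: $W(1,0)$ (resp.\ $W(0,1)$) has the unique monomial basis $\{x_{0},\dots,x_{2g-2}\}$ (resp.\ $\{y_{0},\dots,y_{g-3}\}$), which is trivially balanced, and $W(0,0)=\CC$. For the inductive step with $m\ge 2$, suppose first $k\ge 1$ and peel off a big factor. By induction there is a balanced monomial multibasis $\mathbb{T}=\{\mathcal{B}_{1},\dots,\mathcal{B}_{s}\}$ of $W(k-1,m-k)$, each $\mathcal{B}_{p}$ containing exactly one monomial of each degree $d'\in\{0,\dots,N'\}$, where $N'=N-(2g-2)$. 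Since $b\mapsto\deg b$ is a bijection $\mathcal{B}_{p}\to\{0,\dots,N'\}$, I may treat the monomials of $\mathcal{B}_{p}$ as weighted variables of degrees $0,\dots,N'$ and apply Lemma \ref{L:path-lemma} with the second variable set $\{x_{0},\dots,x_{2g-2}\}$ (degrees $0,\dots,2g-2$). Using one copy of the resulting multiset $S$ for each $p$, I obtain for every $p$ a multiset $\mathbb{M}_{p}$ of $(k,m-k)$-monomials $x_{i}\cdot b$ (with $b\in\mathcal{B}_{p}$), and I set $\mathbb{S}(k,m-k):=\mathbb{M}_{1}\cup\cdots\cup\mathbb{M}_{s}$. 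If instead $k=0$, I peel a small factor and run the identical argument with $W(0,m-1)$ in place of $W(k-1,m-k)$ and $\{y_{0},\dots,y_{g-3}\}$ in place of $\{x_{0},\dots,x_{2g-2}\}$.

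It remains to verify that $\mathbb{S}(k,m-k)$ is a balanced monomial multibasis of $W(k,m-k)$. It is a monomial multibasis because, by Lemma \ref{L:path-lemma}(1), inside each $\mathbb{M}_{p}$ every degree of $[0,N]=[0,2g-2+N']$ occurs the same number of times. For balance, note that the occurrences of any fixed variable among the members of $\mathbb{S}(k,m-k)$ are of two kinds: those as the peeled factor, and those inside the ``old'' monomials $b$. The peeled factor is distributed uniformly over $x_{0},\dots,x_{2g-2}$ by Lemma \ref{L:path-lemma}(2); and by Lemma \ref{L:path-lemma}(3) every $b\in\mathcal{B}_{p}$ occurs equally often in $\mathbb{M}_{p}$, so the old monomials occurring (with multiplicity) in $\mathbb{S}(k,m-k)$ form a fixed positive multiple of $\mathbb{T}$ as a multiset of monomials, which is balanced. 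Thus the big variables, and likewise the small variables, appear inside the old monomials with uniform total multiplicity; since a sum of uniform distributions is uniform, and since the peeled factor is a big variable (so does not affect the small ones), every $x_{i}$ and every $y_{j}$ occurs the same number of times in $\mathbb{S}(k,m-k)$. All the multiplicities involved are integers — the multiset $S$ of Lemma \ref{L:path-lemma} has size divisible by $2g-1$ and by $N'+1$ — and in any case one is free to pass to multiples. This completes the induction; note that the case $k=0$, $m=2$ recovers the $k=0$ case of Proposition \ref{P:balanced-rational-curve}.

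The genuine content is Lemma \ref{L:path-lemma}: it is what turns the trapezoidal convolution of two degree-intervals into a flat one while simultaneously keeping both factors uniformly distributed, and everything else is bookkeeping. The one point that needs care is the balance computation in the inductive step — in particular recognizing that the peeled factor and the old monomials contribute \emph{separately} uniform distributions whose sum is therefore uniform — together with the degenerate cases: $k=0$ (peel a small factor instead, whereupon $X$-balance is vacuous), $g=3$ (there is a single small variable $y_{0}=w$, so $Y$-balance is automatic and the small-variable reduction only ever leads to the trivial multibasis of $\CC\langle w^{m}\rangle$), and $m\le 1$ (the base case). Beyond this I anticipate no real obstacle.
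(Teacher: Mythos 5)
Your proposal is correct and is essentially identical to the paper's proof: both proceed by induction on $m$, peel off a single $x_i$ (or $y_j$ when $k=0$) from a balanced multibasis of $W(k-1,m-k)$ (resp. $W(0,m-1)$), and invoke Lemma \ref{L:path-lemma} to make the degree distribution flat while keeping both the peeled factor and the old monomials uniformly distributed. Your write-up is somewhat more careful about the balance bookkeeping, integrality, and degenerate cases, but the argument is the same.
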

\begin{proof} We proceed by induction on $m$. The base case is $m=1$. Here, we can even find a 
balanced basis: If $k=0$, then $\{y_0,\dots, y_{g-3}\}$ is a balanced $(0,1)$-monomial basis of $W(0,1)$; 
if $k=1$, then $\{x_0,\dots, x_{2g-3}\}$ is a balanced $(1,0)$-monomial basis of $W(1,0)$. 

Suppose now that $m\geq 2$ and $k\geq 1$. Then $\mathbb{S}(k-1,m-k)$ exists by the induction assumption.
Write $\mathbb{S}(k-1,m-k)=\{\B_\ell\}_{\ell=1}^{r}$, where each
$\B_\ell=\{e^{\ell}_d\}_{d=0}^{(k-1)(2g-2)+(m-k)(g-3)}$ is a $(k-1,m-k)$-monomial basis of $W(k-1,m-k)$,
and where we choose the indexing so that the monomial $e^{\ell}_d$ maps to $z^d w^{m-k}$ 
in $\HH^0\bigl(C,\O_C(m)\bigr)$. Next, let $\deg(e^{\ell}_d)=d$ and $\deg(x_i)=i$, so that the degree 
corresponds to the power of $z$ occurring in a monomial.
Consider the multiset $S_{\ell}=\{x_i e^{\ell}_d\}_{(i,d)\in I}$
satisfying Lemma \ref{L:path-lemma}:
\begin{enumerate}
\item If we write $x_i e^{\ell}_d=z^{d+i} w^{m-k}$, then each power of $z$ occurs the same number of times.
\item Each index $0\leq i\leq 2g-2$ occurs the same number of times in $S_{\ell}$.
\item Each index $0\leq d\leq (k-1)(2g-2)+(m-k)(g-3)$ occurs the same number of times in $S_{\ell}$.
\end{enumerate}
Condition (1) implies that we can arrange the elements of $S_{\ell}$ into a $(k,m-k)$-monomial multibasis
$\mathbb{T}_{\ell}$ of $W(k,m-k)$. Next, we set
$\mathbb{S}(k,m-k):=\cup_{\ell=1}^{r} \mathbb{T}_{\ell}$. Then conditions (2--3) and the assumption that
$\mathbb{S}(k-1,m-k)$ is balanced imply that
$\mathbb{S}(k,m-k)$ is a balanced $(k,m-k)$-monomial multibasis of $W(k,m-k)$.

If $k=0$, then an analogous argument, with $\{x_i\}_{i=0}^{2g-2}$ replaced by $\{y_j\}_{j=0}^{g-3}$,
constructs $\mathbb{S}(0,m)$ from $\mathbb{S}(0,m-1)$.
\end{proof}

Next, we record an application of the preceding combinatorial lemmas, which will be used in the proof of semistability of the $2^{nd}$ Hilbert point of the Wiman curve.
\begin{example}\label{E:example} 
Let $g\geq 3$.
Consider the $(2g+2)$-dimensional linear space
\[
V:=\lspan \left\{ z^i: 0\leq i \leq 4g-4\right\} 
\big/ \lspan \left\{ z^i+z^{2g+1+i} : 0\leq i\leq 2g-6\right\}.
\]
We construct an $X$-balanced $(2,0)$-monomial multibasis of $V$ in variables $\{x_i=z^i\}_{i=0}^{2g-2}$
as follows: 
Let $\mathbb{H}_{g-3}^{2g-3}$ be the balanced $(2,0)$-monomial multibasis 
of $\lspan\{ x^i: \ g-3\leq i\leq 3g-3\}$ in variables $\{x_i\}_{i=0}^{2g-3}$, which exists by Proposition \ref{P:balanced-rational-curve}.
Set $\mathbb{T}_1 := \mathbb{H}_{g-3}^{2g-3}+\{x_{2g-2}^2\}$. 
Then $\mathbb{T}_1$ is a multibasis of $V$ of weight
\[
w_{\rho}(\mathbb{T}_1)=\frac{2(2g+1)}{2g-2}\sum_{i=0}^{2g-3}\lambda_i+2\lambda_{2g-2}.
\] 
Let $\mathbb{H}_{g-2}^{2g-2}$ be the balanced $(2,0)$-monomial multibasis 
of $\lspan \{ x^i: \ g-2\leq i \leq 3g-2\}$ in variables $\{x_i\}_{i=0}^{2g-2}$, which exists by Proposition \ref{P:balanced-rational-curve}.
Set $\mathbb{T}_2 := \mathbb{H}_{g-2}^{2g-2}+\{x_{2g-2}^2\}$. Then $\mathbb{T}_2$ is a multibasis of $V$ of weight
\[
w_{\rho}(\mathbb{T}_2)=\frac{2(2g+1)}{2g-1}\sum_{i=0}^{2g-2}\lambda_i+2\lambda_{2g-2}=\frac{2(2g+1)}{2g-1}\sum_{i=0}^{2g-3}\lambda_i+\frac{8g}{2g-1}\lambda_{2g-2}.
\] 

Evidently, a suitable combination of $\mathbb{T}_1$ and $\mathbb{T}_2$ gives an $X$-balanced multibasis of $V$ 
of weight $\frac{2(2g+2)}{2g-1}\sum_{i=0}^{2g-2} \lambda_i$. 
\end{example}

\subsection{Monomial multibases and stability} 
\label{S:monomial-bases-wiman}
The monomial (multi)bases of $\HH^0\bigl(C,\O_C(m)\bigr)$ that we use will be of the following two types.
\begin{enumerate}
\item A {\em Type I basis} consists of: 
\begin{itemize}
\item a $(m,0)$-monomial basis of $W(m,0)$; 
that is, of $(2g-2)m+1$ linearly independent degree $m$ monomials in the variables $x_i$'s.
\item a $(m-1,1)$-monomial basis of $W(m-1,1)$; 
that is, of $(2g-2)(m-1)+g-2$ linearly independent monomials that are products of a degree $m-1$ 
monomial in the variables $x_i$'s and a $y_j$ term.
\end{itemize} 
That a set of such monomials is a basis of $\HH^0\bigl(C,\O_C(m) \bigr)$ follows from 
Equation \eqref{E:isomorphism}.

 A {\em Type I multibasis} is a multibasis whose every element is a Type I basis.

\item  A {\em Type II basis} consists of:
\begin{itemize}
\item
a $(0,m)$-monomial basis of $W(0,m)$,
\item
a $(1,m-1)$-monomial basis of $W(1,m-1)$, 
\item
For $2\leq k \leq m$, a $(k,m-k)$-monomial basis of
$W(k,m-k)/r\bigl(W(k-2,m-k+2)\bigr)$.
\end{itemize} 
That a set of such monomials is a basis follows from Equation \eqref{E:isomorphism-2}.

 A {\em Type II multibasis} is a multibasis whose every element is a Type II basis.
\end{enumerate}

We pause for a moment to explain these definitions in the case of $m=2$.
\begin{enumerate}
\item A {\em Type I basis} of $\HH^0\bigl(C,\O_C(2)\bigr)$ consists 
of $4g-3$ quadratic $(2,0)$-monomials spanning $W(2,0)=\lspan\{1, \dots, z^{4g-4}\}\subset \HH^0\bigl(C,\O_C(2)\bigr)$ and of
$3g-4$ quadratic $(1,1)$-monomials spanning $W(1,1)=\lspan\{w, zw, \dots, z^{3g-5}w\}\subset \HH^0\bigl(C,\O_C(2)\bigr)$.

\item A {\em Type II basis} of $\HH^0\bigl(C,\O_C(2)\bigr)$ consists 
of $2g-5$ quadratic $(0,2)$-monomials spanning $W(0,2)=\lspan\{w^2, \dots, z^{2g-6}w^2\}\subset \HH^0\bigl(C,\O_C(2)\bigr)$; of
$3g-4$ quadratic $(1,1)$-monomials  
spanning $W(1,1)$; and of $2g+2$ quadratic $(2,0)$-monomials
that are linearly independent modulo $r\bigl(W(0,2)\bigr)$, that is, $2g+2$ monomials 
with exactly one from each pair
\[ 
(z^d, z^{d+2g+1}), \quad 0\leq d \leq 2g-6,
\]
and with the remaining $7$ being $z^{2g-5}, z^{2g-4}, z^{2g-3}, z^{2g-2}, z^{2g-1}, z^{2g}$,  and $z^{4g-4}.$
\end{enumerate}

Before proceeding with our construction of monomial bases of both types for every $m$, we illustrate our 
approach by considering the case of $m=2$, thus establishing semistability 
of the $2^{nd}$ Hilbert point of the bicanonically embedded Wiman 
curve for every $g\geq 4$.
\begin{prop}
There exist balanced Type I and Type II monomial multibases $\mathbb{B}_1$ and $\mathbb{B}_2$ of $\HH^0\bigl(C, \O_C(2)\bigr)$. 
Their weights are, respectively, 
\begin{align*}
w_{\rho}(\mathbb{B}_1)
=\frac{11g-10}{2g-1}\Lambda+\frac{3g-4}{g-2}N, \qquad \text{and} \qquad
w_{\rho}(\mathbb{B}_2) 
=\frac{7g}{2g-1}\Lambda+7N.
\end{align*}
\end{prop}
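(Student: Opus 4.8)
The plan is to assemble both $\mathbb{B}_1$ and $\mathbb{B}_2$ from balanced multibases of the individual summands in the decompositions $\HH^0\bigl(C,\O_C(2)\bigr)=W(2,0)\oplus W(1,1)$ of \eqref{E:isomorphism} and $\HH^0\bigl(C,\O_C(2)\bigr)=W(0,2)\oplus W(1,1)\oplus W(2,0)/r\bigl(W(0,2)\bigr)$ of \eqref{E:isomorphism-2}, using the operation $\mathbb{S}_1+\mathbb{S}_2$ on multibases. Since that operation adds $\rho$-weights, and since each constituent multibasis will be balanced — so that the concatenation is again balanced, variable occurrences being additive — the two displayed weight formulas will fall out of a short occurrence count once the pieces are in hand. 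A single balanced $(1,1)$-multibasis of $W(1,1)$ will serve in \emph{both} $\mathbb{B}_1$ and $\mathbb{B}_2$.

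First I would construct this balanced $(1,1)$-monomial multibasis of $W(1,1)=\lspan\{z^dw:0\le d\le 3g-5\}$ by invoking Lemma~\ref{L:path-lemma} with the $x$-variables $x_0,\dots,x_{2g-2}$ (so $n=2g-2$) and the $y$-variables $y_0,\dots,y_{g-3}$ (so $m=g-3$). The resulting multiset $\{x_iy_j\}$ contains every $z$-degree in $[0,3g-5]=[0,\dim W(1,1)-1]$ equally often, hence partitions into $(1,1)$-monomial bases of $W(1,1)$; conditions (2)--(3) of the lemma say each $x_i$ and each $y_j$ occur equally often, so the multibasis is balanced, and a direct count gives its weight as $\tfrac{3g-4}{2g-1}\Lambda+\tfrac{3g-4}{g-2}N$.

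For $\mathbb{B}_1$ I would take the $W(2,0)=\lspan\{z^d:0\le d\le 4g-4\}$ summand to be the multibasis $\mathbb{H}^{2g-2}_{0}$ supplied by Proposition~\ref{P:balanced-rational-curve} in the case $n=2g-2$, $k=0$; this is $X$-balanced of weight $\tfrac{8g-6}{2g-1}\Lambda$, and by \eqref{E:isomorphism} its concatenation with the $(1,1)$-multibasis above is a Type I multibasis $\mathbb{B}_1$, balanced, of weight $\tfrac{11g-10}{2g-1}\Lambda+\tfrac{3g-4}{g-2}N$. For $\mathbb{B}_2$ I need in addition a $Y$-balanced $(0,2)$-monomial multibasis of $W(0,2)=\lspan\{z^dw^2:0\le d\le 2g-6\}$ and an $X$-balanced $(2,0)$-monomial multibasis of $W(2,0)/r\bigl(W(0,2)\bigr)$. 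The first is obtained by rerunning the proof of Proposition~\ref{P:balanced-rational-curve} verbatim with $y_j:=z^jw$ in place of $z^j$ (the common factor $w^2$ is inert) and $n=g-3$, $k=0$, giving weight $\tfrac{2(2g-5)}{g-2}N$. For the second, note $r(z^dw^2)=z^d(z^{2g+1}+1)=z^d+z^{d+2g+1}$, so $W(2,0)/r\bigl(W(0,2)\bigr)$ is exactly the space $V$ of Example~\ref{E:example} (valid for $g\ge 3$), whose $X$-balanced $(2,0)$-multibasis there has weight $\tfrac{2(2g+2)}{2g-1}\Lambda$. By \eqref{E:isomorphism-2} the concatenation of these two multibases with the $(1,1)$-multibasis is a Type II multibasis $\mathbb{B}_2$; it is balanced, and adding the three weights yields $\tfrac{7g}{2g-1}\Lambda+7N$ after using $7g-14=7(g-2)$.

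The bulk of the argument is pure bookkeeping, so the only genuine point requiring care is the legitimacy of concatenating multibases \emph{across the quotient summand} $W(2,0)/r\bigl(W(0,2)\bigr)$: one must confirm that combining a $(0,2)$-monomial basis of $W(0,2)$, a $(1,1)$-monomial basis of $W(1,1)$, and $(2,0)$-monomials whose residues form a basis of that quotient really does yield a basis of $\HH^0\bigl(C,\O_C(2)\bigr)$, which is precisely \eqref{E:isomorphism-2} together with the definition of a Type II basis, and, in the same way, that \eqref{E:isomorphism} makes the concatenation of a $W(2,0)$-basis with a $W(1,1)$-basis a Type I basis. Everything else is the routine verification that Lemma~\ref{L:path-lemma}, Proposition~\ref{P:balanced-rational-curve}, and Example~\ref{E:example} output multibases with the stated occurrence counts, from which the weight formulas follow by linearity in the $\lambda_i$'s and $\nu_j$'s together with $\Lambda+N=0$.
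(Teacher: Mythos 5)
Your proposal is correct and follows essentially the same route as the paper: decompose $\HH^0\bigl(C,\O_C(2)\bigr)$ via \eqref{E:isomorphism} and \eqref{E:isomorphism-2}, take balanced multibases of each summand, concatenate, and add weights, with Example \ref{E:example} supplying the quotient piece $W(2,0)/r\bigl(W(0,2)\bigr)$. The only (immaterial) difference is that for the summands $W(2,0)$, $W(1,1)$, $W(0,2)$ the paper simply cites Proposition \ref{P:V-bases}, whereas you rederive the same balanced multibases with the same weights directly from Lemma \ref{L:path-lemma} and Proposition \ref{P:balanced-rational-curve}, which is exactly how Proposition \ref{P:V-bases} is proved in the $m=2$ case anyway.
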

\begin{proof}
The existence of a balanced Type I multibasis follows from Proposition \ref{P:V-bases}. The existence of a balanced Type II multibasis follows from 
Proposition \ref{P:V-bases} and Example \ref{E:example}.
\end{proof}
\begin{corollary}\label{C:wiman-2nd}
The $2^{nd}$ Hilbert point of $C$ is semistable for $g\geq 4$.
\end{corollary}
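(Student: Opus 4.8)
The plan is to run exactly the covering argument of Lemma \ref{L:covering}, now at the level of multibases. By Lemma \ref{L:wiman-mult} and Proposition \ref{P:kempf} it suffices to exhibit, for each one-parameter subgroup $\rho$ acting diagonally on the distinguished basis, a monomial basis of $\HH^0\bigl(C,\O_C(2)\bigr)$ of non-positive $\rho$-weight; and by the elementary observation recorded earlier in this section, a monomial \emph{multi}basis of non-positive $\rho$-weight already contains a monomial basis of non-positive $\rho$-weight, so a multibasis is enough. The two candidates are the balanced multibases $\mathbb{B}_1$ (Type I) and $\mathbb{B}_2$ (Type II) produced in the preceding Proposition.

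The first step is to eliminate $N$ using the relation $\Lambda+N=0$, which expresses both weights as scalar multiples of $\Lambda$:
\begin{align*}
w_{\rho}(\mathbb{B}_1)&=\left(\frac{11g-10}{2g-1}-\frac{3g-4}{g-2}\right)\Lambda=\frac{(g-1)(5g-16)}{(2g-1)(g-2)}\,\Lambda,\\
w_{\rho}(\mathbb{B}_2)&=\left(\frac{7g}{2g-1}-7\right)\Lambda=-\frac{7(g-1)}{2g-1}\,\Lambda;
\end{align*}
the only computation involved is the factorization $5g^2-21g+16=(g-1)(5g-16)$. For $g\geq 4$ every factor in the coefficient of $\Lambda$ in $w_{\rho}(\mathbb{B}_1)$ is positive---note in particular $5g-16\geq 4$---while the coefficient of $\Lambda$ in $w_{\rho}(\mathbb{B}_2)$ is strictly negative. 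Hence $w_{\rho}(\mathbb{B}_1)$ and $w_{\rho}(\mathbb{B}_2)$ are nonzero multiples of $\Lambda$ of opposite sign, so some strictly positive rational combination of them vanishes identically in the weight coordinates $(\lambda_i,\nu_j)$. Concretely, for any fixed $\rho$ either $\Lambda\geq 0$, in which case $w_{\rho}(\mathbb{B}_2)\leq 0$, or $\Lambda\leq 0$, in which case $w_{\rho}(\mathbb{B}_1)\leq 0$; either way one of $\mathbb{B}_1,\mathbb{B}_2$ is a monomial multibasis of non-positive $\rho$-weight, and passing to a constituent basis and invoking Proposition \ref{P:kempf} finishes the proof.

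I do not anticipate a genuine obstacle: the entire content of the corollary is packaged in the preceding Proposition (the explicit construction of the balanced multibases $\mathbb{B}_1$ and $\mathbb{B}_2$), and what remains is the trivial sign analysis above. The one point worth underlining is that the argument degenerates precisely at $g=3$, where $5g-16<0$ makes both weights \emph{negative} multiples of $\Lambda$, so no cancelling positive combination exists---in agreement with the exclusion of $(g,m)=(3,2)$ in Conjecture \ref{conjecture}.
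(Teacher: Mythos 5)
Your argument is correct and is essentially the paper's own proof: the paper likewise observes that for $g\geq 4$ one has $\frac{11g-10}{2g-1}>\frac{3g-4}{g-2}$ and $\frac{7g}{2g-1}<7$, so that via $\Lambda+N=0$ a positive combination of $w_{\rho}(\mathbb{B}_1)$ and $w_{\rho}(\mathbb{B}_2)$ vanishes, and then invokes the covering lemma together with the multibasis-to-basis observation. Your explicit factorization $5g^2-21g+16=(g-1)(5g-16)$ and the sign check are right, and the remark about the degeneration at $g=3$ matches the paper's Remark on the non-semistability of the genus $3$ case.
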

\begin{proof}
For $g\geq 4$, we have $\frac{11g-10}{2g-1}>\frac{3g-4}{g-2}$ and $\frac{7g}{2g-1}<7$.
Since $\Lambda+N=0$, some positive linear combination of $w_{\rho}(\mathbb{B}_1)$ and $w_{\rho}(\mathbb{B}_2)$ is $0$
for every $\rho$ acting diagonally on the distinguished basis $\{x_0,\dots, x_{2g-2}, y_0, \dots, y_{g-3}\}$ of $\HH^0\bigl(C,\O_C(1)\bigr)$.
Semistability now follows from Lemma \ref{L:covering}.
\end{proof} 

\subsubsection{Construction of a balanced Type I multibasis}
A Type I basis is obtained by concatenating 
a $(m,0)$-monomial multibasis of $W(m,0)$ and a $(m-1,1)$-monomial multibasis of $W(m-1,1)$.
By Proposition \ref{P:V-bases}, there exists a balanced $(m,0)$-monomial multibasis of $W(m,0)$, whose weight is
\begin{align*}
m\frac{(2m(g-1)+1)}{(2g-1)}\Lambda,
\end{align*}
and a balanced $(m-1,1)$-monomial multibasis of $W(m-1,1)$, whose weight is 
\begin{align*}
(m-1)\frac{(2g-2)(m-1)+g-2)}{(2g-1)}\Lambda+\frac{\bigl((2g-2)(m-1)+(g-2)\bigr)}{(g-2)}N.
\end{align*}

Summarizing, we obtain the following result.
\begin{prop}\label{P:type-I}
There is a Type I multibasis of $\HH^0\bigl(C,\O_C(m)\bigr)$ of weight
\begin{equation}\label{E:weight-type-I}
 \frac{\bigl((4g-4)m^2-(3g-3)m+g\bigr)}{(2g-1)}\Lambda+\frac{\bigl((2g-2)m-g\bigr)}{(g-2)}N.
\end{equation}
\end{prop}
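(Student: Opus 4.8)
The plan is to assemble the required Type I multibasis directly from the balanced multibases furnished by Proposition~\ref{P:V-bases}, and then to extract its weight by bookkeeping how often each variable occurs.

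First I would apply Proposition~\ref{P:V-bases} twice, obtaining a balanced $(m,0)$-monomial multibasis $\mathbb{S}(m,0)$ of $W(m,0)$ and a balanced $(m-1,1)$-monomial multibasis $\mathbb{S}(m-1,1)$ of $W(m-1,1)$. The isomorphism \eqref{E:isomorphism} gives $\HH^0\bigl(C,\O_C(m)\bigr)\simeq W(m,0)\oplus W(m-1,1)$, so in particular $W(m,0)\cap W(m-1,1)=\{0\}$, and the two multibases may be combined into the multibasis $\mathbb{S}(m,0)+\mathbb{S}(m-1,1)$ of $\HH^0\bigl(C,\O_C(m)\bigr)$, whose members are exactly concatenations of an $(m,0)$-monomial basis of $W(m,0)$ with an $(m-1,1)$-monomial basis of $W(m-1,1)$, i.e.\ Type I bases. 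Thus this is a Type I multibasis, and since the weight is additive over the two summands it remains to compute $w_\rho\bigl(\mathbb{S}(m,0)\bigr)$ and $w_\rho\bigl(\mathbb{S}(m-1,1)\bigr)$ separately.

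Next I would carry out the count. Because $W(m,0)=\bigoplus_{d=0}^{(2g-2)m}\CC\langle z^d\rangle$ has dimension $(2g-2)m+1$ and each of its basis monomials is a product of $m$ of the $2g-1$ variables $x_0,\dots,x_{2g-2}$, the $X$-balanced condition forces every $x_i$ to occur $m\bigl((2g-2)m+1\bigr)/(2g-1)$ times per basis on average across the multibasis, whence $w_\rho\bigl(\mathbb{S}(m,0)\bigr)=m\,\tfrac{2m(g-1)+1}{2g-1}\,\Lambda$. Likewise $W(m-1,1)$ has dimension $(2g-2)(m-1)+(g-2)=(2g-2)m-g$, its basis monomials are $(m-1,1)$-monomials, and counting the $m-1$ slots for $x$'s and the single slot for $y$'s while using balancedness in the two groups of variables separately gives $w_\rho\bigl(\mathbb{S}(m-1,1)\bigr)=(m-1)\,\tfrac{(2g-2)m-g}{2g-1}\,\Lambda+\tfrac{(2g-2)m-g}{g-2}\,N$. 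Adding the two contributions, the $N$-coefficient is already $\tfrac{(2g-2)m-g}{g-2}$, and a short simplification collapses the $\Lambda$-coefficient $\tfrac{(2g-2)m^2+m+(m-1)\bigl((2g-2)m-g\bigr)}{2g-1}$ to $\tfrac{(4g-4)m^2-(3g-3)m+g}{2g-1}$, which is exactly \eqref{E:weight-type-I}.

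I do not expect a genuine obstacle: the content is entirely combinatorial bookkeeping. The only points needing care are (i) that Proposition~\ref{P:V-bases} really produces \emph{balanced} multibases, so that the weights are honest rational multiples of $\Lambda$ and $N$ with no residual dependence on the individual $\lambda_i$ and $\nu_j$, and (ii) that $W(m,0)$ and $W(m-1,1)$ are complementary inside $\HH^0\bigl(C,\O_C(m)\bigr)$, so that the concatenation operation on multibases is legitimate and additive on weights — and both of these are already in hand.
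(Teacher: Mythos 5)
Your proposal is correct and follows exactly the paper's argument: concatenate the balanced multibases of $W(m,0)$ and $W(m-1,1)$ supplied by Proposition~\ref{P:V-bases} via the splitting \eqref{E:isomorphism}, and add the two weights, which your occurrence counts compute correctly. The algebraic simplification of the $\Lambda$-coefficient also checks out, so there is nothing to add.
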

\begin{remark}\label{remark}
We note that in Equation \eqref{E:weight-type-I}, the coefficient of $\Lambda$ is greater than the coefficient of $N$ 
for all values of $g\geq 3$ and all values of $m\geq 2$, with the sole exception of $(g,m)=(3,2)$ for which
we get $\frac{23}{5}\Lambda+5N$. It is easy to see that in this exceptional case, the $2^{nd}$ Hilbert 
point of the bicanonically embedded genus $3$ Wiman curve is, in fact, {\em non-semistable}.
\end{remark}
\subsubsection{Construction of a Type II basis}
In this section we construct a (balanced) Type II multibasis of $\HH^0\bigl(C,\O_C(m)\bigr)$. We begin with a preliminary result.
\begin{lemma}\label{L:type-II-key} {\em (a)} Suppose $k\geq 3$. Then for $0\leq i\leq 2g-2$ and 
$0\leq \epsilon \ll 1$, there is a $(k,m-k)$-monomial multibasis of $W(k,m-k)/r\bigl(W(k-2,m-k+2)\bigr)$ whose
weight is 
\begin{equation}\label{E:type-II-piece-3}
\left(k\frac{(2g+2)}{(2g-1)}-\frac{\epsilon}{2g-1}\right)\Lambda+(m-k)\frac{(2g+2)}{(g-2)}N+\epsilon \lambda_i.
\end{equation}
{\em (b)} Suppose $m-k\geq 1$. Then for $0\leq j\leq g-3$ and 
$0\leq \delta \ll 1$, there is a $(k,m-k)$-monomial multibasis of $W(k,m-k)/r\bigl(W(k-2,m-k+2)\bigr)$ whose
weight is 
\begin{equation}\label{E:type-II-piece-4}
k\frac{(2g+2)}{(2g-1)}\Lambda+\left((m-k)\frac{(2g+2)}{(g-2)}-\frac{\delta}{g-2}\right)N+\delta \nu_j.
\end{equation}

\end{lemma}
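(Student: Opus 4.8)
The plan is to prove both parts by one construction: take an explicit balanced $(k,m-k)$-monomial multibasis of the $(2g+2)$-dimensional quotient $W(k,m-k)/r\bigl(W(k-2,m-k+2)\bigr)$ and perturb it by an arbitrarily small amount concentrated on the single variable $x_i$ (for (a)) or $y_j$ (for (b)).

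First one records the internal structure of this quotient. Put $D:=(2g-2)k+(g-3)(m-k)$ and $D':=(2g-2)(k-2)+(g-3)(m-k+2)$; a one-line computation gives $D-D'=2g+2$. Then $W(k,m-k)=\bigoplus_{d=0}^{D}\CC\langle z^{d}w^{m-k}\rangle$, while $r\bigl(W(k-2,m-k+2)\bigr)$ is spanned by $(z^{d}+z^{d+2g+1})w^{m-k}$ for $0\le d\le D'$. So the quotient is exactly the space $V$ of Example \ref{E:example} carrying an extra factor $w^{m-k}$: the $2g+2$ monomials $z^{d}w^{m-k}$ with $d\in[D-2g-1,D]$ project to a basis (applying $z^{e}\equiv-z^{e+2g+1}$ for $e\le D'$ pushes exponents upward, and the first term exceeding $D'$ already lies in $[D'+1,D]$), and more generally any $2g+2$ exponents meeting each chain $d\sim d+2g+1$ exactly once form a basis. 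This reduces the lemma to bookkeeping about which $(k,m-k)$-monomials realize which $z$-degrees.

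Next, the plan is to build an honest balanced $(k,m-k)$-monomial multibasis $\mathbb{B}_0$ of the quotient --- one of weight $k\frac{2g+2}{2g-1}\Lambda+(m-k)\frac{2g+2}{g-2}N$, the $\epsilon=0$ case of \eqref{E:type-II-piece-3} --- by the method of Proposition \ref{P:V-bases} and Example \ref{E:example}. Namely: choose a transversal of the identification chains which, outside finitely many exceptional exponents, is a shifted contiguous block; feed the corresponding span of $z$-powers to Proposition \ref{P:balanced-rational-curve} to get an $X$-balanced quadratic multibasis; promote it to a $(k,m-k)$-monomial multibasis by distributing the remaining $x$- and $y$-factors with Lemma \ref{L:path-lemma}, exactly as in the inductive step of Proposition \ref{P:V-bases}; and combine two or three such multibases, built from different exceptional exponents, so that the unbalanced remainders cancel.

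Finally one produces the perturbation. Starting from $\mathbb{B}_0$, perform degree-preserving substitutions inside its constituent bases: trade a factor $x_j$ (with $j$ ranging in equal proportion over all indices $\ne i$) for a factor $x_i$, together with a compensating change of one further factor so that the monomial keeps its $z$-degree; the modified set then still maps onto the same $z$-powers and stays a basis of the quotient. Averaging over $j$ makes the net weight change a positive multiple of $\lambda_i-\frac{1}{2g-1}\Lambda$, and mixing the tilted multibasis with $\mathbb{B}_0$ in a small enough rational proportion realizes \eqref{E:type-II-piece-3} for every sufficiently small $\epsilon\ge 0$; part (b), with \eqref{E:type-II-piece-4} in place of \eqref{E:type-II-piece-3}, follows from the same argument applied to the $y$-factors. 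The hard part --- and the reason for the hypotheses $k\ge 3$ in (a) and $m-k\ge 1$ in (b) --- is checking that each of these substitutions lands back inside a genuine monomial basis of the quotient: near the ends of the degree range $[0,D]$ the supply of $(k,m-k)$-monomials of a prescribed degree is limited, so both the path-distribution producing $\mathbb{B}_0$ and the compensating swaps have to be arranged with care, using Example \ref{E:example} as the template for the boundary bookkeeping.
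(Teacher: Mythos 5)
Your reduction of the quotient $W(k,m-k)/r\bigl(W(k-2,m-k+2)\bigr)$ to a transversal problem is correct, and the first two stages of your plan --- producing a balanced $(k,m-k)$-monomial multibasis $\mathbb{B}_0$ of weight $k\frac{2g+2}{2g-1}\Lambda+(m-k)\frac{2g+2}{g-2}N$ from a contiguous block of exponents via Proposition \ref{P:balanced-rational-curve} and Lemma \ref{L:path-lemma} --- are sound and close to what the paper actually does. The gap is in the perturbation step, which is the real content of the lemma. Your mechanism is to trade a factor $x_j$ for $x_i$ inside individual monomials, restore the $z$-degree by shifting one further factor, and then average over $j$. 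But the compensating shift changes the multiplicity of two more variables (say $x_l\mapsto x_{l+j-i}$, or $y_l\mapsto y_{l+j-i}$), and the resulting contributions $\lambda_{l+j-i}-\lambda_l$ (or $\nu_{l+j-i}-\nu_l$) do not disappear when you average over $j$ alone; the claim that the net weight change is a positive multiple of $\lambda_i-\frac{1}{2g-1}\Lambda$ is therefore unjustified as stated. You would need a second averaging, or an exact pairing of swaps, that makes the compensating shifts cancel while every modified monomial keeps all its indices in range --- and you exhibit neither, only flagging the in-range issue as ``the hard part.''

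The paper sidesteps this entirely by building the $\lambda_i$- and $\nu_j$-dependence into the basis from the start: each constituent basis has the form $\{x_{2g-2}^k y_{g-3}^{m-k}\}\cup\bigl(x_i^{k-2}y_j^{m-k}\times\mathbb{T}_u\bigr)$, where $\mathbb{T}_u$ is a quadratic $X$-balanced multibasis covering $2g+1$ consecutive $z$-degrees supplied by Proposition \ref{P:balanced-rational-curve} (exactly as in Example \ref{E:example}), and the spare multiplier $x_i^{k-2}y_j^{m-k}$ contributes $(2g+1)(k-2)\lambda_i+(2g+1)(m-k)\nu_j$ to the weight. Fully averaging over $i$ (and over the two ranges of $\mathbb{T}_0$, $\mathbb{T}_1$ to kill the $\lambda_{2g-2}$ term) gives the balanced multibasis; under-averaging by a proportion $c=O(\epsilon)$ produces precisely the $\epsilon\lambda_i$ excess, with no substitutions and hence no basis property to re-verify. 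This also explains the hypotheses: $k\geq 3$ (resp.\ $m-k\geq 1$) is needed exactly so that $k-2\geq 1$ (resp.\ $m-k\geq 1$), i.e.\ so that there is a spare $x$-factor (resp.\ $y$-factor) beyond the two consumed by $\mathbb{T}_u$ to carry the perturbation --- not, as you suggest, because of a shortage of monomials of prescribed degree near the ends of the range $[0,D]$.
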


\begin{proof}
We identify $W(k,m-k)/r\bigl(W(k-2,m-k+2)\bigr)$ with the vector space  
\[
\lspan \{z^d w^{m-k} \ : \ 0\leq d \leq (2g-2)k+(g-3)(m-k)\}
\] 
modulo the relations
\[
z^{d+2g+1}w^{m-k}+z^{d}w^{m-k} =0, \quad 0\leq d\leq (2g-2)k+(g-3)(m-k)-(2g+2).
\]

We define a set of $(k,m-k)$-monomials that form a basis $W(k,m-k)/r\bigl(W(k-2,m-k+2)\bigr)$, and which
depends on three parameters: $i\in \{0,\dots, 2g-2\}$,  $j\in \{0,\dots, g-3\}$, and $u\in \{0,1\}$:
\begin{equation}\label{E:S}
\mathbb{S}_u(i,j):=\{x_{2g-2}^k y_{g-3}^{m-k}\}\cup \left(x_i^{k-2}y_{j}^{m-k} \times \mathbb{T}_u\right),
\end{equation}
where
\begin{itemize}
\item
 $\mathbb{T}_0:=\mathbb{H}^{2g-3}_{g-3}$ is the {\em quadratic} monomial multibasis of  $\lspan\{ z^i: \ g-3\leq i \leq 3g-3\}$ in variables
 $\{x_0,\dots, x_{2g-3}\}$, which exists by
Proposition \ref{P:balanced-rational-curve} and has weight 
\[
w_{\rho}(\mathbb{T}_0)=\frac{2(2g+1)}{2g-2}(\Lambda-\lambda_{2g-2}).
\]
\item $\mathbb{T}_1:=\mathbb{H}^{2g-2}_{g-2}$ is the {\em quadratic} monomial multibasis of $\lspan\{z^i: \ g-2 \leq i \leq 3g-2\}$ 
in variables $\{x_0, \dots, x_{2g-2}\}$, which exists by
Proposition \ref{P:balanced-rational-curve} and has weight 
\[
w_{\rho}(\mathbb{T}_1)=\frac{2(2g+1)}{2g-1}\Lambda.
\]
\end{itemize}
Setting $\nu(k,j):=(2g+1)(m-k)\nu_j+(m-k)\nu_{g-3}$,
we deduce that 
\begin{align}
w_{\rho}\bigl(\mathbb{S}_0(i,j)\bigr) &=\frac{2(2g+1)}{2g-2}(\Lambda-\lambda_{2g-2})+(2g+1)(k-2)\lambda_{i}+k\lambda_{2g-2}+\nu(k,j),\\ 
w_{\rho}\bigl(\mathbb{S}_1(i,j)\bigr) &=\frac{2(2g+1)}{2g-1}\Lambda+(2g+1)(k-2)\lambda_{i}+k\lambda_{2g-2}+\nu(k,j).
\end{align}
Since $\sum_{i=0}^{2g-3}\lambda_i=\Lambda-\lambda_{2g-2}$, the mutibasis 
$\mathbb{S}_0:=\cup_{i=0}^{2g-3} \mathbb{S}_0(i,j)$ has weight
\begin{equation*}
k\frac{(2g+1)}{2g-2}(\Lambda-\lambda_{2g-2})+k\lambda_{2g-2} +\nu(k,j).
\end{equation*} 
If $a+b+c=1$, then $\mathbb{S}_1:=a\cdot \mathbb{S}_0 \cup b\cdot \mathbb{S}_1(2g-2,j)\cup c\cdot \mathbb{S}_1(i,j)$ has weight 
\begin{multline}\label{E:eq}
\left[ ak\frac{2g+1}{2g-2}+2(b+c)\frac{2g+1}{2g-1}\right]\Lambda 
+\left[ ck+b\bigl((2g+2)k-2(2g+1)\bigr)-\frac{3ak}{2g-2}\right] \lambda_{2g-2}\\
+\left[ c(2g+1)(k-2)\right]\lambda_{i}+\nu(k,j).
\end{multline}
For any small non-negative $c$, we can find $a$ and $b$ in $[0,1]$ satisfying $a+b+c=1$ and such that 
the coefficient of $\lambda_{2g-2}$ in \eqref{E:eq} equals $0$. If we additionally require that $c=0$, which then 
also determines $a$ and $b$, the $\Lambda$ coefficient in \eqref{E:eq} simplifies to 
$k\dfrac{(2g+2)}{(2g-1)}$. For $c=\epsilon$, it follows that $\mathbb{S}_1$ has weight 
\[
\left(k\dfrac{(2g+2)}{(2g-1)}-\frac{\epsilon}{2g-1}\right)\Lambda+\epsilon \lambda_i+\nu(k,j).
\]
Recall that $\nu(k,j)=(2g+1)(m-k)\nu_j+(m-k)\nu_{g-3}$. Since $(2g+1)(m-k)>(m-k)$, 
an averaging argument with $\nu$'s, analogous to the one given above for $\lambda$'s, shows that there exist multibases of weights
given by Equations \eqref{E:type-II-piece-3} and \eqref{E:type-II-piece-4}.
\end{proof}

\begin{prop}\label{P:type-II} Let $m\geq 3$. For $0\leq i \leq 2g-2$, $0\leq j \leq g-3$, and $0\leq \epsilon, \delta \ll 1$,
there exists a Type II multibasis of $\HH^0\bigl(C,\O_C(m)\bigr)$ of weight $a \Lambda + b N +\epsilon \lambda_i+\delta \nu_j$,
where
\begin{align*}
a&=\dfrac{1}{2g-1} \left((g+1)m^2+(2g-2)m-g)\right)-\frac{\epsilon}{2g-1}, \\
b&=\dfrac{1}{g-2} \left((3g-5)m^2-(3g-3)m+g \right)-\frac{\delta}{g-2};
\end{align*}
in particular $a<b$. 
\end{prop}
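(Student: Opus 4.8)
The plan is to assemble the desired Type II multibasis summand-by-summand along the direct sum decomposition \eqref{E:isomorphism-2}, $\HH^0\bigl(C,\O_C(m)\bigr)\simeq\bigoplus_{k=0}^{m}W(k,m-k)/r(W(k-2,m-k+2))$, and then add up the $\rho$-weights of the pieces. For the two extreme summands $k=0$ and $k=1$ the relevant quotient is the whole space ($W(0,m)$, resp. $W(1,m-1)$), so Proposition \ref{P:V-bases} directly supplies a balanced $(0,m)$-, resp. $(1,m-1)$-, monomial multibasis. Using the decomposition of $W(k,m-k)$ recorded just after its definition, $\dim W(k,m-k)=(2g-2)k+(g-3)(m-k)+1$, so the $\rho$-weight of a balanced $(k,m-k)$-monomial multibasis of $W(k,m-k)$ is $\tfrac{k\dim W(k,m-k)}{2g-1}\Lambda+\tfrac{(m-k)\dim W(k,m-k)}{g-2}N$. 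For the middle summands $2\le k\le m$, each quotient has dimension $2g+2$, and Lemma \ref{L:type-II-key} produces a $(k,m-k)$-monomial multibasis whose weight equals the balanced value $\tfrac{k(2g+2)}{2g-1}\Lambda+\tfrac{(m-k)(2g+2)}{g-2}N$ up to a freely prescribable perturbation: part (a), valid for $k\ge3$, adds a term $\epsilon\lambda_i$ for any chosen $i$, while part (b), valid for $m-k\ge1$, adds a term $\delta\nu_j$ for any chosen $j$.

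Concretely, since $m\ge3$ I would use part (a) of Lemma \ref{L:type-II-key} on the $k=m$ summand (keeping the $\epsilon\lambda_i$ term), part (b) on the $k=2$ summand (keeping the $\delta\nu_j$ term, which is legitimate because $m-2\ge1$), and the unperturbed versions ($\epsilon=0$ in part (a), or $\delta=0$ in part (b)) on the remaining middle summands $3\le k\le m-1$. Concatenating the resulting $m+1$ monomial multibases yields a Type II multibasis of $\HH^0\bigl(C,\O_C(m)\bigr)$. This is the only place $m\ge3$ is needed: it guarantees that $k=m$ can be handled by part (a) and that there is a distinct middle index, namely $k=2$, handled by part (b), so that both perturbation terms $\epsilon\lambda_i$ and $\delta\nu_j$ are simultaneously available.

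Summing the weights, the coefficient of $\Lambda$ is $\tfrac{1}{2g-1}\bigl[(g-3)m+g+2+(2g+2)\sum_{k=2}^{m}k-\epsilon\bigr]$ and the coefficient of $N$ is $\tfrac{1}{g-2}\bigl[m\bigl((g-3)m+1\bigr)+(m-1)\bigl((g-3)m+g+2\bigr)+(2g+2)\sum_{k=2}^{m-1}(m-k)-\delta\bigr]$, while the remaining terms add up to exactly $\epsilon\lambda_i+\delta\nu_j$. Evaluating $\sum_{k=2}^{m}k=\tfrac{m(m+1)}{2}-1$ and $\sum_{k=2}^{m-1}(m-k)=\tfrac{(m-1)(m-2)}{2}$ and simplifying gives precisely the claimed values of $a$ and $b$. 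For the inequality $a<b$, I would clear the (positive) denominators $2g-1$ and $g-2$ and verify the identity
\[
(2g-1)\bigl((3g-5)m^2-(3g-3)m+g\bigr)-(g-2)\bigl((g+1)m^2+(2g-2)m-g\bigr)=(m-1)\bigl((5g^2-12g+7)m-(3g^2-3g)\bigr),
\]
whose right-hand side is strictly positive for all $g\ge3$, $m\ge3$: both factors are positive, since $(5g^2-12g+7)m-(3g^2-3g)\ge 3(4g-7)(g-1)>0$. Hence $a<b$ persists for all sufficiently small $\epsilon,\delta\ge0$.

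The hard constructions are already packaged in Proposition \ref{P:V-bases} and Lemma \ref{L:type-II-key}, so the main obstacle in this proof is purely organizational bookkeeping: assigning a concrete multibasis to every summand of \eqref{E:isomorphism-2}, being careful that the two perturbations are realized on two different summands, and summing the weights without arithmetic slips. The polynomial identity in the last display is the only genuinely new computation, and it is what makes the final inequality transparent.
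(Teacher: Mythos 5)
Your proof is correct and follows essentially the same route as the paper: decompose $\HH^0\bigl(C,\O_C(m)\bigr)$ via \eqref{E:isomorphism-2}, take the balanced multibases of Proposition \ref{P:V-bases} for the $k=0,1$ summands and those of Lemma \ref{L:type-II-key} for $2\le k\le m$, placing the two perturbations on distinct summands (you put $\epsilon\lambda_i$ on $k=m$ where the paper uses $k=3$, an immaterial choice), and the weight bookkeeping matches. Your explicit factorization $(m-1)\bigl((5g^2-12g+7)m-(3g^2-3g)\bigr)$ verifying $a<b$ is a welcome addition that the paper leaves implicit.
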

\begin{proof}
We begin with a balanced $(0,m)$-monomial multibasis of $W(0,m)$
which exists by Proposition \ref{P:V-bases} and whose weight is
\begin{equation}\label{E:type-II-piece-1}
\dfrac{m\bigl((g-3)m+1\bigr)}{(g-2)}N.
\end{equation}
Next, we take
a balanced $(1,m-1)$-monomial multibasis of $W(1,m-1)$, which again exists by Proposition \ref{P:V-bases}. 
Its weight is
\begin{equation}\label{E:type-II-piece-2}
(m-1)\dfrac{\bigl((g-3)(m-1)+2g-1\bigr)}{(g-2)}N+\dfrac{\bigl((g-3)(m-1)+2g-1\bigr)}{(2g-1)}\Lambda.
\end{equation}

By Lemma \ref{L:type-II-key} there exists a multibasis of $W(k,m-k)/r\bigl(W(k-2,m-k+2)\bigr)$
of weight
\[
\omega_k:=k\frac{(2g+2)}{(2g-1)}\Lambda + (m-k)\frac{(2g+2)}{(g-2)}N,
\]
for $2\leq k\leq m$.
Moreover, by the same lemma, there exists a $(3,m-3)$-monomial multibasis of $W(3,m-3$) of 
weight 
\[
\omega'_3:=\left(3\frac{(2g+2)}{(2g-1)}-\frac{\epsilon}{(2g-1)}\right)\Lambda+(m-3)\frac{(2g+2)}{(g-2)}N+\epsilon \lambda_i,
\]
for any small non-negative $\epsilon$ and any $i$, and
there also exists a multibasis of $W(2,m-2)$ of 
weight 
\[
\omega'_2:=2\frac{(2g+2)}{(2g-1)}\Lambda+\left((m-2)\frac{(2g+2)}{(g-2)}-\frac{\delta}{(g-2)}\right)N+\delta \nu_j,
\]
again for any small non-negative $\delta$ and any $j$. 
Concatenating the above bases, we obtain a Type II multibasis.
If we set $\epsilon=\delta=0$, the weight of the resulting multibasis is
\begin{multline*}
\dfrac{m\bigl((g-3)m+1\bigr)}{(g-2)}N+(m-1)\dfrac{\bigl((g-3)(m-1)+2g-1\bigr)}{(g-2)}N+\dfrac{\bigl((g-3)(m-1)+2g-1\bigr)}{(2g-1)} \Lambda    \\ 
+\sum_{k=2}^{m} \omega_k = \dfrac{1}{2g-1} \left((g+1)m^2+(2g-2)m-g)\right)\Lambda+\dfrac{1}{g-2} \left( (3g-5)m^2-(3g-3)m+g \right)N.
\end{multline*}
The result follows.
\end{proof}

We are now ready to prove Theorem \ref{T:wiman} 
in the case of $m\geq 3$. 

\begin{corollary}[Stability of Wiman curves]\label{T:wiman-m-3}
The $m^{th}$ Hilbert point of the bicanonically embedded Wiman curve $C$ of genus $g\geq 3$ is stable for every $m\geq 3$.
\end{corollary}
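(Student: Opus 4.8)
The plan is to deduce the $m\geq 3$ case of stability directly from the two families of monomial multibases already constructed: the Type I multibasis of Proposition~\ref{P:type-I} and the Type II multibasis of Proposition~\ref{P:type-II}. By Lemma~\ref{L:wiman-mult} and the Kempf--Morrison criterion (Proposition~\ref{P:kempf}), it suffices to show that for every nontrivial one-parameter subgroup $\rho\co \gm \ra \SL(3g-3)$ acting diagonally on the distinguished basis, say with weights $\{\lambda_i\}_{i=0}^{2g-2}$ on the $x_i$'s and $\{\nu_j\}_{j=0}^{g-3}$ on the $y_j$'s and with $\Lambda:=\sum_i\lambda_i$, $N:=\sum_j\nu_j$ (so $\Lambda+N=0$), there is a monomial basis of $\HH^0\bigl(C,\O_C(m)\bigr)$ of strictly negative $\rho$-weight. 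Since a monomial multibasis of negative $\rho$-weight forces such a basis to exist, it is enough to exhibit, for each such $\rho$, a monomial multibasis of negative weight.

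First I would rewrite the weights of these two multibases using $N=-\Lambda$. By Proposition~\ref{P:type-I} the Type I multibasis has weight $A_1\Lambda+B_1N=(A_1-B_1)\Lambda$, with $A_1=\tfrac{(4g-4)m^2-(3g-3)m+g}{2g-1}$ and $B_1=\tfrac{(2g-2)m-g}{g-2}$; since $m\geq 3$ we have $(g,m)\neq(3,2)$, so Remark~\ref{remark} gives $A_1>B_1$. By Proposition~\ref{P:type-II}, for every $0\leq i\leq 2g-2$, every $0\leq j\leq g-3$, and all sufficiently small $\epsilon,\delta\geq 0$, there is a Type II multibasis of weight $a\Lambda+bN+\epsilon\lambda_i+\delta\nu_j=(a-b)\Lambda+\epsilon\lambda_i+\delta\nu_j$ with $a<b$.

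Next I would split into cases on the sign of $\Lambda$. If $\Lambda<0$, the Type I multibasis has weight $(A_1-B_1)\Lambda<0$. If $\Lambda>0$, the Type II multibasis with $\epsilon=\delta=0$ has weight $(a-b)\Lambda<0$. The remaining case $\Lambda=N=0$ is the one where this crude dichotomy yields only semistability, and here the perturbation terms are essential: since $\rho$ is nontrivial, not all of the $\lambda_i,\nu_j$ vanish, and since $\sum_i\lambda_i=0$ and $\sum_j\nu_j=0$, some individual weight is strictly negative. If $\lambda_{i_0}<0$ for some index $i_0$, the Type II multibasis with this $i_0$, a small $\epsilon>0$, and $\delta=0$ has weight $\epsilon\lambda_{i_0}<0$; if instead all $\lambda_i=0$ and $\nu_{j_0}<0$ for some $j_0$, the symmetric choice $\epsilon=0$, $\delta>0$ small gives weight $\delta\nu_{j_0}<0$. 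In every case we have produced a monomial multibasis of negative $\rho$-weight, and Proposition~\ref{P:kempf} then yields stability of $[C]_m$.

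I expect essentially all of the substantive work to be upstream, in Propositions~\ref{P:type-I} and~\ref{P:type-II}, with the final argument being just the short case analysis above. The only genuinely delicate point is the boundary case $\Lambda=0$: this is exactly why one needs a refined Type II multibasis carrying an extra $\epsilon\lambda_i$ (or $\delta\nu_j$) term, and in turn why $m\geq 3$ is required --- Lemma~\ref{L:type-II-key} supplies such a refinement only for a block $W(k,m-k)/r\bigl(W(k-2,m-k+2)\bigr)$ with $k\geq 3$ (for the $\epsilon\lambda_i$ term) or $m-k\geq 1$ (for the $\delta\nu_j$ term), and such a block is present precisely when $m\geq 3$. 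For $m=2$ the mechanism is unavailable, consistent with the fact that in degree $2$ one gets only semistability, and genuinely so: by Remark~\ref{remark} (cf. Corollary~\ref{C:wiman-2nd}) the genus $3$ Wiman curve has non-semistable $2^{nd}$ Hilbert point.
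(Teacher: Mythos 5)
Your proof is correct and follows essentially the same route as the paper: both deduce the result from Propositions~\ref{P:type-I} and~\ref{P:type-II} via Lemma~\ref{L:wiman-mult} and the Kempf--Morrison criterion (Proposition~\ref{P:kempf}). The only cosmetic difference is that the paper takes a single positive linear combination of the Type I and Type II multibases to cancel the $\Lambda$- and $N$-coefficients outright, leaving weight $\epsilon\lambda_i+\delta\nu_j$, whereas you split into cases on the sign of $\Lambda$; both reduce to the same final observation that for a nontrivial $\rho$ some $\lambda_i$ or $\nu_j$ is strictly negative.
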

\begin{proof} 
Lemma \ref{L:wiman-mult} and Proposition \ref{P:kempf} reduce the verification of stability of $C$ to verifying stability 
with respect to one-parameter subgroups acting diagonally 
on the distinguished basis $\{x_0, \dots, x_{2g-2}, y_0, \dots, y_{g-3}\}$ of $\HH^0\bigl(C,\O_C(1) \bigr)$. 
To prove stability with respect to such one-parameter subgroup $\rho$, we need to find a monomial basis of $\HH^0\bigl(C,\O_C(m)\bigr)$ 
of negative $\rho$-weight. 
By taking a suitable linear combination of the Type I monomial multibasis of Proposition \ref{P:type-I}
and the Type II monomial multibasis of Proposition \ref{P:type-II}, we can now construct a monomial multibasis of weight 
$\epsilon \lambda_i+\delta \nu_{j}$,
where $0 \leq \epsilon, \delta \ll 1$ are arbitrary and indices $i, j$ can be chosen freely. Since at least one of the weights
$\{\lambda_0, \dots, \lambda_{2g-2}, \nu_0, \dots, \nu_{g-3}\}$ is negative, the claim follows.
\end{proof}

\bibliography{stability-bib}{}
\bibliographystyle{alpha}

\end{document}